\documentclass[reqno]{amsart}
\usepackage{PianoPreambles}
\usepackage[backend=bibtex,bibstyle=alphabetic,citestyle=alphabetic]{biblatex}
\usepackage{upref}
\crefdefaultlabelformat{#2\textup{#1}#3}

\title[Piano Algebras]{Extended Admissible Dissections of Marked Surfaces and Piano Algebras}
\author{Marina Godinho and Dave Murphy}
\subjclass{16E45,16G20,18G80}
\keywords{Extended admissible dissections, piano algebra, gentle algebra, perfect derived category, cluster category}
\date{}

\begin{document}

\begin{abstract}
	We introduce the notion of extended admissible dissections of a marked surface, building upon the notion of an admissible dissection of a marked surface by Amiot--Plamondon--Schroll.
	For each extended admissible dissection we construct a differential graded algebra, called a piano algebra, which may be viewed in some sense as a differential graded analogue of a gentle algebra.
	We show that for a marked disc without punctures, a piano algebra is quasi-isomorphic to the graded endomorphism ring of a classical generator of the Paquette--Y\i ld\i r\i m completion of the discrete cluster category of Dynkin type $A_{\infty}$, labelled $\ocC$.
	
	We use previous results of the authors to show that there exists an additive equivalence between $\ocC$ and the perfect derived category of a specific piano algebra, that sends triangles with two indecomposable terms to triangles with two indecomposable terms.
	We use this equivalence to prove that any two piano algebras coming from homeomorphic marked discs are derived equivalent.
\end{abstract}

\maketitle

\tableofcontents

\section*{Introduction}

The use of combinatorial models to understand certain categories has become a popular trend in recent years.
The concept is based in the notion of being able to visualise certain data from a given category in the hopes to understand the representation theory of the category via more combinatorial means.
Usually this involves viewing indecomposable objects as combinatorial objects, and showing that certain homological properties are in correspondence with some combinatorial properties.

Two particular classes of algebras that this process has proven particularly powerful in are cluster algebras and gentle algebras.
combinatorial models for categories associated to these algebras (e.g.\ cluster categories, module categories, bounded derived categories) often take the form of \textit{marked surfaces}, that is, the data of a compact, orientable surface (often with boundary) and a set of \textit{marked points} on the surface.
In these models, indecomposable objects correspond to isotopy classes of curves on the surface with endpoints in the set of marked points, and the dimension of $\mathrm{Hom}$-spaces are in some correspondence with number of times corresponding arcs cross, for some suitable definition of crossing.

Cluster algebras were introduced by Fomin--Zelevinsky in a series of papers \cite{FominZelevinsky1,FominZelevinsky2,ClusterAlg3,FominZelevinsky4} around the start of the millennium.
They are typified by their \textit{clusters}, certain sets of generators, and their \textit{cluster mutation}, a process of uniquely replacing a given generator in a cluster  to obtain a new cluster, according to some prescribed rule.
It was noticed in the first paper by Fomin--Zelevinksy \cite{FominZelevinsky1} that clusters in the cluster algebra of Dynkin type $A_n$ were in correspondence with the triangulations of an $(n+3)$-gon, and the cluster mutation corresponded to flipping a diagonal in the triangulation.

Cluster categories are a family of triangulated categories, and their study began with \cite{BMRRT,CCS}, where both sets of authors independently sought to categorify the then recently developed theory of cluster algebras.
In \cite{BMRRT}, the authors aimed to be able to study the tilting theory of an hereditary algebra by categorical means.
To do this, for an hereditary algebra $H$, they consider the (triangulated) orbit category of the bounded derived category $\mathrm{D}^b(H)/( \tau^{-1} [1])$, where $\tau$ is the Auslander--Reiten translation, and $[1]$ denotes the suspension functor.
 
It was shown in \cite{CCS} that the cluster category of $kA_n$ may be modelled by the $(n+3)$-gon, with indecomposable objects corresponding to diagonals, non-trivial $\mathrm{Ext}^1$-spaces between indecomposable objects corresponding to the respective diagonals crossing, and the triangulations corresponded to the so-called cluster-tilting subcategories, the categorical analogue to clusters in a cluster algebra.

In \cite{Holm2009}, Holm--J\o rgensen studied $\mathrm{D}^{\mathrm{f}}(k[x])$, the finite derived category of the polynomial ring $(k[x],d=0)$ seen as a differential graded algebra (=\textit{dg algebra}) with $x$ placed in degree $-1$.
The authors showed that the category was modelled by the $\infty$-gon, more commonly viewed as a disc with $\mathbb{Z}$ many marked points on the boundary, and a single \textit{accumulation point}, which is not considered to be a marked point here.
Cluster-tilting subcategories in this category also correspond to certain triangulations of the $\infty$-gon, and mutation works in the combinatorial manner as in the cluster categories of Dynkin type $A_n$.

A completion via the combinatorial model was studied by Fisher \cite{Fisher2014}, where they look at the closure under homotopy colimits corresponding to one-sided fountains.
The author shows that the resulting category is triangulated and may be modelled using the $\infty$-gon by now considering the accumulation point as a marked point, and indecomposable homotopy colimits correspond to those arcs with the accumulation point as an endpoint.
It was proven in \cite{ACFGS} that this category is triangle equivalent to the perfect derived category of the dg algebra $(k[x],d=0)$, previously considered by Holm--J\o rgensen \cite{Holm2009}.

Igusa--Todorov provided an algorithm in \cite{Igusa2013} to begin with a cyclic poset, and to obtain a cluster category.
This provided a combinatorial generalisation to the category studied by Holm--J\o rgensen in \cite{Holm2009}, by constructing triangulated categories that could be modelled by the $\infty$-gon with an arbitrary number of finite accumulation points.
These are algebraic, Krull--Schmidt, 2-Calabi--Yau, Hom-finite, $k$-linear, triangulated categories, with classical generators \cite{Generators}, and as such are equivalent as triangulated categories to the perfect derived category of some differential graded algebra by a result of Keller \cite{KellerDeriving}.
These categories exhibit interesting phenomena that is not present in the finite rank cases, and this has lead to a lot of recent interest \cite{CummingsGratz,FranchiniNegative,Gratz2017,GZ2021,Holm2012,MurphyGrothendieck}.

Paquette--Y\i ld\i r\i m \cite{Paquette2020} constructed a family of categories, as a Verdier localisation of the Igusa--Todorov categories that can be viewed as completions of the Igusa--Todorov categories in the sense that they can be modelled by now considering the accumulation points as marked points too.
We denote these categories by $\ocC$ for $n \geq 1$, where $n$ is the number of accumulation points in the combinatorial model, and call them \textit{Paquette-Y\i ld\i r\i m categories}.
These categories have exceptionally powerful combinatorial properties, and can be used to understand develop combinatorial techniques to study the representation theory of triangulated categories with cluster-tilting subcategories containing infinite indecomposable objects up to isomorphism.
Just as in the non-completed case, there has been quite a bit of recent work on the category $\ocC$ \cite{CanakciKalckPressland,Franchini,Generators,MurphyGrothendieck}.
It was shown in \cite{ACFGS} that $\ocC[1]$ is equivalent as triangulated categories to the completion under homotopy colimits due to Fisher.

However, a natural question now arises, can we describe the dg algebra $\Lambda$ such that $\ocC$ is triangle equivalent to the perfect derived category of $\Lambda$?

A partial answer was provided by the authors in \cite{LinearGens}, where it was shown that $\ocC$ is additively equivalent to the perfect derived category of a dg algebra $\Lambda_n$.
Moreover, this equivalence commutes with the respective suspension functors, and preserves triangles that are determined by crossing arcs.

Our work here begins by considering the combinatorial models introduced in \cite{AmiotPlamondonSchroll}, which they use to model the bounded derived category of a gentle algebra.
In this model $(\Sigma,M,P)$, we consider an orientable surface $\Sigma$, possibly with boundary,and two sets of coloured marked points, one coloured red $M_{\rcirc} \cup P_{\rcirc}$ and the other green $M_{\gbullet} \cup P_{\gbullet}$, such that $M_{\rcirc} \cup M_{\gbullet}$ is contained on the boundary of $\Sigma$.
An \textit{admissible dissection} of the surface is a maximal set of non-crossing arcs between points in $M_{\rcirc} \cup P_{\rcirc}$, such that they cut out a topological disc with exactly one point in $M_{\gbullet} \cup P_{\gbullet}$, either in the interior or on the boundary.
It was shown in \cite{OpperPlamondonSchroll}, for an equivalent combinatorial model, that the class of admissible dissections up to homeomorphism are in bijection to the equivalences classes of gentle algebras.
It should also be noted that the combinatorial model for the module category of a gentle algebra was provided by Baur--Coelho Sim\~{o}es in \cite{BaurRaquel}.

We introduce \textit{extended admissible dissections} of a marked surface, in which we take the data of an admissible dissection, and add in a single so-called \textit{binding arc} to each point in $M_{\gbullet} \cup P_{\gbullet}$, connecting a green marked point to a red one.
We again ask that this collection of arcs is pairwise non-crossing.
To each extended admissible dissection $\Delta$, we can construct a gentle quiver, to which we add a loop in degree $-1$ to each vertex, and a loop in degree $1$ to each vertex in the underlying admissible dissection.
A \textit{piano algebra} $\Lambda^{\Delta}$ is then a dg algebra defined to be the graded path algebra of the quiver, modulo certain relations on the graded loops, and equipped with a trivial differential.

This leads us to our main result on extended admissible dissections, where $D^2$ is a disc, and $M_n$ is a set of $2n$ marked points on the boundary of $D^2$.

\begin{theorem*}[\Cref{Prop: Bijection of ext admissible dissections}]\label{Intro Thm1}
	There exists a bijection between the homeomorphism classes of extended admissible dissections of $(D^2,M_n,\emptyset)$, and the equivalence classes of limit generators of $\ocC$.
\end{theorem*}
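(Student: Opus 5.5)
I will construct explicit maps in both directions and check that they are mutually inverse on the level of classes.

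\textbf{Setup on the category side.} I use the description of $\ocC$ from \cite{Paquette2020}. Indecomposables correspond to arcs of the disc whose $n$ accumulation points are now marked points. A limit generator, in the sense of \cite{Generators,LinearGens}, is a classical generator satisfying a maximality and non-crossing condition. Combinatorially it should be a collection of pairwise non-crossing arcs with two properties: it includes, for each accumulation point, enough arcs ending at that point to generate the corresponding homotopy colimits; and it cuts the disc into pieces that are triangulated by cones. I take equivalence of limit generators to mean equivalence under the autoequivalences of $\ocC$ induced by homeomorphisms of the model, namely rotations and reflections of the accumulation points together with the order-preserving reindexings of the $\mathbb{Z}$-segments. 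Under this convention, the problem becomes a combinatorial bijection between two classes of arc systems, each taken up to homeomorphism.

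\textbf{The map from dissections to generators.} The $2n$ points of $M_n$ split into $n$ red and $n$ green points, which alternate on the boundary. I identify the red points $M_{\rcirc}$ with the $n$ accumulation points. I identify each boundary segment between consecutive red points, which contains exactly one green point, with one of the $\mathbb{Z}$-indexed segments of discrete marked points. The map then sends arcs to arcs:
\begin{itemize}
\item An arc of the admissible dissection, joining two red points, goes to the arc of $\ocC$ joining the corresponding accumulation points.
\item The binding arc at a green point $g$, joining $g$ to a red point $r$, goes to an arc from the accumulation point $r$ to a discrete point in the segment determined by $g$.
\end{itemize}
Non-crossing is preserved by construction. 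Admissibility says each polygon cut out contains exactly one green point. Together with the single binding arc, this should give exactly the generation condition: the binding arc supplies the one ``discrete'' summand needed in each region to reach all arcs in it by iterated cones. For this I will use the generation criterion for $\ocC$ from \cite{Generators}, which states it in terms of arcs to accumulation points and the fountain-like structure.

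\textbf{The inverse map.} Given a limit generator, I first show combinatorially that it consists of exactly two kinds of arcs:
\begin{itemize}
\item arcs between accumulation points, which form a maximal non-crossing family, and
\item exactly one arc from each $\mathbb{Z}$-segment to an accumulation point.
\end{itemize}
Maximality rules out more arcs of the second kind, and generation rules out fewer. Collapsing each segment to a single green point then recovers an extended admissible dissection. I then check that homeomorphisms on each side correspond: the choice of which discrete point the binding arc hits is forgotten up to homeomorphism, matching the reindexing equivalence on generators.

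\textbf{Main obstacle.} I expect the main obstacle to be this structural classification of limit generators. Specifically, I must show that every limit generator has exactly one arc from each segment, and that its arcs between accumulation points are maximal, so that each region contains exactly one green point. My first attempt will be a counting argument: take the rank of the Grothendieck group of $\ocC$ from \cite{MurphyGrothendieck}, and compare it with the number of arcs in an extended admissible dissection of the disc, which is $2n-3+n$. I will combine this with the generation criterion to force the shape of the generator.
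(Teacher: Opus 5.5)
\textbf{There is a genuine gap in the structural step.} Your correspondence is exactly the paper's assignment $\varepsilon$: red points to accumulation points, segments to green points, $\rcirc$-arcs to double limit arcs, binding arcs to limit arcs. But the step you single out as the main obstacle rests on false claims, so your inverse map does not land in extended admissible dissections.

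First, a limit generator involves no maximality. It is a \emph{minimal} classical generator whose summands are all (double) limit arcs, with $G_a$ and $G_b[i]$ non-crossing for all $a,b$ and all $i\in\mathbb{Z}$. So ``maximality rules out more arcs of the second kind'' cannot be right, since maximality pushes towards more arcs, not fewer. Two limit arcs ending in the same segment are ruled out by the non-crossing-with-all-suspensions condition: suitable suspensions of two such arcs cross or coincide (\Cref{Rem: One arc per section}). Having fewer is ruled out by the complete-orbit condition of \Cref{Thm:Gens}.

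Second, the double limit arcs do not form a maximal non-crossing family. For $n\geq 3$, such a family on the accumulation points is a triangulation with $2n-3$ arcs. Its interior triangles contain no boundary segment, hence no green point, which contradicts your own requirement that each region contain exactly one green point. An admissible $\rcirc$-dissection of $(D^2,M_n,\emptyset)$ is maximal only among \emph{admissible} collections. It has $n-1$ arcs by \Cref{Prop: arcs in admissible}, forming a spanning tree on the red points, so the total is $(n-1)+n=2n-1$ (\Cref{Cor: Arcs in Extended}), not $2n-3+n$. Already for $n=1$ your count gives $0$, whereas a limit generator of $\cC_1$ is a single limit arc.

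\textbf{The $K_0$ count cannot repair this.} A double limit arc $Y$ has both endpoints at accumulation points, which the suspension fixes, so $Y\cong Y[1]$ (compare $\mathrm{End}^{\ast}(Y)\cong k[x^{\pm 1}]$ in \Cref{Lem:EndY}). Hence $[Y]=-[Y]$ is $2$-torsion in $K_0(\ocC)$. Since $G$ classically generates, $K_0(\ocC)$ is generated by the classes of the summands of $G$, and $n-1$ of these are torsion. So the rank is at most $n<2n-1$ for $n\geq 2$, and no rank count can recover the number of summands. The equality of summand count with rank is a silting phenomenon, and for $n\geq 2$ limit generators are not silting, since $\mathrm{Hom}(Y,Y[1])\neq 0$.

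\textbf{What you need instead is the paper's argument.}
\begin{itemize}
\item There is exactly one limit arc per segment, for the reasons above.
\item No two summands cross, and distinct limit arcs end in distinct segments, so summands interact only through common accumulation points. Homological connectedness (\Cref{Thm:Gens}) then forces the double limit arcs to join all accumulation points.
\item Minimality, via \cite[Lemma 5.4]{Generators}, forces the resulting graph to be a tree, giving exactly $n-1$ double limit arcs.
\end{itemize}

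\textbf{The other direction and the equivalence relation.} From dissections to generators you must also check minimality. You must also check that $G_a$ does not cross \emph{any} suspension $G_b[i]$. This is not ``by construction''; it holds because each binding arc and all its suspensions stay inside the region containing its green point. Finally, the equivalence relation on limit generators is fixed by the statement: additive equivalence of $\mathrm{add}(G[i]\mid i\in\mathbb{Z})$. You must show $\varepsilon$ is compatible with it rather than substitute your own. In particular, reflections reverse the orientation on which \Cref{Prop:PY3.14} depends, so they need not come from autoequivalences of $\ocC$.
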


Limit generators are a special class of classical generators of $\ocC$, signified by some particularly nice homological properties.
Their connections to extended admissible dissections of $(D^2,M_n,\emptyset)$ makes them particularly powerful classical generators for our particular interests.
In fact, this connection provides us with the following result.

\begin{theorem*}[\Cref{Thm:path algebras} and \Cref{Lem: Lambda 0 is gentle}]
	Let $G$ be a limit generator of $\ocC$, and let $\Delta$ be the corresponding extended admissible dissection by \ref{Intro Thm1}.
	Then the graded endomorphism ring of $G$ is quasi-isomorphic to $\Lambda^{\Delta}$. 
\end{theorem*}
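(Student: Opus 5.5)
The plan is to compute the graded endomorphism ring $\mathrm{End}^{\ast}(G)=\bigoplus_{i\in\mathbb{Z}}\mathrm{Hom}_{\ocC}(G,G[i])$ combinatorially in the disc model. Then I would show that the dg algebra $\mathcal{E}nd(G)$ is formal, so that it is quasi-isomorphic to this graded ring with zero differential. Finally I would identify the graded ring with $\Lambda^{\Delta}$ by an explicit isomorphism of graded path algebras.

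First I would write $G=\bigoplus_{\gamma\in\Delta}X_\gamma$ as a sum of indecomposables indexed by the arcs of $\Delta$. The arcs of the underlying admissible dissection join accumulation points (red points). Each binding arc joins a red point to a green point. By the bijection of \Cref{Prop: Bijection of ext admissible dissections}, distinct summands correspond to pairwise non-crossing arcs. The Hom-spaces in $\ocC$ between indecomposables are described via arcs sharing endpoints or crossing, as in \cite{Paquette2020,LinearGens}. Non-crossing therefore forces every degree-$0$ morphism between distinct summands to come from arcs sharing an endpoint and consecutive in the anticlockwise order there. These morphisms give exactly the arrows of the gentle quiver of $\Delta$. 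Compositions of two consecutive arrows around a common red point vanish precisely when the arcs bound a common polygon, which gives the gentle relations.

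Next I would compute the self-extensions of each summand. For an arc with an accumulation-point endpoint, the suspension in $\ocC$ fixes the accumulation points. This should yield $\mathrm{Hom}(X_\gamma,X_\gamma[-1])\neq 0$, giving the degree $-1$ loop, with powers nonzero in a controlled way. For arcs of the underlying dissection (both endpoints accumulation points) there is an extra class in degree $1$, giving the degree $1$ loop. I would then check the relations on the graded loops, namely how they compose with each other and with the arrows, by comparing with the computation for the generator $\Lambda_n$ of \cite{LinearGens}. That generator is one specific extended admissible dissection. For a general limit generator I would reduce to the local picture at each red point, where the computation is the same.

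Formality is the step I expect to be the main obstacle. My first attempt would use a grading or weight argument: if the arrows and loops admit an additional internal grading under which every possible $A_\infty$-operation $m_k$, $k\geq3$, has the wrong bidegree, then the intrinsic formality criterion applies. Failing that, I would use the additive equivalence of \cite{LinearGens} with $\mathrm{per}(\Lambda_n)$, where $\Lambda_n$ has zero differential. Under this equivalence $G$ is sent to an object built from arcs, and I would compute its endomorphism dg algebra directly as a minimal model there, checking that the higher products vanish because the relevant triangles have two indecomposable terms. Either way, the combination with \Cref{Lem: Lambda 0 is gentle} (the degree-$0$ part is the gentle algebra) pins down the quasi-isomorphism type as $\Lambda^{\Delta}$.
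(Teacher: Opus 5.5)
\textbf{Formality is not needed, and your argument for it would fail.} You have read the statement as a formality claim for the dg endomorphism algebra $\mathcal{E}nd(G)$ of an enhancement of $\ocC$. The step you single out as the main obstacle is both unnecessary and, as sketched, not proved.

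In the paper the graded endomorphism ring is $\mathrm{End}^{\ast}(G)=\bigoplus_{i}\ocC(G,G[i])$ with product $gf=g[i]\circ f$. It is a graded algebra with no differential, and $\Lambda^{\Delta}$ also has $d=0$, so ``quasi-isomorphic'' here just means isomorphic as graded algebras. That is exactly what is proved: \Cref{Prop: endo ring} identifies $\mathrm{End}^{\ast}(G)$ with $\chi^G$, and \Cref{Thm:path algebras} gives $\chi^G\cong kQ^G/I^G$. No $A_\infty$-structure enters.

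Your two formality arguments would not work in any case:
\begin{itemize}
\item The equivalence $\mathscr{F}$ of \cite{LinearGens} is only additive. It commutes with $[1]$ and preserves triangles with two indecomposable terms, but it is not shown to be exact, let alone induced by a dg or $A_\infty$ functor. So it cannot compute the higher products of the enhancement of $\ocC$, and the shape of triangles says nothing about $m_k$ for $k\geq 3$.
\item An auxiliary grading on the cohomology algebra does not constrain the $A_\infty$-structure. You would need either a compatible grading on the enhancement or a proof that $HH^{k,2-k}(\Lambda^{\Delta},\Lambda^{\Delta})=0$ for $k\geq 3$, and you supply neither.
\end{itemize}
Formality of $\mathcal{E}nd(E)$ for a fan generator $E$ would upgrade the paper's additive equivalence to a triangle equivalence $\ocC\simeq\mathrm{perf}\Lambda_n$, which the paper does not establish. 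Your closing appeal to \Cref{Lem: Lambda 0 is gentle} is also circular. That lemma is deduced from \Cref{Thm:path algebras}, and the degree-$0$ part cannot determine the graded algebra anyway.

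\textbf{What the computational part must actually establish.} The real content is your first two steps. They follow the paper's route but omit the points that carry the argument:
\begin{enumerate}
\item For a double limit arc, the degree-$1$ class is the \emph{inverse} of the degree $-1$ class (\Cref{Lem:EndY}, via the localisation functor $\pi$). This is what yields $\alpha\beta=\beta\alpha=\iota$. A limit arc gives only $k[x]$ (\Cref{Lem:EndX}).
\item For distinct summands, $\ocC(G_a,G_b[i])\cong k$ for \emph{every} $i\in\mathbb{Z}$ when $G_b$ is an anticlockwise rotation of $G_a$ about a shared accumulation point, and $0$ otherwise (\Cref{Lem:accumulation point endo}). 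So there are positive-degree maps into and out of binding-arc summands. $\Lambda^{\Delta}$ can realise these only by inserting $\beta^m$ at an adjacent standard vertex, which exists by \Cref{Lem: arcs between binding}. Your proposal mentions only degree-$0$ maps between distinct summands.
\item You need a normal form for paths (\Cref{Lem: equiv classes of paths}). It shows each $\iota_a(kQ^G/I^G)\iota_b$ is at most one-dimensional in each degree, and nonzero in all degrees exactly when a degree-$0$ path exists. Only then can dimensions and products be matched (\Cref{Lem: loop algebras,Lem: path modules}).
\end{enumerate}
Reducing to the local picture at each red point is reasonable, since nonzero paths are supported at a single red point. However, the pattern of sharp and standard vertices there need not be the alternating pattern of the fan, so comparing with $\Lambda_n$ does not replace the check above.

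Your description of the gentle relations is also misplaced. Composites of arrows around the same red point never vanish. The zero relations are for arrows at the two different endpoints of the middle arc (\Cref{Def: Quiver from dissection}).
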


A key result of \cite{AmiotPlamondonSchroll} is that two gentle algebras are derived equivalent if and only if there exists an orientation-preserving homeomorphism between their associated marked surfaces, along with a condition concerning the line fields on the marked surfaces.
It is natural for us then to ask whether a similar result may hold for their respective piano algebras.
That is, are two piano algebras derived equivalent if and only if there exists an orientation-preserving homeomorphism between their respective associated marked surfaces?

We do not give a conclusive answer to this question in this paper, however we do begin to study this question by looking at the piano algebras associated to the marked disc $(D^2,M_n,\emptyset)$.
To do this we need to make use of the connection between these piano algebras and the limit generators of $\ocC$.

It was shown in \cite{LinearGens} that there is a dg algebra $\Lambda_n$ such that $\ocC$ and $\mathrm{perf}\Lambda_n$ are additively equivalent.
Here the dg algebra $\Lambda_n$ has trivial differential, and is quasi-isomorphic to the graded endomorphism ring of a so-called \textit{fan generator} of $\ocC$.
However, we show that a fan generator of $\ocC$ is in fact a special case of a limit generator, and so $\Lambda_n$ is a piano algebra.

By showing that the additive equivalence between $\ocC$ and $\mathrm{perf}\, \Lambda_n$ in \cite[Theorem 5.6]{LinearGens} preserves classical generators, we prove that we can view a piano algebra $\Lambda^{\Delta}$ as a $\Lambda^{\Delta}-\Lambda_n$ bimodule.
This view point allows us to use a result due to Keller \cite{KellerDeriving} to prove our final result.

\begin{theorem*}[\Cref{Thm: Derived Equiv}]
	There is a equivalence of triangulated categories
	\[
	\mathrm{D}(\Lambda^{\Delta}) \xlongrightarrow{\sim} \mathrm{D}(\Lambda_n),
	\]
	for any extended admissible dissection $\Delta$ of the marked surface $(D^2,M_n,\emptyset)$.
\end{theorem*}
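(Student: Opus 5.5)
We prove the derived equivalence with Keller's Morita theorem for dg algebras: if $T$ is a compact generator of $\mathrm{D}(\Lambda_n)$ whose dg endomorphism algebra is quasi-isomorphic to $\Lambda^{\Delta}$, then $\mathrm{RHom}_{\Lambda_n}(T,-)$ is a triangle equivalence $\mathrm{D}(\Lambda_n)\to \mathrm{D}(\Lambda^{\Delta})$. Its quasi-inverse $-\otimes^{\mathbf{L}}_{\Lambda^{\Delta}} T$ gives the stated direction. So the task is to produce such an object $T$ inside $\mathrm{perf}\,\Lambda_n$.

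\textbf{Step 1: the generator in $\ocC$.} Let $\Delta$ be an extended admissible dissection of $(D^2,M_n,\emptyset)$. By \Cref{Prop: Bijection of ext admissible dissections} it corresponds to a limit generator $G$ of $\ocC$. By \Cref{Thm:path algebras} and \Cref{Lem: Lambda 0 is gentle}, the graded endomorphism ring $\bigoplus_{i}\mathrm{Hom}_{\ocC}(G,G[i])$ is quasi-isomorphic to $\Lambda^{\Delta}$.

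\textbf{Step 2: transport to $\mathrm{perf}\,\Lambda_n$.} Let $F\colon \ocC\to \mathrm{perf}\,\Lambda_n$ be the additive equivalence of \cite[Theorem 5.6]{LinearGens}. It commutes with suspension and preserves the triangles coming from crossing arcs. Set $T=F(G)$.

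\textbf{Step 3: $T$ is a classical generator.} This is where we use that $F$ preserves classical generators. A limit generator is built from its summands by iterated triangles, and we choose these triangles to have two indecomposable terms, that is, triangles determined by crossing arcs. Hence their images under $F$ are again triangles in $\mathrm{perf}\,\Lambda_n$. Therefore $\mathrm{thick}(T)$ contains the image of a fan generator, which is $\Lambda_n$ itself. It follows that $T$ classically generates $\mathrm{perf}\,\Lambda_n$, and so $T$ is a compact generator of $\mathrm{D}(\Lambda_n)$.

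\textbf{Step 4: the endomorphism algebra of $T$.} Since $F$ is fully faithful and commutes with $[1]$, we get an isomorphism of graded algebras
\[
H^*\mathrm{REnd}_{\Lambda_n}(T)\cong \textstyle\bigoplus_i \mathrm{Hom}(G,G[i]),
\]
and the right-hand side is isomorphic to $\Lambda^{\Delta}$ by Step 1. We still need a quasi-isomorphism of dg algebras, not just an isomorphism in cohomology. Our plan is to show $\mathrm{REnd}(T)$ is formal. Since $\Lambda^{\Delta}$ has zero differential, it is enough to produce a dg algebra map $\Lambda^{\Delta}\to\mathrm{REnd}(T)$ inducing this isomorphism on cohomology.

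We construct this map by lifting generators. Choose cocycle representatives in $\mathrm{REnd}(T)$ of the arrows of the quiver: the gentle arrows, the degree $-1$ loops, and the degree $1$ loops. Then check that the defining relations hold on the nose and not merely up to homotopy.

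For the gentle relations and the degree $-1$ loops we plan to argue by degrees. Each summand $F(G_i)$ has endomorphisms concentrated in degrees $0$ and $\le -1$, given by powers of the loop $x$. So we can realise $F(G_i)$ as a semifree dg module with an explicit action of $k[x]$ and read off strict relations.

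\textbf{Main obstacle.} The hard case is the degree $1$ loops $y$ with their relations. If a direct strictification fails, the fallback is an $A_\infty$ argument. Endow $H^*\mathrm{REnd}(T)$ with its Kadeishvili minimal model and show that all higher products $m_k$, $k\ge3$, vanish. The intended tool is a bigrading (path length together with cohomological degree) that is preserved by the structure, combined with the limited degrees of the Hom-spaces in $\ocC$. Once formality is established, Keller's theorem gives $\mathrm{D}(\Lambda^{\Delta})\simeq\mathrm{D}(\Lambda_n)$.
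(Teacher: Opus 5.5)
Your Steps 1--3 follow the paper; Step 3 is essentially \Cref{Cor: Preserving Limit Generators}. You also correctly see that Keller's theorem needs an actual dg algebra map from $\Lambda^{\Delta}$ to the dg endomorphism algebra of $T$ inducing the known isomorphism on cohomology. But Step 4 never produces this map. It states intentions, names the degree $1$ loops as the ``main obstacle'', and offers an $A_\infty$ fallback that is not carried out. That map is the nontrivial content of the theorem, so there is a genuine gap.

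Moreover, the tools you sketch would not close it.
\begin{itemize}
\item Your claim that each $F(G_i)$ has endomorphisms only in degrees $0$ and $\le -1$ holds only for limit arcs, i.e.\ sharp vertices. The summands at standard vertices are double limit arcs with $\mathrm{End}^*\cong k[x^{\pm 1}]$ (\Cref{Lem:EndY}).
\item $\mathrm{Ext}^*$ between distinct summands sharing an accumulation point is nonzero in every degree (\Cref{Lem:accumulation point endo}).
\item Hence there is no sparsity of degrees that could force the relations to hold strictly or kill the higher products $m_k$.
\item Path length is not a grading compatible with the relation $\alpha_a\beta_a=\iota_a$.
\item Grading by the number of gentle arrows is determined by the endpoints, because $\widehat{Q^G}$ is a tree. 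So every idempotent-compatible $A_\infty$-structure respects it automatically, and it gives no constraint. You would also first need such a grading on a dg model of $\mathrm{REnd}(T)$.
\end{itemize}

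The missing idea is to avoid formality altogether and write down a strict model, as the paper does.
\begin{itemize}
\item Each summand is written as $G_j\cong\mathrm{cone}(g_j\colon Q_j\to P_j)$ with $Q_j,P_j\in\langle\Lambda_n\rangle_1$.
\item Then $G$ is the semifree module $Q[1]\oplus P$ with differential given by $g$. So $\mathrm{Mod}^{dg}\Lambda_n(G,G)$ is an explicit $2\times 2$ block algebra whose differential is built from $g\circ(-)$ and $(-)\circ g$ with a sign $(-1)^i$ (\Cref{Lem: moddg structure}).
\item A degree $i$ basis morphism $G_j\to G_l$ is sent to a block-diagonal pair if it is a forward morphism. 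It is sent to the off-diagonal block $Q_j[1]\to P_l$ if it is a backwards morphism.
\item The sign $(-1)^i$ is absorbed by multiplying by $(M_G)^i$. Here the signed diagonal matrix $M_G$ propagates a sign choice along $\widehat{Q^G}$ and swaps it across backwards arrows (\Cref{Def: Signed matrix}, \Cref{Lem: beta and delta}).
\item Closedness under the differential and multiplicativity then follow from two facts: two backwards morphisms compose to zero, and a forward and a backwards morphism compose to a backwards one (\Cref{Lem: Forwards and Backwards}).
\end{itemize}
This gives the dg $(\Lambda^{G},\Lambda_n)$-bimodule structure on $G$, which is exactly the map you are missing, and Keller's lemma then finishes the proof. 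To repair Step 4, either carry out such an explicit construction or supply a complete formality proof.
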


In \Cref{Sec: Those cats} we recall the Paquette--Y\i ld\i r\i m categories, denoted $\ocC$ for all $n \geq 1$, and some necessary results about them.
In particular we recall the classification of their classical generators and introduce the notion of a limit generator.

\Cref{Sec: Dissections} is where we recall the construction of admissible dissections of marked surfaces by Amiot--Plamondon--Schroll \cite{AmiotPlamondonSchroll}.
We also recall how to construct gentle algebras from these admissible dissections.
We then begin our work here by introducing extended admissible dissections, and proving some elementary results involving extended admissible dissections.
Finally, we introduce the piano algebra associated to an extended admissible dissection.

We show that there exists a strong connection between piano algebras associated to the marked surface $(D^2,M_n,\emptyset)$, and the limit generators of $\ocC$ in \Cref{Sec:Gens}.
In fact we show that there exists an isomorphism of graded algebras between the graded endomorphism ring of a limit generator and the homology of a piano algebra.
We also explicitly describe the graded endomorphism ring of a class of classical generators of $\ocC$, which includes the class of limit generators.

Finally, in \Cref{Sec: The Final Equivalences}, we show that there is an additive equivalence between the Paquette--Y\i ld\i r\i m category $\ocC$ and the perfect derived category of $\mathrm{perf}\Lambda_n$.
Furthermore, we show that any piano algebra associated to the marked surface $(D^2,M_n,\emptyset)$ is derived equivalence to $\Lambda_n$.

\subsubsection*{Conventions}

Throughout, we let $k$ be an algebraically closed field.
All modules will be considered as right modules, unless otherwise stated, and we will compose paths from left to right.
That is, if $a$ and $b$ are paths, then we write $ab$ to mean the path from the source of $a$ to the target of $b$.
However, morphisms will be composed right to left, \ie if $f : X \rightarrow Y$ and $g :Y \rightarrow Z$, then $gf : X \rightarrow Z$.
For a category $\cC$ and two objects $X,Y \in \cC$, we write $\cC(X,Y) = \Hom[\cC]{X}{Y}$.

We will assume that the suspension functor $[1]_{\cT}$ for a triangulated category $\cT$ has a natural inverse, $[-1]_{\cT}$, and we will simply denote the suspension functor by $[1]$ if the category is clear from context.
We will always take triangle to mean a distinguished triangle in a triangulated category.

For a graded/differential graded algebra $A$, we denote the degree $m \in \mathbb{Z}$ component of $A$ by $A^m$.

\section{Discrete Cluster Categories}\label{Sec: Those cats}

\subsection{Discrete Cluster Categories of Dynkin Type \texorpdfstring{$A_{\infty}$}{Ainf}}\label{Sec:Discrete}

We start by defining the marked surface we use to construct the category $\cC_n$, where we always consider the closed disc $D^2$ to have an anti-clockwise orientation on the boundary $\partial D^2 = S^1$.

\begin{definition}\label[def]{Def:admiss}
A subset $\sM$ of the circle $S^1$ is called \textit{admissible} if it satisfies the following conditions:
\begin{enumerate}
    \item $\sM$ has infinitely many elements,
    \item $\sM \subset S^1$ is a discrete subset,
    \item $\sM$ satisfies the \textit{two-sided limit condition}, i.e.\ each $x \in S^1$ which is the limit of a sequence in $\sM$ is a limit of both an increasing and decreasing sequence from $\sM$ with respect to the cyclic order.
\end{enumerate}
\end{definition}

We call the points at which the two-sided limits in $\sM$ converge \textit{accumulation points}.
Note that these accumulation points are not in $\sM$, as they do not have defined successors and predecessors, meaning $\sM$ would not be discrete if they were to be included.
As in \cite{Gratz2017}, we may think of $\sM$ as the vertices of the $\infty$-gon.
For an admissible subset $\sM \in S^1$, we label the set of accumulation points $L(\sM)$, and give them a cyclic ordering induced by the orientation of $S^1$.

Let $\mathfrak{a} \in L(\sM)$ be an accumulation point, we define $\mathfrak{a}^+ \in L(\sM)$ (resp.\ $\mathfrak{a}^- \in L(\sM)$) to be the accumulation point such that $\mathfrak{a} \leq \mathfrak{b} \leq \mathfrak{a}^+$ with $\mathfrak{b} \in L(\sM)$ ($\mathfrak{a}^- \leq \mathfrak{c} \leq \mathfrak{a}$ with $\mathfrak{c} \in L(\sM)$) implies $\mathfrak{b}=\mathfrak{a}$ or $\mathfrak{b}=\mathfrak{a}^+$ (resp.\ $\mathfrak{c}=\mathfrak{a}^-$ or $\mathfrak{c}=\mathfrak{a}$).
Moreover, we say that $\mathfrak{a}$ is its own \textit{successor} and \textit{predecessor}.
Let $\osM := L(\sM) \cup \sM$.

\begin{figure}[h]
	\centering
	\begin{tikzpicture}
		\draw (0,0) circle (2cm);
		
		\foreach[count = \i] \txt in {1,...,3}
		\draw[fill=white] ({2*cos(47+120*\i)},{2*sin(47+120*\i)}) circle (0.1cm);
		
		\foreach[count = \i] \txt in {1,...,7}
		\draw[thin] ({1.95*cos(47-(30/\i))},{1.95*sin(47-(30/\i))}) -- ({2.05*cos(47-(30/\i))},{2.05*sin(47-(30/\i))});
		
		\foreach[count = \i] \txt in {1,...,7}
		\draw[thin] ({1.95*cos(47+(30/\i))},{1.95*sin(47+(30/\i))}) -- ({2.05*cos(47+(30/\i))},{2.05*sin(47+(30/\i))});
		
		\foreach[count = \i] \txt in {1,...,7}
		\draw[thin] ({1.95*cos(167-(30/\i))},{1.95*sin(167-(30/\i))}) -- ({2.05*cos(167-(30/\i))},{2.05*sin(167-(30/\i))});
		
		\foreach[count = \i] \txt in {1,...,7}
		\draw[thin] ({1.95*cos(167+(30/\i))},{1.95*sin(167+(30/\i))}) -- ({2.05*cos(167+(30/\i))},{2.05*sin(167+(30/\i))});
		
		\foreach[count = \i] \txt in {1,...,7}
		\draw[thin] ({1.95*cos(287-(30/\i))},{1.95*sin(287-(30/\i))}) -- ({2.05*cos(287-(30/\i))},{2.05*sin(287-(30/\i))});
		
		\foreach[count = \i] \txt in {1,...,7}
		\draw[thin] ({1.95*cos(287+(30/\i))},{1.95*sin(287+(30/\i))}) -- ({2.05*cos(287+(30/\i))},{2.05*sin(287+(30/\i))});
		
		\foreach[count = \i] \txt in {1,2,3}
		\draw[thin] ({1.95*cos(77+15*\i)},{1.95*sin(77+15*\i)}) -- ({2.05*cos(77+15*\i)},{2.05*sin(77+15*\i)});
		
		\foreach[count = \i] \txt in {1,2,3}
		\draw[thin] ({1.95*cos(197+15*\i)},{1.95*sin(197+15*\i)}) -- ({2.05*cos(197+15*\i)},{2.05*sin(197+15*\i)});
		
		\foreach[count = \i] \txt in {1,2,3}
		\draw[thin] ({1.95*cos(317+15*\i)},{1.95*sin(317+15*\i)}) -- ({2.05*cos(317+15*\i)},{2.05*sin(317+15*\i)});
		
		\node at ({2.3*cos(47)},{2.3*sin(47)}) {\footnotesize $\mathfrak{a}$};
		\node at ({2.3*cos(92)},{2.3*sin(92)}) {\footnotesize $x^+$};
		\node at ({2.3*cos(107)},{2.3*sin(107)}) {\footnotesize $x$};
		\node at ({2.3*cos(122)},{2.3*sin(122)}) {\footnotesize $x^-$};
	\end{tikzpicture}
	\caption{An admissible subset $\sM$ of $S^1$. The marked points in $\sM$ converge to the accumulation points represented as small circles, and each marked point $x$ has both a predecessor and a successor, labelled $x^-$ and $x^+$ respectively.}
	\label{fig:admissable}
\end{figure}
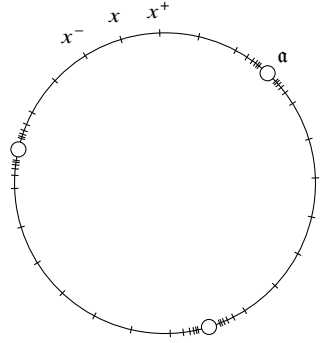

\begin{definition}\label[def]{Def:arcs}
A \textit{arc of} $\osM$ is a subset $\ell_X=\{x_0,x_1\} \subset \sM$ where $x_1 \notin \{x_0^-, x_0, x_0^+\}$, where $x^+$ and $x^-$ are the successor and predecessor respectively to $x \in \sM$.
If $\ell_Y= \{y_0,y_1\}$ is another arc, then $\ell_X$ and $\ell_Y$ \textit{cross} if $x_0<y_0<x_1<y_1 <x_0$ or $x_0<y_1<x_1<y_0<x_0$.

We will call the points $x_0$ and $x_1$ the \textit{endpoints} of $\ell_X$.
\end{definition}

We may think of an arc $\ell_X=\{x_0,x_1\}$ as being a representative of the isotopy class of non-self-intersecting curves in $D^2$ between the marked points $x_0$ and $x_1$.
Two arcs cross if any of their representative curves cross in the interior of $D^2$.

There are four types of arcs for us to consider; \textit{short arcs}, \textit{long arcs}, \textit{limit arcs}, and \textit{double limit arcs}.
An arc $\ell = \{x,y\}$ with endpoints in $\sM$ is a short arc if $\mathfrak{a} < x < y < \mathfrak{a}^+$, and is a long arc if $\mathfrak{a}^- < x < \mathfrak{a} < y < \mathfrak{a}^+$, for some $\mathfrak{a} \in L(\sM)$.
An arc $\ell = \{x,y\}$ is a limit arc if $x \in L(\sM)$ and $y \in \sM$, or vice versa, and $\ell$ is a double limit arc if $x,y \in L(\sM)$.

Note that there are no long arcs, nor double limit arcs when $n=1$.
We write \textit{(double) limit arcs} to mean the set of all limit arcs and double limit arcs.

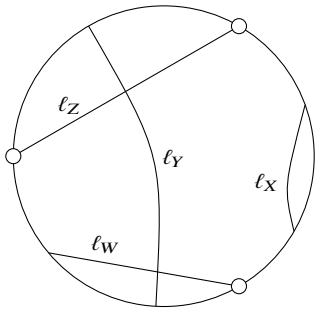
\begin{figure}[h]
    \centering
    \begin{tikzpicture}
    \draw (0,0) circle (2cm);
    \draw ({2*cos(20)},{2*sin(20)}) .. controls ({1.6*cos(355)},{1.6*sin(345)}) .. ({2*cos(330)},{2*sin(330)}) node [pos=0.5,left] {\footnotesize $\ell_X$};
    \draw ({2*cos(267)},{2*sin(267)}) .. controls (0,0) .. ({2*cos(120)},{2*sin(120)}) node [pos=0.5,right] {\footnotesize $\ell_Y$};
    \draw (-2,0) -- (1,1.73) node [pos=0.25,above] {\footnotesize $\ell_Z$};
    \draw (1,-1.73) -- ({2*cos(220)},{2*sin(220)}) node [pos=0.7,above] {\footnotesize $\ell_W$};
    \draw[fill=white] (-2,0) circle (0.1cm);
    \draw[fill=white] (1,1.73) circle (0.1cm);
    \draw[fill=white] (1,-1.73) circle (0.1cm);
    \end{tikzpicture}
    \caption{Four arcs of $\osM$, where $\ell_X$ is a short arc, $\ell_Y$ is a long arc, $\ell_W$ is a limit arc, and $\ell_Z$ is a double limit arc.}
    \label[fig]{fig:short and long arcs}
\end{figure}

In \cite{Paquette2020}, the authors construct a Hom-finite, Krull-Schmidt, $k$-linear, triangulated category, denoted $\cC(D^2,\osM)$ here, such that the indecomposable objects are in bijection to the arcs of $\osM$, for $\lvert L(\sM) \rvert =n$.
The suspension functor of $\cC(D^2,\osM)$ acts on indecomposable objects by taking endpoints in $\sM$ to their predecessor, and taking an endpoint in $\osM$ to itself.
The authors of \cite{Paquette2020} show that $\cC(D^2,\osM)(X,Y) \cong k$ if and only if $\ell_X$ and $\ell_{Y[-1]}$ cross for $X,Y$ indecomposable objects in $\cC(D^2,\osM)$, and $\cC(D^2,\osM)(X,Y) =0$ otherwise.

These categories are known as the \textit{Paquette-Y\i ld\i r\i m completions of discrete cluster categories of Dynkin type} $A_{\infty}$, however we shall refer to them as \textit{Paquette-Y\i ld\i r\i m categories} for the sake of brevity.

Let $\sM$ and $\mathscr{N}$ be admissible subsets of $S^1$ such that $\lvert L(\sM) \rvert = \lvert  L(\overline{\mathscr{N}}) \rvert = n$, then there is an equivalence of categories between $\cC(D^2,\osM)$ and $\cC(D^2,\mathscr{N})$.
Therefore for all $n \in \mathbb{Z}_{\geq 1}$, we will consider an admissible subset of $S^1$ with $n$ accumulation points, $\sM_n$, and so we consider the category $\ocC := \cC(S^1,\osM[n])$ as the representative of the equivalence class of discrete cluster categories of Dynkin type $A_{\infty}$ with $n$ accumulation points.

Let $X = \{x_0,x_1\}$ and $Y = \{y_0,y_1\}$ be indecomposable objects in $\ocC$ such that $y_0 < x_0 < y_1 < x_1$, then there exists two triangles in $\ocC$;
\begin{align*}
	X \rightarrow A \oplus B & \rightarrow Y \rightarrow X[1],\\
	Y \rightarrow C \oplus D & \rightarrow X \rightarrow Y[1].
\end{align*}
where $A = \{y_1,x_1\}$, $B = \{y_0,x_0\}$, $C = \{y_0,x_1\}$, and $D = \{x_0,y_1\}$.
Moreover, for indecomposable objects $U =\{\mathfrak{a},u\}$ and $Y = \{ \mathfrak{a},v\}$ in $\ocC$ with $\mathfrak{a} < u < v < \mathfrak{a}$, then there also exists a triangle in $\ocC$ of the form;
\[
X \rightarrow Z \rightarrow Y \rightarrow X[1],
\]
where $Z = \{u,v\}$.

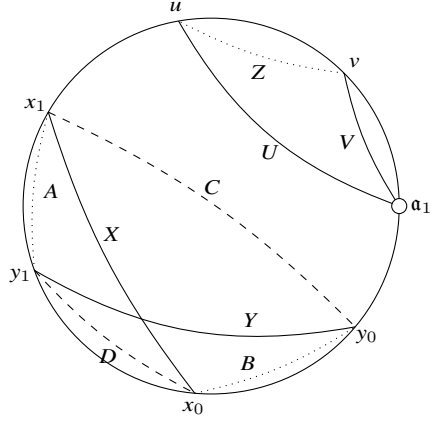
\begin{figure}[H]
	\centering
	\begin{tikzpicture}
		\draw (0,0) circle (2.5cm);
		\draw[thin] (2.5,0) to [bend left =10] node [pos=0.5, left] {\footnotesize $V$} ({2.5*cos(45)},{2.5*sin(45)});
		\draw[thin] (2.5,0) to [bend left = 20] node [pos=0.5,below] {\footnotesize $U$} ({2.5*cos(100)},{2.5*sin(100)});
		
		\draw[dotted, thin] ({2.5*cos(45)},{2.5*sin(45)}) to [bend left =10] node [pos=0.5, below] {\footnotesize $Z$} ({2.5*cos(100)},{2.5*sin(100)});
		
		\draw[thin] ({2.5*cos(200)},{2.5*sin(200)}) to [bend right =20] node [pos=0.7,above] {\footnotesize $Y$} ({2.5*cos(320)},{2.5*sin(320)});
		\draw[thin] ({2.5*cos(150)},{2.5*sin(150)}) to [bend right =10] node [pos=0.4,right] {\footnotesize $X$} ({2.5*cos(265)},{2.5*sin(265)});
		
		\draw[dashed, thin] ({2.5*cos(200)},{2.5*sin(200)}) to [bend right =10] node [pos=0.5,below] {\footnotesize $D$} ({2.5*cos(265)},{2.5*sin(265)});
		\draw[dashed, thin] ({2.5*cos(150)},{2.5*sin(150)}) to [bend left =10] node [pos=0.5,above] {\footnotesize $C$} ({2.5*cos(320)},{2.5*sin(320)});
		
		\draw[dotted, thin] ({2.5*cos(200)},{2.5*sin(200)}) to [bend left =10] node [pos=0.5,right] {\footnotesize $A$} ({2.5*cos(150)},{2.5*sin(150)});
		\draw[dotted, thin] ({2.5*cos(265)},{2.5*sin(265)}) to [bend right =10] node [pos=0.3,above] {\footnotesize $B$} ({2.5*cos(320)},{2.5*sin(320)});
		
		\draw[fill=white] (2.5,0) circle (0.1cm);
		
		\node at (2.8,0) {\footnotesize $\mathfrak{a}_1$};
		\node at ({2.7*cos(45)},{2.7*sin(45)}) {\footnotesize $v$};
		\node at ({2.7*cos(100)},{2.7*sin(100)}) {\footnotesize $u$};
		\node at ({2.7*cos(320)},{2.7*sin(320)}) {\footnotesize $y_0$};
		\node at ({2.7*cos(200)},{2.7*sin(200)}) {\footnotesize $y_1$};
		\node at ({2.7*cos(265)},{2.7*sin(265)}) {\footnotesize $x_0$};
		\node at ({2.7*cos(150)},{2.7*sin(150)}) {\footnotesize $x_1$};
		
	\end{tikzpicture}
	\caption{The arcs corresponding to triangles coming from non-split extensions between indecomposable objects.}
\end{figure}
\begin{definition}\label[def]{Def:Orbits}
Let $X$ be an indecomposable object in $\cC_n$.
Let $\sM_X \subseteq \sM$ be the set of marked points that are the endpoints of the arcs corresponding to suspensions and desuspensions of $X$.

If $A \cong \bigoplus^l_{i=1} X_i$, with all $X_i$ indecomposable, then $\sM_A = \bigcup_{i=1}^l \sM_{X_i}$.
We call $\sM_A$ the \textit{orbit of} $A$ \textit{in} $\sM$.
If we have $\sM_A=\sM$, then we say $\ell_A$ has a \textit{complete orbit in} $\sM$.
\end{definition}

In particular, $\sM_X$ is equal to the union of the segments containing an endpoint of $\ell_X$, and so any object in $\cC_1$ has a complete orbit in $\sM$.

The following lemma shows how the $\mathrm{Ext}^1$-spaces are given in $\ocC$.

\begin{proposition}\cite[Proposition 3.14]{Paquette2020}\label[prop]{Prop:PY3.14}
Let $X,Y \in \ocC$ be indecomposable objects.
Then $\ocC(X,Y\wb{1})$ is at most one dimensional.
It is one dimensional if and only if one of the following conditions are met for the arcs $\ell_X$ and $\ell_Y$:
\begin{itemize}
    \item $\ell_X, \ell_Y$ cross,
    \item $\ell_X \neq \ell_Y$ share exactly one accumulation point, and we can go from $\ell_X$ to $\ell_Y$ by rotating $\ell_X$ about their common endpoint following the orientation of $S^1$,
    \item $\ell_X = \ell_Y$ are double limit arcs.
\end{itemize}
\end{proposition}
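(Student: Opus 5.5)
The plan is to reduce everything to the Hom-description recalled just before the statement: for indecomposable $X,Y$ we have $\ocC(X,Y\wb{1}) \cong k$ exactly when $\ell_X$ and $\ell_{Y\wb{1}\wb{-1}} = \ell_Y$ ``cross'', and $0$ otherwise. The delicate point is that crossing was only defined above for endpoints in $\sM$. So the real content is to pin down what $\ocC(X,Y\wb{1})$ is when an endpoint lies in $L(\sM)$. I would therefore work directly with the construction of \cite{Paquette2020}, namely $\ocC$ as a Verdier localisation of an Igusa--Todorov category with more marked points. In that construction each accumulation point $\mathfrak{a}$ is the image of a marked point, and it is flanked by two-sided sequences that are collapsed.

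The key steps, in order, are as follows.

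\textbf{Step 1 (short and long arcs).} When both arcs have endpoints in $\sM$, the Hom-spaces agree with those in the discrete cluster category $\cC_n$. By its $2$-Calabi--Yau-type crossing description, $\ocC(X,Y\wb{1})$ is at most one-dimensional and is nonzero exactly when $\ell_X,\ell_Y$ cross. This gives the first bullet with no further accumulation-point conditions.

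\textbf{Step 2 (lifting limit arcs).} For $X$ or $Y$ a (double) limit arc, I would lift it to an arc $\tilde\ell$ in the larger Igusa--Todorov category with the accumulation endpoint replaced by a genuine marked point. Then I would compute $\ocC(X,Y\wb{1})$ as a Hom-space in the Verdier quotient. Concretely, I would use a roof/colimit description: morphisms are colimits over the sequences approaching $\mathfrak{a}$ of Hom-spaces between short/long arcs. Each such space is at most one-dimensional by Step 1, and the transition maps are either isomorphisms or zero. Hence the colimit is at most one-dimensional, which proves the first claim.

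\textbf{Step 3 (case analysis of the colimit).} The colimit is nonzero exactly when the approximating arcs eventually cross compatibly. There are three cases.
\begin{itemize}
\item If $\ell_X,\ell_Y$ genuinely cross in the interior, the approximations eventually cross, giving the first bullet.
\item If they share exactly one endpoint $\mathfrak{a}$, approximating $\mathfrak{a}$ from the two sides produces crossing approximants in precisely one rotational direction. The suspension fixes $\mathfrak{a}$ but moves the other endpoint to its predecessor, and this picks out the anticlockwise rotation from $\ell_X$ to $\ell_Y$, giving the second bullet.
\item If $\ell_X=\ell_Y$ is a double limit arc, approximating both endpoints from opposite sides yields crossing approximants, giving the third bullet. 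A limit arc with a single accumulation endpoint only gets a self-extension from one side and fails, so it is correctly excluded.
\end{itemize}
In all other configurations the approximants are eventually disjoint, so the space vanishes.

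I expect Step 3, and specifically the orientation bookkeeping in the shared-accumulation-point case, to be the main obstacle. I would first check the $n=1$ picture with $U=\{\mathfrak{a},u\}$ and $V=\{\mathfrak{a},v\}$, against the triangle $U\to Z\to V\to U\wb{1}$ recalled above, to fix the direction. I would then transport this to general $n$ using the local nature of the colimit near $\mathfrak{a}$.
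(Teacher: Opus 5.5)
The paper does not prove this statement; it quotes it from \cite{Paquette2020}, so your sketch has to stand on its own. Lifting to the Igusa--Todorov category $\mathcal{C}_{2n}$ and computing in the Verdier quotient $\mathcal{C}_{2n}/\mathcal{D}$ is a sound route. However, the two steps that carry the content are either not established or not correct as stated.

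\textbf{Step 2.} In a Verdier quotient, $\mathrm{Hom}(X,Y[1])$ is a filtered colimit of $\mathrm{Hom}_{\mathcal{C}_{2n}}(X',Y[1])$ over \emph{all} $s\colon X'\to X$ with $\mathrm{cone}(s)\in\mathcal{D}$. Here $X'$ may be decomposable, so ``each term is at most one-dimensional'' only follows once you show that a sequence of indecomposable lifts is cofinal. You assert this rather than prove it.

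Your description of the construction also obscures what the lifts are. The point $\mathfrak{a}$ is the image of a whole $\mathbb{Z}$-segment $S$ of marked points of $\mathcal{C}_{2n}$, and arcs with both ends in $S$ lie in $\mathcal{D}$. The lifts of $X=\{x,\mathfrak{a}\}$ are the arcs $\{x,d'\}$ with $d'\in S$. A nonzero map $\{x,d'\}\to\{x,d\}$, whose cone has both endpoints in $S$, exists only when $d'$ precedes $d$.

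The missing argument runs as follows. If $\mathrm{cone}(s)=D\in\mathcal{D}$, then $\mathrm{Hom}_{\mathcal{C}_{2n}}(\{x,d'\},D)=0$ once $d'$ precedes all endpoints of $D$ in $S$. Hence $\{x,d'\}\to\{x,d\}$ factors through $s$, and the factor lies in the multiplicative system by the octahedral axiom. The same vanishing equalises parallel maps. For a double limit arc, move both endpoints. This cofinality is the actual source of the one-dimensionality bound.

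\textbf{Step 3.} The orientation does not come from the suspension. The suspension keeps every endpoint in its segment, and $\mathrm{Hom}(X,Y[1])\neq 0$ in $\mathcal{C}_{2n}$ just means that the lifts cross. Nor does it come from ``approximating $\mathfrak{a}$ from the two sides''. If you may choose the side freely for $\ell_X$ and for $\ell_Y$, you can make the approximants cross for either rotation. For example, approximate $\ell_X$ from just after $\mathfrak{a}$ and $\ell_Y$ from just before it when $\mathfrak{a}<y<x<\mathfrak{a}$.

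The asymmetry comes from the one-sidedness of the cofinal system above. For fixed $e\in S$, eventually $d'$ precedes $e$, and then $\{x,d'\}$ crosses $\{y,e\}$ if and only if $\mathfrak{a}<x<y<\mathfrak{a}$. This is exactly the anticlockwise rotation in the statement. The same computation gives the double limit arc case and the vanishing of $\mathrm{Hom}(X,X[1])$ for a limit arc $X$.

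Your proposed calibration against the triangle $U\to Z\to V\to U[1]$ is circular. A non-split triangle with these end terms is the same thing as a nonzero element of $\mathrm{Hom}(V,U[1])$, which is what you are trying to decide. Moreover, as written with $\mathfrak{a}<u<v<\mathfrak{a}$, it points the wrong way. The lift computation gives $\mathrm{Hom}(U,V[1])\cong k$ and $\mathrm{Hom}(V,U[1])=0$, so the non-split triangle is $V\to Z\to U\to V[1]$. This is consistent with \Cref{Lem:EndX}: take $V=U[-m]$ with $m\geq 1$, so that $\mathrm{Hom}(U,V[1])=\mathrm{Hom}(U,U[1-m])$ with $1-m\leq 0$.
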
.

We shall draw no distinction between indecomposable objects in $\ocC$ and the corresponding arcs, meaning that from now on $X$ will represent both the indecomposable object in $\ocC$ and the arc in $(S^1,\overline{\mathscr{M}})$, previously denoted by $\ell_X$.

\begin{lemma}\label[lem]{Lem: Factoring in Cn}
	Let $Y,Z \in \ocC$ be non-isomorphic indecomposable objects such that $\ocC(Y,Z) \cong k$, and let $Y = \{y_1,y_2\}$ and $Z = \{z_1,z_2\}$.
	Then any morphism $f: Y \rightarrow Z$ factors through an indecomposable object $W = \{w_1,w_2\}$ if and only if $z_1 \geq w_1 \geq y_1$ and $z_2 \geq w_2 \geq y_2$ with respect to $(S^1,\osM)$.
\end{lemma}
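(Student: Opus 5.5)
We prove both directions by translating "factoring through $W$" into crossing conditions via the description of morphisms in \Cref{Prop:PY3.14} together with the fact that $\ocC(X,Y)\cong k$ exactly when $\ell_X$ and $\ell_{Y[-1]}$ cross (or one of the degenerate accumulation-point cases of \Cref{Prop:PY3.14}, applied to $Y[-1]$, holds). Since all nonzero Hom-spaces between indecomposables are one-dimensional, the statement reduces to two questions: when both $\ocC(Y,W)$ and $\ocC(W,Z)$ are nonzero, and when the composite of nonzero maps $Y\to W\to Z$ is nonzero. We fix the labelling so that the endpoints of $Y$ and $Z$ are ordered compatibly, meaning $y_1\le z_1$ and $y_2\le z_2$ in the cyclic order, with the ranges $[y_1,z_1]$ and $[y_2,z_2]$ disjoint. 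We first check that this labelling is forced by $\ocC(Y,Z)\neq 0$: crossing of $\ell_Y$ with $\ell_{Z[-1]}$ means the endpoints of $Z$ lie weakly anticlockwise of those of $Y$, with no overtaking.

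\emph{Necessity.} Suppose $f=hg$ with $g:Y\to W$, $h:W\to Z$ and $f\neq 0$. Then $g\neq0$ and $h\neq0$. Nonvanishing of $g$ places the endpoints of $W$ in the intervals starting at $y_1$ and $y_2$, running anticlockwise up to just before the other endpoint of $Y$ shifted. Nonvanishing of $h$ places them in the intervals ending at $z_1$ and $z_2$. Intersecting these constraints gives $y_i\le w_i\le z_i$. One must still rule out the "swapped" assignment of $w_1,w_2$; this follows because the swapped assignment would force $W$ to cross neither $Y[1]$ nor $Z[-1]$ in the required way.

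\emph{Sufficiency.} Given $W$ with $y_i\le w_i\le z_i$, the same crossing criteria show that $\ocC(Y,W)\cong k\cong\ocC(W,Z)$. The remaining issue is that the composite is nonzero. Our first approach is to use the triangles recalled above, built from crossing arcs, and argue by moving one endpoint at a time. Write $W'=\{w_1,y_2\}$. The morphism $Y\to W'$ obtained by rotating only the first endpoint appears in a triangle $Y\to W'\oplus(\ast)\to \cdots$, or in the limit-arc triangle $X\to Z\to Y$ when an accumulation point is involved. We then show that the composite $Y\to W'\to W$ is nonzero by exhibiting it as a component of a triangle map whose cone is indecomposable, which forces the map to be nonzero. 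Iterating this reduces us to showing that a single endpoint rotation composes nontrivially. That in turn follows because the cone of the composite is $\{w_1,w_2\}$-shaped data forced by the octahedral axiom: if the composite were zero, the octahedral axiom would produce a summand contradicting the Hom-dimension counts of \Cref{Prop:PY3.14}.

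\emph{Main obstacle.} The hardest part is the nonvanishing of composites in the boundary cases. These arise when some $w_i$ or $y_i$ is an accumulation point, where Hom is governed by the rotation and double-limit clauses of \Cref{Prop:PY3.14} rather than by genuine crossings. They also arise when $w_i=y_i$ or $w_i=z_i$, where one endpoint does not move. We would handle these separately, using the limit-arc triangle $\{\mathfrak{a},u\}\to\{u,v\}\to\{\mathfrak{a},v\}$, and check each case against the cyclic order. If this case check becomes too heavy, the fallback is to use the description of $\ocC$ as a Verdier localisation in \cite{Paquette2020}, in which composition of these "rotation" morphisms is known to be nonzero.
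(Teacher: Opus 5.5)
Your necessity argument has a genuine gap. You claim that nonvanishing of $g\colon Y\to W$ and $h\colon W\to Z$ already forces $y_i\le w_i\le z_i$. You exclude the swapped assignment on the grounds that $W$ would then fail to cross $Y[1]$ or $Z[-1]$. That claim is false.

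Take $n=1$ and $\mathscr{M}=\mathbb{Z}$, so that $[1]$ lowers both endpoints by one. Let $Y=\{0,10\}$, $Z=\{5,20\}$, $W=\{7,30\}$.
\begin{itemize}
\item $Z[-1]=\{6,21\}$ crosses both $Y$ and $W$.
\item $Y[1]=\{-1,9\}$ crosses $W$.
\end{itemize}
So $\ocC(Y,Z)$, $\ocC(Y,W)$ and $\ocC(W,Z)$ are all $k$. Yet no endpoint of $W$ lies in $[y_1,z_1]=[0,5]$.

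This is exactly your swapped assignment. The endpoint $7$ of $W$, reached from $y_1$, is sent by $h$ to $z_2$. The endpoint $30$, reached from $y_2$, is sent through the accumulation point to $z_1$. More generally there is a whole second region of such $W$: labelling $w_1$ as the endpoint reached from $y_1$, it has $w_1$ between $z_1$ and $y_2$ and $w_2$ between $z_2$ and $y_1$.

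So the ``only if'' direction is not a statement about Hom-supports. You must prove that $hg=0$ for these $W$, and your proposal has no argument for the vanishing of a composite of two nonzero maps. This composition information is exactly what the paper imports: its entire proof is the citation of \cite[Lemma 2.4.2]{Igusa2013} and \cite[Lemmas 3.8, 3.9]{Paquette2020}.

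You can close this within your own triangle framework in the generic case, where all endpoints lie in $\mathscr{M}$. This also makes your sufficiency step precise, without the octahedral axiom or moving one endpoint at a time.
\begin{itemize}
\item The crossing triangle for $Y$ and $W[-1]$ gives $Y\xrightarrow{g}W\xrightarrow{c}\{y_1^-,w_1\}\oplus\{y_2^-,w_2\}\to Y[1]$. The component $c_1\colon W\to\{y_1^-,w_1\}$ is nonzero, since otherwise $\{y_1^-,w_1\}$ would split off $Y[1]$.
\item If $W$ lies in the rectangle $[y_1,z_1]\times[y_2,z_2]$, the crossing criterion gives $\ocC(\mathrm{cone}(g),Z)=0$. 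So no nonzero $h$ factors through $c$, i.e.\ $hg\neq0$.
\item If $W$ lies in the second region, then $\{y_1^-,w_1\}$ lies in the rectangle between $W$ and $Z$. By the first case, $h$ factors through a nonzero map $W\to\{y_1^-,w_1\}$, which by one-dimensionality is a multiple of $c_1$. Hence $hg$ factors through $c_1g=0$.
\end{itemize}

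The configurations with endpoints at accumulation points remain open in your proposal. There the rotation clauses of \Cref{Prop:PY3.14} and the limit-arc triangles replace crossings. You only defer these cases, just as the paper defers them to \cite{Paquette2020}.
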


\begin{proof}
	This is a direct consequence of \cite[Lemma 2.4.2]{Igusa2013}, and \cite[Lemmas 3.8, 3.9]{Paquette2020}.
\end{proof}

Let $X,Y \in \ocC$ be indecomposable objects, suppose that $X = \{x_1,x_2\}$ and $Y= \{y_1,y_2\}$, and there exists a non-zero morphism $f: X \rightarrow Y$.
If we have 
\[
a_1 < x_1 \leq y_1 < x_2 \leq y_2 \leq a_1,
\]
such that $x_2 \neq a_1$, then we call $f$ a \textit{forward morphism}.
Alternatively, if we have 
\[
a_1 \leq y_1 < x_1 \leq y_2 < x_2 \leq a_1,
\]
then we call $f$ a \textit{backwards morphism}.

The following is a direct consequence of \Cref{Lem: Factoring in Cn}.

\begin{corollary}\label[cor]{Cor: Forwards Backwards Composition}
	Let $f: X \rightarrow Y$ and $g: Y \rightarrow Z$ be non-zero morphisms between indecomposable objects in $\ocC$, then;
	\begin{enumerate}
		\item if both $f$ and $g$ are forward morphisms, then $gf$ is a non-zero forward morphism,
		\item if one of $f$ or $g$ is forward and the other backwards, then $gf$ is a non-zero backwards morphism,
		\item if both $f$ and $g$ are backwards morphisms, then $gf=0$.
	\end{enumerate}
\end{corollary}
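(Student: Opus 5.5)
The plan is to reduce all three parts to \Cref{Lem: Factoring in Cn}, after first fixing a common reference point on $S^1$. Since $f$ and $g$ are non-zero, $\ocC(X,Y)\cong k \cong \ocC(Y,Z)$. Write $X=\{x_1,x_2\}$, $Y=\{y_1,y_2\}$ and $Z=\{z_1,z_2\}$, with endpoints labelled so that each of $f,g$ satisfies one of the two defining chains of inequalities relative to a chosen base point $a_1$.

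The first step is to check that one can choose the labelling and $a_1$ compatibly for both morphisms. Concretely, the labelling of $Y$ used for $f$ should coincide with the one used for $g$, so that all six endpoints sit in a single cyclic order cut at $a_1$. I expect this to be the main bookkeeping obstacle, especially when some endpoint equals an accumulation point $a_1$. I would handle it by cutting $S^1$ at a point just before $y_1$, so that both chains read linearly in that interval.

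For part (1), suppose both $f$ and $g$ are forward. Then $x_1\le y_1\le z_1$ and $x_2\le y_2\le z_2$, and these combine with the strict inequalities $y_1<x_2$ and $z_1<y_2$. The first task is to show that $\ocC(X,Z)\cong k$. For this I would use $x_1\le z_1<y_2$, together with the chain constraints, to get $x_1\le z_1<x_2\le z_2$. This is the crossing or rotation configuration of \Cref{Prop:PY3.14} applied to $X$ and $Z[-1]$. If instead $z_1\ge x_2$, the arcs would not interleave; I would need to rule this out, perhaps using $\ocC(Y,Z)\ne0$. Once $\ocC(X,Z)\cong k$ is known, \Cref{Lem: Factoring in Cn} says that the non-zero morphism $X\to Z$ factors through $Y$, because $Y$ lies between $X$ and $Z$ coordinatewise. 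Hence $gf$ is a non-zero scalar multiple of it, and the inequalities show that it is forward.

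For part (2), the mixed case, the same comparison of endpoints gives the interleaving $z_1<x_1\le z_2<x_2$ (up to boundary equalities). So $\ocC(X,Z)\cong k$ again, and $Y$ is sandwiched coordinatewise between $X$ and $Z$ in the sense of \Cref{Lem: Factoring in Cn}. The lemma then shows that $gf\ne0$, and the resulting inequalities are those of a backwards morphism.

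For part (3), where both are backwards, composing the chains gives $z_1<y_1<x_1$ together with $y_2<x_2$ and $z_2<y_2$. I would then show two things. Either $\ocC(X,Z)=0$, because the endpoints of $X$ and $Z[-1]$ no longer interleave, which makes $gf=0$ immediate. Or, if $\ocC(X,Z)\ne 0$, then $Y$ fails the betweenness condition $z_1\ge y_1\ge x_1$ of \Cref{Lem: Factoring in Cn} in the cyclic order, so the non-zero morphism does not factor through $Y$. In that case $gf$, which does factor through $Y$, cannot be non-zero; since the Hom-space is one dimensional, $gf=0$.
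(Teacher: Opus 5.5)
There is a genuine gap in the non-vanishing halves of (1) and (2), and it cannot be closed because those claims are false as stated. Your plan needs $\ocC(X,Z)\neq 0$, and you propose to rule out $z_1\ge x_2$ using $\ocC(Y,Z)\neq 0$. The two chains of inequalities do not force this. Take $n=1$, identify $\mathscr{M}$ with $\mathbb{Z}$ (increasing along the cyclic order, with $a_1$ at $\pm\infty$), and let $X=\{0,10\}$, $Y=\{5,20\}$, $Z=\{15,25\}$. Then $X$ crosses $Y[-1]=\{6,21\}$ and $Y$ crosses $Z[-1]=\{16,26\}$, so non-zero $f$ and $g$ exist. Both are forward, since $a_1<0\le 5<10\le 20\le a_1$ and $a_1<5\le 15<20\le 25\le a_1$. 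But $X$ and $Z[-1]$ do not cross, so $\ocC(X,Z)=0$ and $gf=0$.

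Part (2) fails the same way. With $Z'=\{-5,15\}$, $Y$ crosses $Z'[-1]=\{-4,16\}$, and $a_1\le -5<5\le 15<20\le a_1$, so $g'\colon Y\to Z'$ is a non-zero backwards morphism. Yet $X$ and $\{-4,16\}$ do not cross, so $\ocC(X,Z')=0$. Hence the interleaving $z_1<x_1\le z_2<x_2$ you assert in (2) does not follow either. The paper's one-line justification has the same defect: \Cref{Lem: Factoring in Cn} presupposes $\ocC(X,Z)\cong k$, and so it cannot produce non-vanishing.

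What your method does prove is the corrected statement with the extra hypothesis $gf\neq 0$ in (1) and (2). This is exactly the form of \Cref{Lem: Forwards and Backwards}, which is the version used later. Under that hypothesis, $\ocC(X,Z)\cong k$ is given, and \Cref{Lem: Factoring in Cn} places $Y$ coordinatewise between $X$ and $Z$. The type of $gf$ is then read off from which of the four boundary segments cut out by $X$ and $Z$ contains $a_1$. Your argument for (3), via failure of betweenness, is the right mechanism.

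Two bookkeeping points need care.
\begin{itemize}
\item Forward and backwards are defined relative to the fixed accumulation point $a_1$, not a chosen base point. So you may not re-cut the circle just before $y_1$: that changes which morphisms count as forward.
\item When $a_1$ is an endpoint of $Y$, the two definitions overlap. For example, $\{0,10\}\to\{5,a_1\}$ satisfies both the forward and the backwards inequalities. Composing it with the backwards map $\{5,a_1\}\to\{7,a_1\}$ gives a non-zero composite, because $\{0,10\}$ crosses $\{8,a_1\}$ and $\{5,a_1\}$ lies between the two arcs. So a convention removing this overlap must be fixed before (3) holds.
\end{itemize}
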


\subsection{Classical Generators of \texorpdfstring{$\ocC$}{Cn}}

In a triangulated category $\cT$, a subcategory $\cS$ is called \textit{thick} if it is a triangulated subcategory closed under direct summands.
Let $G \in \cT$, then we denote by $\langle G \rangle$ the smallest thick subcategory of $\cT$ containing $G$.
The object $G$ is a \textit{classical generator} of $\cT$ if $\langle G \rangle = \cT$, and $G$ is \textit{minimal} if no direct summand of $G$ is a classical generator.

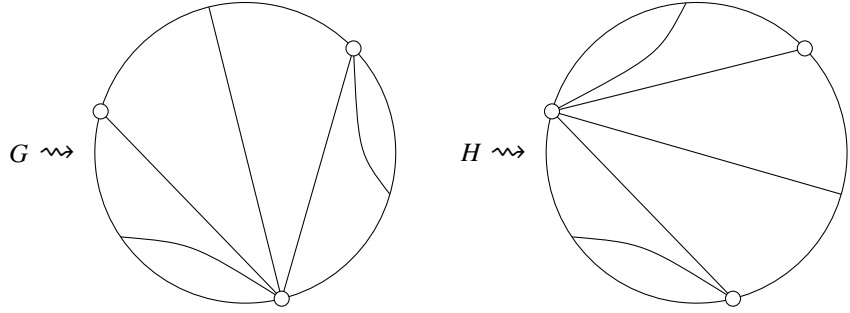
\begin{figure}[H]
	\centering
	\begin{tikzpicture}
		\draw (-3,0) circle (2cm);
		\draw (3,0) circle (2cm);
		\draw[thin] ({-3+2*cos(284)},{2*sin(284)}) -- ({-3+2*cos(164)},{2*sin(164)});
		\draw[thin] ({-3+2*cos(284)},{2*sin(284)}) -- ({-3+2*cos(44)},{2*sin(44)});
		\draw[thin] ({-3+2*cos(284)},{2*sin(284)}) -- ({-3+2*cos(104)},{2*sin(104)});
		\draw[thin] ({-3+2*cos(284)},{2*sin(284)}) .. controls (-3.7,-1.2) ..  ({-3+2*cos(214)},{2*sin(214)});
		\draw[thin] ({-3+2*cos(44)},{2*sin(44)}) .. controls (-1.5,0) .. ({-3+2*cos(344)},{2*sin(344)});
		\draw[thin] ({3+2*cos(284)},{2*sin(284)}) -- ({3+2*cos(164)},{2*sin(164)});
		\draw[thin] ({3+2*cos(164)},{2*sin(164)}) -- ({3+2*cos(44)},{2*sin(44)});
		\draw[thin] ({3+2*cos(164)},{2*sin(164)}) -- ({3+2*cos(344)},{2*sin(344)});
		\draw[thin] ({3+2*cos(284)},{2*sin(284)}) .. controls (2.3,-1.2) .. ({3+2*cos(214)},{2*sin(214)});
		\draw[thin] ({3+2*cos(164)},{2*sin(164)}) .. controls (2.5,1.3) .. ({3+2*cos(94)},{2*sin(94)});
		\draw[fill=white] ({-3+2*cos(284)},{2*sin(284)}) circle (0.1cm);
		\draw[fill=white] ({3+2*cos(284)},{2*sin(284)}) circle (0.1cm);
		\draw[fill=white] ({-3+2*cos(44)},{2*sin(44)}) circle (0.1cm);
		\draw[fill=white] ({3+2*cos(44)},{2*sin(44)}) circle (0.1cm);
		\draw[fill=white] ({-3+2*cos(164)},{2*sin(164)}) circle (0.1cm);
		\draw[fill=white] ({3+2*cos(164)},{2*sin(164)}) circle (0.1cm);
		\node at (-6,0) {$G$};
		\node at (0,0) {$H$};
		\node at (-5.5,0) {\Large $\rightsquigarrow$};
		\node at (0.5,0) {\Large $\rightsquigarrow$};
	\end{tikzpicture}
	\caption{The objects $G$ and $H$ are an example of two generators such that $H$ and $G$ are in the same equivalence class.}
	\label[fig]{fig:enter-label}
\end{figure}

The classical generators of $\ocC$ were classified in \cite{Generators}, using a notion of an object being homologically connected.
An object $X$ in a triangulated category $\cT$ is \textit{homologically connected}, if for any two indecomposable objects $Y,Z \in \langle X \rangle$, there exists a sequence of non-zero, irreducible morphisms between indecomposables in $\langle X \rangle$, starting and ending at $Y$ and $Z$.
Note that there is no requirement on the direction or composition of the morphisms.

A similar, yet subtly different, notion of \textit{Ext-connected} was studied in \cite{PageThick}.

\begin{theorem}[\cite{Generators}]\label{Thm:Gens}
	Let $G$ be an object in $\ocC$. Then $G$ is a classical generator if and only if $G$ is homologically connected and $G$ has a complete orbit in $\mathscr{M}$.
\end{theorem}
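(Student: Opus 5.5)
The proof splits into the two implications. Both rest on one combinatorial observation about $\ocC$, which I would isolate first as a lemma. \emph{Every indecomposable object of $\langle G\rangle$ has its endpoints in $\sM_G \cup L(\sM)$.} To see this, note that $\langle G\rangle$ is built from the indecomposable summands of $G$ by four operations: shifts, cones, direct sums and summands. Shifts move an endpoint in $\sM$ to its predecessor or successor, and fix accumulation points. By \Cref{Prop:PY3.14} and the triangles listed after \Cref{fig:short and long arcs}, the cone of a non-zero morphism between indecomposables $X,Y$ has indecomposable summands whose arcs use only endpoints of $X$ and $Y$, or their shifts. Krull--Schmidt handles summands. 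An induction on the number of operations used then gives the lemma.

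\textbf{Necessity.} Suppose $\langle G\rangle=\ocC$.
\begin{enumerate}
\item If $\sM_G\neq\sM$, pick $x\in\sM\setminus\sM_G$. Any arc with endpoint $x$ is an object of $\ocC=\langle G\rangle$ with an endpoint outside $\sM_G\cup L(\sM)$, contradicting the lemma. So $G$ has a complete orbit.
\item For homological connectedness, since $\langle G\rangle=\ocC$ it suffices to show that $\ocC$ itself is connected by chains of irreducible morphisms.
\item I would check this with \Cref{Lem: Factoring in Cn}: a non-zero morphism $\{y_1,y_2\}\to\{z_1,z_2\}$ is irreducible exactly when there is no intermediate arc $W$. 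This happens when one endpoint is fixed and the other moves to its successor, or, at an accumulation point, in the limiting configurations.
\item Irreducible morphisms of the first kind connect every arc to arcs sharing one endpoint. The limit arcs $\{\mathfrak{a},x\}$ then link the regions near different accumulation points, so any two arcs are joined by a finite chain.
\end{enumerate}

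\textbf{Sufficiency.} Assume $G$ is homologically connected with complete orbit, and let $\cS=\langle G\rangle$. I would prove $\cS=\ocC$ in three stages.
\begin{enumerate}
\item \emph{Short arcs from a common endpoint.} If $U=\{p,u\}$ and $V=\{p,v\}$ lie in $\cS$ with a non-zero morphism between them, the cone triangle produces $\{u,v\}$. Walking along a chain of irreducible morphisms in $\cS$, each step fixes one endpoint, so the cones give arcs joining any two endpoints met along the chain.
\item \emph{All arcs in $\sM$.} Homological connectedness makes the set of points joined by arcs of $\cS$ "connected". The complete orbit condition, together with closure under $[\pm1]$, then shows every point of $\sM$ is an endpoint of some arc in $\cS$. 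Combining these two facts with Stage~1, I get every arc $\{x,y\}$ with $x,y\in\sM$, short or long.
\item \emph{Limit and double limit arcs.} These are the delicate objects: they are not finite extensions of ordinary arcs, so they must already be reached from a summand of $G$ touching the relevant accumulation point.
\end{enumerate}

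\textbf{Main obstacle.} Stage~3 is where I expect the difficulty. The plan is to show that homological connectedness together with complete orbit forces, for each $\mathfrak{a}\in L(\sM)$, some indecomposable of $\cS$ with endpoint $\mathfrak{a}$. I would argue as follows.
\begin{enumerate}
\item Use \Cref{Prop:PY3.14}, in particular the rotation condition about a shared accumulation point, to see which irreducible chains can approach $\mathfrak{a}$.
\item Once one arc $\{\mathfrak{a},u\}$ is in $\cS$, use the third triangle type
\[
\{\mathfrak{a},u\}\to\{u,v\}\to\{\mathfrak{a},v\},
\]
together with the ordinary arcs from Stage~2, to obtain every $\{\mathfrak{a},v\}$.
\item Finally, obtain the double limit arcs $\{\mathfrak{a},\mathfrak{b}\}$ as cones of morphisms $\{\mathfrak{a},x\}\to\{x,\mathfrak{b}\}$.
\end{enumerate}
If the implication "connected with complete orbit $\Rightarrow$ touches every accumulation point" fails as I have phrased it, I would revisit how the orbit $\sM_G$ is meant to interact with the points of $L(\sM)$ in \Cref{Def:Orbits}, since some such interaction is exactly what this stage needs.
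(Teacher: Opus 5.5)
The paper does not prove \Cref{Thm:Gens}; it quotes it from \cite{Generators}, so your proposal has to stand on its own, and it has two genuine gaps.

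\textbf{The orbit condition and Stage~3.} Your suspicion about Stage~3 is correct. With the orbit taken in $\sM$, the implication you need is false, and so is the statement as literally printed.

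Let $L=\{\mathfrak a,\ell\}$ be a limit arc with $\mathfrak a<\ell<\mathfrak a^+$. The objects $Z$ with $\ocC(L,Z[i])=0$ for all sufficiently negative $i$ form a thick subcategory $\mathcal U$; this follows from the long exact Hom sequence. By \Cref{Prop:PY3.14}, $\mathcal U$ contains every indecomposable not ending at $\mathfrak a$. Indeed, for $i\ll 0$ both endpoints of $Z[i-1]$ lie in the open arc running anticlockwise from $\ell$ to $\mathfrak a$, so $Z[i-1]$ neither crosses $L$ nor shares its accumulation point. But $L\notin\mathcal U$ by \Cref{Lem:EndX}.

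Hence no object lacking a summand that ends at $\mathfrak a$ can generate. For example, with $n=1$, a single short arc has $\sM_G=\sM$ and is homologically connected (its thick closure is the $\mathbb{Z}A_\infty$-component of short arcs), yet it is not a generator.

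The correct hypothesis is the one the paper actually uses later (see \Cref{Rem: One arc per section}): a complete orbit in $\overline{\mathscr{M}}$, i.e.\ every accumulation point is also an endpoint of a summand of $G$. With this hypothesis, Stage~3 simply starts from such a summand. Note that for a limit generator even the long arcs only appear after this step, so Stage~2 cannot come first.

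The burden then moves to necessity, and your lemma is not enough there:
\begin{enumerate}
\item It allows every point of $L(\sM)$, so it cannot show that a generator must touch each $\mathfrak a$. The test subcategory $\mathcal U$ above does exactly that.
\item Its proof only treats cones of maps between indecomposables, whereas $\langle G\rangle$ requires cones of maps between direct sums. For the segment part, argue by orthogonality instead. A short arc $Y$ inside a segment missed by $G$ crosses no shift of any summand of $G$. So $G$ lies in the thick subcategory $\{Z\mid \ocC(Z,Y[i])=0 \text{ for all } i\}$, which does not contain $Y$.
\end{enumerate}

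\textbf{Necessity of homological connectedness.} This part of your argument fails. Under your reading (chains of irreducible morphisms between indecomposables of the thick closure, as the printed definition literally says), step~(2) requires $\ocC$ itself to be connected by irreducible morphisms. It is not.

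Since $\sM$ accumulates at every $\mathfrak a$, \Cref{Lem: Factoring in Cn} shows that a non-zero map between a (double) limit arc and a non-isomorphic indecomposable factors through infinitely many intermediate arcs. The only exceptions are the one-step maps between $\{\mathfrak a,y\}$ and $\{\mathfrak a,y^{\pm}\}$. So the limit arcs at $\mathfrak a$ form their own component, and each double limit arc is isolated. Step~(4) is therefore false, and under this reading no classical generator would be homologically connected.

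The definition has to be read on $\mathrm{add}(G[i]\mid i\in\mathbb{Z})$, as the paper itself does in the proof of \Cref{Prop: Ind triangles everywhere}: the summands of $G$, up to shift, are linked by non-zero morphisms. Necessity is then an orthogonality argument. Suppose $G=G'\oplus G''$ with $G',G''\neq 0$ and $\ocC(G',G''[i])=0=\ocC(G'',G'[i])$ for all $i$. Then $\langle G\rangle=\langle G'\rangle\oplus\langle G''\rangle$ is a Hom-orthogonal decomposition. But $\ocC$ has no such decomposition, because crossing arcs have non-zero $\mathrm{Ext}^1$ and any two indecomposables are joined by a chain of crossing arcs.

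Correspondingly, in your Stage~1, consecutive summands in the chain may be linked by a crossing rather than a shared endpoint. So you need the crossing triangles there as well, not only the ``fix one endpoint'' steps.
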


From now on, when we refer to generators we mean classical generators rather than any other definition of generator in a triangulated category.

We may impose an equivalence relation on the class of classical generators in $\ocC$, where $G \sim G'$ if there exists the an additive equivalence of categories between the categories,
\[
\mathrm{add}(G[i] \mid \forall i \in \mathbb{Z})  \xrightarrow{\sim} \mathrm{add}(G'[i] \mid \forall i \in \mathbb{Z}).
\]

\begin{definition}
	Let $G \in \ocC$ be a classical generator.
	We say that $G$ is a \textit{limit generator} if all indecomposable direct summands of $G$ are (double) limit arcs, and for any two indecomposable direct summands $G_1,G_2$ of $G$, then $G_1$ and $G_2[i]$ do not cross for any $i \in \mathbb{Z}$.
\end{definition}

\begin{proposition}\label[prop]{Prop: Ind triangles everywhere}
	Let $G$ be a limit generator of $\ocC$.
	Then for each indecomposable object $X \in \ocC$, there exists a triangle in $\ocC$,
	\[
	Y \rightarrow X \rightarrow Z \rightarrow Y[1],
	\]
	with $Y,Z$ indecomposable, where $Y \in \lang[a]{G}$, $Z \in \lang[b]{G}$, and $X \in \lang[a+b]{G}$.
\end{proposition}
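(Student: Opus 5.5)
The plan is to prove the statement by exhibiting, for each indecomposable $X$, a triangle $Y \to X \to Z \to Y[1]$ with $Y,Z$ indecomposable, and then reading off the levels. Since $G$ is a classical generator, every object of $\ocC$ lies in some $\lang[m]{G}$. So once such a triangle exists, I take $a$ and $b$ to be levels with $Y \in \lang[a]{G}$ and $Z \in \lang[b]{G}$. The standard inclusion $\lang[a]{G} \ast \lang[b]{G} \subseteq \lang[a+b]{G}$ then gives $X \in \lang[a+b]{G}$. The real content is therefore the existence of a triangle whose outer terms are indecomposable and whose middle term is exactly $X$.

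The tool is the triangle recalled after \Cref{Prop:PY3.14}. If two (double) limit arcs share an accumulation point $\mathfrak{a}$, say $\{\mathfrak{a},u\}$ and $\{\mathfrak{a},v\}$ with $\mathfrak{a}<u<v<\mathfrak{a}$, there is a non-split extension between them (second bullet of \Cref{Prop:PY3.14}). This gives a triangle with indecomposable outer terms $\{\mathfrak{a},u\}$ and $\{\mathfrak{a},v\}$, and with middle term the single arc $\{u,v\}$. I would also use the analogous statement with $u$ or $v$ an accumulation point. The key observation is that any arc $X=\{x_0,x_1\}$ which is not itself a (double) limit arc having an accumulation point as an endpoint can be realised as such a middle term $\{u,v\}$. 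To do this I pick an accumulation point $\mathfrak{a}$ distinct from $x_0,x_1$, ordered so that $\mathfrak{a}<x_0<x_1<\mathfrak{a}$, and take $Y=\{\mathfrak{a},x_0\}$ and $Z=\{\mathfrak{a},x_1\}$; one of the two rotational Ext's supplies the triangle. For short and long arcs such an $\mathfrak{a}$ always exists. For a limit arc $\{\mathfrak{c},x\}$ I instead use a common endpoint of the middle term: I choose another accumulation point $\mathfrak{a}$ and consider $\{\mathfrak{a},\mathfrak{c}\}$ and $\{\mathfrak{a},x\}$. When $n=1$ no other accumulation point exists, so this case needs separate treatment, as below.

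The main obstacle is the remaining degenerate cases. These are limit arcs when $n=1$, and double limit arcs $\{\mathfrak{a},\mathfrak{c}\}$ for $n=2$ with no third accumulation point. In these cases the construction above either fails or forces one of $Y,Z$ to be $0$. Here I would use the structure of $G$ itself. Because the summands of $G$ are pairwise non-crossing up to shift, and $G$ has a complete orbit, $G$ contains (up to shift) a limit arc $\{\mathfrak{c},y\}$ for every segment. The shift moves $y$ freely within its segment while fixing $\mathfrak{c}$. So if $X=\{\mathfrak{c},x\}$, then $X$ is a shift of a summand of $G$ when $x$ lies in the same segment as $y$, and $X \in \lang[1]{G}$. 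Otherwise I use the triangle built from $\{\mathfrak{c},y[i]\}$ and the short or long arc $\{y[i],x\}$, which share the endpoint $y[i]$; the two rotation triangles of \Cref{Prop:PY3.14} apply after checking orientation. If $X$ lies in $\mathrm{add}(G[i])$, I would adopt the convention that one outer term may be taken trivially, with $\lang[0]{G}=0$. I expect the bookkeeping of orientations, namely which of the two rotational Ext's is non-zero, to be the fiddly part, together with the verification that the chosen outer arcs are genuine arcs. That verification amounts to checking that their endpoints are not successor and predecessor of one another, which is always arrangeable by shifting.
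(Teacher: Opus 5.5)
Your generic construction is sound. For short and long arcs it is exactly the paper's: two limit arcs at a common accumulation point, with $X$ as the middle term. For limit arcs when $n\geq 2$ and double limit arcs when $n\geq 3$, you realise $X$ as the third side of two arcs meeting at another accumulation point. This is a cleaner substitute for the paper's chain of triangles through summands $W_0,\dots,W_m\in\langle G\rangle_1$.

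The gap is in the degenerate cases you isolate: there the proposal never produces two indecomposable outer terms.
\begin{itemize}
\item For $n=1$, a limit generator is a single limit arc $\{\mathfrak{c},y\}$. Since there is only one segment, its shifts $\{\mathfrak{c},y[i]\}$ are \emph{all} the limit arcs, so your ``otherwise'' branch is never reached.
\item For $n=2$, the only double limit arc $\{\mathfrak{a}_1,\mathfrak{a}_2\}$ is forced to be a summand of every limit generator (it is the limit pre-generator part). It is also not of the form $\{\mathfrak{c},x\}$.
\end{itemize}
So in every degenerate case you fall back on letting an outer term be $0$. The statement excludes exactly this, since it requires $Y$ and $Z$ to be indecomposable. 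In addition, the triangle in your ``otherwise'' branch is misattributed. The rotation clause of \Cref{Prop:PY3.14} concerns a common \emph{accumulation} point; it says nothing about two arcs sharing the marked point $y[i]$.

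The missing ingredient is the crossing triangle in the position where two endpoints are neighbours, and it needs no input from $G$. Take the triangle $Y\to C\oplus D\to X\to Y[1]$ recalled in \Cref{Sec:Discrete}, for $y_0<x_0<y_1<x_1$, and set $y_1=x_0^+$. Then $D=\{x_0,x_0^+\}$ is not an arc, so $D=0$ and the middle term is the single arc $C=\{y_0,x_1\}$.
\begin{itemize}
\item For $n=2$, any marked point $x$ with $\mathfrak{a}_1<x<\mathfrak{a}_2$ gives $\{\mathfrak{a}_1,x^+\}\to\{\mathfrak{a}_1,\mathfrak{a}_2\}\to\{x,\mathfrak{a}_2\}\to\{\mathfrak{a}_1,x^+\}[1]$.
\item For $n=1$ and $X=\{\mathfrak{c},x\}$, choose $u$ with $\mathfrak{c}<u<u^+<x$; this is possible because the marked points accumulate at $\mathfrak{c}$. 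This gives $\{\mathfrak{c},u^+\}\to\{\mathfrak{c},x\}\to\{u,x\}\to\{\mathfrak{c},u^+\}[1]$.
\end{itemize}
With these two cases added, your argument proves the statement as literally written.

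Be aware, though, that under your reading the clause $X\in\langle G\rangle_{a+b}$ is automatic. The paper's proof instead tries to choose the outer terms adapted to $G$: one of them in $\langle G\rangle_1$, or built along summands of $G$ via homological connectedness. That is what one needs to run the induction behind \Cref{Cor: Preserving Limit Generators}, and your $G$-independent triangles do not supply it by themselves.
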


\begin{proof}
	Suppose that $X$ is either a long arc or a short arc.
	Then there exists a triangle of the form,
	\[
	Y \rightarrow X \rightarrow Z \rightarrow Y[1],
	\]
	where $Y,Z$ are limit arcs \cite{Paquette2020}, and hence are indecomposable.
	There also exists a choice such that at least one of $Y$ or $Z$ is contained in $\lang[1]{G}$.
	It is left to show that we may obtain limit arcs in the same manner.
	
	Now suppose that $X$ is a limit arc, then $X$ shares an endpoint with two indecomposable objects in $\lang[1]{G}$, by \Cref{Thm:Gens}.
	Call those indecomposable objects $W_0$ and $W_m$.
	Also by \Cref{Thm:Gens}, there exists a sequence of indecomposable objects $W_0,W_1,W_2, \ldots, W_m \in \lang[1]{G}$, such that $W_j$ and $W_{j+1}$ share an endpoint, but $W_j$ and $W_{j+2}$ do not share an endpoint for all $j=0,\ldots,m-1$.
	This is a direct consequence of $G$ being homologically connected.
	Therefore we have a sequence of triangles with all terms indecomposable by \cite{Paquette2020};
	\[
	Y_j \rightarrow X_j \rightarrow Z_j \rightarrow Y_j[1]
	\]
	where $Y_j \neq Z_j$, $Y_j, Z_j \in \{X_{j-1},W_{j+1}\}$ for $j =1,\ldots, m-1$, $X_{m-1} \cong X$, and $Y_0,Z_0 \in \{W_0,W_1\}$.
	Hence we have found such a triangle for a limit arc $X$, and we have proven our claim.
\end{proof}

\section{Dissections of Marked Surfaces}\label{Sec: Dissections}

\subsection{Admissible Dissections of Marked Surfaces}

We recall the notions of admissible dissections studied in \cite{AmiotPlamondonSchroll,OpperPlamondonSchroll}.

\begin{definition}\cite{AmiotPlamondonSchroll}
	A \textit{marked surface} is a triple $(\Sigma,M,P)$, where 
	\begin{itemize}
		\item $\Sigma$ is an orientated open smooth surface, with boundary denoted by $\partial \Sigma$;
		\item $M = M_{\rcirc} \cup M_{\gbullet}$, is a finite set of marked points on $\partial \Sigma$. We denote the elements of $M_{\rcirc}$ and $M_{\gbullet}$ by $\rcirc$ and $\gbullet$ respectively.
		They are required to alternate on each connected component of $\partial \Sigma$, and each such component is required to have at least one marked point;
		\item $P = P_{\rcirc} \cup P_{\gbullet}$ is a finite set of marked points in $\Sigma \backslash \partial \Sigma$, called \textit{punctures}, and we denote the elements of $P_{\rcirc}$ and $P_{\gbullet}$ by $\rcirc$ and $\gbullet$ respectively.
	\end{itemize}
	If the surface has empty boundary, we require that both $P_{\rcirc}$ and $P_{\gbullet}$ are non-empty.
\end{definition}

\begin{definition}
	A $\rcirc$-\textit{arc} (resp.\ $\gbullet$-\textit{arc}) is a smooth map $\gamma$ from the open interval $(0,1)$ to $\Sigma \backslash P$ such that its \textit{endpoints} $\lim_{x \rightarrow 0} \gamma (x)$ and $\lim_{x \rightarrow 1} \gamma (x)$ are in $M_{\rcirc} \cup P_{\rcirc}$ (resp.\ $M_{\gbullet} \cup P_{\gbullet}$).
	We require the curve not be contractible to a point in $M_{\rcirc} \cup P_{\rcirc}$ (resp.\ $M_{\gbullet} \cup P_{\gbullet}$).
\end{definition}

We consider arcs up to isotopy, and say that two arcs are intersecting if any choice of their homotopic representatives intersect.

\begin{definition}
	A collection of pairwise non-intersecting and different $\rcirc$-arcs $\{\gamma_1, \ldots, \gamma_r\}$ on the surface $(\Sigma,M,P)$ is \textit{admissible} if the arcs $\gamma_1,\ldots,\gamma_r$ do not enclose a subsurface containing non punctures of $P_{\gbullet}$ and with no boundary segment on its boundary.
	A maximal admissible collection of $\rcirc$-arcs is an \textit{admissible} $\rcirc$-\textit{dissection}.
	
	An \textit{admissible} $\gbullet$-\textit{dissection} is defined analogously.
\end{definition}

\begin{proposition}\cite[proposition 1.11]{AmiotPlamondonSchroll}\label[prop]{Prop: arcs in admissible}
Let $(\Sigma,M,P)$ be a marked surface, with $\Sigma$ a surface of genus $g$ such that $\partial \Sigma$ has $b$ connected components.
Then an admissible collection of $\rcirc$-arcs is an admissible $\rcirc$-dissection if and only if it contains exactly $\lvert M_{\rcirc} \rvert + \lvert P \rvert + b + 2g -2$ arcs.
\end{proposition}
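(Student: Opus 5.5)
Since the statement is quoted from \cite[Proposition 1.11]{AmiotPlamondonSchroll}, the plan is an Euler characteristic count. We cut $\Sigma$ along an admissible $\rcirc$-dissection and count the resulting pieces. First we characterise maximality combinatorially. An admissible collection $\{\gamma_1,\dots,\gamma_r\}$ is a dissection exactly when every connected component of $\Sigma \setminus \bigcup_i \gamma_i$ is a topological disc whose boundary contains either exactly one boundary segment carrying a single point of $M_{\gbullet}$, or no boundary segment and exactly one puncture in $P_{\gbullet}$.

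\textbf{Step 1: the cell characterisation.} Suppose a component is not such a disc. Then it has genus, or several boundary components, or at least two green points. In each case we can draw a further $\rcirc$-arc inside it that separates green points or reduces topology without creating a forbidden region. This contradicts maximality. Conversely, in a region of the stated form any new $\rcirc$-arc either is isotopic to an existing arc or boundary segment, or cuts off a piece containing no green point, which violates admissibility. This step is the main obstacle: we must carefully handle regions that are annuli around a $\gbullet$-puncture, or that touch boundary components carrying only one red point. I would first treat the disc-with-several-greens case by an explicit arc construction, then reduce the other cases to it by cutting along a non-separating or boundary-parallel $\rcirc$-arc.

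\textbf{Step 2: counting green points.} Given the characterisation, the number of regions $F$ equals the number of green points, $|M_{\gbullet}| + |P_{\gbullet}|$. Alternation on each boundary component forces $|M_{\gbullet}| = |M_{\rcirc}|$.

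\textbf{Step 3: Euler characteristic.} Build a CW structure on the closed surface $\overline{\Sigma}$ obtained by filling in the green punctures with marked points. The vertices are $V = |M_{\rcirc}| + |P_{\rcirc}|$. The edges are the $r$ arcs together with the $|M_{\rcirc}|$ boundary segments between consecutive red points; each boundary component with a single red point contributes one loop edge. The faces are the $F$ discs. Then
\[
V - E + F = 2 - 2g - b .
\]
Substituting gives
\[
|M_{\rcirc}| + |P_{\rcirc}| - r - |M_{\rcirc}| + |M_{\rcirc}| + |P_{\gbullet}| = 2 - 2g - b ,
\]
so $r = |M_{\rcirc}| + |P| + b + 2g - 2$.

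\textbf{Step 4: the converse.} If an admissible collection is not maximal, extend it to a maximal one. That maximal collection has $|M_{\rcirc}| + |P| + b + 2g - 2$ arcs, so the original collection has strictly fewer. Hence an admissible collection with exactly this many arcs is a dissection, which proves the equivalence.
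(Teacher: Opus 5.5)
The paper gives no argument of its own here (the statement is quoted from \cite{AmiotPlamondonSchroll}), and your cut-and-count Euler characteristic argument is the standard route to it. Your Step~1 is essentially the polygon description \cite[Proposition 1.12]{AmiotPlamondonSchroll}, which the paper invokes later, and the computation in Step~3 is correct. There is, however, a genuine hole in Step~4: ``extend it to a maximal one'' presupposes that admissible collections have bounded size. Nothing in Steps~1--3 gives this, since they only speak about collections that are already maximal (and finite). A priori one could keep adding pairwise disjoint, non-isotopic $\rcirc$-arcs forever.

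The cleanest repair is to run Step~3 as an inequality for an arbitrary admissible collection of $r$ arcs.
\begin{itemize}
\item Take all $|M_{\rcirc}|+|P_{\rcirc}|$ red points as vertices, including red punctures touched by no arc. Take the $r$ arcs together with the $|M_{\rcirc}|$ boundary segments as edges.
\item Additivity of the Euler characteristic gives $2-2g-b=V-E+\sum_R\chi(R)$, summing over the complementary regions $R$.
\item Each $R$ is a connected non-compact surface of finite type, so $\chi(R)\le 1$, with equality exactly when $R$ is an open disc.
\item Admissibility says each region carries a green point: a $\gbullet$-puncture inside it, or a boundary segment on its boundary. Distinct regions carry distinct ones, so there are at most $|M_{\gbullet}|+|P_{\gbullet}|=|M_{\rcirc}|+|P_{\gbullet}|$ regions.
\end{itemize}
Hence $r\le |M_{\rcirc}|+|P|+b+2g-2$ for every admissible collection, with equality if and only if every region is a disc carrying exactly one green point.

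This yields boundedness and the ``if'' direction at once, since a collection of that size cannot be enlarged. For ``only if'' you need just one half of Step~1: a region that is not a one-green disc admits a new arc. The converse half of Step~1 is never used.

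In the half you do need, check that the added arc is a legitimate new element, i.e.\ neither contractible nor isotopic to an arc already present. Such a degeneracy produces an unpunctured monogon or bigon, which must be a component of $R$ minus the new arc and carries no green point. You exclude this by choosing the new arc as follows:
\begin{itemize}
\item non-separating in $R$, when $R$ has genus or several ends (isolated red punctures count as ends);
\item separating two green points, when $R$ is a disc carrying at least two.
\end{itemize}
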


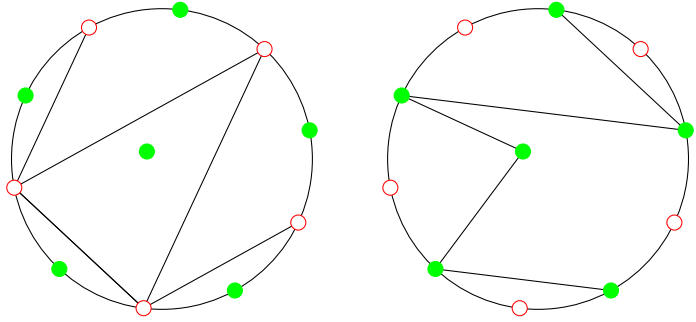
\begin{figure}[H]
	\centering
	\begin{tikzpicture}
		\draw[thin] ({2*sin(43)-2.5},{2*cos(43)}) -- ({2*sin(43+72*2)-2.5},{2*cos(43+72*2)}) -- ({2*sin(43+72*3)-2.5},{2*cos(43+72*3)}) -- ({2*sin(43)-2.5},{2*cos(43)});
		\draw[thin] ({2*sin(43+72)-2.5},{2*cos(43+72)}) -- ({2*sin(43+72*2)-2.5},{2*cos(43+72*2)}) -- ({2*sin(43+72*3)-2.5},{2*cos(43+72*3)}) -- ({2*sin(43+72*4)-2.5},{2*cos(43+72*4)});
		
		\draw[thin] ({2*sin(7+72*2)+2.5},{2*cos(7+72*2)})-- ({2*sin(7+72*3)+2.5},{2*cos(7+72*3)}) -- (2.3,0.1) -- ({2*sin(7+72*4)+2.5},{2*cos(7+72*4)}) -- ({2*sin(7+72)+2.5},{2*cos(7+72)}) -- ({2*sin(7)+2.5},{2*cos(7)});
		
		\draw (-2.5,0) circle (2cm);
		\foreach [count = \i] \txt in {1,...,5}
		\draw[thin,red,fill=white] ({2*sin(43+72*\i)-2.5},{2*cos(43+72*\i)}) circle (0.1cm);
		
		\foreach [count = \i] \txt in {1,...,5}
		\draw[thin,green,fill=green] ({2*sin(7+72*\i)-2.5},{2*cos(7+72*\i)}) circle (0.1cm);
		
		\draw[thin,green,fill=green] (-2.7,0.1) circle (0.1cm);
		
		\draw (2.5,0) circle (2cm);
		\foreach [count = \i] \txt in {1,...,5}
		\draw[thin,red,fill=white] ({2*sin(43+72*\i)+2.5},{2*cos(43+72*\i)}) circle (0.1cm);
		
		\foreach [count = \i] \txt in {1,...,5}
		\draw[thin,green,fill=green] ({2*sin(7+72*\i)+2.5},{2*cos(7+72*\i)}) circle (0.1cm);
		
		\draw[thin,green,fill=green] (2.3,0.1) circle (0.1cm);
	\end{tikzpicture}
	\caption{An admissible $\rcirc$-dissection and an admissible $\gbullet$-dissection of the marked surface $(D^2,M,P)$.}
	\label[fig]{Fig: Admissible dissections}
\end{figure}

\subsection{The Locally Gentle Algebra of an Admissible Dissection}

Gentle algebras are a class of finite-dimensional algebras that have been thoroughly studied and have a well understood representation theory.
They may be described in a particularly nice way in terms of a path algebra with relations.

\begin{definition}\label[def]{Def: Locally gentle}
	An algebra $A$ is \textit{locally gentle} if it is isomorphic to an algebra $kQ/I$ such that;
	\begin{itemize}
		\item $Q$ has finitely many vertices, is connected, and at most 2 arrows beginning and 2 arrows ending at each vertex;
		\item $I$ is an admissible ideal of $Q$ (that is, $R^m \subset I \subset R^2$, where $R$ is the ideal generated by arrows, and $2 \leq m \in \mathbb{Z}$);
		\item $I$ is generated by paths of length 2;
		\item for every arrow $\alpha$ of $Q$, there is at most one $\beta$ and one arrow $\gamma$ such that $\beta \alpha, \alpha \gamma \in I$; and there is at most one $\beta'$ and one arrow $\gamma'$ such that $\beta' \alpha, \alpha \gamma' \not\in I$.
	\end{itemize}
	We say that $A$ is \textit{gentle} if it is a finite-dimensional locally gentle algebra.
\end{definition}

By an abuse of naming convention, we may call the pair $(Q,I)$ a \textit{gentle quiver} if $kQ/I$ is a gentle algebra.

We recall from \cite{OpperPlamondonSchroll} how we may construct a (locally) gentle algebra from an admissible $\rcirc$-dissection.

\begin{definition}\label[def]{Def: Quiver from dissection}
	Let $\Omega$ be an admissible $\rcirc$-dissection of a marked surface $(\Sigma,M,P)$.
	The $k$-algebra $A(\Omega)$ is the quotient of the path algebra of the quiver $\widehat{Q}(\Omega)$ by the ideal $\widehat{I}(\Omega)$ defined as follows:
	\begin{enumerate}
		\item the vertices of $\widehat{Q}(\Omega)$ are in bijection with the $\rcirc$-arcs.
		\item there is an arrow $i \rightarrow j$ in $\widehat{Q}(\Omega)$ whenever the $\rcirc$-arcs $i$ and $j$ meet at a marked point $\rcirc$, with $i$ immediately preceding $j$ in the anticlockwise order around $\rcirc$.
		\item the ideal $\widehat{I}(\Omega)$ is generated by the following relations: whenever $i$ and $j$ meet at a marked point as above, the other end of $j$ meets $l$ at a marked point as above, then the composition of the corresponding arrows $i \rightarrow j$ and $j \rightarrow l$ is a relation.
	\end{enumerate}
\end{definition}

\begin{figure}[H]
	\centering
	\begin{tikzpicture}
		\draw[thin] ({2*sin(43)-2.5},{2*cos(43)}) -- ({2*sin(43+72*2)-2.5},{2*cos(43+72*2)})node[pos=0.5] (1) {$\empty$};
		\draw[thin] ({2*sin(43+72*2)-2.5},{2*cos(43+72*2)}) -- ({2*sin(43+72*3)-2.5},{2*cos(43+72*3)})node[pos=0.5] (2) {$\empty$};
		\draw[thin] ({2*sin(43+72*3)-2.5},{2*cos(43+72*3)})  -- ({2*sin(43)-2.5},{2*cos(43)})node[pos=0.5] (3) {$\empty$};
		\draw[thin] ({2*sin(43+72)-2.5},{2*cos(43+72)}) -- ({2*sin(43+72*2)-2.5},{2*cos(43+72*2)})node[pos=0.5] (4) {$\empty$};
		\draw[thin] ({2*sin(43+72*3)-2.5},{2*cos(43+72*3)})  -- ({2*sin(43+72*4)-2.5},{2*cos(43+72*4)})node[pos=0.5] (5) {$\empty$};
		
		\draw[thin] ({2*sin(7+72*2)+2.5},{2*cos(7+72*2)})-- ({2*sin(7+72*3)+2.5},{2*cos(7+72*3)}) node[pos=0.5] (A) {$\empty$};
		\draw[thin] ({2*sin(7+72*3)+2.5},{2*cos(7+72*3)}) -- (2.6,-0.25) node[pos=0.7] (B) {$\empty$};
		\draw[thin] (2.6,-0.25) -- ({2*sin(7+72*4)+2.5},{2*cos(7+72*4)}) node[pos=0.3] (C) {$\empty$};
		\draw[thin] ({2*sin(7+72*4)+2.5},{2*cos(7+72*4)}) -- ({2*sin(7+72)+2.5},{2*cos(7+72)}) node[pos=0.5] (D) {$\empty$};
		\draw[thin] ({2*sin(7+72)+2.5},{2*cos(7+72)}) -- ({2*sin(7)+2.5},{2*cos(7)}) node[pos=0.5] (E) {$\empty$};
		
		\draw[thin,red,dashed,->] (1)-- (2) node[pos=0.5] (1') {\empty} node[pos=0.25] (1'') {\empty};
		\draw[thin,red,dashed,->] (4)-- (1);
		\draw[thin,red,dashed,->] (3)-- (1) node[pos=0.75] (2') {\empty} node[pos=0.25] (2'') {\empty};
		\draw[thin,red,dashed,->] (3)-- (5);
		\draw[thin,red,dashed,->] (2)-- (3) node[pos=0.75] (3') {\empty} node[pos=0.5] (3'') {\empty};
		
		\draw[thin,red,dotted] (1'') -- (2');
		\draw[thin,red,dotted] (2'') -- (3');
		\draw[thin,red,dotted] (3'') -- (1');
		
		\draw[thin,red,dashed,->] (A) -- (B) node[pos=0.3] (A') {\empty};
		\draw[thin,red,dashed,->] (C) -- (B);
		\draw[thin,red,dashed,->] (C) -- (D) node[pos=0.8] (B') {\empty};
		\draw[thin,red,dashed,->] (E) -- (D);
		\draw[thin,red,dashed,->,looseness=4] (B) to[out=350,in=10] (C) node[pos=0.75] (C') {\empty} node[pos=0.25] (D') {\empty};
		
		\draw[thin,red,dotted] (A') -- (2.8,-0.65);
		\draw[thin,red,dotted] (2.8,0.1) -- (B');
		
		\draw (-2.5,0) circle (2cm);
		\foreach [count = \i] \txt in {1,...,5}
		\draw[thin,red,fill=white] ({2*sin(43+72*\i)-2.5},{2*cos(43+72*\i)}) circle (0.1cm);
		
		\foreach [count = \i] \txt in {1,...,5}
		\draw[thin,green,fill=green] ({2*sin(7+72*\i)-2.5},{2*cos(7+72*\i)}) circle (0.1cm);
		
		\draw[thin,green,fill=green] (-2.6,-0.25) circle (0.1cm);
		
		\draw (2.5,0) circle (2cm);
		\foreach [count = \i] \txt in {1,...,5}
		\draw[thin,red,fill=white] ({2*sin(43+72*\i)+2.5},{2*cos(43+72*\i)}) circle (0.1cm);
		
		\foreach [count = \i] \txt in {1,...,5}
		\draw[thin,green,fill=green] ({2*sin(7+72*\i)+2.5},{2*cos(7+72*\i)}) circle (0.1cm);
		
		\draw[thin,green,fill=green] (2.6,-0.25) circle (0.1cm);
	\end{tikzpicture}
	\caption{The (locally) gentle quivers associated to the admissible dissections from \Cref{Fig: Admissible dissections}.}
	\label[fig]{Fig: Quivers from Admissible}
\end{figure}
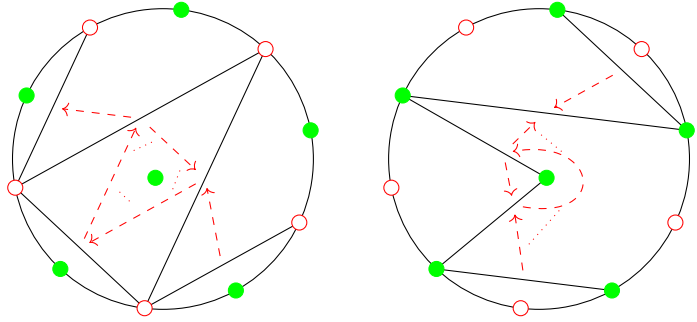

We state the following result in the language of \cite{AmiotPlamondonSchroll}.

\begin{theorem}\cite[Proposition 1.21]{OpperPlamondonSchroll} \cite[Theorem 4.10]{PPP2} \label{Thm: Gentle to Admissible}
	The assignment $((\Sigma,M,P),\Omega) \mapsto A(\Omega)$ defines a bijection between the set of homeomorphism classes of marked surfaces $(\Sigma,M,P)$ with an admissible $\rcirc$-dissection to the set of isomorphism classes of locally gentle algebras.
	Under this bijection, gentle algebras correspond to the case where $P_{\rcirc} = \emptyset$.
\end{theorem}

\subsection{Extended Admissible Dissections}

For our intended goals, admissible dissections are not enough data to understand the limit generators of $\ocC$.
Therefore, we introduce the notion of an extended admissible dissection, which imposes an additional choice upon an admissible dissection.

\begin{definition}\label[def]{Def: Ext Admissible}
	A \textit{binding arc} is a smooth map $\gamma$ from the open interval $(0,1)$ to $\Sigma \backslash P$ such that its endpoints $\lim_{x \rightarrow 0} \gamma (x) \in  M_{\rcirc} \cup P_{\rcirc}$ and $\lim_{x \rightarrow 1} \gamma (x) \in M_{\gbullet} \cup P_{\gbullet}$, respectively (or vice versa).
	
	A collection of pairwise non-intersecting and pairwise different arcs $\{\gamma_1,\ldots,\gamma_r, \delta_1,\ldots,\delta_s\}$ is an \textit{extended admissible} $\rcirc$-\textit{dissection} of $(\Sigma, M, P)$ if the collection $\{\gamma_1,\ldots,\gamma_r\}$ forms an admissible $\rcirc$-dissection, $s = \lvert M_{\gbullet} \cup P_{\gbullet} \rvert$, and $\{\delta_1,\ldots,\delta_s\}$ is a collection of binding arcs such that no two binding arcs share a $\gbullet$-endpoint.
	
	An \textit{extended admissible} $\gbullet$-\textit{dissection} is defined analogously.
\end{definition}

We find the following result immediately from the definition of an extended admissible dissection and \Cref{Prop: arcs in admissible}.

\begin{corollary}\label[cor]{Cor: Arcs in Extended}
	Let $(\Sigma,M,P)$ be a surface as in \Cref{Prop: arcs in admissible}.
	Then an extended admissible $\rcirc$-dissection has exactly $\lvert M \rvert + \lvert P \rvert + \lvert P_{\gbullet} \rvert + b + 2g -2$ arcs.
\end{corollary}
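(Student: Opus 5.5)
An extended admissible $\rcirc$-dissection $\Delta$ splits as the disjoint union of two collections. The first is the underlying admissible $\rcirc$-dissection $\{\gamma_1,\ldots,\gamma_r\}$. The second is the set of binding arcs $\{\delta_1,\ldots,\delta_s\}$. The plan is to count each collection separately and add the two numbers.

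For the first collection, \Cref{Prop: arcs in admissible} gives
\[
r = \lvert M_{\rcirc} \rvert + \lvert P \rvert + b + 2g - 2.
\]
For the second, \Cref{Def: Ext Admissible} imposes $s = \lvert M_{\gbullet} \cup P_{\gbullet} \rvert$ directly. Since $M_{\gbullet} \subset \partial\Sigma$ and $P_{\gbullet} \subset \Sigma \setminus \partial \Sigma$, the two sets are disjoint, so
\[
s = \lvert M_{\gbullet} \rvert + \lvert P_{\gbullet} \rvert.
\]

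Next, check that the two collections do not overlap, so that their sizes simply add. A binding arc has one endpoint in $M_{\gbullet} \cup P_{\gbullet}$. A $\rcirc$-arc has both endpoints in $M_{\rcirc} \cup P_{\rcirc}$. Hence no binding arc is a $\rcirc$-arc. The definition also requires all arcs in the collection to be pairwise different. The total number of arcs is therefore $r+s$.

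Finally, use $\lvert M \rvert = \lvert M_{\rcirc} \rvert + \lvert M_{\gbullet} \rvert$, which holds because $M_{\rcirc}$ and $M_{\gbullet}$ are disjoint. Substituting gives
\[
r + s = \lvert M \rvert + \lvert P \rvert + \lvert P_{\gbullet} \rvert + b + 2g - 2,
\]
which is the claimed count.

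There is no real obstacle here. The only points needing care are the disjointness of the two colour classes and the fact that a binding arc is never a $\rcirc$-arc, so that no arc is counted twice.
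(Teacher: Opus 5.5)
Your proposal is correct and follows essentially the same route as the paper. The paper also obtains the count by applying \Cref{Prop: arcs in admissible} to the underlying admissible $\rcirc$-dissection and adding one binding arc for each point of $M_{\gbullet} \cup P_{\gbullet}$. You additionally spell out the disjointness checks, and the identity $\lvert M \rvert = \lvert M_{\rcirc} \rvert + \lvert M_{\gbullet} \rvert$, which the paper leaves implicit.
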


\begin{proof}
	This follows directly from \Cref{Prop: arcs in admissible} and the definition of an extended admissible $\rcirc$-dissection having a binding arc for each marked point in $M_{\gbullet} \cup P_{\gbullet}$.
\end{proof}

\begin{figure}[H]
	\centering
	\begin{tikzpicture}
		\draw[thin] ({2*sin(43)-2.5},{2*cos(43)}) -- ({2*sin(43+72*2)-2.5},{2*cos(43+72*2)}) -- ({2*sin(43+72*3)-2.5},{2*cos(43+72*3)}) -- ({2*sin(43)-2.5},{2*cos(43)});
		\draw[thin] ({2*sin(43+72)-2.5},{2*cos(43+72)}) -- ({2*sin(43+72*2)-2.5},{2*cos(43+72*2)}) -- ({2*sin(43+72*3)-2.5},{2*cos(43+72*3)}) -- ({2*sin(43+72*4)-2.5},{2*cos(43+72*4)});
		
		\draw[thin] ({2*sin(7+72*2)+2.5},{2*cos(7+72*2)})-- ({2*sin(7+72*3)+2.5},{2*cos(7+72*3)}) -- (2.3,0.1) -- ({2*sin(7+72*4)+2.5},{2*cos(7+72*4)}) -- ({2*sin(7+72)+2.5},{2*cos(7+72)}) -- ({2*sin(7)+2.5},{2*cos(7)});
		
		\draw[dashed,thin] ({2*sin(43)-2.5},{2*cos(43)}) -- (-2.7,0.1);
		\draw[dashed,thin] ({2*sin(43)-2.5},{2*cos(43)}) -- ({2*sin(7)-2.5},{2*cos(7)}); 
		\draw[dashed,thin] ({2*sin(43+72*2)-2.5},{2*cos(43+72*2)}) -- ({2*sin(7+72)-2.5},{2*cos(7+72)});
		\draw[dashed,thin] ({2*sin(43+72*2)-2.5},{2*cos(43+72*2)}) -- ({2*sin(7+72*2)-2.5},{2*cos(7+72*2)});
		\draw[dashed,thin] ({2*sin(43+72*3)-2.5},{2*cos(43+72*3)}) -- ({2*sin(7+72*3)-2.5},{2*cos(7+72*3)});
		\draw[dashed,thin] ({2*sin(43+72*3)-2.5},{2*cos(43+72*3)}) -- ({2*sin(7+72*4)-2.5},{2*cos(7+72*4)});

		\draw[dashed,thin] ({2*sin(43+72)+2.5},{2*cos(43+72)}) -- ({2*sin(7+72*3)+2.5},{2*cos(7+72*3)});
		\draw[dashed,thin] ({2*sin(43+72*2)+2.5},{2*cos(43+72*2)}) -- ({2*sin(7+72*2)+2.5},{2*cos(7+72*2)});
		\draw[dashed,thin] ({2*sin(43+72*3)+2.5},{2*cos(43+72*3)}) -- ({2*sin(7+72*4)+2.5},{2*cos(7+72*4)});
		\draw[dashed,thin] ({2*sin(43+72*4)+2.5},{2*cos(43+72*4)}) -- ({2*sin(7+72)+2.5},{2*cos(7+72)});
		\draw[dashed,thin] ({2*sin(43)+2.5},{2*cos(43)}) -- ({2*sin(7)+2.5},{2*cos(7)});
		
		\draw (-2.5,0) circle (2cm);
		\foreach [count = \i] \txt in {1,...,5}
		\draw[thin,red,fill=white] ({2*sin(43+72*\i)-2.5},{2*cos(43+72*\i)}) circle (0.1cm);
		
		\foreach [count = \i] \txt in {1,...,5}
		\draw[thin,green,fill=green] ({2*sin(7+72*\i)-2.5},{2*cos(7+72*\i)}) circle (0.1cm);
		
		\draw[thin,green,fill=green] (-2.7,0.1) circle (0.1cm);
		
		\draw (2.5,0) circle (2cm);
		\foreach [count = \i] \txt in {1,...,5}
		\draw[thin,red,fill=white] ({2*sin(43+72*\i)+2.5},{2*cos(43+72*\i)}) circle (0.1cm);
		
		\foreach [count = \i] \txt in {1,...,5}
		\draw[thin,green,fill=green] ({2*sin(7+72*\i)+2.5},{2*cos(7+72*\i)}) circle (0.1cm);
		
		\draw[thin,green,fill=green] (2.3,0.1) circle (0.1cm);
	\end{tikzpicture}
	\caption{An extended admissible $\rcirc$-dissection and an extended admissible $\gbullet$-dissection of the marked surface $(D^2,M,P)$.}
	\label[fig]{Fig: Extended admissible}
\end{figure}
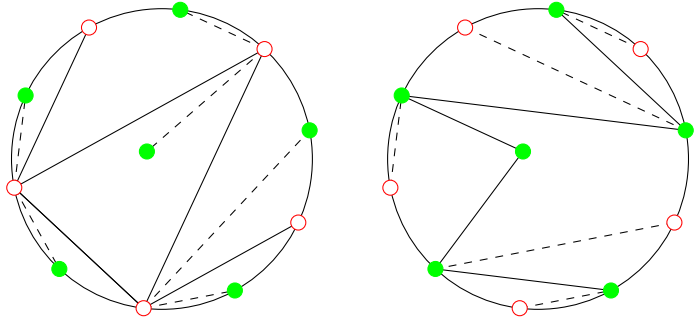

Many of the results concerning admissible dissections in \cite{AmiotPlamondonSchroll} also apply to extended admissible dissections.
This is unsurprising in view of the following result, allowing us to view extended admissible dissections as admissible dissections.

\begin{lemma}\label[lem]{Lem: extended become admissible}
	Let $\Delta$ be an extended admissible $\rcirc$-dissection of a surface $(\Sigma,M,P)$.
	Then there exists some marked surface $(\Sigma, \widetilde{M}, \widetilde{P})$, with $\widetilde{M}_{\rcirc} = M$ and $\widetilde{P}_{\rcirc} = P$, such that $\Delta$ induces an admissible $\rcirc$-dissection, $\widetilde{\Delta}$, of $(\Sigma, \widetilde{M}, \widetilde{P})$.
	
	Moreover, $\widetilde{\Delta}$ is unique up to homeomorphism.
\end{lemma}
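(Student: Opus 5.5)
The idea is to turn the green points into red ones. Every point of $M_{\gbullet} \cup P_{\gbullet}$ is then an endpoint of exactly one binding arc. Each binding arc becomes a $\rcirc$-arc, so the whole of $\Delta$ becomes a collection of $\rcirc$-arcs. After that we add new green points so that this collection is an admissible dissection.

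\emph{Step 1: the new red points.} Put $\widetilde{M}_{\rcirc} = M_{\rcirc} \cup M_{\gbullet} = M$ and $\widetilde{P}_{\rcirc} = P_{\rcirc} \cup P_{\gbullet} = P$. Every arc of $\Delta$, binding or not, is now a $\rcirc$-arc of the new surface. Pairwise non-crossing and pairwise distinctness are preserved, since the underlying curves have not changed. None of the binding arcs becomes contractible: the original $\rcirc$-arcs were not contractible, and a binding arc joins two distinct points.

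\emph{Step 2: the new green points.} The arcs of $\Delta$ cut $\Sigma$ into pieces. Consider a piece $D$ of the original admissible dissection $\{\gamma_1,\ldots,\gamma_r\}$. By \cite{AmiotPlamondonSchroll}, $D$ is a disc containing exactly one point $g \in M_{\gbullet} \cup P_{\gbullet}$. The unique binding arc $\delta$ ending at $g$ lies in $D$, because it cannot cross any $\gamma_i$. Moreover $\delta$ is the only binding arc in $D$, since there are exactly $s$ binding arcs with distinct green endpoints. There are two cases.
\begin{itemize}
\item If $g$ is a puncture, then $\delta$ does not disconnect $D$. We set $g$ red and keep $D$ as a single piece bounded also by both sides of $\delta$.
\item If $g$ lies on the boundary, then $\delta$ cuts $D$ into two discs.
\end{itemize}
On each boundary component we now insert new green marked points, so that red and green alternate. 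Every boundary segment between consecutive points of $M$ gets exactly one new green point. Every resulting piece that has no boundary segment gets one new green puncture.

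One then checks, by counting, that each complementary piece contains exactly one green point. For admissibility of $\widetilde{\Delta}$ we compare the number of arcs in \Cref{Cor: Arcs in Extended} with the count in \Cref{Prop: arcs in admissible} for $(\Sigma,\widetilde M,\widetilde P)$. Here $|\widetilde{M}_{\rcirc}| = |M|$, and $|\widetilde{P}|$ increases by the number of new green punctures, which equals $|P_{\gbullet}|$. These numbers should match, giving maximality. Equivalently, by the characterisation of admissible dissections, every region should be a disc containing exactly one green point.

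\emph{Step 3: uniqueness and the main obstacle.} The main obstacle is making the placement of the new green points canonical. This matters most in the puncture case, where a region is bounded by both sides of $\delta$ and we must argue a green puncture is forced. It also matters for uniqueness up to homeomorphism. I would show that the conditions (i) alternation on the boundary and (ii) exactly one green point per region determine the green points up to isotopy within each region. Since each region is a disc, any two choices differ by a homeomorphism fixing the arcs of $\Delta$, and these homeomorphisms glue to a homeomorphism of $\Sigma$. That gives uniqueness of $\widetilde{\Delta}$ up to homeomorphism.
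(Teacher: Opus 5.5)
Your proposal is correct and takes essentially the paper's route. You recolour all green points red, place one new green point on each boundary segment and one green puncture in each region with no boundary segment, get maximality by comparing \Cref{Cor: Arcs in Extended} with \Cref{Prop: arcs in admissible}, and get uniqueness because these placements are forced; your explicit region analysis (one binding arc per original polygon, which either splits or slits it) and your direct uniqueness argument simply spell out what the paper delegates to \cite[Proposition 1.12]{AmiotPlamondonSchroll}.
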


\begin{proof}
	An extended admissible $\rcirc$-dissection of $(\Sigma,M,P)$ dissects the surface into a collection of discs $\{\nabla_1, \ldots, \nabla_t\}$, such that $\nabla_i$ either;
	\begin{enumerate}
		\item has exactly one side contained within $\partial \Sigma$, or;
		\item has no sides contained within $\partial \Sigma$, and a binding arc to some marked point in $P_{\gbullet}$ on the interior of $\nabla_i$.
	\end{enumerate}
	This follows from \cite[Proposition 1.12]{AmiotPlamondonSchroll} and the definition of extended admissible $\rcirc$-dissections.
	
	We construct the collection $\widetilde{M}_{\gbullet}$ by placing a marked point on the side contained within $\partial \Sigma$ for each $\nabla_i$ satisfying (1).
	We construct the collection $\widetilde{P}_{\gbullet}$ by placing a marked point on the interior of each $\nabla_j$ satisfying (2).
	
	Therefore $\Delta$ induces an admissible collection of arcs in $(\Sigma,\widetilde{M},\widetilde{P})$, and it is only left to show that this is an admissible $\rcirc$-dissection.
	By assumption we have $ \lvert \widetilde{M}_{\rcirc} \rvert = \lvert M \rvert$ and $ \lvert \widetilde{P}_{\rcirc} \rvert = \lvert P \rvert$.
	Similarly, by construction, we have $\lvert \widetilde{P}_{\gbullet} \rvert = \lvert P_{\gbullet} \rvert$, as there is exactly one puncture in $P_{\gbullet}$ for all $\nabla_j$ satisfying (2).
	By \Cref{Cor: Arcs in Extended}, we know that $\Delta$ has 
	\[
	\lvert M \rvert + \lvert P \rvert + \lvert P_{\gbullet} \rvert + b + 2g -2
	\]
	arcs, and by the previous discussion we have
	\begin{align*}
		\lvert M \rvert + \lvert P \rvert + \lvert P_{\gbullet} \rvert + b + 2g -2 &= \lvert \widetilde{M}_{\rcirc} \rvert + \lvert \widetilde{P}_{\rcirc} \rvert + \lvert \widetilde{P}_{\gbullet} \rvert +b +2g -2\\
		&= \lvert \widetilde{M}_{\rcirc} \rvert + \lvert \widetilde{P} \rvert + b + 2g -2.
	\end{align*}
	Hence, by \Cref{Prop: arcs in admissible}, $\Delta$ is an admissible $\rcirc$-dissection of $(\Sigma,\widetilde{M},\widetilde{P})$.
	
	Uniqueness up to homeomorphism follows from \cite[Proposition 1.12]{AmiotPlamondonSchroll}, as any disc $\nabla_i$ with a shared boundary component of $\Sigma$ must contain a $\gbullet$-point on the singular boundary component, and any disc without a shared boundary component of $\Sigma$ must contain a $\gbullet$-puncture on the interior.
	The placement of these marked points is determined up to homeomorphism of $(\Sigma, \widetilde{M}, \widetilde{P})$.
\end{proof}

We will say that the admissible $\rcirc$-dissection $\widetilde{\Delta}$ is \textit{induced} by the extended admissible $\rcirc$-dissection $\Delta$.

\begin{corollary}\label[cor]{Cor: Extended to Gentle}
	The bijection of \Cref{Thm: Gentle to Admissible} restricts to an injection of the set of marked surfaces $(\Sigma,M,P)$ with an extended admissible $\rcirc$-dissection into the set of isomorphism classes of locally gentle algebras.
\end{corollary}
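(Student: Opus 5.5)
The plan is to build a map from extended dissections to induced admissible dissections, check that it is well defined on homeomorphism classes, and then check that it is injective. Composing it with the bijection of \Cref{Thm: Gentle to Admissible} gives the required injection into isomorphism classes of locally gentle algebras.

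\emph{The map.} Let $\Delta$ be an extended admissible $\rcirc$-dissection of $(\Sigma,M,P)$. By \Cref{Lem: extended become admissible}, $\Delta$ induces an admissible $\rcirc$-dissection $\widetilde{\Delta}$ of a marked surface $(\Sigma,\widetilde{M},\widetilde{P})$, unique up to homeomorphism. We send the pair $((\Sigma,M,P),\Delta)$ to $((\Sigma,\widetilde{M},\widetilde{P}),\widetilde{\Delta})$, and then to $A(\widetilde{\Delta})$ via \Cref{Thm: Gentle to Admissible}.

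\emph{Well defined.} Let $\phi:(\Sigma,M,P)\to(\Sigma',M',P')$ be a homeomorphism carrying $\Delta$ to $\Delta'$. Then $\phi$ carries the polygons $\nabla_i$ cut out by $\Delta$ to those cut out by $\Delta'$. It preserves which polygons have a side on $\partial\Sigma$ and which contain a $\gbullet$-puncture. After an isotopy inside each polygon, $\phi$ also sends the newly placed $\gbullet$-points to the newly placed $\gbullet$-points. So the induced dissections are homeomorphic, and the class of $A(\widetilde{\Delta})$ depends only on the class of $((\Sigma,M,P),\Delta)$.

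\emph{Injectivity.} Suppose $A(\widetilde{\Delta}_1)\cong A(\widetilde{\Delta}_2)$. By the bijectivity in \Cref{Thm: Gentle to Admissible}, there is a homeomorphism $\psi$ of marked surfaces carrying $\widetilde{\Delta}_1$ to $\widetilde{\Delta}_2$. We must reconstruct $(\Sigma,M,P)$ and $\Delta$ from the induced data. In the construction, the binding arcs become $\rcirc$-arcs of $\widetilde{\Delta}$, and the original $\gbullet$-points of $M_{\gbullet}\cup P_{\gbullet}$ are promoted to red marked points of $\widetilde{M}_{\rcirc}$ or $\widetilde{P}_{\rcirc}$. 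The task is therefore to identify, intrinsically in $\widetilde{\Delta}$, which red points were originally green and which arcs were binding arcs.

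The intended criterion is this: a binding arc is an arc of $\widetilde{\Delta}$ with an endpoint which is the endpoint of \emph{exactly one} arc of $\widetilde{\Delta}$, namely the binding arc itself. This uses two facts from \Cref{Def: Ext Admissible}: no two binding arcs share a $\gbullet$-endpoint, and admissible $\rcirc$-arcs never end at $\gbullet$-points. Green points would then be exactly the red points of valency one in $\widetilde{\Delta}$. Since $\psi$ preserves valencies, it would carry binding arcs to binding arcs and formerly green points to formerly green points. Forgetting the new $\gbullet$-points, $\psi$ would then give a homeomorphism $((\Sigma,M,P),\Delta_1)\cong((\Sigma,M,P),\Delta_2)$.

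\emph{Main obstacle.} The valency criterion fails as stated, because a red point of $M_{\rcirc}$ may also have valency one in $\widetilde{\Delta}$. For example, it may be the endpoint of a single arc in the underlying admissible dissection with no binding arc attached. To handle this, I would first try to use the alternation of $\rcirc$ and $\gbullet$ on each boundary component, together with the placement of the new $\widetilde{M}_{\gbullet}$ points, one per polygon of type (1) in the proof of \Cref{Lem: extended become admissible}. The aim is a local configuration that only the old green points exhibit, for instance counting the boundary segments between consecutive points of $\widetilde{M}_{\gbullet}$. If no intrinsic criterion exists, the statement should be read with the extended structure fixed, and injectivity holds for pairs related by the fixed recolouring.
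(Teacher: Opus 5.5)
Your construction of the map and your well-definedness step are all the paper has to offer. It gives no separate proof, treating the corollary as immediate from \Cref{Lem: extended become admissible}, whose uniqueness clause is your second step. The real content is injectivity of $\Delta\mapsto\widetilde\Delta$ on homeomorphism classes, and that is where your proposal has a genuine gap. You correctly see that valency one does not single out the former $\gbullet$-points and suggest using alternation, but you leave it open.

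In general the gap cannot be closed, because alternation need not determine the colouring and injectivity actually fails. Take the annulus with $r\in M_{\rcirc}$ and $g\in M_{\gbullet}$ on the outer boundary, and $r'\in M_{\rcirc}$ and $g'\in M_{\gbullet}$ on the inner one. Let $\Delta$ have $\rcirc$-arcs $\gamma_1$ from $r$ to $r'$ and $\gamma_2$, a loop at $r'$ parallel to the inner boundary. Give it binding arcs $\delta$ from $g$ to $r'$ and $\delta'$ from $g'$ to $r'$, chosen so that the anticlockwise fan at $r'$ reads $\gamma_2,\gamma_1,\delta,\gamma_2,\delta'$. Then $r$ and $g$ both have valency one in $\widetilde\Delta$. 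Swapping their colours gives another extended admissible dissection $\Delta'$, with $\rcirc$-arcs $\delta,\gamma_2$ and binding arcs $\gamma_1,\delta'$. Its induced dissection is literally $\widetilde\Delta$: the same curves, the same $\rcirc$-points and the same complementary regions. However $\Delta\not\cong\Delta'$. Any homeomorphism must fix $r'$, the only point with five arc-ends, and must fix $\gamma_2$. The positions of the two ends of $\gamma_2$ in the fan (first and fourth of five) rule out reversing the fan, so $\gamma_1$ would have to go to $\gamma_1$. But $\gamma_1$ is a $\rcirc$-arc in $\Delta$ and a binding arc in $\Delta'$. So these two non-homeomorphic extended dissections give the same locally gentle algebra. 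This is exactly the phenomenon behind the paper's own question of whether $B^{\Delta}$ is determined by $(\widehat{Q^{\Delta}},\widehat{I^{\Delta}})$.

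What you can prove is one of two weaker statements. The first is the tautological one: restrict the bijection of \Cref{Thm: Gentle to Admissible} to the image of $\Delta\mapsto\widetilde\Delta$, i.e.\ identify extended dissections with isomorphic keyboard algebras, as the paper later does. The second is injectivity for $(D^2,M_n,\emptyset)$, the only case used afterwards. There your alternation idea works after a count. By \Cref{Cor: Arcs in Extended}, $\widetilde\Delta$ has $2n-1$ arcs, so total valency $4n-2$, and the former $\gbullet$-points contribute exactly $n$. If the other class of every-other boundary point also consisted of valency-one points, we would get $2n=4n-2$, i.e.\ $n=1$; in that case the two choices differ by a rotation. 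Hence for $n\ge 2$ the former $\gbullet$-points form the unique alternating class of valency-one points. Any homeomorphism $\psi$ of induced dissections preserves valency and boundary adjacency, so it preserves this class and carries binding arcs to binding arcs. Forgetting the added $\gbullet$-points then gives the required homeomorphism of extended dissections.
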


Given an extended admissible $\rcirc$-dissection $\Delta$ of a marked surface $(\Sigma,M,P)$, we let $\widetilde{\Delta}$ be the admissible $\circ$-dissection of $(\Sigma,\widetilde{M},\widetilde{P})$ induced by \Cref{Lem: extended become admissible}.
Then we let the quiver with relations associated to $\Delta$ be the same as the quiver with relations associated to $\widetilde{\Delta}$ (\Cref{Def: Quiver from dissection}), that is,
\[
(\widehat{Q^{\Delta}},\widehat{I^{\Delta}}) = (\widehat{Q^{\widetilde{\Delta}}},\widehat{I^{\widetilde{\Delta}}}).
\]

\begin{lemma}\label[lem]{Lem: arcs between binding}
	Let $\Delta$ be an extended admissible $\rcirc$-dissection of $(\Sigma,M,P)$, and let $\delta,\delta'$ be binding arcs that share an $\rcirc$-endpoint.
	Then there exists a $\rcirc$-arc, $\gamma$, fitting into the following diagram;
	\[
	\begin{tikzpicture}
		\draw[domain=0:90] plot ({cos(\x)-1.5},{sin(\x)-1.5});
		\draw[domain=180:270] plot ({cos(\x)+1.5},{sin(\x)+1.5});
		\draw[domain=90:180] plot ({cos(\x)+1.5},{sin(\x)-1.5});
		\draw[domain=270:360] plot ({cos(\x)-1.5},{sin(\x)+1.5});
		\draw[thin] ({cos(315)-1.5},{sin(315)+1.5}) -- ({cos(45)-1.5}, {sin(45)-1.5}) node[pos=0.5,left] {$\delta$};
		\draw[thin] ({cos(315)-1.5},{sin(315)+1.5}) -- ({cos(225)+1.5}, {sin(225)+1.5}) node[pos=0.5,above] {$\delta'$};
		\draw[thin] ({cos(315)-1.5},{sin(315)+1.5}) -- ({cos(135)+1.5},{sin(135)-1.5}) node[pos=0.5,below] {$\gamma$};
		\draw[red,fill=white] ({cos(315)-1.5},{sin(315)+1.5}) circle (0.07cm);
		\draw[red,fill=white] ({cos(135)+1.5},{sin(135)-1.5}) circle (0.07cm);
		\draw[green,fill=green] ({cos(225)+1.5}, {sin(225)+1.5}) circle (0.07cm);
		\draw[green,fill=green] ({cos(45)-1.5}, {sin(45)-1.5}) circle (0.07cm);
	\end{tikzpicture}
	\]
\end{lemma}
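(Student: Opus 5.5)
We plan to work with the admissible dissection induced by $\Delta$ and reduce the claim to the local picture of sectors around the common $\rcirc$-endpoint. Let $p \in M_{\rcirc} \cup P_{\rcirc}$ be the common endpoint of $\delta$ and $\delta'$, and let $g, g'$ be their $\gbullet$-endpoints. Since no two binding arcs in $\Delta$ share a $\gbullet$-endpoint, we have $g \neq g'$.

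First, recall from the proof of \Cref{Lem: extended become admissible} (via \cite[Proposition 1.12]{AmiotPlamondonSchroll}) that the $\rcirc$-arcs of $\Delta$ cut $\Sigma$ into discs $\nabla_1, \ldots, \nabla_t$. Each disc contains exactly one $\gbullet$-point, either on its unique boundary side or as an interior puncture. A binding arc crosses no $\rcirc$-arc of $\Delta$ and has no interior point in $M \cup P$. Hence it lies entirely inside one disc $\nabla_i$ and joins a corner of $\nabla_i$ to the unique $\gbullet$-point of $\nabla_i$. Because $g \neq g'$, the arc $\delta$ lies in some disc $\nabla$ and $\delta'$ lies in a different disc $\nabla' \neq \nabla$. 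Both discs have $p$ as a corner.

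Next, look at a small neighbourhood of $p$. The $\rcirc$-arcs of $\Delta$ ending at $p$, together with the boundary segments of $\partial\Sigma$ at $p$ when $p \in M_{\rcirc}$, cut this neighbourhood into sectors. Each sector is a corner of exactly one disc $\nabla_i$. If a disc returns to $p$ more than once, it simply owns several sectors; that causes no problem. Near $p$, the arc $\delta$ lies in a sector belonging to $\nabla$, and $\delta'$ lies in a sector belonging to $\nabla'$. Since $\nabla \neq \nabla'$, these are distinct sectors.

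There are two cases for the order of the sectors at $p$.
\begin{enumerate}
	\item If $p \in M_{\rcirc}$, the sectors are linearly ordered between the two boundary segments at $p$. Any two distinct sectors are separated by at least one $\rcirc$-arc of $\Delta$ ending at $p$.
	\item If $p \in P_{\rcirc}$, the sectors are cyclically ordered. Going anticlockwise from $\delta$ to $\delta'$ we must cross some $\rcirc$-arc of $\Delta$, since otherwise they would lie in the same sector.
\end{enumerate}
In either case we take $\gamma$ to be such a separating $\rcirc$-arc. Its other endpoint lies in $M_{\rcirc} \cup P_{\rcirc}$ by definition, and by construction $\gamma$ sits between $\delta$ and $\delta'$ at $p$, exactly as in the pictured configuration. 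The arcs $\delta, \delta', \gamma$ are pairwise non-crossing because they all belong to $\Delta$.

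The main obstacle is the first step: justifying that every complementary disc contains exactly one $\gbullet$-point and that binding arcs cannot leave their disc. We will argue this carefully from \Cref{Lem: extended become admissible}, using the non-crossing condition in \Cref{Def: Ext Admissible}. The separating-sector argument after that is purely local.
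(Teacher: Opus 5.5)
Your proposal is correct and rests on the same key fact as the paper's proof: the $\gbullet$-endpoints of $\delta$ and $\delta'$ are distinct, so they lie in different pieces cut out by the $\rcirc$-arcs (\cite[Proposition 1.12]{AmiotPlamondonSchroll}), and hence some $\rcirc$-arc of $\Delta$ must separate $\delta$ from $\delta'$ at their common endpoint. The only difference is in how that separation is localised at the common endpoint. The paper draws an auxiliary $\gbullet$-arc $\delta''$ hugging $\delta' \cup \delta$ around that point and observes that the $\rcirc$-arc it is forced to cross cannot cross $\delta$ or $\delta'$. Your sector decomposition at $p$ does the same job more directly, and it also treats the case $p \in P_{\rcirc}$ explicitly.
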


\begin{proof}
	By definition, in an admissible dissection, no two points in $M_{\gbullet}$ may be in the same subsurface cut out by the $\rcirc$-arcs.
	Consequently, any $\gbullet$-arc must intersect a $\rcirc$-arc in the admissible $\rcirc$-dissection.
	Consider the arc;
	\[
	\begin{tikzpicture}
		\draw[domain=0:90] plot ({cos(\x)-1.5},{sin(\x)-1.5});
		\draw[domain=180:270] plot ({cos(\x)+1.5},{sin(\x)+1.5});
		\draw[domain=90:180] plot ({cos(\x)+1.5},{sin(\x)-1.5});
		\draw[domain=270:360] plot ({cos(\x)-1.5},{sin(\x)+1.5});
		\draw[thin] ({cos(225)+1.5},{sin(225)+1.5}) .. controls ({cos(315)-1.5},{sin(315)+1.5}) .. ({cos(45)-1.5},{sin(45)-1.5}) node[pos=0.6,right] {$\delta''$};
		\draw[red,fill=white] ({cos(315)-1.5},{sin(315)+1.5}) circle (0.08cm);
		\draw[red,fill=white] ({cos(135)+1.5},{sin(135)-1.5}) circle (0.08cm);
		\draw[green,fill=green] ({cos(225)+1.5}, {sin(225)+1.5}) circle (0.08cm);
		\draw[green,fill=green] ({cos(45)-1.5}, {sin(45)-1.5}) circle (0.08cm);
	\end{tikzpicture}
	\]
	Then $\gamma$ is a $\rcirc$-arc that intersects $\delta''$,for all representatives of the homotopy class, and does not intersect $\delta$ and $\delta'$.
	We need that $\gamma$ does not intersect $\delta$ and $\delta'$ as they are all members of the same extended admissible $\rcirc$-dissection.
	
	The $\rcirc$-arc $\gamma$ is not necessarily unique, however any choice must retain the same properties of having an endpoint shared with $\delta$ and $\delta'$, as well as intersecting $\delta''$.
	Hence there must be a $\rcirc$-arc in $\Delta$ that shares the endpoint that is shared by $\delta$ and $\delta'$.
\end{proof}

A natural question to ask would be when given an admissible dissection of a marked surface $(\Sigma,\tilde{M},\tilde{P})$, can we say whether or not this is induced by an extended admissible dissection of some other marked surface $(\Sigma,M,P)$?
The answer to this question is yes, it is possible, and we provide a list of criteria to show exactly when an admissible dissection is induced by an extended admissible dissection.

\begin{lemma}\label[lem]{Lem: Admissible to extended}
	Let $\widetilde{\Delta}$ be an admissible $\rcirc$-dissection of a marked surface $(\Sigma,\widetilde{M},\widetilde{P})$.
	Then there exists an extended admissible $\rcirc$-dissection, $\Delta$, of a marked surface $(\Sigma,M,P)$, such that $\widetilde{\Delta}$ is induced by $\Delta$ if and only if the following conditions hold;
	\begin{enumerate}
		\item $\lvert \widetilde{M}_{\rcirc} \cap S \lvert = 2p$, with $p > 0$, for each boundary component $S \subseteq \partial \Sigma$,
		\item there is exactly one $\rcirc$-arc incident to each alternating point in $\widetilde{M}_{\rcirc} \cap S$, considered under the orientation of $S$,
		\item for each marked point in $\widetilde{P}_{\gbullet}$ contained in a subsurface $\nabla$, then there at least one marked point in $\widetilde{P}_{\rcirc}$ contained in $\partial \nabla$ that is incident to a single $\rcirc$-arc in $\Delta$.
	\end{enumerate}
\end{lemma}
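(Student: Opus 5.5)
The plan is to prove the two directions separately, working directly from the construction in the proof of \Cref{Lem: extended become admissible}, where $\widetilde{M}_{\rcirc}=M$ and $\widetilde{P}_{\rcirc}=P$. The central idea is that the red points of $\widetilde{\Delta}$ are exactly the old red and green points of $\Delta$, and the old green points are precisely the ones met by a single arc, namely their binding arc.

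\emph{Necessity.} Suppose $\widetilde{\Delta}$ is induced by $\Delta$. On each boundary component $S$, the points of $M$ alternate in colour, and $S$ carries at least one marked point. Hence $\lvert \widetilde{M}_{\rcirc}\cap S\rvert=\lvert M\cap S\rvert=2p$ with $p>0$, which gives (1). Each old $\gbullet$-point is the endpoint of exactly one arc of $\Delta$. It is met by its binding arc, and it cannot be met by any $\rcirc$-arc, since those end at red points. Binding arcs do not share $\gbullet$-endpoints, so no second binding arc reaches it either. This gives (2), since these points alternate along $S$. For (3), take a $\widetilde{P}_{\gbullet}$-puncture lying in a subsurface $\nabla$. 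By the proof of \Cref{Lem: extended become admissible}, it corresponds to a disc $\nabla_i$ of type (2). Such a disc contains an old $P_{\gbullet}$-puncture together with its binding arc. In $\widetilde{\Delta}$ that old puncture becomes a point of $\widetilde{P}_{\rcirc}$ on $\partial\nabla$, and it is incident to a single arc. (If needed, I would apply \Cref{Lem: arcs between binding} to rule out extra incidences.)

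\emph{Sufficiency.} Assume (1)--(3) hold. I would first choose, on each boundary component, the alternating set of points described in (2) and recolour them $\gbullet$. Then, for each $\widetilde{P}_{\gbullet}$-puncture, I would use (3) to choose one $\widetilde{P}_{\rcirc}$-puncture on $\partial\nabla$ incident to a single arc, and recolour it $\gbullet$. This defines the surface $(\Sigma,M,P)$: the marked points of $M$ alternate by (1) and (2), and we delete the old $\widetilde{M}_{\gbullet}$ and $\widetilde{P}_{\gbullet}$. Every arc of $\widetilde{\Delta}$ ending at a recoloured point is the unique arc there, and I declare it to be a binding arc. The remaining arcs form the candidate admissible $\rcirc$-dissection $\Gamma$. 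Pairwise non-intersection is inherited from $\widetilde{\Delta}$. By construction there is exactly one binding arc per green point, and no two binding arcs share a $\gbullet$-endpoint. What remains is to show that $\Gamma$ is an admissible $\rcirc$-dissection, which I would check by counting against \Cref{Prop: arcs in admissible}. The number of removed arcs is $\lvert M_{\gbullet}\rvert+\lvert P_{\gbullet}\rvert$. Combining this with the count for $\widetilde{\Delta}$ and the identities $\lvert\widetilde{M}_{\rcirc}\rvert=\lvert M\rvert$ and $\lvert\widetilde{P}\rvert=\lvert P_{\rcirc}\rvert+2\lvert P_{\gbullet}\rvert$ yields exactly $\lvert M_{\rcirc}\rvert+\lvert P\rvert+b+2g-2$ arcs. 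This is the same arithmetic as in \Cref{Cor: Arcs in Extended}, run backwards.

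The main obstacle is admissibility of $\Gamma$ itself. Removing a binding arc merges two pieces, and we must ensure that no enclosed subsurface is left without a boundary segment and without a $\gbullet$-puncture. I would argue this by following the cell structure. Removing the binding arc at a boundary green point merges two boundary-touching discs, or simply fuses a single disc. Removing the binding arc at a chosen puncture leaves a disc that contains that green puncture in its interior. I also need the $\widetilde{\Delta}$ data to be used consistently: distinct $\widetilde{P}_{\gbullet}$-punctures must receive distinct chosen punctures, and this is where I expect the fiddliest bookkeeping. Finally, running the construction of \Cref{Lem: extended become admissible} on $\Delta$ returns $\widetilde{\Delta}$, with uniqueness up to homeomorphism, which completes the argument.
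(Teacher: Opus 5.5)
Your proposal follows essentially the same route as the paper: you recolour an alternating set of single-arc boundary points and some single-arc punctures green, discard their arcs, argue via region-merging that the rest is admissible, and conclude with the count in \Cref{Prop: arcs in admissible}, while necessity is read off from the construction in \Cref{Lem: extended become admissible}. Choosing exactly one single-arc puncture per $\widetilde{P}_{\gbullet}$-puncture is slightly more careful than the paper, which takes all punctures satisfying (3) and then asserts $\lvert P_{\gbullet}\rvert=\lvert\widetilde{P}_{\gbullet}\rvert$; the distinctness you worry about is automatic, since a puncture met by a single arc borders only one region and each region contains at most one $\gbullet$-point.
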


\begin{proof}
We begin by showing that if we have an admissible $\rcirc$-dissection $\widetilde{\Delta}$ satisfying the above conditions, then there exists some extended admissible $\rcirc$-dissection, $\Delta$, of a marked surface $(\Sigma,M,P)$, that induces $\widetilde{\Delta}$.

We relabel the marked points and punctures in $\widetilde{M}_{\rcirc} \cup \widetilde{P}_{\rcirc}$.
By condition $(2)$ above, there exists a subset $M_{\gbullet}$ of $\widetilde{M}_{\rcirc}$ such that each point is incident to one $\rcirc$-arc in $\widetilde{\Delta}$, and there is exactly one point not in $M_{\gbullet}$ between successive points in $M_{\gbullet}$ following the orientation of the relevant boundary component.
We let $M_{\rcirc}$ be the complement of $M_{\gbullet}$ in $\widetilde{M}_{\rcirc}$, and note that this only makes sense in light of condition $(1)$, else $M_{\gbullet}$ and $\widetilde{M}_{\rcirc}$ would coincide when restricted to a boundary component that does not satisfy $(1)$.

Similarly, let $P_{\gbullet}$ be the subset of $\widetilde{P}_{\rcirc}$ made up of points that satisfy condition $(3)$, and let $P_{\rcirc}$ be the complement of $P_{\gbullet}$ in $\widetilde{P}_{\rcirc}$.

It is clear that $(\Sigma,M,P)$ is a marked surface, where $M = M_{\rcirc} + M_{\gbullet}$ and $P = P_{\rcirc} + P_{\gbullet}$.
Now let $\widetilde{\Delta}'$ be the set of $\rcirc$-arcs on $(\Sigma,M,P)$ that correspond to the $\rcirc$-arcs in $\widetilde{\Delta}$ that do not have an endpoint at any of the marked points or punctures in the relabelled subsets $M_{\gbullet}$ or $P_{\gbullet}$.
Then $\widetilde{\Delta}'$ is an admissible collection of $\rcirc$-arcs of $(\Sigma,M,P)$, as every subsurface cut from $\widetilde{\Delta}'$ either contains exactly one puncture in $P_{\gbullet}$, as we replaced the puncture from $\widetilde{P}_{\gbullet}$ in the admissible $\rcirc$-dissection $\widetilde{\Delta}$, or contains exactly one marked point in $M_{\gbullet}$ on its boundary, by the construction of $M_{\gbullet}$.

The fact that there exists a single point in $M_{\gbullet}$ follows from the fact the corresponding point $p \in \widetilde{M}_{\rcirc}$ is incident to a single $\rcirc$-arc, and so this $\rcirc$-arc bounds two subsurfaces, both sharing a boundary component with $\partial \Sigma$, and both of those boundary components containing the point $p$.
By removing this $\rcirc$-arc, we induce a subsurface that corresponds to the union of the two subsurfaces above, and this subsurface must have a single boundary component, between $p^-$ and $p^+$, the predecessor and successor of $p$ in $\widetilde{M}_{\rcirc}$ with respect to the orientation of the boundary component of $\partial \Sigma$.

However, we have
\begin{align*}
	\lvert \widetilde{\Delta}' \rvert &= \lvert \widetilde{\Delta} \rvert - \lvert M_{\gbullet} \rvert - \lvert P_{\gbullet} \rvert\\
	&= \lvert \widetilde{M}_{\rcirc} \rvert + \lvert \widetilde{P} \rvert + b + 2g -2 - \lvert M_{\gbullet} \rvert - \lvert P_{\gbullet} \rvert\\
	&= \lvert M_{\rcirc} \rvert + \lvert M_{\gbullet} \rvert + \lvert P_{\rcirc} \rvert + 2\lvert P_{\gbullet} \rvert + b + 2g -2 - \lvert M_{\gbullet} \rvert - \lvert P_{\gbullet} \rvert\\
	&= \lvert M_{\rcirc} \rvert + \lvert P \rvert + b + 2g -2,
\end{align*}
where the third line follows from $\lvert \widetilde{P}_{\gbullet} \rvert = \lvert P_{\gbullet} \rvert$ by condition $(3)$.
Hence, by \Cref{Prop: arcs in admissible}, $\widetilde{\Delta}'$ is an admissible $\rcirc$-dissection of $(\Sigma,M,P)$.

Therefore, $\Delta$ is an extended admissible $\rcirc$-dissection as it is the data of an admissible $\rcirc$-dissection with a binding arc for each marked point in $M_{\gbullet}$ and each puncture in $P_{\gbullet}$, guaranteed by conditions (2) and (3) respectively.

To prove the other direction, let $\Delta$ be an extended admissible $\rcirc$-dissection of $(\Sigma,M,P)$.
Then by construction of the marked surface $(\Sigma,\widetilde{M},\widetilde{P})$ in \Cref{Lem: extended become admissible}, we see that $M \mapsto \widetilde{M}_{\rcirc}$, and so condition $(1)$ follows.

Given there is exactly one binding arc incident to any marked point in $M_{\gbullet}$, and $M_{\gbullet}$ and $M_{\rcirc}$ alternate on any boundary component of $\Sigma$, we see that condition $(2)$ holds.

Finally, by \cite[Propositon 1.12]{AmiotPlamondonSchroll}, we see that any puncture in $P_{\gbullet}$ is isolated in a subsurface cut out by the $\rcirc$-arcs in the admissible $\rcirc$-dissection underlying $\Delta$.
Hence there is at least one puncture in $\widetilde{P}_{\rcirc}$ for any subsurface cut out from $\widetilde{\Delta}$ that only intersects the boundary of $\Sigma$ at finitely many points, such that the puncture is incident to a single $\rcirc$-arc, and so condition $(3)$ holds.
\end{proof}

\subsection{Piano Algebras}

Let $\Delta$ be an extended admissible $\rcirc$-dissection for some marked surface $(\Sigma,M,P)$.
Then we call the data of the associated quiver with relations, $(\widehat{Q^{\Delta}}, \widehat{I^{\Delta}},B^{\Delta})$, a \textit{keyboard quiver}, where $B^{\Delta}$ is the set of vertices in $\widehat{Q^{\Delta}}$ corresponding to binding arcs.
A vertex in $(\widehat{Q^{\Delta}}, \widehat{I^{\Delta}},B^{\Delta})$ corresponding to a binding arc is called a \textit{sharp vertex}, and a vertex corresponding to a $\rcirc$-arc is a \textit{standard vertex}.
We shall denote a sharp vertex by a black point, $\bullet$, and a standard vertex by a white point, $\circ$.

We shall call the algebra $k\widehat{Q^{\Delta}} / \widehat{I^{\Delta}}$ a \textit{keyboard algebra}.
We shall say that two extended admissible $\rcirc$-dissections are \textit{equivalent} if their respective keyboard algebras are isomorphic.

In general, given a locally gentle algebra, it proves remarkably difficult to provide sufficient conditions to show that it is a keyboard algebra without passing to the associated admissible $\rcirc$-dissection and applying \Cref{Lem: Admissible to extended}.

\begin{question}
Let $(\widehat{Q^{\Delta}},\widehat{I^{\Delta}},B^{\Delta})$ be a keyboard quiver associated to some extended admissible $\rcirc$-dissection, $\Delta$, of a marked surface. Does there exist another extended admissible $\rcirc$-dissection, $\Delta'$, such that $\widehat{Q^{\Delta}} = \widehat{Q^{\Delta'}}$, and $\widehat{I^{\Delta}} = \widehat{I^{\Delta'}}$, but $B^{\Delta} \neq B^{\Delta'}$?
Equivalently, for a keyboard quiver $(\widehat{Q},\widehat{I},B)$, is $B$ uniquely determined? If not, can we provide any criteria for when $B$ must be unique?
\end{question}

Let $\Delta$ be an extended admissible $\rcirc$-dissection of a marked surface $(\Sigma,M,P)$, and let $(\widehat{Q^{\Delta}}, \widehat{I^{\Delta}}, B^{\Delta})$ be the associated keyboard quiver.
Then we define a triple $(Q^{\Delta},I^{\Delta},B^{\Delta})$ as follows.
We let $Q_0^{\Delta} = \widehat{Q_0^{\Delta}}$, and $\widehat{Q_1^{\Delta}} \subset Q_1^{\Delta}$, where we add a loop $\alpha_a$ for all $a \in Q_0^{\Delta}$, and a loop $\beta_b$ for all $b \not\in B$.
We impose a grading on $(Q^{\Delta},I^{\Delta},B^{\Delta})$ by placing  $\alpha_a$ in degree $-1$, and $\beta_b$ in degree 1. 

The set of relations $I^{\Delta}$ is generated by the relations,
\[
I^{\Delta} := \langle \alpha_{a} \beta_{a}-\iota_{a},\, \beta_{a} \alpha_{a}-\iota_{a},\, \alpha_a \delta_{ab} - \delta_{ab} \alpha_b,\, \beta_{a'} \delta_{a'a} \cdots \delta_{bb'} - \delta_{a'a} \cdots \delta_{bb'} \beta_{b'}, \widehat{I^{\Delta}} \rangle,
\]
for all such paths that exist in $Q^{\Delta}$.
Here $\iota_a$ denotes the trivial path at vertex $a$.
We may drop the notation of the vertices for $\alpha,\beta$ and $\delta$, where it is implicit that each $\alpha$ (resp.\ $\beta$) in $\alpha \delta - \delta \alpha$ (resp.\ $\beta \delta - \delta \beta$) is based at a different vertex determined by $\delta$.
We call the triple $(Q^{\Delta},I^{\Delta},B^{\Delta})$ a \textit{piano quiver}.

\begin{definition}
	Let $(\Sigma,M,P)$ be a marked surface, and let $\Delta$ be an extended admissible $\rcirc$-dissection of the marked surface $(\Sigma,M,P)$.
	Then we call the differentially graded $k$-algebra $\Lambda^{\Delta} = (kQ^{\Delta}/I^{\Delta},d=0)$ the \textit{piano algebra associated to} $\Delta$.
\end{definition}

We may analogously define a piano algebra from an extended admissible $\gbullet$-dissection of $(\Sigma,M,P)$, however this adds no new examples of piano algebras as this only differs up to the choice of colouring of the marked points and punctures of the marked surface.

\begin{example}\label[ex]{Ex: A keyboard}
	Let $\Sigma$ be a disc, $\lvert M_{\rcirc} \rvert = 5$ and $\lvert P \rvert =0$.
	Then we have the following extended admissible $\rcirc$-dissection, which we label $\Delta$.
	\[
	\begin{tikzpicture}
		\draw (0,0) circle (3cm);
		\draw ({3*sin(0)},{3*cos(0)}) -- ({3*sin(216)},{3*cos(216)}) node[pos=0.5] (1) {$\empty$};
		\draw ({3*sin(0)},{3*cos(0)}) -- ({3*sin(324)},{3*cos(324)}) node[pos=0.5] (2) {$\empty$};
		\draw ({3*sin(0)},{3*cos(0)}) -- ({3*sin(72)},{3*cos(72)}) node[pos=0.5] (3) {$\empty$};
		\draw ({3*sin(252)},{3*cos(252)}) -- ({3*sin(216)},{3*cos(216)}) node[pos=0.5] (4) {$\empty$};
		\draw ({3*sin(288)},{3*cos(288)}) -- ({3*sin(216)},{3*cos(216)}) node[pos=0.5] (5) {$\empty$};
		\draw ({3*sin(180)},{3*cos(180)}) -- ({3*sin(216)},{3*cos(216)}) node[pos=0.5] (6) {$\empty$};
		\draw ({3*sin(36)},{3*cos(36)}) -- ({3*sin(72)},{3*cos(72)}) node[pos=0.5] (7) {$\empty$};
		\draw ({3*sin(144)},{3*cos(144)}) -- ({3*sin(72)},{3*cos(72)}) node[pos=0.5] (8) {$\empty$};
		\draw ({3*sin(144)},{3*cos(144)}) -- ({3*sin(108)},{3*cos(108)}) node[pos=0.5] (9) {$\empty$};

		\draw[red,fill=white] ({3*sin(0)},{3*cos(0)}) circle (0.1cm);
		\draw[red,fill=white] ({3*sin(72)},{3*cos(72)}) circle (0.1cm);
		\draw[red,fill=white] ({3*sin(144)},{3*cos(144)}) circle (0.1cm);
		\draw[red,fill=white] ({3*sin(216)},{3*cos(216)}) circle (0.1cm);
		\draw[red,fill=white] ({3*sin(288)},{3*cos(288)}) circle (0.1cm);
		\draw[fill=green,green] ({3*sin(36)},{3*cos(36)}) circle (0.1cm);
		\draw[fill=green,green] ({3*sin(108)},{3*cos(108)}) circle (0.1cm);
		\draw[fill=green,green] ({3*sin(180)},{3*cos(180)}) circle (0.1cm);
		\draw[fill=green,green] ({3*sin(252)},{3*cos(252)}) circle (0.1cm);
		\draw[fill=green,green] ({3*sin(324)},{3*cos(324)}) circle (0.1cm);

		\draw[red,dashed,->] (2) -- (1) node[pos=0.75] (A) {$\empty$};
		\draw[red,dashed,->] (6) -- (1) node[pos=0.83] (B) {$\empty$};
		\draw[red,dashed,->] (1) -- (5) node[pos=0.25] (C) {$\empty$};
		\draw[red,dashed,->] (1) -- (3) node[pos=0.25] (D) {$\empty$} node[pos=0.75] (E) {$\empty$};
		\draw[red,dashed,->] (5) -- (4);
		\draw[red,dashed,->] (3) -- (8) node[pos=0.25] (F) {$\empty$};
		\draw[red,dashed,->] (9) -- (8);
		\draw[red,dashed,->] (7) -- (3);
		\draw[red,dotted] (A) to [out=180,in=135] (C);
		\draw[red,dotted] (B) to [out=15,in=280] (D);
		\draw[red,dotted] (E) to [out=290,in=220] (F);
	\end{tikzpicture}
	\]
	Accordingly, we have the associated piano quiver $(Q^{\Delta},I^{\Delta},B^{\Delta})$, where the relations of $I^{\Delta}$ are denoted by the dotted paths between arrows.
	\[
	\begin{tikzcd}
		\bullet \arrow[drr, ""{name=UL,pos=0.75}] &&&& \circ \arrow[rr] && \bullet &&\\
		&& \circ \arrow[urr,""{name=UR,pos=0.25}] \arrow[drr,""{name=LC,pos=0.25},""{name=LCR,pos=0.75}] &&&& \circ && \bullet \arrow[ll]\\
		\bullet \arrow[urr,""{name=LL,pos=0.75}] &&&& \circ \arrow[urr,""{name=LR,pos=0.25}] && \bullet  \arrow[ll] &&
		\arrow[from=UL, to=UR,dotted, bend left=30,-]
		\arrow[from=LL,to=LC,dotted, bend right=30,-, shorten <= 5pt, shorten >= 5pt]
		\arrow[from=LCR,to=LR,dotted, bend left=30,-]
	\end{tikzcd}
	\]
	We omit drawing the loops, and instead vertices with a loop in both positive and negative degrees are denoted by $\circ$, and vertices with a loop in only negative degree are denoted by $\bullet$.
\end{example}

\subsection{Classical Generators and Admissible Dissections of a Disc}

Throughout this section, we use the notation $(D^2,M_n,\emptyset)$ to mean the marked, unpunctured disc with $\lvert M_{\rcirc} \rvert = \lvert M_{\gbullet} \rvert = n$.
We also consider the marked disc $(D^2,\mathscr{M})$ from \Cref{Sec: Those cats} to be a marked surface $(D^2,M,\emptyset)$, where accumulation points correspond to points in $M_{\rcirc}$, and their cyclic ordering is preserved.

\begin{definition}
	Let $H = \bigoplus_{i=1}^l H_i$ be an object in $\ocC$ such that each $H_i$ is indecomposable and corresponds to a double limit arc.
	Then we say $H$ is a \textit{limit pre-generator} if $H$ is homologically connected, the collection of endpoints of $H$ is $L(\mathscr{M})$, and no two arcs cross.
	We also ask that $H$ is minimal with respect to satisfying these conditions, that is, no proper direct summand of $H$ satisfies these conditions.
	
	We say that two limit pre-generators $H,H'$ are \textit{equivalent} if 
	\[
	\mathrm{add}(H[i] \mid i \in \mathbb{Z}) \xrightarrow{\sim} \mathrm{add}(H'[i]\mid i \in \mathbb{Z})
	\]
	are equivalent as additive categories.
\end{definition}

The naming convention may seem unusual, given that limit pre-generators are not in fact classical generators of $\ocC$, as they do not have a complete orbit in $\overline{\mathscr{M}}$.
However, we will later justify the naming by seeing that every \textit{limit generator} necessarily has a limit pre-generator as a direct summand, and every limit pre-generator is a direct summand of a limit generator.

We also note the implication that any arc in a limit pre-generator $H \in \ocC$ must share an endpoint with another arc in $H$.
This follows from $H$ being homologically connected and no two arcs in $H$ crossing.

\begin{lemma}\label[lem]{Lem: Bijection of admissible dissections}
	There is a bijection between the homeomorphism class of admissible dissections of $(D^2,M_n,\emptyset)$ and equivalence class of limit pre-generators of $\ocC$.
\end{lemma}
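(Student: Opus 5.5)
The plan is to identify the $n$ accumulation points $L(\mathscr{M})$ with the $n$ points of $M_{\rcirc}$ on $(D^2,M_n,\emptyset)$, preserving the cyclic order, as fixed just before the statement. A double limit arc $\{\mathfrak{a},\mathfrak{b}\}$ with $\mathfrak{a}\neq\mathfrak{b}$ then corresponds to a $\rcirc$-arc between the corresponding red points. Crossing of double limit arcs in the sense of \Cref{Def:arcs} is exactly intersection of the corresponding $\rcirc$-arcs in the disc. The map I would use sends a limit pre-generator $H=\bigoplus H_i$ to the collection $\Omega_H$ of $\rcirc$-arcs underlying the $H_i$; for the inverse, an admissible $\rcirc$-dissection $\Omega$ goes to $H_\Omega=\bigoplus_{\gamma\in\Omega}\gamma$. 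Since double limit arcs are fixed by $[1]$ (their endpoints are accumulation points), each $\gamma$ determines its object up to isomorphism, so these assignments are mutually inverse on the level of sets of arcs. The work is to show that each lands in the right class and respects the equivalences.

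First I would check that $\Omega_H$ is an admissible $\rcirc$-dissection. It is a collection of pairwise non-crossing $\rcirc$-arcs by hypothesis. Because the disc has no punctures, admissibility just says no region is cut out without a boundary segment. Such a region would force a polygon of arcs all of whose sides are arcs, and I expect minimality of $H$ to exclude it: removing one side keeps the endpoint set and homological connectivity. For the latter, I would use that $\ocC$-morphisms between double limit arcs sharing an endpoint are given by the rotation description in \Cref{Prop:PY3.14} together with \Cref{Lem: Factoring in Cn}, so connectivity of $H$ amounts to connectivity of the graph on $L(\mathscr{M})$ whose edges are the $H_i$.

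Under that reformulation, a limit pre-generator is precisely a minimal connected non-crossing graph on $n$ cyclically ordered vertices spanning all of them, that is, a non-crossing spanning tree. It then has $n-1$ edges, matching the count $|M_{\rcirc}|+|P|+b+2g-2=n+0+1+0-2=n-1$ of \Cref{Prop: arcs in admissible}. So $\Omega_H$ is admissible with the maximal number of arcs, hence an admissible $\rcirc$-dissection. Conversely, an admissible dissection of the unpunctured disc is a non-crossing collection of $n-1$ arcs with no enclosed region, hence a tree; it is spanning and connected, so $H_\Omega$ is homologically connected, non-crossing, has endpoint set $L(\mathscr{M})$, and is minimal because removing an edge disconnects the tree.

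Finally I must match the equivalences. If $H\sim H'$, an additive equivalence of $\mathrm{add}(H[i])$ with $\mathrm{add}(H'[i])$ preserves the Hom-graphs. The Hom-spaces between the $H_i$ (and their suspensions, which are isomorphic to themselves) record exactly which arcs share an endpoint and in which rotational order. This recovers the ribbon graph structure, i.e.\ the cyclic order of arcs at each red point, which determines the dissection up to orientation-preserving homeomorphism of the disc; the converse direction is the same argument read backwards, a homeomorphism induces such an equivalence. I expect this step to be the main obstacle: it requires computing the endomorphism category of $H$ precisely, including the degree/direction of morphisms at a shared accumulation point, to ensure the additive equivalence class sees the planar embedding and not merely the abstract tree. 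I would attack it first by describing $\mathrm{End}$ of $H$ as the gentle-type category of \Cref{Def: Quiver from dissection} via \Cref{Cor: Forwards Backwards Composition}, and then invoke \Cref{Thm: Gentle to Admissible}.
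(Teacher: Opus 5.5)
Your proposal is correct. Its first half is the paper's argument: identifying $L(\mathscr{M})$ with $M_{\rcirc}$, reducing a limit pre-generator to a non-crossing spanning tree, and matching the count $n-1$ from \Cref{Prop: arcs in admissible}. You do this slightly more carefully than the paper. You check that $\Omega_H$ encloses no region before applying \Cref{Prop: arcs in admissible}, and you verify connectivity and minimality of $H_\Omega$; the paper leaves both implicit or as ``straightforward''.

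The real difference is in matching the equivalence relations. The paper argues directly: if $H\not\sim H'$, then every assignment $\eta$ changes some Hom-space between summands. Since these Hom-spaces record shared endpoints and the direction of rotation, $\varepsilon(H)$ and $\varepsilon(H')$ are not homeomorphic. This is short, but it only argues one direction explicitly. It also tacitly assumes that the pattern of nonzero Hom-spaces, without compositions, determines the additive equivalence class.

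Your route proceeds in three steps:
\begin{enumerate}
\item $\mathrm{add}(H[i]\mid i\in\mathbb{Z})=\mathrm{add}(H)\simeq\mathrm{proj}\,\mathrm{End}(H)$, with $H$ basic.
\item $\mathrm{End}(H)\cong A(\Omega_H)$, or its opposite, which is $A$ of the mirror dissection.
\item Apply \Cref{Thm: Gentle to Admissible}.
\end{enumerate}
This gives both directions at once. It also makes explicit that the homeomorphisms must be orientation-preserving, which is genuinely needed: $A(\Omega)\not\cong A(\Omega)^{\mathrm{op}}$ can happen for tree dissections of the disc.

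When you carry it out, the tool for compositions is \Cref{Lem: Factoring in Cn}, not \Cref{Cor: Forwards Backwards Composition}. Composites within the fan at one accumulation point are nonzero, because the longer morphism factors through the intermediate arc. Composites passing through both endpoints of the middle arc vanish: in a tree the two outer arcs neither share an endpoint nor cross, so the target Hom-space is zero by \Cref{Prop:PY3.14}.

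Also, what the Hom-directions recover at a red point is the linear order of arcs from one boundary side to the other, not just a cyclic order. The ribbon structure alone does not fix how the tree sits against the boundary. Already for a path with three edges, all vertex cyclic orders are trivial, yet the red orders $(a,b,c,d)$ and $(a,b,d,c)$ give non-homeomorphic dissections (linear $A_3$ with a relation versus a source in the middle). Passing through the gentle algebra handles this automatically.
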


\begin{proof}
	We use the assignment $\varepsilon$ taking an accumulation point to a point in $M_{\rcirc}$, and the set of marked point in $\mathscr{M}$ between two successive accumulation points $\mathfrak{a}$ and $\mathfrak{a}^+$ to a point in $M_{\gbullet}$, such that $\varepsilon$ respects the orientation of the disc.
	For an object in $X \in \ocC$, we write $\varepsilon(X)$ to denote the collection of $\rcirc$-arcs induced by the assignment $\varepsilon$.
	
	Then a limit pre-generator $H$ defines a collection of non-intersecting and pairwise non-isomorphic arcs, and so $\varepsilon(H)$ is a collection of non-intersecting and pairwise  $\rcirc$-arcs.
	By \Cref{Prop: arcs in admissible}, it is enough to show that $H$ has $n-1$ indecomposable direct summands to show that $\varepsilon(H)$ is an admissible $\rcirc$-dissection.
	However, as $H$ must be homologically connected, have $n$ endpoints, and is minimal with respect to these properties, then $\varepsilon(H)$ corresponds to a tree with $n$ vertices by considering the non-orientated quiver induced by $\varepsilon(H)$.
	Then basic results in graph theory tell us that there must be exactly $n-1$ $\rcirc$-arcs in the collection $\varepsilon(H)$, hence $\varepsilon(H)$ is an admissible $\rcirc$-dissection for any limit pre-generator $H$.
	
	Suppose that $H,H'$ are not equivalent limit pre-generators.
	As $H$ and $H'$ have the same number of indecomposable direct summands, all of which are double limit arcs and so have isomorphic endomorphism rings, then for any assignment $\eta: \mathrm{add}(H[i]) \rightarrow \mathrm{add}(H'[i])$, there exists at least two indecomposable direct summands $H_1,H_2$ of $H$ such that
	\[
	\ocC(H_1,H_2) \not\cong \ocC (\eta(H_1),\eta(H_2)).
	\]
	We know $H_1,H_2$ are indecomposable, and $\mathrm{Hom}$-spaces of indecomposable objects in $\ocC$ have dimension either 0 or 1 over $k$, and this corresponds to double limit arcs sharing an accumulation as an endpoint in a particular orientation in our situation.
	Therefore there exists $H_1,H_2$ indecomposable direct summands of $H$ such that $\varepsilon(H_1)$ and $\varepsilon(H_2)$ share an endpoint, with $\varepsilon(H_2)$ a rotation of $\varepsilon(H_1)$ in the positive orientation if and only if this is not true for $\varepsilon(\eta(H_1))$ and $\varepsilon(\eta(H_2))$, for any assignment $\eta$.
	Hence $\varepsilon(H)$ and $\varepsilon(H')$ are non-equivalent admissible dissections.
	
	Finally, given an admissible dissection $\Delta$, it is straightforward to see that $\varepsilon^{-1}(\Delta)$ is a collection of $n-1$ non-intersecting and pairwise different double limit arcs, and so is a limit pre-generator of $\ocC$.
	Thus we have a bijection between the equivalence class of limit pre-generators of $\ocC$ and the homeomorphism class of admissible dissections of $(D^2,M_n,\emptyset)$.
\end{proof}

The following result follows immediately from \Cref{Lem: Bijection of admissible dissections} and \Cref{Thm: Gentle to Admissible}.

\begin{corollary}
	There is a bijection between the equivalence classes of limit pre-generators in $\ocC$ and the equivalence classes of gentle algebras whose quivers are trees with $n-1$ vertices.
\end{corollary}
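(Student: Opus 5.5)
The corollary is a composite of two bijections, so the plan is to build it in two steps and then check that the image of the second map is exactly the stated class.

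First, \Cref{Lem: Bijection of admissible dissections} identifies equivalence classes of limit pre-generators of $\ocC$ with homeomorphism classes of admissible $\rcirc$-dissections of $(D^2,M_n,\emptyset)$, via the assignment $\varepsilon$. Second, \Cref{Thm: Gentle to Admissible} sends a marked surface with an admissible $\rcirc$-dissection $\Omega$ to the algebra $A(\Omega)$. This is a bijection onto isomorphism classes of locally gentle algebras, and the surfaces with $P_{\rcirc}=\emptyset$ correspond to gentle algebras. Restricting it to the unpunctured disc with $n$ red and $n$ green boundary points gives an injection from homeomorphism classes of admissible dissections of $(D^2,M_n,\emptyset)$ into isomorphism classes of gentle algebras. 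The corollary follows once I identify the image of this injection as the gentle algebras whose quivers are trees with $n-1$ vertices.

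For the forward inclusion, \Cref{Prop: arcs in admissible} with $g=0$, $b=1$, $P=\emptyset$ shows that any admissible dissection of $(D^2,M_n,\emptyset)$ has exactly $n-1$ arcs. So $\widehat{Q}(\Omega)$ has $n-1$ vertices. To see the quiver is a tree, I will argue that each arrow of $\widehat{Q}(\Omega)$ joins two arcs meeting at a common endpoint, with consecutive arcs around each $\rcirc$-point. Hence the arrows around a fixed $\rcirc$-point form a linear path, not a cycle: on the boundary, the anticlockwise order around that point starts and ends at boundary segments. A cycle in the underlying graph of $\widehat{Q}(\Omega)$ would then force a cycle of arcs enclosing a subdisc with no boundary segment, which is impossible in a disc without punctures. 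The count also checks out: the graph whose vertices are the $\rcirc$-points and whose edges are the arcs is a tree with $n$ vertices and $n-1$ edges, as in the proof of \Cref{Lem: Bijection of admissible dissections}. Its line graph, with consecutive-only adjacency at each vertex, is connected and has $n-2$ edges, so it is a tree.

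For the reverse inclusion, I start from a gentle algebra whose quiver is a tree with $n-1$ vertices. \Cref{Thm: Gentle to Admissible} gives a marked surface with an admissible dissection and $P_{\rcirc}=\emptyset$. I then need to show that this surface is the disc with $n$ points of each colour and no green punctures. I expect this to be the main obstacle. The plan is to use the standard invariants in the bijection of \cite{OpperPlamondonSchroll}: the genus and the number of boundary components come from the Euler characteristic and the permitted threads. A tree quiver has no cycles, so no closed forbidden threads occur and there are no $\gbullet$-punctures. For a tree, $\lvert Q_1\rvert - \lvert Q_0\rvert = -1$, which forces $g=0$ and $b=1$, i.e.\ a disc. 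The count of \Cref{Prop: arcs in admissible}, namely $n-1=\lvert M_{\rcirc}\rvert +1-2$, then gives $\lvert M_{\rcirc}\rvert=n$, and alternation of colours on the boundary gives $\lvert M_{\gbullet}\rvert=n$.

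Composing the two bijections yields the claimed correspondence.
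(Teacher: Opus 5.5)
Your proposal is correct and follows the same route as the paper, which obtains the corollary by composing \Cref{Lem: Bijection of admissible dissections} with the bijection of \Cref{Thm: Gentle to Admissible}. The paper calls this immediate, whereas you also check that the image of the restricted bijection is exactly the gentle algebras whose quivers are trees on $n-1$ vertices: one direction uses the arrow count $\sum_p (d_p - 1) = n-2$, and the converse uses $\lvert Q_1\rvert - \lvert Q_0\rvert = \lvert P\rvert + b + 2g - 2$, which follows from $\lvert M_{\rcirc}\rvert = 2\lvert Q_0\rvert - \lvert Q_1\rvert$ and \Cref{Prop: arcs in admissible}; the paper leaves this check implicit.
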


\begin{example}\label[ex]{Ex: a gentle quiver}
	Consider the extended admissible $\rcirc$-dissection from \Cref{Ex: A keyboard}.
	Then the limit pre-generator associated to the extended admissible $\rcirc$-dissection via \Cref{Lem: extended become admissible} and \Cref{Lem: Bijection of admissible dissections} is as follows.
	\[
	\begin{tikzpicture}
		\draw (0,0) circle (3cm);
		\draw ({3*sin(0)},{3*cos(0)}) -- ({3*sin(216)},{3*cos(216)}) node[pos=0.5] (1) {$\empty$};
		\draw ({3*sin(0)},{3*cos(0)}) -- ({3*sin(324)},{3*cos(324)}) node[pos=0.5] (2) {$\empty$};
		\draw ({3*sin(0)},{3*cos(0)}) -- ({3*sin(72)},{3*cos(72)}) node[pos=0.5] (3) {$\empty$};
		\draw ({3*sin(252)},{3*cos(252)}) -- ({3*sin(216)},{3*cos(216)}) node[pos=0.5] (4) {$\empty$};
		\draw ({3*sin(288)},{3*cos(288)}) -- ({3*sin(216)},{3*cos(216)}) node[pos=0.5] (5) {$\empty$};
		\draw ({3*sin(180)},{3*cos(180)}) -- ({3*sin(216)},{3*cos(216)}) node[pos=0.5] (6) {$\empty$};
		\draw ({3*sin(36)},{3*cos(36)}) -- ({3*sin(72)},{3*cos(72)}) node[pos=0.5] (7) {$\empty$};
		\draw ({3*sin(144)},{3*cos(144)}) -- ({3*sin(72)},{3*cos(72)}) node[pos=0.5] (8) {$\empty$};
		\draw ({3*sin(144)},{3*cos(144)}) -- ({3*sin(108)},{3*cos(108)}) node[pos=0.5] (9) {$\empty$};
		\draw[fill=white] ({3*sin(0)},{3*cos(0)}) circle (0.1cm);
		\draw[fill=white] ({3*sin(36)},{3*cos(36)}) circle (0.1cm);
		\draw[fill=white] ({3*sin(72)},{3*cos(72)}) circle (0.1cm);
		\draw[fill=white] ({3*sin(108)},{3*cos(108)}) circle (0.1cm);
		\draw[fill=white] ({3*sin(144)},{3*cos(144)}) circle (0.1cm);
		\draw[fill=white] ({3*sin(180)},{3*cos(180)}) circle (0.1cm);
		\draw[fill=white] ({3*sin(216)},{3*cos(216)}) circle (0.1cm);
		\draw[fill=white] ({3*sin(252)},{3*cos(252)}) circle (0.1cm);
		\draw[fill=white] ({3*sin(288)},{3*cos(288)}) circle (0.1cm);
		\draw[fill=white] ({3*sin(324)},{3*cos(324)}) circle (0.1cm);
		\draw[red,dashed,->] (2) -- (1) node[pos=0.75] (A) {$\empty$};
		\draw[red,dashed,->] (6) -- (1) node[pos=0.83] (B) {$\empty$};
		\draw[red,dashed,->] (1) -- (5) node[pos=0.25] (C) {$\empty$};
		\draw[red,dashed,->] (1) -- (3) node[pos=0.25] (D) {$\empty$} node[pos=0.75] (E) {$\empty$};
		\draw[red,dashed,->] (5) -- (4);
		\draw[red,dashed,->] (3) -- (8) node[pos=0.25] (F) {$\empty$};
		\draw[red,dashed,->] (9) -- (8);
		\draw[red,dashed,->] (7) -- (3);
		\draw[red,dotted] (A) to [out=180,in=135] (C);
		\draw[red,dotted] (B) to [out=15,in=280] (D);
		\draw[red,dotted] (E) to [out=290,in=220] (F);
	\end{tikzpicture}
	\]
	Where the corresponding gentle algebra is given by following the quiver with relations.
	\[
	\begin{tikzcd}
		\circ \arrow[drr, ""{name=UL,pos=0.75}] &&&& \circ \arrow[rr] && \circ &&\\
		&& \circ \arrow[urr,""{name=UR,pos=0.25}] \arrow[drr,""{name=LC,pos=0.25},""{name=LCR,pos=0.75}] &&&& \circ && \circ \arrow[ll]\\
		\circ \arrow[urr,""{name=LL,pos=0.75}] &&&& \circ \arrow[urr,""{name=LR,pos=0.25}] && \circ \arrow[ll] &&
		\arrow[from=UL, to=UR,dotted, bend left=30,-]
		\arrow[from=LL,to=LC,dotted, bend right=30,-, shorten <= 5pt, shorten >= 5pt]
		\arrow[from=LCR,to=LR,dotted, bend left=30,-]
	\end{tikzcd}
	\]
\end{example}

\begin{definition}
	Let $G \cong \bigoplus_{a=1}^m G_a$ be a minimal generator of $\ocC$.
	We call $G$ a \textit{limit generator}, if $G_a$ is a (double) limit arc, and $G_a$ and $G_b[i]$ do not cross each other for any $i \in \mathbb{Z}$, for all $1 \leq a,b \leq m$.
\end{definition}

\begin{remark}\label[rem]{Rem: One arc per section}
A straightforward observation from the definition of a limit generator tells us that there is exactly one limit arc with an endpoint in the open interval $(a_i,a_{i+1}) \subset \overline{\mathscr{M}}$ for all $1 \leq i \leq n$.
This is as if two arcs had an endpoint in such an open interval, then some suspension of the arcs would cross each other, or be isomorphic, and so would not be a limit generator.
There must also be at least one arc with an endpoint in the open interval, as $G$ must have a complete orbit in $\overline{\mathscr{M}}$ to be a classical generator by \Cref{Thm:Gens}.
\end{remark}

\begin{lemma}
	Let $G \in \ocC$ be a limit generator.
	Then $G \cong \prescript{p}{\empty}{G} \oplus \prescript{l}{\empty}{G}$, where $\prescript{p}{\empty}{G}$ is a limit pre-generator with $n-1$ indecomposable direct summands, and $\prescript{l}{\empty}{G}$ is a set of $n$ non-isomorphic, non-intersecting limit arcs.
\end{lemma}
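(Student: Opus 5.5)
The plan is to split the indecomposable summands of $G$ by arc type. Let $\prescript{p}{\empty}{G}$ be the sum of the summands that are double limit arcs, and $\prescript{l}{\empty}{G}$ the sum of those that are limit arcs. Since every summand of a limit generator is a (double) limit arc, $G \cong \prescript{p}{\empty}{G} \oplus \prescript{l}{\empty}{G}$ holds automatically. What must be shown is that $\prescript{l}{\empty}{G}$ consists of exactly $n$ non-isomorphic, non-crossing limit arcs, and that $\prescript{p}{\empty}{G}$ is a limit pre-generator with $n-1$ summands.

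\emph{Limit arcs.} For $\prescript{l}{\empty}{G}$, apply \Cref{Rem: One arc per section}. Each open interval $(a_i,a_{i+1})$ contains the $\sM$-endpoint of exactly one limit-arc summand of $G$. A limit arc has exactly one endpoint in $\sM$, so this gives exactly $n$ limit-arc summands. They are pairwise non-isomorphic because their $\sM$-endpoints lie in distinct intervals. They are non-crossing by the definition of a limit generator (take $i=0$).

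\emph{Double limit arcs.} The summands of $\prescript{p}{\empty}{G}$ are double limit arcs and pairwise non-crossing. It remains to check three things: their endpoints exhaust $L(\sM)$, $\prescript{p}{\empty}{G}$ is homologically connected, and it is minimal. I will argue via \Cref{Thm:Gens}. By \Cref{Prop:PY3.14} and \Cref{Lem: Factoring in Cn}, the only nonzero morphisms between summands of $\langle G\rangle$ that join a limit arc $\{\mathfrak a, x\}$ to anything else go through the shared accumulation point $\mathfrak a$. A limit arc in $\prescript{l}{\empty}{G}$ has its $\sM$-endpoint in an interval containing no other summand's $\sM$-endpoint, so homological connectedness of $G$ must be realised through accumulation points.

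\emph{Connectedness and covering of $L(\sM)$.} Form the graph on $L(\sM)$ whose edges are the double limit arcs of $\prescript{p}{\empty}{G}$. Each limit arc only hangs a leaf off one accumulation point. I claim this graph must be connected and span all $n$ accumulation points. If two accumulation points lay in different components, then the components of $G$ (the double limit arcs together with the limit arcs attached at those accumulation points) would split $G$ into two summands with no chain of nonzero morphisms between them. That contradicts homological connectedness in \Cref{Thm:Gens}. Here I will use that non-crossing arcs meeting at different accumulation points have zero Hom, and that suspension fixes accumulation-point endpoints.

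\emph{Minimality and the count.} Minimality of $G$ forces the graph to be a tree: if some edge could be removed while keeping the graph connected, the summand $G$ minus that edge would still have a complete orbit and be homologically connected, contradicting minimality. A tree on $n$ vertices has exactly $n-1$ edges, and minimality of $\prescript{p}{\empty}{G}$ as a pre-generator follows the same way, as in the proof of \Cref{Lem: Bijection of admissible dissections}. The main obstacle is this step, turning the abstract homological connectedness of $G$ into connectivity of the accumulation-point graph. For that I will argue directly with the Hom-criterion of \Cref{Prop:PY3.14}, taking care of the case $n=1$, where $\prescript{p}{\empty}{G}=0$.
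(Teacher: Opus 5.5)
\textbf{Verdict.} Your route is the same as the paper's, but it has one real gap: nothing in your argument shows that every accumulation point is an endpoint of some summand of $G$, and the connectivity step depends on this.

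\textbf{Comparison with the paper.} You split off the limit arcs using \Cref{Rem: One arc per section}, treat the double limit arcs as edges of a graph on $L(\mathscr{M})$, get connectivity from homological connectedness, and get a tree (hence $n-1$ edges) from minimality. For the tree step you delete an edge of a cycle and re-apply \Cref{Thm:Gens}, where the paper cites an external lemma; either works.

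\textbf{The gap.} Your splitting argument only shows that the accumulation points which are endpoints of some summand of $G$ lie in one component. An accumulation point $\mathfrak{b}$ that is an endpoint of no summand is an isolated vertex carrying nothing. Splitting $G$ along components then produces an empty piece, so there is no contradiction.

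The property your argument uses, chains of nonzero morphisms between shifts of summands, cannot rule this out. Take $n=2$ and $\{\mathfrak a_1,z_1\}\oplus\{\mathfrak a_1,z_2\}$ with $z_1,z_2$ in the two different open intervals. This object:
\begin{itemize}
\item has nonzero morphisms between its summands, since they share $\mathfrak a_1$ (\Cref{Prop:PY3.14});
\item meets every interval, so it has complete orbit in $\mathscr{M}$;
\item is non-crossing up to shift;
\item has no double limit arc at all.
\end{itemize}
What excludes it is that $\mathfrak a_2$ is not an endpoint. So the orbit condition must be read in $\overline{\mathscr{M}}$, with accumulation points included. That is how \Cref{Rem: One arc per section} and the discussion of limit pre-generators use it; \Cref{Thm:Gens} as literally stated only speaks of $\mathscr{M}$.

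\textbf{The fix.} State this input explicitly: every accumulation point is an endpoint of a summand of $G$. Then every vertex of your graph carries a summand. If the graph were disconnected, $G$ would split into two non-empty parts that share no endpoint and do not cross up to shift, so by \Cref{Prop:PY3.14} all morphisms between their shifts vanish, contradicting homological connectedness. The same input is needed in your minimality step: deleting an edge of a cycle keeps every accumulation point an endpoint, because a connected graph on $n\geq 2$ vertices has no isolated vertex.

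The paper's proof has the same weak point. It begins by choosing limit arcs at two arbitrary accumulation points, but these need not exist; in the fan generator every limit arc ends at $\mathfrak a_1$. So the same ingredient must be supplied there too.
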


\begin{proof}
	By definition, all indecomposable direct summands of $G$ are either limit arcs, or double limit arcs, such that no two arcs intersect up to suspension, but may cross.
	Therefore, for each interval $(\mathfrak{a}, \mathfrak{a}^+) \subset \overline{\mathscr{M}}$, there must be exactly one limit arc $G_a$ with an endpoint in $(\mathfrak{a}, \mathfrak{a}^+)$, see \Cref{Rem: One arc per section}.
	Label the collection of limit arcs in $G$ by $\prescript{l}{\empty}{G}$.
	If two limit arcs in $\prescript{l}{\empty}{G}$ intersect up to suspension, then $G$ is not a limit generator by definition, and if they are isomorphic up to suspension, then $G$ is no longer minimal as a proper direct summand of $G$ is a generator, and so $G$ would not a limit generator by definition.
	Hence there is a sub-collection $\prescript{l}{\empty}{G}$ of $n$ non-intersecting limit arcs in $G$, one for each subset $(\mathfrak{a}, \mathfrak{a}^+) \subset \overline{\mathscr{M}}$.
	
	All other indecomposable direct summands of $G$ must then be double limit arcs, so we need to show that there are $n-1$ of these direct summands, that the object $\prescript{p}{\empty}{G}$ is homologically connected and has a collection of endpoints equal to $L(\mathscr{M})$.
	Let $\mathfrak{a}_1,\mathfrak{a}_m$ be distinct accumulation points, and let $G_a,G_b$ be limit arcs with an endpoint at $\mathfrak{a}_1$ and $\mathfrak{a}_m$ respectively.
	Then $G_a,G_b$ do not cross, and $G$ may only be homologically connected if there exists a set of double limit arcs $G_{a_1}, \ldots, G_{a_p}$ such that $G_{a_i} = \{\mathfrak{a}_{r_i},\mathfrak{a}_{r_{i+1}}\}$.
	This is because no two distinct arcs in $G$ intersect, and limit arcs will only cross other arcs with the same accumulation point.
	Hence, between any two accumulation points, there is a path that transverses a set of double limit arcs.
	As $G$ is minimal, there may only be a single path of this kind by \cite[Lemma 5.4]{Generators}, and so we can consider the accumulation points as vertices, and the double limit arcs as edges on a connected graph.
	As there is exactly one path between any two vertices, then the graph is in fact a tree with $n$ vertices, and so has $n-1$ edges, meaning there must be $n-1$ double limit arcs as indecomposable direct summands of $G$.
	
	The above arguments also show that the collection of endpoints of $\prescript{p}{\empty}{G}$ is $L(\mathscr{M})$, and that $\prescript{p}{\empty}{G}$ must be homologically connected.
	Hence, $\prescript{p}{\empty}{G}$ is a limit pre-generator.
\end{proof}

\begin{lemma}
	Let $\prescript{p}{\empty}{G} \in \ocC$ be a limit pre-generator.
	Then there exists a limit generator $G \in \ocC$ such that $G \cong \prescript{p}{\empty}{G} \oplus \prescript{l}{}{G}$.
\end{lemma}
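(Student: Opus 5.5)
We construct $\prescript{l}{}{G}$ explicitly and then check the definition of a limit generator directly. Let the accumulation points be $\mathfrak{a}_1 < \mathfrak{a}_2 < \cdots < \mathfrak{a}_n$ in cyclic order. For each $i$ choose any marked point $x_i \in \sM$ in the open interval $(\mathfrak{a}_i,\mathfrak{a}_{i+1})$, with indices modulo $n$, and set
\[
\prescript{l}{}{G} := \bigoplus_{i=1}^n \{\mathfrak{a}_i, x_i\},
\qquad
G := \prescript{p}{}{G} \oplus \prescript{l}{}{G}.
\]
Each summand $\{\mathfrak{a}_i,x_i\}$ is the limit arc hugging the boundary segment from $\mathfrak{a}_i$ to $\mathfrak{a}_{i+1}$. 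The suspension moves $x_i$ only within $(\mathfrak{a}_i,\mathfrak{a}_{i+1})$ and fixes $\mathfrak{a}_i$. For $n=1$ the pre-generator is zero, and the construction reduces to a single limit arc.

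\textbf{Step 1 (non-crossing).} We show that $G_a$ and $G_b[i]$ do not cross for all summands $G_a,G_b$ and all $i$. The suspensions of $\prescript{l}{}{G}$ are the arcs $\{\mathfrak{a}_i, y\}$ with $y\in(\mathfrak{a}_i,\mathfrak{a}_{i+1})$. Such an arc cuts off only part of a single boundary segment between consecutive accumulation points, with no accumulation point in its interior. Any arc crossing it must therefore have an endpoint strictly inside $(\mathfrak{a}_i,y)$, and in particular in $(\mathfrak{a}_i,\mathfrak{a}_{i+1})\cap\sM$. Double limit arcs have no such endpoint, and they are fixed by $[1]$. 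Two limit arcs $\{\mathfrak{a}_i,y\}$ and $\{\mathfrak{a}_j,y'\}$ with $i\ne j$ sit in disjoint segments and cannot cross. If $i=j$ they share the endpoint $\mathfrak{a}_i$, so they do not cross either. Finally, the summands of $\prescript{p}{}{G}$ pairwise do not cross by the definition of a limit pre-generator.

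\textbf{Step 2 (classical generator).} We verify the two conditions of \Cref{Thm:Gens}. For the complete orbit: the suspensions of $\{\mathfrak{a}_i,x_i\}$ have endpoints covering all of $(\mathfrak{a}_i,\mathfrak{a}_{i+1})\cap\sM$, together with $\mathfrak{a}_i$. The endpoints of $\prescript{p}{}{G}$ give all of $L(\sM)$. Hence $\sM_G=\osM$.

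Homological connectedness is the step I expect to be the main obstacle, since it concerns all indecomposables in $\langle G\rangle$ and not only the summands. I would argue as follows. Since $\prescript{p}{}{G}$ is homologically connected, its indecomposables are linked by chains of irreducible morphisms. Each limit arc $\{\mathfrak{a}_i,y\}$ shares the endpoint $\mathfrak{a}_i$ with some double limit arc of $\prescript{p}{}{G}$ (all accumulation points occur as endpoints), or for $n=1$ there is nothing to connect. Sharing an accumulation endpoint gives a nonzero morphism by \Cref{Prop:PY3.14}, up to a shift. Using \Cref{Lem: Factoring in Cn}, this morphism factors as a composite of irreducible morphisms through arcs with the same endpoint $\mathfrak{a}_i$, obtained by rotating the other endpoint. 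These intermediate arcs lie in $\langle G\rangle$ via the triangles $\{\mathfrak{a},u\}\to\{u,v\}\to\{\mathfrak{a},v\}\to$ recalled earlier. Running the same argument for objects generated inside $\langle G\rangle$ then links every indecomposable of $\langle G\rangle$ to a summand. If this direct argument becomes messy, the fallback is to invoke the characterisation in \cite{Generators}: homological connectedness of an object whose arcs are pairwise non-crossing is equivalent to the graph on endpoints, with arcs as edges and accumulation points as vertices, being connected. That graph here is the tree of $\prescript{p}{}{G}$ with one pendant edge attached at each $\mathfrak{a}_i$, hence connected.

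\textbf{Step 3 (minimality).} Removing a limit arc $\{\mathfrak{a}_i,x_i\}$ loses the orbit $(\mathfrak{a}_i,\mathfrak{a}_{i+1})\cap\sM$, because no other summand has an endpoint there. Removing a double limit arc disconnects the tree, so homological connectedness fails by \cite[Lemma 5.4]{Generators}. In both cases \Cref{Thm:Gens} fails, so no proper summand of $G$ is a generator, and $G$ is a limit generator of the required form.
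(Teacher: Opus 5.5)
Your construction is the paper's: one limit arc $\{\mathfrak{a}_i,x_i\}$ with $x_i\in(\mathfrak{a}_i,\mathfrak{a}_i^+)$ for each accumulation point, then non-crossing, complete orbit and homological connectedness fed into \Cref{Thm:Gens}; your Step~3 also supplies the minimality check (required by the second definition of limit generator) that the paper's one-line proof leaves implicit. One caution: your ``direct'' connectedness argument fails as written, because rotating the endpoint $y$ one marked point at a time never reaches an accumulation point, so a morphism from $\{\mathfrak{a}_i,y\}$ to a double limit arc is not a finite composite of irreducible morphisms --- rely instead on your fallback of connectivity of the endpoint graph, which is how the paper itself treats homological connectedness via \cite[Lemma 5.4]{Generators}.
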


\begin{proof}
	Let $\prescript{l}{\empty}{G}$ be collection of limit arcs such that each indecomposable direct summand $\prescript{l}{\empty}{G}_a$ of $\prescript{l}{\empty}{G}$ has endpoints $\{\mathfrak{a}_a,x_a\}$, where $x_a \in (\mathfrak{a}_a,\mathfrak{a}_a^+)$.
	Then it is a simple observation to see that $\prescript{p}{\empty}{G} \oplus \prescript{l}{}{G}$ is a collection of non-intersecting limit and double limit arcs, such that $\prescript{p}{\empty}{G} \oplus \prescript{l}{}{G}$ is homologically connected and has a complete orbit in $\overline{\mathscr{M}}$, and hence is a generator of $\ocC$ by \Cref{Thm:Gens}.
\end{proof}

The above lemma is not an exhaustive list of limit generators containing the limit pre-generator $\prescript{p}{\empty}{G}$.
It is only intended to show the existence of a limit generator for any limit pre-generator.

\begin{proposition}\label[prop]{Prop: Bijection of ext admissible dissections}
	There is a bijection between the set of equivalence classes of limit generators in $\ocC$ and the homeomorphism class of extended admissible $\rcirc$-dissections of $(D^2,M_n,\emptyset)$.
\end{proposition}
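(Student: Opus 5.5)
We extend the assignment $\varepsilon$ from the proof of \Cref{Lem: Bijection of admissible dissections} and reduce to that lemma. Recall that $\varepsilon$ sends each accumulation point of $\overline{\mathscr{M}}$ to a point of $M_{\rcirc}$ and each open interval $(\mathfrak{a},\mathfrak{a}^+)\cap\mathscr{M}$ to a single point of $M_{\gbullet}$, respecting orientation. Under $\varepsilon$, a double limit arc goes to a $\rcirc$-arc. A limit arc $\{\mathfrak{a},x\}$ with $x\in(\mathfrak{b},\mathfrak{b}^+)$ goes to a binding arc from $\varepsilon(\mathfrak{a})$ to the green point $\varepsilon((\mathfrak{b},\mathfrak{b}^+))$. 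The key point is that $\varepsilon$ forgets exactly the position of $x$ inside its interval, and this is precisely the information killed by passing to $\mathrm{add}(G[i]\mid i\in\mathbb{Z})$, since suspension moves $x$ within its interval and fixes accumulation points.

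First, given a limit generator $G$, write $G\cong \prescript{p}{}{G}\oplus\prescript{l}{}{G}$ using the preceding lemma. By \Cref{Lem: Bijection of admissible dissections}, $\varepsilon(\prescript{p}{}{G})$ is an admissible $\rcirc$-dissection of $(D^2,M_n,\emptyset)$. By \Cref{Rem: One arc per section}, $\prescript{l}{}{G}$ has exactly one limit arc with an endpoint in each interval $(\mathfrak{a},\mathfrak{a}^+)$, so $\varepsilon(\prescript{l}{}{G})$ consists of $n$ binding arcs, one at each point of $M_{\gbullet}$. We then check that the whole collection is pairwise non-intersecting. The condition that $G_a$ and $G_b[i]$ never cross means that no representative position of the finite endpoint creates a crossing, and this is exactly non-crossing of the image arcs in the surface where each interval has collapsed to a single green point. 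Arcs sharing the same green endpoint cannot occur, again by \Cref{Rem: One arc per section}. Hence $\varepsilon(G)$ is an extended admissible $\rcirc$-dissection. (The count $2n+n-1$ agrees with \Cref{Cor: Arcs in Extended}, since $n+n+0+0+1+0-2+n=3n-1$.)

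Second, we show the map is well defined and injective on equivalence classes. If $G\sim G'$, there is an additive equivalence of the suspension-closed additive hulls. Because $\mathrm{Hom}$-spaces between indecomposables are $0$ or $k$, and are governed by crossing and by rotation about a common accumulation point (\Cref{Prop:PY3.14}), such an equivalence preserves the combinatorics of endpoint sharing and rotation order at each $\rcirc$-point. That combinatorics is what determines $\varepsilon(G)$ up to orientation-preserving homeomorphism. Conversely, if $\varepsilon(G)$ and $\varepsilon(G')$ are homeomorphic, we build the equivalence by matching summands and their suspension orbits, as in the non-equivalence argument of \Cref{Lem: Bijection of admissible dissections}. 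Surjectivity follows by lifting: given an extended dissection $\Delta$, take $\varepsilon^{-1}$ of its $\rcirc$-arcs as a limit pre-generator, via \Cref{Lem: Bijection of admissible dissections}. For each binding arc to the green point of $(\mathfrak{b},\mathfrak{b}^+)$, choose any $x$ in that interval. The resulting object is non-crossing up to suspension and has a complete orbit. It is homologically connected because the double limit arcs form a tree spanning $L(\mathscr{M})$ and each limit arc shares an accumulation point with that tree. So it is a generator by \Cref{Thm:Gens}; it is minimal by counting summands.

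The main obstacle is the well-definedness and injectivity step: showing that an abstract additive equivalence of $\mathrm{add}(G[i])$ really recovers the cyclic order of arcs around each $\rcirc$-point, including where binding arcs sit relative to $\rcirc$-arcs. We plan to handle this by encoding that cyclic order in the quiver of irreducible maps among the summands, i.e. the keyboard quiver, which is an additive invariant. We then invoke \Cref{Thm: Gentle to Admissible} together with \Cref{Lem: extended become admissible} to recover the dissection up to homeomorphism from this quiver.
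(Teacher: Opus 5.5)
Your route is the paper's: split $G\cong{}^{p}G\oplus{}^{l}G$, push it through $\varepsilon$ using \Cref{Lem: Bijection of admissible dissections} and \Cref{Rem: One arc per section}, and lift back for surjectivity by choosing an arbitrary point in each interval. Checking the definition of an extended dissection directly, and observing that $\varepsilon$ forgets exactly the shift-orbit data, are both sound.

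Two things need repair. First, the parenthetical count is wrong. $\varepsilon(G)$ has $(n-1)+n=2n-1$ arcs, and \Cref{Cor: Arcs in Extended} gives $\lvert M\rvert+\lvert P\rvert+\lvert P_{\gbullet}\rvert+b+2g-2=2n+0+0+1+0-2=2n-1$. Your extra $+n$ counts the binding arcs twice, since they are already accounted for through $\lvert M_{\gbullet}\rvert$, and $3n-1$ contradicts the decomposition lemma. Relatedly, ``minimal by counting summands'' should take the count $2n-1$ from that decomposition lemma. It should not come from \Cref{Cor: Indecomposables in limit gens}, which is deduced from the proposition you are proving.

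The real gap is the mechanism you propose for the step you call the main obstacle; the paper itself only says ``analogous arguments'' there. The relation $\sim$ is an abstract additive equivalence $F$ of $\mathrm{add}(G[i]\mid i\in\mathbb{Z})$, and it is not required to commute with $[1]$. So a priori $F$ need not send the summands of $G$ to shifts of pairwise distinct summands of $G'$, and the Gabriel quiver of $\mathrm{End}(G)$ is not visibly an invariant.

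Nor can you read the keyboard quiver off the irreducible maps of $\mathrm{add}(G[i]\mid i\in\mathbb{Z})$. By \Cref{Lem: Factoring in Cn}, a non-zero map between $\{\mathfrak{a},x\}$ and another arc at $\mathfrak{a}$ factors through the neighbouring shift $\{\mathfrak{a},x^{\pm}\}$. Hence no arrow at a sharp vertex is irreducible there, and each limit-arc orbit is an isolated $A_\infty^\infty$ component of the irreducible quiver. Moreover, \Cref{Thm: Gentle to Admissible} only returns $\widetilde{\Delta}$. Recovering $\Delta$ also needs $B$, which the paper leaves as an open question for the keyboard quiver alone.

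A workable replacement reads everything off the Hom-pattern of $\mathrm{add}(G[i]\mid i\in\mathbb{Z})$:
\begin{itemize}
\item Since no summands cross up to shift, \Cref{Prop:PY3.14} says non-zero Homs between its indecomposables occur only between arcs with a common accumulation point, directed by rotation.
\item Because the double limit arcs form a tree, any family of pairwise Hom-comparable indecomposables has a common accumulation point and is linearly ordered by Hom-direction.
\item The shift orbits of the limit arcs are exactly the infinite components of the relation ``adjacent in one of these linear orders''. Orbits have no first or last element, while the double limit arcs are shift-fixed and finite in number.
\end{itemize}
$F$ preserves all of this. So it maps orbits to orbits and recovers both $B$ and the linear order of arcs at each $\rcirc$-point, hence $\varepsilon(G)$ up to orientation-preserving homeomorphism. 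This is also what would legitimise your keyboard-quiver argument, since then $F(G)\cong\bigoplus_a G'_{\sigma(a)}[j_a]$.

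Orientation genuinely matters here. For $n=2$, binding each $\gbullet$-point to its clockwise versus its anticlockwise $\rcirc$-neighbour gives mirror-image dissections. Their categories $\mathrm{add}(G[i]\mid i\in\mathbb{Z})$ are mutually opposite and not equivalent.
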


\begin{proof}
	Let $\varepsilon$ be the assignment from $\overline{\mathscr{M}}$ to $M_n$ as in the proof of \Cref{Lem: Bijection of admissible dissections}.
	We again denote $\varepsilon(G)$ the collection of $\rcirc$-arcs and binding arcs induced by a limit generator $G \in \ocC$.
	The description of the assignment $\varepsilon$ implies that limit arcs in $G$ are taken to binding arcs, and double limit arcs are taken to $\rcirc$-arcs.
	
	By \Cref{Lem: Bijection of admissible dissections}, we know that $\varepsilon(\empty^p G)$ is an admissible dissection of $(D^2,M_n, \emptyset)$, so we need to show that the limit arcs of $G$ map to a collection of binding arcs such that $\varepsilon(G)$ is an extended admissible dissection.
	To do this, observe that no limit arc of $G$ intersects another arc in $G$, and so $\varepsilon(G)$ is a collection of non-intersecting $\rcirc$-arcs and binding arcs.
	There are also $n$ limit arcs of $G$, as there is only one for each subset $(\mathfrak{a}_i,\mathfrak{a}_{i+1}) \subset \overline{\mathscr{M}}$ by \Cref{Rem: One arc per section}, and so there are $n$ non-intersecting and pairwise different binding arcs in $\varepsilon(G)$.
	Hence, $\varepsilon(G)$ is a collection of $2n-1$ non-intersecting and pairwise different $\rcirc$-arcs and binding arcs, and so is an extended admissible dissection by \Cref{Cor: Arcs in Extended}.
	
	Analogous arguments to the proof of Lemma \ref{Lem: Bijection of admissible dissections} show that for two non-equivalent limit generators $G,G'$, then $\varepsilon(G)$ and $\varepsilon(G')$ are not homeomorphic, and that each extended admissible dissection of $(D^2,M_n,\emptyset)$ comes from a limit generator of $\ocC$, by considering when $\varepsilon^{-1}$ takes a point in $M_{\gbullet}$ to a single point $x \in (\mathfrak{a}_i,\mathfrak{a}_{i+1})$.
\end{proof}

\begin{corollary}\label[cor]{Cor: Indecomposables in limit gens}
	A limit generator $G \in \ocC$ has $2n-1$ indecomposable direct summands.
\end{corollary}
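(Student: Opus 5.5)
The count follows by combining the decomposition of a limit generator from the unnamed lemma preceding \Cref{Prop: Bijection of ext admissible dissections} with the arc count of \Cref{Cor: Arcs in Extended}. Concretely, I would first invoke that lemma to write $G \cong \prescript{p}{\empty}{G} \oplus \prescript{l}{\empty}{G}$. Here $\prescript{p}{\empty}{G}$ is a limit pre-generator with exactly $n-1$ indecomposable direct summands, and $\prescript{l}{\empty}{G}$ consists of exactly $n$ pairwise non-isomorphic limit arcs, one for each interval $(\mathfrak{a},\mathfrak{a}^+)$ by \Cref{Rem: One arc per section}.

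The summands of $\prescript{p}{\empty}{G}$ are double limit arcs, while those of $\prescript{l}{\empty}{G}$ are limit arcs. No summand of one can therefore be isomorphic to a summand of the other, so no indecomposable summand is counted twice. Since $G$ is minimal, we may also assume its indecomposable summands are pairwise non-isomorphic. Adding the two counts gives $(n-1)+n = 2n-1$.

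As an independent check, I would use \Cref{Prop: Bijection of ext admissible dissections}. Under it, $\varepsilon(G)$ is an extended admissible $\rcirc$-dissection of $(D^2,M_n,\emptyset)$, and distinct indecomposable summands of $G$ go to distinct arcs. By \Cref{Cor: Arcs in Extended} with $\lvert M\rvert = 2n$, $\lvert P\rvert = \lvert P_{\gbullet}\rvert = 0$, $b=1$ and $g=0$, such a dissection has exactly $2n + 0 + 0 + 1 + 0 - 2 = 2n-1$ arcs.

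The only point needing care is that the count is of pairwise non-isomorphic summands, so that minimality rules out repeated summands. This is immediate, because a repeated summand could be removed while still leaving a classical generator, contradicting minimality.
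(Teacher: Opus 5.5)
Your proof is correct. Your ``independent check'' is exactly the paper's argument: read the count off \Cref{Prop: Bijection of ext admissible dissections} and evaluate \Cref{Cor: Arcs in Extended} at $\lvert M\rvert = 2n$, $\lvert P\rvert = 0$, $b=1$, $g=0$. Your main line, adding the $n-1$ double limit arcs and the $n$ limit arcs from the decomposition lemma, is the more direct form of the same count. It is in fact how the paper's proof of the bijection itself shows that $\varepsilon(G)$ has $2n-1$ arcs.
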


\begin{proof}
	This is a direct consequence of the bijection of \Cref{Prop: Bijection of ext admissible dissections}, and the number of arcs in an extended admissible $\rcirc$-dissection of $(D^2,M_n,\emptyset)$ given by \Cref{Cor: Arcs in Extended}.
\end{proof}

Let $G \in \ocC$ be a limit generator, and let $\Delta$ be the associated extended admissible $\rcirc$-dissection of $(D^2,M_n,\emptyset)$ under the bijection of \Cref{Prop: Bijection of ext admissible dissections}.
Then we will also use the notation $(Q^G,I^G,B^G)$, resp.\ $(\widehat{Q^G},\widehat{I^G},B^G)$, for the piano quiver $(Q^{\Delta},I^{\Delta},B^{\Delta})$, resp.\ the keyboard quiver $(\widehat{Q^{\Delta}},\widehat{I^{\Delta}},B^{\Delta})$.

\begin{example}\label[ex]{Ex: a piano quiver}
	We retain our keyboard quiver used in \Cref{Ex: A keyboard}.
	\[
	\begin{tikzcd}
		\bullet \arrow[drr, ""{name=UL,pos=0.75}] &&&& \circ \arrow[rr] && \bullet &&\\
		&& \circ \arrow[urr,""{name=UR,pos=0.25}] \arrow[drr,""{name=LC,pos=0.25},""{name=LCR,pos=0.75}] &&&& \circ && \bullet \arrow[ll]\\
		\bullet \arrow[urr,""{name=LL,pos=0.75}] &&&& \circ \arrow[urr,""{name=LR,pos=0.25}] && \bullet \arrow[ll] &&
		\arrow[from=UL, to=UR,dotted, bend left=30,-]
		\arrow[from=LL,to=LC,dotted, bend right=30,-, shorten <= 5pt, shorten >= 5pt]
		\arrow[from=LCR,to=LR,dotted, bend left=30,-]
	\end{tikzcd}
	\]
	 The corresponding limit generator of $\ocC$ under \Cref{Prop: Bijection of ext admissible dissections} is as follows.
	\[
	\begin{tikzpicture}
		\draw (0,0) circle (3cm);
		\draw ({3*sin(0)},{3*cos(0)}) -- ({3*sin(216)},{3*cos(216)}) node[pos=0.5] (1) {$\empty$};
		\draw ({3*sin(0)},{3*cos(0)}) -- ({3*sin(324)},{3*cos(324)}) node[pos=0.5] (2) {$\empty$};
		\draw ({3*sin(0)},{3*cos(0)}) -- ({3*sin(72)},{3*cos(72)}) node[pos=0.5] (3) {$\empty$};
		\draw ({3*sin(252)},{3*cos(252)}) -- ({3*sin(216)},{3*cos(216)}) node[pos=0.5] (4) {$\empty$};
		\draw ({3*sin(288)},{3*cos(288)}) -- ({3*sin(216)},{3*cos(216)}) node[pos=0.5] (5) {$\empty$};
		\draw ({3*sin(180)},{3*cos(180)}) -- ({3*sin(216)},{3*cos(216)}) node[pos=0.5] (6) {$\empty$};
		\draw ({3*sin(36)},{3*cos(36)}) -- ({3*sin(72)},{3*cos(72)}) node[pos=0.5] (7) {$\empty$};
		\draw ({3*sin(144)},{3*cos(144)}) -- ({3*sin(72)},{3*cos(72)}) node[pos=0.5] (8) {$\empty$};
		\draw ({3*sin(144)},{3*cos(144)}) -- ({3*sin(108)},{3*cos(108)}) node[pos=0.5] (9) {$\empty$};
		\draw[fill=white] ({3*sin(0)},{3*cos(0)}) circle (0.1cm);
		\draw[fill=white] ({3*sin(72)},{3*cos(72)}) circle (0.1cm);
		\draw[fill=white] ({3*sin(144)},{3*cos(144)}) circle (0.1cm);
		\draw[fill=white] ({3*sin(216)},{3*cos(216)}) circle (0.1cm);
		\draw[fill=white] ({3*sin(288)},{3*cos(288)}) circle (0.1cm);
		\draw[red,dashed,->] (2) -- (1) node[pos=0.75] (A) {$\empty$};
		\draw[red,dashed,->] (6) -- (1) node[pos=0.83] (B) {$\empty$};
		\draw[red,dashed,->] (1) -- (5) node[pos=0.25] (C) {$\empty$};
		\draw[red,dashed,->] (1) -- (3) node[pos=0.25] (D) {$\empty$} node[pos=0.75] (E) {$\empty$};
		\draw[red,dashed,->] (5) -- (4);
		\draw[red,dashed,->] (3) -- (8) node[pos=0.25] (F) {$\empty$};
		\draw[red,dashed,->] (9) -- (8);
		\draw[red,dashed,->] (7) -- (3);
		\draw[red,dotted] (A) to [out=180,in=135] (C);
		\draw[red,dotted] (B) to [out=15,in=280] (D);
		\draw[red,dotted] (E) to [out=290,in=220] (F);
	\end{tikzpicture}
	\]
\end{example}

\section{Graded Endomorphism Ring of Generators in \texorpdfstring{$\ocC$}{Cn}}\label{Sec:Gens}

In this section we look at the graded endomorphism ring of some generators in $\ocC$.
We do not look at all generators, but cover all generators which satisfy some combinatoric criteria, which includes all limit generators.

\subsection{Graded Endomorphism Rings}

We begin by looking at the graded endomorphism ring of indecomposable objects, with the exception of short arcs, before considering objects without short arcs as direct summands.
However, first let us recall the definition of a graded endomorphism ring.

\begin{definition}
Let $M$ be an object of a triangulated category $\cT$, then the \textit{graded endomorphism ring of} $M$, denoted $\Endo[\cT]{\ast}{M}$, is the graded ring with $i^{th}$ degree
\[
\Endo[\cT]{i}{M} = \Ext[\cT]{i}{M}{M} \cong \cT(M)(M[i]).
\]

Let $f \in \Endo[\cT]{i}{M}$ and $g \in \Endo[\cT]{j}{M}$.
Then we define multiplication as
\[
gf := g [i] \circ f \in \Endo[\cT]{i+j}{M}.
\]
\end{definition}

To compute the graded endomorphism ring of a classical generator, it is worth first looking at the graded endomorphism rings of three of the four types indecomposable objects; long arcs, limit arcs and double limit arcs.
We need not consider short arcs as they do not correspond to the direct summand of a minimal generator \cite{Generators}.
We follow the concept of a similar proof found in \cite{ACFGS}.

\begin{lemma}\label[lem]{Lem:EndX}
Let $X \in \ocC$ be an indecomposable object that corresponds to a limit arc.
Then 
\[
\Endo[\ocC]{\ast}{X} \cong k [x]
\]
as graded rings, with $x$ concentrated in degree $-1$.
\end{lemma}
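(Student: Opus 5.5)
We compute $\Endo[\ocC]{\ast}{X}$ degree by degree and then identify the multiplication. Write $X = \{\mathfrak{a}, x\}$ with $\mathfrak{a} \in L(\sM)$ and $x \in \sM$, say $\mathfrak{a} < x < \mathfrak{a}^+$. Since the suspension fixes accumulation points and moves points of $\sM$ to their predecessors, $X[i] = \{\mathfrak{a}, x^{(-i)}\}$, where $x^{(-i)}$ denotes the $i$-th predecessor of $x$ (the $|i|$-th successor when $i<0$). The hom-spaces $\ocC(X, X[i])$ are therefore governed by the relative position of the limit arcs $X$ and $X[i]$, which share exactly the endpoint $\mathfrak{a}$ and otherwise never cross. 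We use the description of $\ocC(-,-)$ via $\mathrm{Ext}^1$ in \Cref{Prop:PY3.14}: $\ocC(X, X[i]) = \ocC(X, X[i-1][1])$. This space is one dimensional exactly when $X = X[i-1]$ is a double limit arc (impossible here) or when $X \neq X[i-1]$ and $X[i-1]$ is obtained from $X$ by rotating about $\mathfrak{a}$ following the orientation of $S^1$. Crossing never occurs, since the two arcs share an endpoint.

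The first step is to decide, using the orientation convention, for which $i$ the rotation condition holds. Rotating $\{\mathfrak{a},x\}$ anticlockwise about $\mathfrak{a}$ moves $x$ toward $\mathfrak{a}^+$ along the orientation, so we need $x^{(-(i-1))}$ to lie strictly after $x$ in the cyclic order on $(\mathfrak{a},\mathfrak{a}^+)$. (If the conventions go the other way, the direction of rotation flips and $x$ is replaced by the arc's other orientation; the argument is symmetric.) Together with $i=0$, where the space is $k\cdot \mathrm{id}$, this gives $\Endo{i}{X}\cong k$ exactly for $i \le 0$ and $0$ for $i>0$. As a sanity check, the identity must appear, and morphisms in degree $-1$ should be the ``rotation toward the successor'' maps $X \to X[-1]$. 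The main obstacle is fixing these sign conventions so that the nonzero degrees are precisely $i \le 0$ rather than $i\ge 0$; I would settle this by comparing with the $n=1$ case computed in \cite{ACFGS}, where $\Endo{\ast}{X}\cong k[x]$ with $\deg x = -1$.

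Next we identify the ring structure. Let $x \in \Endo{-1}{X}$ be a nonzero element, that is, a nonzero map $f\colon X \to X[-1]$. Its power $x^m$ is the composite
\[
X \xrightarrow{f} X[-1] \xrightarrow{f[-1]} X[-2] \to \cdots \to X[-m].
\]
Each factor is a morphism between limit arcs sharing $\mathfrak{a}$, with the free endpoint moving monotonically in one direction. In the terminology before \Cref{Cor: Forwards Backwards Composition}, each factor is a forward morphism: the endpoints are ordered $\mathfrak{a} < x \le x^{+} < \cdots$ with $x_2 \ne \mathfrak{a}$, after choosing the labelling so that the accumulation point is the first endpoint. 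So \Cref{Cor: Forwards Backwards Composition}(1) shows inductively that $x^m \neq 0$. Alternatively, \Cref{Lem: Factoring in Cn} shows directly that the nonzero map $X \to X[-m]$ factors through each intermediate $X[-j]$, because the endpoint inequalities hold, so it equals a nonzero scalar multiple of $x^m$.

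Finally, since every graded piece $\Endo{-m}{X}$ for $m\ge 0$ is one dimensional and spanned by $x^m$, and all positive degrees vanish, the graded $k$-algebra map $k[x]\to \Endo{\ast}{X}$ sending the generator to $x$ (placed in degree $-1$) is an isomorphism. Commutativity is automatic because the algebra is generated by the single element $x$.
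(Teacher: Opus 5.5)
Your proposal is correct and follows the same route as the paper. The degree-wise dimensions ($k$ for $i \le 0$, zero for $i>0$) come from \Cref{Prop:PY3.14}, and the ring structure comes from factoring the degree $-m$ map through an $m$-fold product of degree $-1$ maps, which you justify more explicitly than the paper via \Cref{Lem: Factoring in Cn}. Note that the identity in degree $0$ already fixes the rotation convention, so no appeal to \cite{ACFGS} is needed, and the factoring-lemma argument is the one to rely on rather than \Cref{Cor: Forwards Backwards Composition}, since the forward/backward terminology is defined relative to a base accumulation point and does not cleanly apply when that point is the accumulation endpoint of $X$ (always the case when $n=1$).
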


We follow part of the proof of \cite[Prop. 3.6]{ACFGS}.

\begin{proof}
By \Cref{Prop:PY3.14}, we have 
\[
\dim_k(\Endo[\ocC]{l}{X}) =
\begin{cases*}
1 & if $l \leq 0$,\\
0 & if $l>0$,
\end{cases*}
\]
which agrees on dimensions with $k[x]$ considered as a graded ring with $x$ concentrated in degree $-1$.
All that is left to show is that the ring structures on $\Endo[\ocC]{\ast}{X}$ and $k [x]$ agree.

Let $f \in \Endo{-l}{X}$ be a morphism with $l>0$, then by the construction of $\ocC$ in \cite{Paquette2020} and using \Cref{Prop:PY3.14}, we see $f$ factors through an $l$-fold product $\Endo[\ocC]{-1}{X} \times \ldots \times \Endo[\ocC]{-1}{X}$.
This shows that the graded endomorphism ring of $X$ is isomorphic to $k [x]$ with $x$ in degree $-1$.
\end{proof} 

\begin{lemma}\label[lem]{Lem:EndY}
Let $Y \in \ocC$ be an indecomposable object that corresponds to a double limit arc.
Then 
\[
\Endo[\ocC]{\ast}{Y} \cong k [x^{\pm1}]
\]
as graded rings, with $x$ concentrated in degree $-1$.
\end{lemma}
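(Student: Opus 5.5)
We follow the same strategy as for \Cref{Lem:EndX}: first compute the dimension of each graded piece, then identify the ring structure by showing that a degree $-1$ generator is invertible.

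\textbf{Dimensions.} Write $Y=\{\mathfrak{a},\mathfrak{b}\}$ with $\mathfrak{a},\mathfrak{b}\in L(\mathscr{M})$ distinct. The suspension functor fixes endpoints in $L(\mathscr{M})$, so $Y[i]\cong Y$ for every $i\in\mathbb{Z}$. Hence
\[
\Endo[\ocC]{l}{Y}=\ocC(Y,Y[l])\cong \ocC(Y,Y[1])=\ocC(Y,Y\wb{1})
\]
for every $l$. By the third case of \Cref{Prop:PY3.14}, this space is one dimensional, since $\ell_Y=\ell_Y$ is a double limit arc. Thus $\dim_k\Endo[\ocC]{l}{Y}=1$ for all $l\in\mathbb{Z}$, which matches the dimensions of $k[x^{\pm1}]$ with $x$ in degree $-1$.

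\textbf{Ring structure.} Fix an isomorphism $\varphi\colon Y\to Y[-1]$, which exists because $Y[-1]\cong Y$. This $\varphi$ is a nonzero element $x\in\Endo[\ocC]{-1}{Y}$. Likewise $\varphi^{-1}[1]\colon Y\to Y[1]$ is a nonzero element $y\in\Endo[\ocC]{1}{Y}$. Using the multiplication rule $gf=g[i]\circ f$, we compute
\[
yx=y[-1]\circ x=\varphi^{-1}\circ\varphi=\mathrm{id}_Y,
\]
and similarly $xy=\mathrm{id}_Y$ after shifting. So $x$ is a unit of degree $-1$ in the graded ring.

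It follows that the powers $x^l$, for $l\in\mathbb{Z}$, are all nonzero: each is a composite of shifted isomorphisms, so each is an isomorphism $Y\to Y[-l]$. Each graded piece is one dimensional, so $x^l$ spans $\Endo[\ocC]{-l}{Y}$. Therefore the graded ring homomorphism $k[x^{\pm1}]\to\Endo[\ocC]{\ast}{Y}$ sending $x\mapsto\varphi$ is well defined, since $x$ is invertible, and it is bijective degree by degree. It remains to check that the ring is commutative, which is automatic here: the ring is generated by $x$ and $x^{-1}$, and these commute.

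\textbf{Main obstacle.} The one subtle point is justifying $Y[1]\cong Y$ as objects, not merely that the two arcs coincide. This follows from the bijection between indecomposables and arcs in \cite{Paquette2020}, together with the stated action of the suspension functor on endpoints in $L(\mathscr{M})$. Once this is in place, the argument needs no factorisation analysis, unlike the proof of \Cref{Lem:EndX}.
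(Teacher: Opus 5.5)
Your proof is correct, but it takes a different route from the paper's.

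\textbf{The paper's route.} The paper also gets the dimensions from \Cref{Prop:PY3.14}, but then treats the two halves of the ring separately.
\begin{enumerate}
\item For negative degrees it claims, as in \Cref{Lem:EndX}, that every morphism of degree $-l<0$ factors as an $l$-fold product of degree $-1$ morphisms.
\item For positive degrees it lifts $Y$ to an object $A$ of $\mathcal{C}_{2n}$. It then uses that the localisation functor $\pi$ inverts the degree $-1$ morphisms $t\in\Omega$. Since $\mathcal{C}_{2n}(A,A[1])=0$, every degree $1$ element has the form $\pi(t)^{-1}$, and it is inverse to the degree $-1$ element $\pi(t)$.
\end{enumerate}

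\textbf{Your route.} You obtain an invertible degree $-1$ element directly from the object-level isomorphism $Y\cong Y[-1]$ supplied by the arc model. The combination ``unit of degree $-1$'' plus ``every graded piece is one-dimensional'' then does all the remaining work: the nonvanishing of every power $x^l$ replaces both factorisation arguments.

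\textbf{What each buys.} Your version is shorter and more rigorous, since the paper's factorisation claims are only sketched. The paper's version explains where the invertibility comes from, namely the Verdier localisation $\mathcal{C}_{2n}\to\overline{\mathcal{C}}_n$. You instead take the action of the suspension on double limit arcs as input. Both arguments ultimately rest on the description of the category in \cite{Paquette2020}.
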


\begin{proof}
By \Cref{Prop:PY3.14} we have
\[
\dim_k (\Endo[\ocC]{l}{Y}) = 1
\]
for all $l \in \mathbb{Z}$.
We need to show that multiplication agrees.

Given $g \in \Endo[\ocC]{-l}{Y}$ for $l > 0$, we may use the same approach as in \Cref{Lem:EndX} to show that $g$ factors through an $l$-fold product $\Endo[\ocC]{-1}{Y} \times \ldots \times \Endo[\ocC]{-1}{Y}$.
Hence 
\[
\bigoplus_{l \leq 0} \Endo[\ocC]{l}{Y} \cong k[x]
\]
such that $x$ is placed in degree $-1$.

Let $f \in \Endo{l'}{Y}$ for $l' > 0$, we show that there exists an $l'$-fold product $\Endo[\ocC]{1}{Y} \times \ldots \times \Endo[\ocC]{1}{Y}$.
Let $A \in \cC_{2n}$ such that $\pi(A) \cong Y$ under the localisation functor $\pi$.
There exists an $l$-fold product $h = h_1 \cdots h_{l'} \in \Endo[\cC_{2n}]{-1}{A} \times \ldots \times \Endo[\cC_{2n}]{-1}{A}$ in $\cC_{2n}$ by \cite{Igusa2013}, where each $h_i$ is in the multiplicative system $\Omega$.
Therefore in $\ocC$, there exists a morphism $\pi(h_i)^{-1} \in \Endo{1}{\pi(A)} \cong \Endo{1}{Y}$, and so $f=\pi(h)^{-1} \in \Endo{l'}{Y}$ factors through an $l'$-fold product $\Endo[\ocC]{1}{Y} \times \ldots \times \Endo[\ocC]{1}{Y}$.
Thus 
\[
\bigoplus_{l' \geq 0} \Endo[\ocC]{l'}{Y} \cong k [y]
\]
such that $y$ is placed in degree $1$.

Let $s \in \Endo[\ocC]{1}{Y}$, then $s =\pi(t)^{-1}$ where $t \in \Omega$, as $\cC_{2n}(A,A[1]) = 0$ where $\pi(A) \cong Y$.
Then there exists $s' \in \Endo[\ocC]{-1}{Y}$ such that $s' = \pi(t)$, and so $ss' = id_{Y}$.
Therefore 
\[
\bigoplus_{l \in \mathbb{Z}} \Endo[\ocC]{l}{Y} \cong k[x^{\pm 1}]
\]
with $x$ placed in degree $-1$.
\end{proof}

Unfortunately, the graded endomorphism ring of a long arc is somewhat less pleasant than those of a limit arc or double limit arc.
In particular, the multiplication in the graded endomorphism ring of a long arc is seems rather unnatural in comparison.

\begin{lemma}\label[lem]{Lem: long endo}
    Let $Z \in \ocC$ be an indecomposable object corresponding to a long arc.
    Then $\Endo{\ast}{Z} \cong \mathfrak{R}$, where
    \[ 
    \mathfrak{R}^i \cong \begin{cases*}
        k & if $i \neq 1$,\\
        0 & if $i=1$.
    \end{cases*}
    \]
    For $a \in \mathfrak{R}^i$ and $b \in \mathfrak{R}^j$, multiplication is given by,
    \[
    a \cdot b = \begin{cases*}
    ab & if $i,j \leq 0$,\\
    ab & if $i \leq 0, j \geq 2$ and $j>-i$,\\
    ab & if $i \geq 2, j \leq 0$ and $-j<i$,\\
    0 & else.
    \end{cases*}
    \]
\end{lemma}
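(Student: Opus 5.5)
The plan is to compute the graded pieces of $\Endo{\ast}{Z}$ first, using \Cref{Prop:PY3.14}, and then determine the multiplication using \Cref{Lem: Factoring in Cn} and \Cref{Cor: Forwards Backwards Composition}.

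\emph{Graded pieces.} Write $Z=\{x,y\}$ with $\mathfrak{a}^-<x<\mathfrak{a}<y<\mathfrak{a}^+$. Since the suspension sends each endpoint in $\sM$ to its predecessor, $Z[m]=\{x_m,y_m\}$, where $x_m,y_m$ are obtained from $x,y$ by $m$ steps (backwards for $m>0$, forwards for $m<0$). Both endpoints stay in their respective segments $(\mathfrak{a}^-,\mathfrak{a})$ and $(\mathfrak{a},\mathfrak{a}^+)$, because these contain infinitely many points of $\sM$ accumulating at both ends.

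We have $\Endo{i}{Z}=\ocC(Z,Z[i])=\ocC(Z,Z[i-1][1])$. Since $Z$ has no endpoint at an accumulation point, \Cref{Prop:PY3.14} says this is $k$ exactly when $Z$ and $Z[i-1]$ cross, and $0$ otherwise. For $m=i-1>0$ we get the cyclic order $x_m<x<\mathfrak{a}<y_m<y$, so the arcs cross; the case $m<0$ is symmetric. For $m=0$ the arcs coincide and do not cross. Hence $\mathfrak{R}^i\cong k$ for $i\neq 1$ and $\mathfrak{R}^1=0$.

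\emph{Forward and backward morphisms.} Next I classify the nonzero morphisms $f_i\colon Z\to Z[i]$ using the definitions preceding \Cref{Cor: Forwards Backwards Composition}, reading the endpoint inequalities cyclically as in \Cref{Lem: Factoring in Cn}.
\begin{itemize}
\item For $i\leq 0$ the endpoints of $Z[i]$ lie weakly after those of $Z$ in each segment. This gives $f_i$ the shape of a \emph{forward} morphism; $f_0$ is the identity.
\item For $i\geq 2$ the endpoints of $Z[i]$ lie strictly before those of $Z$. Tracking the cyclic inequality $y_1<x_1\leq y_2<x_2$, this gives a \emph{backwards} morphism.
\end{itemize}
A suspension of a forward (respectively backwards) morphism is again forward (respectively backwards), since $[1]$ preserves the cyclic configuration.

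\emph{Multiplication.} The product is $a\cdot b=g[i]\circ f$ for $f\in\mathfrak{R}^i$ and $g\in\mathfrak{R}^j$, and I apply \Cref{Cor: Forwards Backwards Composition} case by case.
\begin{itemize}
\item If $i,j\leq 0$, both maps are forward, so the composite is a nonzero forward morphism. As the degree $i+j$ piece is one-dimensional, the product is the scalar product $ab$.
\item If exactly one of $i,j$ is $\leq 0$ and the other is $\geq 2$, the composite is a nonzero backwards morphism, hence nonzero, provided the target space $\mathfrak{R}^{i+j}$ is nonzero and consists of backwards morphisms. The condition $j>-i$ (respectively $-j<i$) means $i+j\geq 1$. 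When $i+j=1$ the target is $0$, so the product vanishes trivially. When $i+j\geq 2$ we get $ab$.
\item If $i+j\leq 0$ in this mixed case, a backwards morphism would have to land in a space spanned by a forward morphism. The point is that the corollary's conclusion "nonzero backwards" cannot hold there, so the composite must be $0$.
\item If $i,j\geq 2$, both maps are backwards and the product is $0$.
\end{itemize}

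\emph{Main obstacle.} The hardest step is the mixed case with $i+j\leq 0$. Here I would argue directly with \Cref{Lem: Factoring in Cn} rather than through the corollary. A factorisation of $Z\to Z[i+j]$ through $Z[i]$ would require the endpoints of $Z[i]$ to lie cyclically between those of $Z$ and $Z[i+j]$, and this is violated because $Z[i]$ lies on the opposite side of $Z$. A related care point is checking the sign conventions in the forward/backward definitions for a long arc, whose endpoints straddle $\mathfrak{a}$.
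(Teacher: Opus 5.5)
Your computation of the graded pieces via \Cref{Prop:PY3.14} is correct, and so is your classification of the maps $Z\to Z[i]$ as forward for $i\le 0$ and backwards for $i\ge 2$. The gap is in the multiplication step: \Cref{Cor: Forwards Backwards Composition} cannot decide whether a composite vanishes. Read literally, it says a forward and a backwards morphism always compose to a \emph{nonzero} map, and your own mixed case refutes this. For $i=-3$, $j=2$ the composite $Z\to Z[-3]\to Z[-1]$ is forward followed by backwards, yet it is $0$ by the very statement you are proving. So ``the corollary's conclusion cannot hold there, so the composite must be $0$'' is a non sequitur: a clash with the corollary shows it is not valid in that configuration, not that the product vanishes. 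For the same reason you cannot quote it for \emph{non}vanishing when $i,j\le 0$, or in the mixed case with $i+j\ge 2$. Two forward maps can compose to zero in general, e.g.\ the short arcs $\{1,5\}\to\{3,7\}\to\{5,9\}$ inside one segment. The forward/backwards rules only describe the type of a composite already known to be nonzero.

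The repair is to do in every case what you propose only for the ``main obstacle''. Apply \Cref{Lem: Factoring in Cn} to $Z\to Z[i]\to Z[i+j]$ and test whether the endpoints of $Z[i]$ lie between those of $Z$ and $Z[i+j]$ (cyclic intervals read in the positive direction).

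For $i+j\ge 2$, the admissible region consists of arcs with one endpoint in $[x,y_{i+j}]$ (across $\mathfrak{a}$) and the other in $[y,x_{i+j}]$. It contains $Z[i]$ exactly when one of $i,j$ is $\le 0$. This gives the mixed nonzero cases and kills the case $i,j\ge 2$.

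For $i+j<0$, the region consists of arcs with endpoints in $[x,x_{i+j}]$ and $[y,y_{i+j}]$, and it contains $Z[i]$ iff $i+j\le i\le 0$. In the mixed case with $i\le 0$, $j\ge 2$ this fails because $Z[i]$ overshoots $Z[i+j]$ on the same side. It is not because $Z[i]$ lies on the opposite side of $Z$; that description only fits $i\ge 2$.

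The case $i+j=0$ needs a separate argument, since \Cref{Lem: Factoring in Cn} assumes non-isomorphic end terms. A composite $Z\to Z[i]\to Z$ with $i\neq 0$ is a non-invertible element of $\mathrm{End}(Z)\cong k$ (otherwise $Z$ would be a summand of $Z[i]$), hence $0$.

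This betweenness check is exactly the paper's proof, via its three endpoint inequalities, and it makes the forward/backwards bookkeeping unnecessary.
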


\begin{proof}
    By \Cref{Prop:PY3.14}, 
    \[ 
    \Endo{i}{Z} \cong \begin{cases*}
        k & if $i \neq 1$,\\
        0 & if $i=1$.
    \end{cases*}
    \]
    It also follows from the same result that two morphisms $f:X \rightarrow Y$ and $g : Y \rightarrow X$, with $X,Y$ indecomposable objects, compose to zero unless $Y \cong X$.
    It remains to show how multiplication works in $\Endo{\ast}{Z}$.

    Suppose that $\ell_Z = \{z_0,z_1\}$, and let $\ell_{Z[i]}=\{z_0-i,z_1-i\}$.
    Then by \Cref{Prop:PY3.14} it follows that $f \in \Endo{i}{Z}$ and $g \in \Endo{j}{Z}$ compose non-trivially to $gf \in \Endo{i+j}{Z}$ if and only if one of the following holds;
    \begin{align}
    z_0 \leq z_0-i \leq z_0-i-j < z_1,\\
    z_0 \leq z_0-j \leq z_1-i-j < z_1,\\
    z_0 \leq z_1-j \leq z_1-i-j < z_1.
    \end{align}
    It follows that in case (3.A), $i,j \leq 0$, in case (3.B), $i \leq 0,j \geq 2$ and $j>-i$, and in case (3.C) $i \geq 2, j \leq 0$ and $-j<i$.
\end{proof}

With the graded endomorphism rings of all of the relevant indecomposable objects computed, it is time for us to look at graded endomorphism rings of (some) decomposable objects.
To begin with, we look at objects with exactly two non-isomorphic indecomposable direct summands, and consider some conditions on the choice of those direct summands.

\begin{lemma}\label[lem]{Lem:accumulation point endo}
    Let $X,Y \in \ocC$ be two indecomposable objects such that the corresponding arcs are (double) limit arcs, and share an accumulation point, such that $Y \not\cong X[i]$ for all $i \in \mathbb{Z}$.
    Suppose, without loss of generality, that $Y$ is an anticlockwise rotation of $X$.
    Then 
    \[
    \Endo{\ast}{X \oplus Y} \cong \begin{pmatrix}
        \Endo{\ast}{X} & k[x^{\pm 1}]\\
        0 & \Endo{\ast}{Y}
    \end{pmatrix}
    \]
    with multiplication given by matrix multiplication, and $x$ concentrated in degree $-1$.
\end{lemma}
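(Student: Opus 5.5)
The approach is to decompose the graded endomorphism ring of $X \oplus Y$ as a $2\times 2$ matrix of graded Hom-spaces,
\[
\Endo{\ast}{X \oplus Y} \cong \begin{pmatrix} \Endo{\ast}{X} & \bigoplus_i \ocC(X,Y[i]) \\ \bigoplus_i \ocC(Y,X[i]) & \Endo{\ast}{Y} \end{pmatrix},
\]
where the product is matrix multiplication combined with the graded composition $g\cdot f = g[i]\circ f$. The diagonal entries are already known: they are $k[x]$ or $k[x^{\pm1}]$ by \Cref{Lem:EndX} and \Cref{Lem:EndY}, depending on the type of arc. So the work lies in identifying the two off-diagonal graded Hom-spaces and their module structures over the diagonal entries.

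\textbf{Vanishing of the lower-left entry.} First, I show that $\ocC(Y,X[i])=0$ for all $i$. Write $\mathfrak{a}$ for the common accumulation point. Since the suspension fixes $\mathfrak{a}$ and moves endpoints in $\mathscr{M}$ to their predecessors, each $X[i]$ still has $\mathfrak{a}$ as an endpoint. It also stays in the same sector, so $Y$ and $X[i]$ never cross. Because $Y\not\cong X[i]$, the only possible contribution from \Cref{Prop:PY3.14} is the rotation condition, applied to $\ocC(Y,X[i])=\ocC(Y,(X[i-1])[1])$. Here $X[i-1]$ would have to be an anticlockwise rotation of $Y$ about $\mathfrak{a}$. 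But $Y$ is an anticlockwise rotation of $X$, and hence of every $X[j]$, since the suspension does not move an endpoint across another accumulation point. This gives zero. I will need to check the boundary case carefully, where the far endpoints lie in the same open interval $(\mathfrak{b},\mathfrak{b}^+)$ and a shift could pass one endpoint past the other. In that case the hypothesis $Y\not\cong X[i]$ together with the two-sided limit structure should still pin down the orientation.

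\textbf{The upper-right entry.} By the same proposition, $\ocC(X,Y[i])\cong k$ for every $i$, since $Y[i-1]$ is always an anticlockwise rotation of $X$ about $\mathfrak{a}$. So the graded dimension agrees with that of $k[x^{\pm1}]$.

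\textbf{The multiplicative structure.} To match multiplication, I fix a nonzero $u\in\ocC(X,Y)$. Composing $u$ with the degree $\pm1$ generators of $\Endo{\ast}{Y}$, or pulling back along the generator of $\Endo{\ast}{X}$, should produce nonzero elements in every degree. The tool here is \Cref{Lem: Factoring in Cn}: a morphism $X\to Y[i]$ factors through $Y[i+1]$ (respectively $X[\pm1]$), because the endpoint inequalities $z_1\ge w_1\ge y_1$ hold when one endpoint is the fixed $\mathfrak{a}$ and the other moves monotonically. The degree $+1$ maps on a double limit arc come from inverting localisation morphisms, as in the proof of \Cref{Lem:EndY}, and they compose with $u$ nontrivially. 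Hence the right entry is a free rank-one module generated by $u$ with $x$ acting invertibly, that is, $k[x^{\pm1}]$.

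\textbf{Main obstacle.} The hardest step is verifying these nonvanishing compositions when $X$ is only a limit arc. In that case $\Endo{\ast}{X}=k[x]$ has no degree $+1$ part, so the positive-degree part of $\ocC(X,Y[\ast])$ must be reached entirely from $Y$'s side. For this I would lift to $\cC_{2n}$ and use the multiplicative system $\Omega$, exactly as in \Cref{Lem:EndY}.
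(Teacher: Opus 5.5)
Your outline matches the paper's proof: diagonal entries from \Cref{Lem:EndX} and \Cref{Lem:EndY}, $\ocC(Y,X[i])=0$ from the rotation condition of \Cref{Prop:PY3.14}, and $\dim_k\ocC(X,Y[i])=1$ for every $i$. The paper stops at this dimension count and simply asserts that $k[x^{\pm1}]$ is the natural bimodule. Your ``boundary case'' cannot occur. If both far endpoints lay in one sector, both arcs would be limit arcs at $\mathfrak{a}$ whose far endpoints lie in a single predecessor-orbit, so $Y\cong X[i]$ for some $i$, which the hypothesis excludes.

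\textbf{The gap} is in the step where you go beyond the paper and identify the bimodule. Take $X$ and $Y$ both limit arcs, $X=\{\mathfrak{a},x\}$ and $Y=\{\mathfrak{a},y\}$ with $x,y$ in different sectors. This is exactly the situation of the odd-indexed summands of a fan generator, so the case matters. By \Cref{Lem:EndX}, neither $\mathrm{End}^{\ast}(X)$ nor $\mathrm{End}^{\ast}(Y)$ has anything in positive degree. So no ring element carries $u\in\ocC(X,Y)$ into the nonzero spaces $\ocC(X,Y[i])$ with $i>0$. The off-diagonal bimodule is therefore not cyclic, and your claim that it is free of rank one generated by $u$ is false. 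Your plan to reach positive degrees ``from $Y$'s side'' by lifting to $\cC_{2n}$ and using $\Omega$ also fails here: $\ocC(Y,Y[l])=0$ for $l>0$, so there is nothing to invert. In the mixed cases no lift is needed at all, since the degree-one element of $\mathrm{End}^{\ast}$ of a double limit arc is a unit by \Cref{Lem:EndY} and so acts injectively.

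\textbf{The repair} is to drop cyclicity and apply your factorisation statement in every degree. Write $Y[i]=\{\mathfrak{a},y_i\}$ and choose $0\neq u_i\in\ocC(X,Y[i])$ for each $i\in\mathbb{Z}$. By \Cref{Lem: Factoring in Cn}, $u_i$ factors through $Y[i+1]\to Y[i]$ and through $X\to X[-1]$, because $x\le y_{i+1}\le y_i$ and $x\le x^+\le y_i$. Since all the Hom-spaces involved are one-dimensional, each degree $-1$ generator maps $\ocC(X,Y[i+1])$ onto, hence isomorphically to, $\ocC(X,Y[i])$. Normalise the $u_i$ so that the generator from $\mathrm{End}^{\ast}(Y)$ sends $u_{i+1}$ to $u_i$. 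The generator from $\mathrm{End}^{\ast}(X)$ then sends $u_{i+1}$ to $c_iu_i$ with $c_i\neq0$. Associativity of graded composition gives $c_{i-1}=c_i$, so $c_i=c$ is constant. Rescaling that generator by $c^{-1}$ identifies $\bigoplus_i\ocC(X,Y[i])$ with $k[x^{\pm1}]$, carrying its natural actions, in all cases and with no localisation argument.
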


\begin{proof}
    The main diagonal entries of $\Endo{\ast}{X \oplus Y}$ are given by the graded endomorphism rings of both indecomposable direct summands of $X \oplus Y$, and the other two entries are given by $\Ext{\ast}{X}{Y}$ and $\Ext{\ast}{Y}{X}$ respectively.

    The graded endomorphism rings of $X$ and $Y$ respectively are given by identifying what type of arc they correspond to and apply either  \Cref{Lem:EndX} or \Cref{Lem:EndY}, whichever is appropriate.
    Given $\ell_Y$ is an anticlockwise rotation of $\ell_{X[i]}$ for all $i \in \mathbb{Z}$, then it follows from \cite{Paquette2020} that $\ocC(Y,X[i])= 0$, and so 
    \[
    \Ext{\ast}{Y}{X} = 0.
    \]
    Analogously, we find that $\ocC(Y,X[i]) \cong k$ for all $i \in \mathbb{Z}$.

    To justify our choice of $\Ext{\ast}{X}{Y} \cong k[x^{\pm 1}]$ as graded $k$-vector spaces, consider that $\Ext{\ast}{X}{Y}$ is an $\Endo{\ast}{X}-\Endo{\ast}{Y}$-bimodule, and both $\Endo{\ast}{X}$ and $\Endo{\ast}{Y}$ are isomorphic to either $k[x]$ or $k[x^{\pm 1}]$ (with $x$ concentrated in degree $-1$) by \Cref{Lem:EndX,Lem:EndY}.
    Hence $k[x^{\pm 1}]$ is a natural $\Endo{\ast}{X}-\Endo{\ast}{Y}$-bimodule, and our result follows.
\end{proof}

In this next result, we look at pairs of indecomposable objects such that they cross for any power of the suspension functor applied to either of them.
This is somewhat for convenience sake, as we then do not need to consider at which power of the suspension functor that the two indecomposable objects no longer cross.

\begin{lemma}\label[lem]{Lem:crossing endo}
    Let $X,Y \in \ocC$ be two indecomposable objects such that the corresponding arcs cross and $\mathscr{M}_X \cap \mathscr{M}_Y = \emptyset$.
    Then 
    \[
    \Endo{\ast}{X \oplus Y} \cong \begin{pmatrix}
        \Endo{\ast}{X} & k[x^{\pm 1}]\\
        k[x^{\pm 1}] & \Endo{\ast}{Y}
    \end{pmatrix}
    \]
    with multiplication given by matrix multiplication, and $f \otimes_{\Endo{\ast}{Y}} g=g \otimes_{\Endo{\ast}{X}} f=0$ for $f \in \Ext{\ast}{X}{Y}$ and $g \in \Ext{\ast}{Y}{X}$, if $X,Y$ are (double) limit arcs.
    If $X$ (resp. $Y$) is a long arc, then $f \otimes_{\Endo{\ast}{Y}} g = fg$ (resp. $g \otimes_{\Endo{\ast}{X}} f = gf$) if $\lvert fg \rvert > 0$ (resp. $\lvert gf \rvert > 0$), else multiplication is trivial.
\end{lemma}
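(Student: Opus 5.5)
The plan mirrors the proof of \Cref{Lem:accumulation point endo}. Write $\Endo{\ast}{X \oplus Y}$ as the $2\times 2$ matrix of graded spaces with diagonal entries $\Endo{\ast}{X}$ and $\Endo{\ast}{Y}$, and off-diagonal entries $\Ext{\ast}{X}{Y}$ and $\Ext{\ast}{Y}{X}$. Composition of endomorphisms of a direct sum is then matrix multiplication. The diagonal entries are already known from \Cref{Lem:EndX}, \Cref{Lem:EndY} and \Cref{Lem: long endo}, so the work lies in the off-diagonal entries and the products between them.

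\textbf{Step 1: the off-diagonal dimensions.} By \Cref{Prop:PY3.14}, $\ocC(X,Y[i])\cong k$ exactly when $X$ and $Y[i-1]$ satisfy one of the three listed conditions, and is $0$ otherwise. The hypothesis $\mathscr{M}_X\cap\mathscr{M}_Y=\emptyset$ forces the endpoints of $X$ and of every $Y[j]$ to lie in different segments between consecutive accumulation points. Since suspension only moves endpoints within their segment (and fixes accumulation points), the cyclic interlacing pattern of $X$ and $Y[j]$ is independent of $j$. As $X$ and $Y$ cross, $X$ and $Y[j]$ therefore cross for every $j$. Hence both $\Ext{\ast}{X}{Y}$ and $\Ext{\ast}{Y}{X}$ are one-dimensional in every degree, matching $k[x^{\pm1}]$ as graded vector spaces.

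\textbf{Step 2: the bimodule structure.} Next check that the actions of $\Endo{\ast}{X}$ and $\Endo{\ast}{Y}$ on these spaces are the natural ones. Take the degree $-1$ generator of the endomorphism ring, which is a forward morphism $X\to X[-1]$. By \Cref{Cor: Forwards Backwards Composition}, or directly by \Cref{Lem: Factoring in Cn}, composing it with a nonzero map $X[-1]\to Y[i-1]$ is nonzero, because the endpoint inequalities remain satisfied when one endpoint is moved within its segment. The same holds for invertible degree $+1$ elements of a double limit arc. This identifies each off-diagonal entry with $k[x^{\pm1}]$ as a bimodule, as in \Cref{Lem:accumulation point endo}.

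\textbf{Step 3: the mixed products (main obstacle).} The key remaining step is to compute the composite $g[\,|f|\,]\circ f$ for $f\in\Ext{\ast}{X}{Y}$ and $g\in\Ext{\ast}{Y}{X}$, which lands in $\Endo{\ast}{X}$.

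\emph{Case 1: $X$ is a (double) limit arc.} I would show the composite vanishes by \Cref{Lem: Factoring in Cn}. A nonzero endomorphism of $X$ factoring through $Y[j]$ would require the endpoints of $Y[j]$ to lie between the endpoints of $X$ and of $X[m]$ in the cyclic order. But $X$ and $X[m]$ have endpoints in the same segments, and these segments are disjoint from $\mathscr{M}_Y$, so this is impossible.

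\emph{Case 2: $X$ is a long arc.} Here the endpoints of $X$ and $X[m]$ for $m\ge 2$ straddle an accumulation point. Then $Y[j]$ can sit in the required window, which is exactly the range where the endomorphism in degree $|fg|>0$ is nonzero by \Cref{Lem: long endo}. So I would check the inequalities of \Cref{Lem: Factoring in Cn} in this situation, mirroring the case analysis (3.A)--(3.C). The resulting product is nonzero precisely when its degree is positive, and otherwise it vanishes. The delicate point is keeping track of the positive-degree morphisms, which come from inverting localised maps as in \Cref{Lem:EndY}. I would handle these by lifting to the Igusa--Todorov category, as done there.
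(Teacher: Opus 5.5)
Your proposal is essentially the paper's proof written out in more detail: the paper also gets the off-diagonal dimensions from \Cref{Prop:PY3.14} (your observation that $\mathscr{M}_X\cap\mathscr{M}_Y=\emptyset$ makes the crossing of $X$ and $Y[j]$ independent of $j$ is exactly what is needed) and derives all products from \Cref{Lem: Factoring in Cn}. Two small points: first, that lemma assumes non-isomorphic source and target, so when $X$ is a double limit arc (where $X[m]\cong X$), or in degree $0$, the vanishing in Case 1 should instead come from the nonzero endomorphism being an isomorphism, which therefore cannot factor through the non-isomorphic indecomposable $Y[j]$; second, Case 2 needs no lifting to the Igusa--Todorov category, because $Y$ is forced to end at the accumulation point separating the two segments of $X$, so every $Y[j]$ lies in the factorisation window of $X\to X[m]$ for $m\geq 2$ and \Cref{Lem: Factoring in Cn} applies directly.
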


\begin{proof}
    The main diagonal entries of $\Endo{\ast}{X \oplus Y}$ are given by the graded endomorphism rings of both indecomposable direct summands of $X \oplus Y$, and the other two entries are given by $\Ext{\ast}{X}{Y}$ and $\Ext{\ast}{Y}{X}$ respectively.
    A similar argument to \Cref{Lem:accumulation point endo} shows that $\Ext{\ast}{X}{Y}$ and $\Ext{\ast}{Y}{X}$ are both isomorphic to $k[x^{\pm 1}]$ as $k$-vector spaces.

    The multiplication rules then follow from \Cref{Lem: Factoring in Cn}.
\end{proof}

In the proof of \Cref{Lem:crossing endo}, we make a slight oversight showing that $\Ext{\ast}{X}{Y}\cong k[x^{\pm 1}]$, in that $\Ext{\ast}{X}{Y}$ may be an $\mathfrak{R}-\mathfrak{R}$ bimodule, and so our argument for why we chose $k[x^{\pm 1}]$ for our notation doesn't necessarily make much sense.
However, $\Ext{\ast}{X}{Y}$ and $k[x^{\pm 1}]$ agree on dimensions in each degree, and therefore as graded $k$-vector spaces, so we make the choice as a matter of notational convenience.

We may now state the graded endomorphism ring for a class of generators of $\ocC$.

\begin{proposition}\label[prop]{Prop: endo ring}
    Let $G \cong \bigoplus_{i=1}^m G_i \in \ocC$ be a minimal classical generator, with $G_i$ indecomposable for all $i$, such that $\mathscr{M}_{G_i} \cap \mathscr{M}_{G_j} = \emptyset$ or $\{a\}$ for some accumulation point $a$.
    Then $\Endo{\ast}{G}$ is isomorphic to a generalised $(m \times m)$-matrix $k$-algebra, labelled $\chi^G$, with entries given by,
    \[
    \chi^G_{i,j} \cong \begin{cases*}
        k[x] & if $i=j$ and $\ell_{G_i}$ is a limit arc,\\
        \mathfrak{R} & if $i=j$ and $\ell_{G_i}$ is a long arc,\\
        0 & if $i \neq j$ and, $\ell_{G_i}$ and $\ell_{G_j}$ do not cross,\\
        0 & if $i \neq j$ and, $\ell_{G_i}$ and $\ell_{G_j}$ share an accumulation point, and $\ell_{G_i}$ is an anticlockwise rotation of $\ell_{G_j}$,\\
        k[x^{\pm 1}] & else.
    \end{cases*}
    \]
    Where multiplication of $f \in \chi^G_{i,j}$ and $g = \chi^G_{j,l}$, for $f,g \neq 0$, in $\chi^G$ is given by 
    \[
    \varphi_{ijl}(f \otimes_{\Endo{}{G_j}} g) = \begin{cases*}
        gf & if $\ell_{G_i},\ell_{G_j},\ell_{G_l}$ share an accumulation point as an endpoint,\\
        gf & if $\ell_{G_i},\ell_{G_j},\ell_{G_l}$ cross and satisfy \Cref{Prop:PY3.14}, with $G_j$ the object factored through,\\
        gf & if $l=i$, $G_i$ is a long arc, and we are in the case of \Cref{Lem:crossing endo},\\
        0 & else.
    \end{cases*}
    \]
\end{proposition}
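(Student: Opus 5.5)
The plan is to assemble $\Endo{\ast}{G}$ entrywise from the two-summand computations already made, and then verify that the multiplication is determined by the triple-wise behaviour of the arcs. Since $G \cong \bigoplus_{i=1}^m G_i$, there is a canonical isomorphism of graded vector spaces
\[
\Endo{\ast}{G} \cong \bigoplus_{i,j} \Ext{\ast}{G_i}{G_j},
\]
and composition in $\ocC$ turns this into a generalised matrix algebra. Thus it suffices to identify each $\Ext{\ast}{G_i}{G_j}$ and each composition map $\Ext{\ast}{G_i}{G_j} \otimes \Ext{\ast}{G_j}{G_l} \rightarrow \Ext{\ast}{G_i}{G_l}$.

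First, the diagonal. Minimality of $G$ together with \cite{Generators} shows that no $G_i$ is a short arc. Each $G_i$ is therefore a limit, double limit or long arc, and \Cref{Lem:EndX}, \Cref{Lem:EndY} and \Cref{Lem: long endo} give the diagonal entries. (The statement lists $k[x]$ and $\mathfrak{R}$; the double limit case gives $k[x^{\pm1}]$, which falls under the final ``else'' clause.)

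Next, the off-diagonal entries. For $i \neq j$, the hypothesis $\mathscr{M}_{G_i} \cap \mathscr{M}_{G_j} \in \{\emptyset, \{a\}\}$ forces one of three configurations:
\begin{enumerate}
\item the arcs $\ell_{G_i}[p]$ and $\ell_{G_j}[q]$ never cross and share no endpoint, so by \Cref{Prop:PY3.14} every $\Ext$-group between them vanishes;
\item the arcs share exactly the accumulation point $a$, in which case \Cref{Lem:accumulation point endo} gives $k[x^{\pm1}]$ in one direction and $0$ in the other, according to the rotation direction;
\item the arcs cross with disjoint orbits, in which case \Cref{Lem:crossing endo} gives $k[x^{\pm1}]$ in both directions.
\end{enumerate}
The key point is that crossing is invariant under suspension when orbits are disjoint. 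This is what the hypothesis buys: there is no partial range of degrees in which crossing holds.

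The multiplication is the main work. For nonzero $f \in \chi^G_{i,j}$ and $g \in \chi^G_{j,l}$, the product is the composite $g[\,|f|\,]\circ f$ between suspensions of indecomposables. Since Hom-spaces are at most one-dimensional, it suffices to decide when this composite is nonzero. By \Cref{Lem: Factoring in Cn}, it is nonzero exactly when the middle object $G_j[\cdot]$ lies between $G_i$ and $G_l[\cdot]$ in the endpoint order. I will split into cases matching the statement:
\begin{enumerate}
\item all three arcs share an accumulation point, where rotation order gives factorisation, using \Cref{Cor: Forwards Backwards Composition} to see that the composite is nonzero;
\item the crossing configurations, where factoring through $G_j$ is exactly the betweenness condition of \Cref{Lem: Factoring in Cn};
\item the return case $l=i$ with $G_i$ long, handled by \Cref{Lem:crossing endo}.
\end{enumerate}
In every remaining configuration the betweenness inequalities fail, so the product is zero.

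The main obstacle is this last step: checking exhaustively that no other triple configuration yields a nonzero composite, and that the choice of generator in each $k[x^{\pm1}]$ entry can be rescaled consistently so that the products are literally $gf$. I would handle the rescaling first along a spanning tree of the nonzero off-diagonal entries. I would then argue that any cycle of composites is fixed by degree considerations, since each graded piece is one-dimensional, so that consistency on the tree extends to the whole matrix algebra.
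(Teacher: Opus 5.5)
Your proposal follows the paper's own route. You identify the $(i,j)$ entry with $\mathrm{Ext}^{\ast}(G_i,G_j)$, read off the diagonal from \Cref{Lem:EndX}, \Cref{Lem:EndY} and \Cref{Lem: long endo}, take the off-diagonal entries from \Cref{Lem:accumulation point endo} and \Cref{Lem:crossing endo} (with vanishing otherwise), and decide the products via the factoring criterion of \Cref{Lem: Factoring in Cn}, which is what underlies the paper's appeal to those two-summand lemmas; your separation of ``no shared endpoint'' from ``shared accumulation point'' is actually more careful than the paper's. The closing rescaling step is unnecessary and its justification would not work: in the statement $\chi^G_{i,j}$ is just $\mathrm{Ext}^{\ast}(G_i,G_j)$ and $gf$ is the composite, so nothing needs normalising, and one-dimensionality of the graded pieces fixes composites only up to a nonzero scalar, so it cannot by itself make structure constants consistent around cycles.
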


\begin{proof}
Let $\chi^G_{i,j} =\Ext{\ast}{G_i}{G_j}$.

The entry $\chi^G_{i,i}$ is given by \Cref{Lem:EndX} if $G_i$ is a limit arc, \Cref{Lem:EndY} if $G_i$ is a double limit arc, and \Cref{Lem: long endo} if $G_i$ is a long arc.

\Cref{Lem:crossing endo}, resp.\ \Cref{Lem:accumulation point endo}, gives the $(2 \times 2)$-matrix subalgebra,
\[
\begin{pmatrix}
    \chi^G_{i,i} & \chi^G_{i,j}\\
    \chi^G_{j,i} & \chi^G_{j,j}
\end{pmatrix}
\]
for all $G_i$ and $G_j$ such that $\ell_{G_i}$ and $\ell_{G_j}$ cross, resp.\ share an accumulation point.
If $\ell_{G_i}$ and $\ell_{G_j}$ do not cross, then $\Ext{\ast}{G_i}{G_j}=0= \Ext{\ast}{G_j}{G_i}$, and so $\chi^G_{i,j} = 0 = \chi^G_{j,i}$.

The multiplication follows from \Cref{Lem:accumulation point endo,Lem:crossing endo}.
\end{proof}

\subsection{Piano Algebras and Classical Generators}

We investigate the connections between the piano algebras defined for the marked surface $(D^2,M_n,\emptyset)$, where $\lvert M_{\rcirc} \rvert = n$, and the limit generators of the Paquette-Y\i ld\i r\i m category $\ocC$.
Our aim is to prove the following theorem.

\begin{theorem}\label{Thm:path algebras}
	Let $G$ be a limit generator of $\ocC$.
	Then there is an isomorphism of graded $k$-algebras,
	\[
	\chi^G\xrightarrow{\sim} kQ^G/I^G.
	\]
\end{theorem}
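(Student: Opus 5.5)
We construct an explicit graded algebra homomorphism $\Phi: kQ^G \to \chi^G$ by sending generators to morphisms, check that it kills $I^G$, and then compare graded dimensions vertex-pair by vertex-pair. The inverse of the induced map is the isomorphism $\chi^G \xrightarrow{\sim} kQ^G/I^G$ of the statement.

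Write $G \cong \prescript{p}{}{G} \oplus \prescript{l}{}{G}$. By \Cref{Prop: Bijection of ext admissible dissections}, the vertices of $Q^G$ are the indecomposable summands $G_a$. Sharp vertices correspond to the limit arcs in $\prescript{l}{}{G}$, and standard vertices to the double limit arcs in $\prescript{p}{}{G}$. The map $\Phi$ is defined on generators as follows:
\begin{itemize}
\item the trivial path $\iota_a$ goes to $\mathrm{id}_{G_a}$;
\item the loop $\alpha_a$ goes to a fixed nonzero $x_a \in \Endo{-1}{G_a}$;
\item for a standard vertex $b$, the loop $\beta_b$ goes to $x_b^{-1} \in \Endo{1}{G_b}$, which exists by \Cref{Lem:EndY};
\item each arrow $\delta_{ab}$ of $\widehat{Q^G}$ goes to a nonzero degree-$0$ morphism $G_a \to G_b$.
\end{itemize}
The last assignment needs an arrow $a\to b$ to mean that the arcs $\ell_{G_a}$ and $\ell_{G_b}$ share an accumulation point, with $\ell_{G_b}$ the next arc anticlockwise around it. Binding arcs share their $\rcirc$-endpoint with the limit arc's accumulation point, so this covers both kinds of vertex. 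The required nonzero morphism then exists by \Cref{Lem:accumulation point endo}. Since the summands in a limit generator never cross, \Cref{Prop: endo ring} says the only nonzero off-diagonal entries of $\chi^G$ come from arcs sharing an accumulation point.

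Next we verify the relations. The relations $\alpha\beta=\iota=\beta\alpha$ hold by construction. For $\alpha_a\delta_{ab}=\delta_{ab}\alpha_b$, the space $\Ext{-1}{G_a}{G_b}$ is one-dimensional, and both composites are nonzero by \Cref{Cor: Forwards Backwards Composition} (forward morphisms compose nonzero), so they agree after rescaling. I would fix the rescaling coherently along the tree $\widehat{Q^G}$: choose the $x_a$ first, then normalise each $\delta_{ab}$ so the square commutes. This is possible because the underlying graph of the induced dissection is a tree around each $\rcirc$-point, so no cycle forces an inconsistency. The $\beta$-relations are handled the same way, using invertibility on standard vertices.

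For the gentle relations in $\widehat{I^G}$, suppose $\delta_{ij}\delta_{jl}$ is a relation. Then $\delta_{ij}$ rotates about one accumulation point and $\delta_{jl}$ about the other endpoint of $\ell_{G_j}$. The composite $\ell_{G_i}\to\ell_{G_l}$ would then require $\ell_{G_i}$ and $\ell_{G_l}$ to share an endpoint or cross, which they do not. Hence $\Ext{\ast}{G_i}{G_l}=0$ and the composite vanishes.

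Finally, surjectivity and dimension counting. The diagonal entries are $k[x]$ or $k[x^{\pm1}]$ by \Cref{Lem:EndX,Lem:EndY}, and these are generated by the images of $\alpha$ and $\beta$. For an off-diagonal entry $\chi^G_{a,b}\cong k[x^{\pm 1}]$, the arcs share an accumulation point $\mathfrak{a}$ with $\ell_{G_b}$ anticlockwise of $\ell_{G_a}$. The arrows around $\mathfrak{a}$ give a path $\delta_{a a_1}\cdots\delta_{a_r b}$ with no relations, and its image composed with $x^{m}$ spans each degree. When one endpoint is a sharp vertex, negative powers are reached via the $\beta$ at the standard end. Hence $\Phi$ is surjective.

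For injectivity, I would show that in $kQ^G/I^G$, for each pair $(a,b)$ and each degree, the space $\iota_a(kQ^G/I^G)\iota_b$ has dimension at most that of $\chi^G_{a,b}$. Using the commutation relations, every path can be rewritten as $\alpha^{m}\beta^{m'}$ times a path in $\widehat{Q^G}$. The keyboard algebra is gentle on a disc whose arcs form a tree at each $\rcirc$-point, so between two vertices there is at most one nonzero path in $k\widehat{Q^G}/\widehat{I^G}$. Moreover, nonzero paths exist exactly between arcs sharing an endpoint in the anticlockwise order, which matches the zero pattern of \Cref{Prop: endo ring}.

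I expect the main obstacle to be the subtle case where a $\beta$-loop is needed at a sharp vertex, which does not exist. Here we must show that $\chi^G_{a,b}$ for a sharp vertex is $k[x^{\pm1}]$ rather than $k[x]$, and that the relation $\beta\delta\cdots\delta=\delta\cdots\delta\beta$ produces all negative-degree elements without extra ones. I would treat this by a careful degree bookkeeping of paths ending or starting at a standard vertex, before and after applying the $\beta$-relation.
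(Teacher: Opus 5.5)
Your route differs from the paper's. The paper first matches graded dimensions entry by entry: its lemmas on equivalence classes of paths, on loop algebras and on path modules show that $\iota_a(kQ^G/I^G)\iota_b$ and $\chi^G_{a,b}$ agree in each degree. It then simply asserts that the resulting map $x^i\mapsto x^i$ is multiplicative. You instead define $\Phi$ on generators, so multiplicativity is automatic and only the relations, surjectivity and a dimension bound remain. That handles the scalars the paper suppresses, but your normalisation targets the wrong variable. Define $c_{ab}$ by $\Phi(\delta_{ab}\alpha_b)=c_{ab}\,\Phi(\alpha_a\delta_{ab})$. Both sides are linear in $\Phi(\delta_{ab})$, so rescaling the arrow leaves $c_{ab}$ unchanged. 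Rescale the loops instead: $x_a\mapsto\mu_a x_a$ replaces $c_{ab}$ by $c_{ab}\mu_b/\mu_a$. Since the underlying graph of $\widehat{Q^G}$ is a tree, you can choose the $\mu_a$ by propagating from a root. The $\beta$-relations then follow formally from $\beta=\alpha^{-1}$ at standard vertices. Your gentle-relation step also needs the unstated fact that the arcs of $\widetilde{\Delta}$ form a tree on the $2n$ points, so $G_i,G_j,G_l$ cannot form a triangle.

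The genuine gap is the dimension upper bound. Your normal form ``$\alpha^m\beta^{m'}$ times a path of $\widehat{Q^G}$'' fails when both endpoints are sharp. A $\beta$ can only slide along subpaths between standard vertices, so it may be stuck at an interior standard vertex $c$. You then need $p_1\beta_c^m p_2=0$ whenever $p_1p_2=0$ in $k\widehat{Q^G}/\widehat{I^G}$.

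Degree bookkeeping cannot give this, because the relations alone do not imply it. Suppose a piano quiver contained $s\to c\to s'$ (arrows $\delta,\delta'$) with $s,s'$ sharp, $c$ standard and $\delta\delta'\in\widehat{I^G}$. Take the inclusion $k[t]\to k[t^{\pm1}]$ followed by the projection $k[t^{\pm1}]\to k[t^{\pm1}]/k[t]$, with every $\alpha$ acting by $t$ and $\beta_c$ by $t^{-1}$. This is a module satisfying all the relations, and $\delta\beta_c\delta'$ acts nontrivially on it, so an extra degree-one element from $s$ to $s'$ would survive.

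The missing input is geometric: this configuration never occurs for the disc.
\begin{itemize}
\item A relation $u\to v\to w$ means $u,v,w$ are consecutive sides of one cell of $\widetilde{\Delta}$.
\item $v$ is standard, since the green end of a binding arc carries no arrows.
\item Every cell of $\widetilde{\Delta}$ is half of a cell of the underlying admissible dissection, cut along that cell's unique binding arc. So it has exactly one binding arc on its boundary, and $u$ or $w$ must be standard.
\end{itemize}
Slide $\beta^m$ onto that vertex to expose the relation, and the path vanishes. Combine this with \Cref{Lem: arcs between binding}: every path between distinct vertices meets a standard vertex, which also covers your surjectivity when both ends are sharp. Together with the tree property, this gives dimension at most one in each degree with the correct support, and your surjectivity finishes the proof. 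The paper's lemma on equivalence classes of paths asserts this same step (the case $\delta\cdots\delta\beta^m\delta$) without addressing it.
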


Recall from \Cref{Sec: Dissections} that a piano quiver associated to the marked surface $(D^2,\mathscr{M},\emptyset)$ corresponds to the data $(Q^G,I^G,B^G)$, for some limit generator $G \in \ocC$, where for every vertex in $Q_0^G$ there exists a loop labelled by $\alpha$ in degree $-1$, and for every vertex not in $B^G$, there exists a loop $\beta$ in degree 1.
We also recall the ideal $I^G$;
\[
I^G := \langle \alpha_{a} \beta_{a}-\iota_{a},\, \beta_{a} \alpha_{a}-\iota_{a},\, \alpha_a \delta_{ab} - \delta_{ab} \alpha_b,\, \beta_{a'} \delta_{a'a} \cdots \delta_{bb'} - \delta_{a'a} \cdots \delta_{bb'} \beta_{b'}, \widehat{I^{\Delta}} \rangle,
\]

We begin with some results concerning submodules of $kQ^G/I^G$.

\begin{lemma}\label[lem]{Lem: equiv classes of paths}
	Let $a,b$ be different vertices in $Q^G_0$, and let $m \in \mathbb{Z}$.
	Then $(\iota_a(kQ^G/I^G)\iota_b)^m \cong (\iota_a(kQ^G/I^G)\iota_b)^0$ as $k$-vector spaces.
	Moreover, the dimension of $(\iota_a(kQ^G/I^G)\iota_b)^m$ as a $k$-vector space is at most 1.  
\end{lemma}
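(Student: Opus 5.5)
Fix distinct vertices $a,b\in Q^G_0$. The plan is to describe the paths from $a$ to $b$ in $kQ^G$ modulo $I^G$ explicitly and reduce each one to a normal form.

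\textbf{Step 1: the underlying keyboard quiver.} By construction $\widehat{Q^G}$ comes from an extended admissible dissection of the unpunctured disc $(D^2,M_n,\emptyset)$. Its underlying unoriented graph has no cycles, since the arcs of the dissection form a tree on the marked points. Hence between $a$ and $b$ there is at most one walk in $\widehat{Q^G}$ that does not backtrack. Moreover $\widehat{I^G}$ is monomial, so there is at most one nonzero path $p=\delta_{aa_1}\cdots\delta_{a_rb}$ in $k\widehat{Q^G}/\widehat{I^G}$ from $a$ to $b$. This path has degree $0$.

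\textbf{Step 2: normal form.} Any path in $kQ^G$ from $a$ to $b$ is a word in the arrows $\delta$ and the loops $\alpha,\beta$. Using the relations $\alpha\delta-\delta\alpha$ and $\beta\delta\cdots\delta-\delta\cdots\delta\beta$, we commute all loops to the left end of the word, at the vertex $a$. This gives the form $\alpha_a^{s}\beta_a^{t}\,q$, where $q$ is a path in $\widehat{Q^G}$. Using $\alpha_a\beta_a=\beta_a\alpha_a=\iota_a$, we cancel $\alpha$'s against $\beta$'s. When $a\notin B^G$ this leaves $\alpha_a^{s}q$ with $s\in\mathbb{Z}$, where we write $\alpha_a^{-1}=\beta_a$. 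When $a\in B^G$ there is no $\beta_a$, and we must check that a $\beta$ elsewhere on the path can still be moved left or right. The relation $\beta_{a'}\delta\cdots\delta-\delta\cdots\delta\beta_{b'}$ is imposed only between standard vertices. So if a $\beta$ sits at a standard vertex, we push it towards whichever end of the path is standard, where it is absorbed by the $\alpha$'s or remains. Both endpoints cannot be sharp with a $\beta$ stuck in the middle in an unreducible way, because the path $q$ passes through a standard vertex only via sharp$\to$standard$\to$sharp configurations. These, by the description of the arcs near binding arcs (\Cref{Lem: arcs between binding}), are killed by $\widehat{I^G}$ or are the unique path. Either way, every nonzero element of $(\iota_a(kQ^G/I^G)\iota_b)^m$ is a scalar multiple of the single monomial $\alpha^{-m}p$ (or $\beta^{m}p$), which gives the bound $\dim\le 1$.

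\textbf{Step 3: the isomorphism $(\iota_a\Lambda\iota_b)^m\cong(\iota_a\Lambda\iota_b)^0$.} Multiplication by the appropriate power of $\alpha$ or $\beta$ at a standard endpoint is invertible, by the relation $\alpha\beta=\iota$. This gives mutually inverse maps between degree $m$ and degree $0$. To ensure such an endpoint exists, we need that whenever $\iota_a\Lambda\iota_b\neq0$ the path $p$ meets a standard vertex. This holds because distinct sharp vertices (binding arcs) are only joined through a $\rcirc$-arc, again by \Cref{Lem: arcs between binding}. When the degree-$0$ part vanishes, the same normal form shows that every degree is zero.

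\textbf{Main obstacle.} The delicate point is the bookkeeping in Step 2: showing that the interplay of $\widehat{I^G}$ with the $\beta$-commutation relations, whose precise range is stated only loosely, really does not leave two independent survivors in a given degree. The first thing to try is to verify the claim locally on the three-arc configuration of \Cref{Lem: arcs between binding} and then extend along the tree.
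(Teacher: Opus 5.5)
The gap is the case $a,b\in B^G$. Your argument works whenever $a$ or $b$ is a standard vertex. There $\alpha$ and $\beta$ are mutually inverse, so left (resp.\ right) multiplication by their powers is a genuine degree-shifting bijection of $\iota_a(kQ^G/I^G)\iota_b$. This is actually cleaner than the paper's proof, which rewrites every word into one of the normal forms $\delta\cdots\delta\alpha^{-m}$, $\delta\cdots\delta\beta^{m}$, $\delta\cdots\delta\beta^{m}\delta$ and then only observes that a word of each degree exists.

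The sharp--sharp case does occur. For the fan generator (\Cref{Ex: Lambda quiver}) the keyboard algebra is $kA_{2n-1}$ with no zero relations, vertices $1$ and $3$ are sharp, and $\delta_{12}\delta_{23}\neq 0$. Your Step 2 claims that two sharp endpoints with a $\beta$ stuck in the middle cannot occur, or are killed by $\widehat{I^G}$. This is false: for $m>0$ the space $(\iota_1(kQ^G/I^G)\iota_3)^m$ is spanned by $\delta_{12}\beta_2^m\delta_{23}$, not by anything of the form $\beta^m p$. Your claim that $q$ meets standard vertices only in sharp$\to$standard$\to$sharp patterns also fails, since keyboard quivers have arrows between standard vertices. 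For $\dim\le 1$ this is easily repaired, as in the paper. The relation $\beta_{a'}\delta\cdots\delta-\delta\cdots\delta\beta_{b'}$ holds for every path between standard vertices, including paths through sharp ones. So all $\beta$'s can be collected at one standard vertex of $p$, e.g.\ the one preceding $b$, which is standard by \Cref{Lem: arcs between binding}.

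Step 3, however, has nothing to offer in this case. There is no standard endpoint, and the fact that $p$ passes through a standard vertex $v$ does not substitute for one. Inserting $\beta_v^m$ in the interior of a word is not a well-defined operation on $kQ^G/I^G$, so you obtain no maps between degrees $0$ and $m$, let alone inverse ones. What is missing is a proof that the degree-$m$ normal form $x_m$ vanishes exactly when $p\in\widehat{I^G}$.

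If $p\in\widehat{I^G}$, place the $\beta$'s at a standard vertex that is not the middle vertex of a length-two zero relation of $p$. This is possible because a sharp$\to$standard$\to$sharp path with a zero relation would put two binding arcs into one polygon of the underlying admissible dissection.

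If $p\notin\widehat{I^G}$, you need a model in which $x_m$ survives. For instance, take the generalised matrix algebra with entry $k[x^{\pm1}]$ at $(u,v)$ when the $\delta$-path from $u$ to $v$ is non-zero in $k\widehat{Q^G}/\widehat{I^G}$, and $k[x]$ at sharp diagonal positions. Send $\alpha\mapsto x$, $\beta\mapsto x^{-1}$, $\delta\mapsto 1$. One checks that every generator of $I^G$ maps to zero and that $x_m\mapsto x^{-m}\neq 0$.
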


\begin{proof}
	Suppose $\sigma = \rho \delta \rho \delta \cdots \rho \delta \rho$ is a path of degree $m \in \mathbb{Z}$, starting at $a$ and ending at $b$ where each $\rho$ is some path consisting of $\alpha$ and $\beta$ for a fixed vertex.
	By the relations in $I^G$, we can commute $\alpha$ freely with $\beta$ and $\delta$.
	Hence
	\[
	\sigma \equiv \sigma' + I^G = \beta^{i_1} \delta \beta^{i_2} \delta \cdots \beta^{i_{p-1}} \delta \beta^{i_p} \alpha^i + I^G,
	\]
	where $m = \sum_{l=1}^p (i_l) - i$.
	
	Similarly, according to the relations in $I^G$, we may commute $\beta$ with paths starting and ending at vertices not in $B$.
	By \Cref{Lem: arcs between binding}, it follows that there is no arrow between vertices both in $B$, hence we may have either
	\[
	\sigma \equiv \sigma_1'' + I^G = \delta \cdots \delta\delta \alpha^i \beta^j + I^G,
	\]
	or
	\[
	\sigma \equiv \sigma_2'' + I^G = \delta \cdots \delta \beta^j \delta \alpha^i + I^G.
	\]
	However, in both $\sigma_1''$ and  $\sigma_2''$, we must have $m=j-i$, as $\sigma$ is in degree $m$ and all $\delta$ are in degree $0$.
	Then one of the following is true;
	\[
	\sigma \equiv \begin{cases*}
		\delta \cdots \delta \delta \alpha^{-m} + I^G  & if $m \leq 0$,\\
		\delta \cdots \delta \delta \beta^m + I^G  & if $m \geq 0$ and $b \not\in B$,\\
		\delta \cdots \delta \beta^m \delta + I^G & if $m \geq 0$ and $b \in B$.
	\end{cases*}
	\]
	That is, any non-zero path from $a$ to $b$ of degree $m$, is equivalent to one of the above paths in $kQ^G/I^G$, determined by $m$ and $b$.
	Indeed, such a path exists if and only if a path of degree $0$ between $a$ and $b$ exists, as \Cref{Lem: arcs between binding} implies that any path starting and ending at different vertices must pass through a vertex not in $B$.
	Hence $(\iota_a(kQ^G/I^G)\iota_b)^m \cong (\iota_a(kQ^G/I^G)\iota_b)^0$, and it follows that the dimension as a $k$-vector space is at most $1$ as $\widehat{Q^G}$ is a tree.
\end{proof}

Here we show that there is an isomorphism between the paths starting ending at a vertex $a$, and the graded endomorphism ring of the indecomposable direct summand of a limit generator in $\ocC$ associated to the vertex $a$ by \Cref{Prop: Bijection of ext admissible dissections}.

\begin{lemma}\label[lem]{Lem: loop algebras}
	There is an isomorphism of graded $k$-algebras,
	\[
	\iota_a(kQ^G/I^G)\iota_a \cong \Endo[\ocC]{\ast}{G_a},
	\]
	for any $a \in Q_0$.
\end{lemma}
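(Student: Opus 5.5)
The plan is to compute $\iota_a(kQ^G/I^G)\iota_a$ directly from the presentation and compare it with \Cref{Lem:EndX} and \Cref{Lem:EndY}. Everything splits on whether $a$ is sharp or standard. If $a \in B^G$, the summand $G_a$ is a limit arc, and there is only the loop $\alpha_a$ in degree $-1$. If $a \notin B^G$, then $G_a$ is a double limit arc, and there are loops $\alpha_a$ in degree $-1$ and $\beta_a$ in degree $1$, subject to $\alpha_a\beta_a=\iota_a=\beta_a\alpha_a$. In both cases the goal is to show that the loop algebra at $a$ is $k[\alpha_a]$ or $k[\alpha_a^{\pm1}]$ respectively. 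Then $\alpha_a \mapsto x$ (and $\beta_a \mapsto x^{-1}$) will give the required graded isomorphism.

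The first step is to control which paths from $a$ to $a$ survive in $kQ^G/I^G$. Any such path has the form $\rho\delta\rho\cdots\delta\rho$, where the $\rho$ are words in loops and the $\delta$ are arrows of $\widehat{Q^G}$. By \Cref{Prop: Bijection of ext admissible dissections} the underlying keyboard quiver is the quiver of an extended admissible dissection of a disc, which is a tree. So any path using at least one $\delta$ that returns to $a$ would have to traverse some arrow in both directions, which is impossible for oriented arrows in a tree. More carefully, an oriented closed walk in a tree with no backtracking along arrows cannot exist, so every path from $a$ to $a$ consists of loops at $a$ only.

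Next I would use the relations. For $a$ sharp, no relation involves $\alpha_a$ alone except commutation relations with arrows, which are irrelevant here. Also, $\widehat{I^\Delta}$ consists of length-two paths of arrows, which never touch powers of a single loop. Hence $\alpha_a^m \neq 0$ for all $m\ge0$, and the loop algebra is $k[\alpha_a]$. To confirm that no power of $\alpha_a$ is killed indirectly through the commutation relations $\alpha\delta-\delta\alpha$, I would exhibit a representation or a normal form. The normal form from the proof of \Cref{Lem: equiv classes of paths} specialised to $a=b$ should do this, giving a basis $\{\alpha_a^i\}$. For $a$ standard, the relations $\alpha_a\beta_a=\beta_a\alpha_a=\iota_a$ make $\beta_a=\alpha_a^{-1}$. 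The same normal-form argument shows that the monomials $\alpha_a^i$ ($i\ge0$) and $\beta_a^j$ ($j>0$) are linearly independent, giving $k[\alpha_a^{\pm1}]$. Comparing dimensions degreewise with \Cref{Lem:EndX,Lem:EndY}, and checking that the generator maps to a degree $-1$ generator, finishes the proof.

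I expect the main obstacle to be the independence claim: showing that the ideal $I^G$ does not collapse the loops at $a$. This could happen through the relations $\beta\delta\cdots\delta-\delta\cdots\delta\beta$ combined with the monomial relations from $\widehat{I^\Delta}$. I would handle it by constructing an explicit graded representation (a module) of $kQ^G/I^G$. At each vertex it would place $k[x]$ or $k[x^{\pm1}]$, let the loops act by multiplication by $x^{\pm1}$, and let the arrows act by inclusion maps $k[x^{\pm1}]\leftarrow k[x]$ or identities, with zero for composites lying in $\widehat{I^\Delta}$. One then checks that all generators of $I^G$ act by zero. In this module $\alpha_a^i$ acts nontrivially at $a$, which forces the loop algebra to be exactly as claimed.
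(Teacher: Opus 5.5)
Your plan follows the paper's proof. You split on $a\in B^G$ versus $a\notin B^G$, show that the loops at $a$ span $\iota_a(kQ^G/I^G)\iota_a$, and send $\alpha_a\mapsto x$, $\beta_a\mapsto x^{-1}$, comparing with \Cref{Lem:EndX,Lem:EndY}. Two of your steps are more careful than the paper. Your tree argument (a closed oriented walk in a tree is trivial, so closed paths at $a$ are words in $\alpha_a,\beta_a$) replaces the paper's appeal to \Cref{Lem: equiv classes of paths}, which is stated only for distinct vertices. And the paper simply asserts that the $\alpha_a^i$ are non-zero, whereas you set out to prove it.

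The step that fails as written is your explicit module. You cannot make composites lying in $\widehat{I^{\Delta}}$ act by zero independently of the arrows. A path acts by the composite of its arrows' maps, so injective arrow maps give non-zero composites. Moreover, $\alpha_a\delta-\delta\alpha_b\in I^G$ forces every arrow map to be $k[x]$-linear, and there is no non-zero $k[x]$-linear map $k[x^{\pm1}]\to k[x]$. Arrows run both ways between sharp and standard vertices (e.g.\ $\bullet\to\circ\to\bullet$ in \Cref{Ex: Lambda quiver}). So under either module convention some arrows could only act by zero, contrary to your "inclusions or identities" prescription.

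The repair is to shrink the module:
\begin{itemize}
\item set $M_a=k[x]$ or $k[x^{\pm1}]$, according as $a\in B^G$ or not, and $M_b=0$ for $b\neq a$;
\item let every arrow of $\widehat{Q^G}$ act by $0$;
\item let $\alpha_a,\beta_a$ act by $x,x^{-1}$.
\end{itemize}
Every generator of $I^G$ acts by zero on this module. Generators with an arrow of $\widehat{Q^G}$ in each term are killed because arrows act by $0$. The remaining generators are $\alpha_b\beta_b-\iota_b$ and $\beta_b\alpha_b-\iota_b$, which act as $xx^{-1}-1=0$ when $b=a$ and vanish on $M_b=0$ otherwise. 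So this is a $kQ^G/I^G$-module on which $\alpha_a^i$ and $\beta_a^j$ act non-trivially. Since $I^G$ is homogeneous and these monomials lie in distinct degrees, they are linearly independent, hence a basis of the loop algebra at $a$. With this fix your argument is complete.
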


\begin{proof}
	By \Cref{Lem: equiv classes of paths}, there exists at most one non-zero equivalence class of paths of degree $i$ starting and ending at vertex $a \in Q_0^G$.
	As all vertices have $\iota_a$, and paths $\alpha_a^i$, then there exists a non-zero equivalence path in degree $i \leq 0$ for all vertices $a\in Q_0^G$.
	
	If $a \in B$, then $G_a$ corresponds to a binding arc, and so is a limit arc under the bijection \Cref{Prop: Bijection of ext admissible dissections}, and so $\Endo[\ocC]{\ast}{G_a} \cong k[x]$, with $\lvert x \rvert =-1$ by \Cref{Lem:EndX}.
	There is no loop $\beta$ at $a$, and $\widehat{Q^G}$ is a tree, so there are no non-zero equivalence classes of paths of degree $i > 0$ starting and ending at $a$.
	Then a $k$-linear map $\varphi_a : \iota_a(kQ^G/I^G)\iota_a \rightarrow k[x]$ under the assignment $\alpha_a^i \mapsto x^i$ is an isomorphism as concatenation of paths is equivalent to multiplication of the induced powers of $x$.
	
	If $a \not\in B$, then $G_a$ is a double limit arc and $\Endo[\ocC]{\ast}{G_a} \cong k[x^{\pm 1}]$, with $\lvert x \rvert =-1$ by \Cref{Lem:EndY}.
	There is also a loop $\beta_a$, in degree $1$, and so there is a non-zero equivalence class of paths of degree $i$ starting and ending at $a$, for all $i \in \mathbb{Z}$.
	Then the $k$-linear map $\varphi_a : \iota_a(kQ^G/I^G)\iota_a \rightarrow k[x^{\pm 1}]$, taking $\alpha_a^i \mapsto x^i$ and $\beta_a^i \mapsto x^{-i}$, is an isomorphism of $k$-algebras, again by taking concatenation of equivalence classes of paths to multiplication of the induced powers of $x$, and considering the relations $\alpha_a\beta_a -\iota_a$ and $\beta_a \alpha_a - \iota_a$.
\end{proof}

The above result may be lifted to an isomorphism of the submodule generated by paths starting at a vertex $a$ and ending at a vertex $b$, and the graded homomorphism ring from $G_a$ to $G_b$, the respective indecomposable direct summands of the generator $G$.

\begin{lemma}\label[lem]{Lem: path modules}
	Let $a,b$ be distinct vertices in $Q_0$.
	Then there is an isomorphism of graded $k$-vector spaces
	\[
	\iota_a(kQ^G/I^G)\iota_b \cong \Ext[\ocC]{\ast}{G_a}{G_b} .
	\]
	Moreover, this lifts to an isomorphism of $\Endo{\ast}{G_a}$-$\Endo{\ast}{G_b}$-bimodules.
\end{lemma}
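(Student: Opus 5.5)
The plan is to compare dimensions degree by degree, and then write down an explicit map that intertwines the two bimodule actions. Throughout, $G_a, G_b$ are the indecomposable summands of the limit generator $G$ attached to $a,b$ by \Cref{Prop: Bijection of ext admissible dissections}.

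\emph{Vector space isomorphism.} By \Cref{Lem: equiv classes of paths}, $\iota_a(kQ^G/I^G)\iota_b$ is either zero in every degree or one dimensional in every degree. It is nonzero exactly when there is a degree-zero path $a \to b$ in the keyboard quiver $\widehat{Q^G}$ that avoids the relations $\widehat{I^G}$. Since $\widehat{Q^G}$ is a tree, this path is unique if it exists. On the other side, since $G$ is a limit generator, no two summands cross. \Cref{Prop:PY3.14} and \Cref{Lem:accumulation point endo} then show that $\Ext[\ocC]{\ast}{G_a}{G_b}$ is either $0$ or $k$ in every degree, with the nonzero case occurring exactly when $\ell_{G_b}$ is reached from $\ell_{G_a}$ by a chain of positive rotations about shared accumulation points, and in that case the morphism factors through the intermediate arcs.

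So the key combinatorial step is to show that the two support conditions agree. A non-zero path $a\to b$ means the arcs $\varepsilon(G_a),\dots,\varepsilon(G_b)$ along the path meet successively at $\rcirc$-points, each immediately anticlockwise of the last, with no two consecutive arrows turning at opposite ends of an arc. I would match arrows with the generating Hom's between arcs that are adjacent around a common accumulation point, and zero relations with the vanishing in \Cref{Cor: Forwards Backwards Composition}(3) together with \Cref{Lem: Factoring in Cn}. Concretely: a composite of rotations about two different endpoints of the middle arc does not satisfy the inequalities of \Cref{Lem: Factoring in Cn}, so it is zero; a composite of rotations about the same endpoint factors through the middle arc, so it is nonzero. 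This gives $\iota_a(kQ^G/I^G)\iota_b^m \cong \Ext[\ocC]{m}{G_a}{G_b}$ for every $m$.

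\emph{The explicit map.} For each arrow $\delta_{ab}$ of $\widehat{Q^G}$, choose a nonzero degree-$0$ morphism $f_{ab}\colon G_a\to G_b$. Send the normal forms from the proof of \Cref{Lem: equiv classes of paths} as follows: $\delta\cdots\delta\,\alpha^{-m}\mapsto x^{-m}\cdot f_{b'b}\cdots f_{ab''}$, and $\delta\cdots\beta^{m}(\delta)\mapsto$ the corresponding composite with $x^{-m}$, using the isomorphisms $\varphi_a$ of \Cref{Lem: loop algebras}. For the bimodule structure, it then suffices to check that the relations $\alpha\delta=\delta\alpha$ and $\beta\delta\cdots\delta=\delta\cdots\delta\beta$ hold on the categorical side. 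This amounts to showing that the degree $-1$ generator of $\Endo{\ast}{G_a}$ (a morphism $G_a\to G_a[-1]$) composed with $f_{ab}$ equals $f_{ab}[-1]$ composed with the generator for $G_b$, up to a nonzero scalar. Both composites are nonzero, by the factorization criterion of \Cref{Lem: Factoring in Cn}, and they lie in the one dimensional space $\Ext[\ocC]{-1}{G_a}{G_b}$, so they agree up to scalar. I would normalize the chosen generators, propagating along the tree from a root vertex, so that these scalars are all $1$. The tree structure is what makes this possible, since it leaves no cycles to obstruct the normalization.

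\emph{Main obstacle.} I expect the hardest part to be the positive-degree $\beta$ relations at a sharp (limit arc) endpoint $b\in B$. There $\Endo{\ast}{G_b}=k[x]$ has no inverse of $x$, so a positive degree map into $G_b$ has to be written as $f\circ(\text{inverse on }G_{b'})$, where $b'$ is the last standard vertex before $b$. Checking that this is well defined and compatible with the right $k[x]$-action requires the localization description from the proof of \Cref{Lem:EndY}, namely $\beta=\pi(t)^{-1}$. I would verify compatibility there first, using that $\pi(t)$ commutes with $f$ by naturality in $\cC_{2n}$.
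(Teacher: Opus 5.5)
Your proposal is correct. For the vector-space part it is the paper's argument. Both sides are zero or one-dimensional in every degree: \Cref{Lem: equiv classes of paths} on one side, and \Cref{Prop:PY3.14} with \Cref{Lem:accumulation point endo} on the other, using that summands of a limit generator do not cross. Both sides are nonzero exactly when $G_a$ and $G_b$ share an accumulation point and $G_b$ is a positive rotation of $G_a$, i.e.\ when a relation-free path $a\to b$ exists. Strictly, the criterion is a single rotation about one common accumulation point, not a chain of rotations about several; your remark that composites across the two ends of a middle arc vanish is consistent with this.

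You genuinely differ on the bimodule statement. The paper sends the nonzero class of degree $i$ to $x^i$ on both sides and checks $x^q\varphi_{a,b}(\sigma_p)=\varphi_{a,b}(\sigma_{p+q})$. In other words, it takes for granted from \Cref{Lem:accumulation point endo} that $\mathrm{End}^{\ast}(G_a)$ and $\mathrm{End}^{\ast}(G_b)$ act on $\mathrm{Ext}^{\ast}(G_a,G_b)$ by the obvious multiplication. You instead build the map from chosen generators and verify the defining relations in the category:
\begin{itemize}
\item the zero relations via \Cref{Lem: Factoring in Cn};
\item $\alpha\delta=\delta\alpha$ via one-dimensionality plus nonvanishing of both composites, with the resulting nonzero scalars removed by rescaling the degree $-1$ generators along the tree.
\end{itemize}
This is longer, but it actually proves what the paper assumes: that the degree $-1$ generators act bijectively between consecutive degrees, with matching constants on the two sides. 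It does so coherently for all pairs at once, which is what \Cref{Thm:path algebras} really needs. Define the map on the generators of $kQ^G$ rather than on normal forms, since defining it on normal forms presupposes they are linearly independent in the quotient. Done that way, it also shows that the normal-form paths are nonzero in $kQ^G/I^G$, which the paper only asserts.

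Your anticipated main obstacle is not one. Once $\alpha\delta=\delta\alpha$ holds, and $\alpha\beta=\beta\alpha=\iota$ at standard vertices, each relation $\beta_{a'}\sigma=\sigma\beta_{b'}$ is formal:
\[
\beta_{a'}\sigma=\beta_{a'}\sigma\alpha_{b'}\beta_{b'}=\beta_{a'}\alpha_{a'}\sigma\beta_{b'}=\sigma\beta_{b'}.
\]
Similarly, consider positive-degree classes ending at a sharp vertex $b$. Their compatibility with the right $k[x]$-action follows from $x_b\circ f_{b'b}=f_{b'b}[-1]\circ x_{b'}$ and the invertibility of $x_{b'}$ in $\mathrm{End}^{\ast}(G_{b'})\cong k[x^{\pm 1}]$. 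So you do not need the localization description from \Cref{Lem:EndY}. That is just as well, because the claim that $\pi(t)$ commutes with $f$ ``by naturality'' would need its own justification: nothing in the construction makes elements of the multiplicative system components of a natural transformation. In fact that commutation is exactly the $\alpha\delta=\delta\alpha$ relation you have already secured by normalization.
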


\begin{proof}
	If there is a non-zero morphism $f^i \in \ocC(G_a,G_b[i])$ if and only if $G_a$ and $G_b[i]$ cross or share an endpoint at an accumulation point, with $G_b[i]$ an anticlockwise rotation of $G_a$. As $G$ is a limit generator, and so $G_a$ and $G_b[i]$ do not cross, this means that $G_a$ and $G_b[i]$ must share an endpoint at an accumulation point, and $G_b[i]$ is a rotation of $G_a$ in the positive orientation of $D^2$.
	Also, as $a \neq b$, this implies there is a non-zero morphism $f^j \in \ocC(G_a,G_b[j])$ for all $j \in \mathbb{Z}$ by \Cref{Lem:accumulation point endo}.
	
	However, by the construction of $(Q^G,I^G)$, there is a path of degree $0$ from $a$ to $b$, such that no subpath is in $I^G$, hence there is a non-zero equivalence class of paths in $kQ^G/I^G$ starting at $a$ and ending at $b$ for all $i \in \mathbb{Z}$.
	By \Cref{Lem: equiv classes of paths}, there exists at most one non-zero equivalence class of paths of degree $i$ starting at vertex $a$ and ending at vertex $b$.
	
	Hence there is an isomorphism of graded $k$-vector spaces, as the dimension of $\Ext[\ocC]{\ast}{G_a}{G_b}$ and $\iota_a(kQ^G/I^G)\iota_b$ in degree $i$ is 1 for all $i \in \mathbb{Z}$.
	We denote this isomorphism taking the non-zero equivalence class of paths of degree $i$ to $x^i$ by $\varphi_{a,b}$.
	
	To see that this isomorphism lifts to an isomorphism of $\Endo{\ast}{G_a}$-$\Endo{\ast}{G_b}$-bimodules, let $\sigma_i \in (\iota_a(kQ^G/I^G)\iota_b)^i$, and observe that,
	\[
	x^q \varphi_{a,b}(\sigma_p) = x^q \cdot x^p = x^{p+q} = \varphi_{a,b} (\sigma_{p+q}),
	\]
	and 
	\[
	\varphi_{a,b} (\sigma_p) x^{q'} = x^p \cdot x^{q'} = x^{p+q'} = \varphi_{a,b} (\sigma_{p+q'}),
	\]
	for $x^q \in \Endo{\ast}{G_a}$ and $x^{q'} \in \Endo{\ast}{G_b}$, where $\iota_a(kQ^G/I^G)\iota_a \cong \Endo[\ocC]{\ast}{G_a}$ and $\iota_b(kQ^G/I^G)\iota_b \cong \Endo[\ocC]{\ast}{G_b}$ by \Cref{Lem: loop algebras}.
\end{proof}

We are now in a position to prove \Cref{Thm:path algebras}.

\begin{proof}[Proof of \Cref{Thm:path algebras}]
	\Cref{Lem: loop algebras,Lem: path modules}, and \Cref{Prop: endo ring} combine to show that there is an isomorphism of graded $k$-vector spaces $h: kQ^G/I^G \rightarrow \chi^G$.
	All that is left to show that this is an isomorphism of $k$-algebras.
	
	Let $\sigma_i \in kQ^G/I^G$ be a path from $a$ to $b$ of degree $i$, and $\sigma'_j \in kQ^G/I^G$ a path from $b$ to $c$ of degree $j$, such that $\sigma'_j\sigma_i = \sigma''_{i+j}$ is non-zero in degree $i+j$.
	Then we have
	\[
	h(\sigma'_j)h(\sigma_i) = x^j \cdot x^i = x^{i+j} = h(\sigma''_{i+j}) h(\sigma'_j\sigma_i),
	\]
	and if $\sigma'_j\sigma_i$ is zero, then it is clear that 
	\[
	h(\sigma'_j)h(\sigma_i) = 0 = h(\sigma'_j\sigma_i).
	\]
	Hence $h$ is an isomorphism of $k$-algebras.
\end{proof}

It seems reasonable that there is a natural connection between piano algebras and locally gentle algebras.
This is because piano algebras are defined via extended admissible dissections, locally gentle algebras are in bijection to admissible dissections of marked surfaces, and \Cref{Lem: extended become admissible} tells us that all extended admissible dissections may be viewed as admissible dissections.

For the case of a disc without punctures and with $2n$ marked points, we make this connection explicit.

\begin{lemma}\label[lem]{Lem: Lambda 0 is gentle}
	Let $G$ be a limit generator of $\ocC$.
	Then $(\chi^G)^0 \cong k\widehat{Q^G}/\widehat{I^G}$ as $k$-algebras, in particular, $(\chi^G)^0$ is a locally gentle algebra.
\end{lemma}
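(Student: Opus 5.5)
By \Cref{Thm:path algebras} there is an isomorphism of graded $k$-algebras $\chi^G \cong kQ^G/I^G$. Restricting to degree $0$ gives an isomorphism of $k$-algebras $(\chi^G)^0 \cong (kQ^G/I^G)^0$, because the degree $0$ part of a graded algebra is a subalgebra. It therefore suffices to construct an isomorphism $k\widehat{Q^G}/\widehat{I^G} \to (kQ^G/I^G)^0$. Local gentleness then follows from \Cref{Lem: extended become admissible} and \Cref{Thm: Gentle to Admissible}: $(\widehat{Q^G},\widehat{I^G})$ is by definition the quiver with relations of the induced admissible $\rcirc$-dissection $\widetilde{\Delta}$, so $k\widehat{Q^G}/\widehat{I^G} = A(\widetilde{\Delta})$ is locally gentle. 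In fact $\widehat{Q^G}$ is a finite tree, so the algebra is even gentle.

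For the map, I take the natural algebra homomorphism $\psi : k\widehat{Q^G} \to kQ^G/I^G$ induced by the inclusion $\widehat{Q^G} \subset Q^G$. Every arrow of $\widehat{Q^G}$ has degree $0$, so $\psi$ lands in $(kQ^G/I^G)^0$. Since $\widehat{I^G}$ is one of the generating sets of $I^G$, $\psi$ factors through $\overline{\psi} : k\widehat{Q^G}/\widehat{I^G} \to (kQ^G/I^G)^0$.

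\textbf{Surjectivity.} I use the normal forms from the proof of \Cref{Lem: equiv classes of paths}. Any path of degree $0$ from $a$ to $b$ can be rewritten modulo $I^G$ as a $\delta$-word times $\alpha^{i}\beta^{j}$ (or with $\beta^j$ inserted before the last arrow) with $j-i=0$. The relations $\alpha\beta=\iota=\beta\alpha$ then reduce it to a pure $\delta$-word. The same holds for loops at $a$ by \Cref{Lem: loop algebras}: the degree $0$ component is $k\iota_a$. Hence every degree $0$ class is the image of a path in $\widehat{Q^G}$.

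\textbf{Injectivity.} This is the main obstacle. I do it by a dimension count, using that both sides decompose as $\bigoplus_{a,b} \iota_a(-)\iota_b$ and $\overline{\psi}$ respects this decomposition.
\begin{itemize}
\item On the target, \Cref{Lem: equiv classes of paths} and \Cref{Lem: loop algebras} give $\dim (\iota_a(kQ^G/I^G)\iota_b)^0 \le 1$, with equality exactly when $a=b$ or a nonzero $\delta$-path from $a$ to $b$ exists.
\item On the source, $\widehat{Q^G}$ is a tree, so there is at most one path from $a$ to $b$. It is nonzero in $k\widehat{Q^G}/\widehat{I^G}$ exactly when it contains no length-two subpath in $\widehat{I^G}$.
\end{itemize}
The delicate point is that the extra relations of $I^G$ (commutation with $\alpha$ and $\beta$, and $\alpha\beta=\iota$) cannot kill a $\delta$-path that avoids $\widehat{I^G}$. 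To argue this, I use \Cref{Thm:path algebras} together with \Cref{Lem: path modules}: such a path corresponds to a nonzero composite of degree $0$ morphisms between summands of $G$ sharing an accumulation point and rotating anticlockwise. By \Cref{Prop: endo ring} and \Cref{Cor: Forwards Backwards Composition}, that composite is nonzero in $\chi^G$.

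So the corresponding summands on the two sides have equal dimension, and a surjective map between equal-dimensional finite-dimensional spaces is an isomorphism. Hence $\overline{\psi}$ is an isomorphism of $k$-algebras, which proves the statement.
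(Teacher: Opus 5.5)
Your proof is correct and follows the paper's route. The paper deduces the lemma directly from \Cref{Thm:path algebras}, \Cref{Prop: Bijection of ext admissible dissections} and \Cref{Cor: Extended to Gentle}, leaving implicit the identification $(kQ^G/I^G)^0 \cong k\widehat{Q^G}/\widehat{I^G}$, which you make explicit through normal forms (surjectivity) and a vertex-by-vertex dimension count (injectivity). Your injectivity step leans on \Cref{Lem: path modules}, whose proof only asserts the needed nonvanishing; you could make it self-contained with the algebra map $kQ^G/I^G \to (k\widehat{Q^G}/\widehat{I^G})[t^{\pm 1}]$ given by $\delta \mapsto \delta$, $\alpha_a \mapsto t\,\iota_a$, $\beta_a \mapsto t^{-1}\iota_a$, which kills every generator of $I^G$ and whose composite with $\overline{\psi}$ is the canonical inclusion.
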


\begin{proof}
	The result follows from \Cref{Thm:path algebras}, \Cref{Prop: Bijection of ext admissible dissections} and \Cref{Cor: Extended to Gentle}.
\end{proof}

The following result tells us about the structure of $\Lambda^G$, in particular what the degree-wise components of $\Lambda^G$ look like.

\begin{proposition}\label[prop]{Prop: Piano as paths}
	Let $G \in \ocC$ be a limit generator, and let $\Lambda^G$ be its associated piano algebra.
	Then we have the following isomorphisms of $k\widehat{Q^G}/\widehat{I^G}$-modules;
	\[
	(\Lambda^G)^i \cong
	\begin{cases*}
		k\widehat{Q^G}/\widehat{I^G} & if $i \leq 0$,\\
		\bigoplus_{a \in Q_0} p_{\overline{a}} & if $i > 0$,
	\end{cases*}
	\]
	where $p_{\overline{a}}$ is the projective at $\overline{a}$, and $\overline{a} = a$ if $a \not\in B$, $\overline{a} = a'$ for an arrow $a' \rightarrow a$ if $a \in B$, and $p_{\overline{a}}=0$ if $a \in B$ and $a$ is a source.
\end{proposition}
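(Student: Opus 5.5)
The plan is to decompose $\Lambda^G$ degreewise using the path normal forms from \Cref{Lem: equiv classes of paths}. Writing $A := k\widehat{Q^G}/\widehat{I^G}$, we identify $(\Lambda^G)^0$ with $A$ via \Cref{Lem: Lambda 0 is gentle} and \Cref{Thm:path algebras}. The key point is that degree-$0$ elements are exactly linear combinations of paths in the $\delta$'s. Indeed, any product containing $\alpha$'s and $\beta$'s of total degree $0$ reduces, using $\alpha\beta=\beta\alpha=\iota$ and the commutation relations, to a $\delta$-path. Moreover, each component $(\Lambda^G)^i$ is an $A$-bimodule, since $A=(\Lambda^G)^0$ and the grading is multiplicative; we regard it as a (right) $A$-module.

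For $i<0$, multiplication by $\alpha^{-i}$ (summed over vertices, i.e.\ $\sum_a \alpha_a^{-i}$) gives a map $A\to(\Lambda^G)^i$. Because $\alpha$ commutes with every $\delta$, this map is $A$-linear. It is surjective because \Cref{Lem: equiv classes of paths} puts every nonzero path of degree $i\le 0$ into the form $\delta\cdots\delta\,\alpha^{-i}$. It is injective because \Cref{Lem: equiv classes of paths} and \Cref{Lem: loop algebras} show that $\iota_a\Lambda^G\iota_b$ has the same dimension in degree $i$ as in degree $0$. Comparing the $(a,b)$-pieces then gives $(\Lambda^G)^i\cong A$.

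For $i>0$, we again use \Cref{Lem: equiv classes of paths}. Every nonzero path of degree $i$ starting at $a$ has the form $\beta_a^i\,\delta\cdots\delta$ if $a\notin B$. If $a\in B$, there is no $\beta$ at $a$. By \Cref{Lem: arcs between binding} and the tree structure, a sharp vertex has only one neighbour, which is standard. So the path must first leave $a$ along the unique arrow and insert $\beta$ at that standard vertex. Hence $\iota_a(\Lambda^G)^i$ is spanned by $\beta^i$ times paths out of $\overline{a}$, where $\overline{a}$ is that neighbour.

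We then map the projective $p_{\overline a}=\iota_{\overline a}A$ onto $\iota_a(\Lambda^G)^i$ by $\sigma\mapsto \beta^i_{\overline a}\sigma$ (when $a\notin B$), or by $\sigma\mapsto \delta_{a\overline a}\beta^i_{\overline a}\sigma$ (when $a\in B$). These maps are $A$-linear by the $\beta$-commutation relations on standard vertices. They are injective by the one-dimensionality in \Cref{Lem: equiv classes of paths}, since each nonzero path class from $\overline a$ to $b$ gives a nonzero class of degree $i$ from $a$ to $b$. When $a\in B$ is a source with no arrow to use, the component vanishes. Summing over $a$ gives $\bigoplus_a p_{\overline a}$.

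The main obstacle is the sharp-vertex case. We must check that the arrow used to reach a standard vertex is really the indicated one: the statement's convention speaks of an arrow $a'\to a$, so the orientation must be reconciled with the left-to-right path convention. We must also check that nonzero paths in $A$ from $\overline a$ correspond bijectively to nonzero degree-$i$ paths from $a$. I would verify this by checking that prepending the arrow does not create a relation in $\widehat{I^G}$, using the local picture of \Cref{Lem: arcs between binding}. If this bijection fails, the claimed summand must be a proper quotient of $p_{\overline a}$ and the statement would need adjusting.
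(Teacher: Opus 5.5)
Your route is the paper's route: decompose $\Lambda^G$ vertex by vertex, use the normal forms of \Cref{Lem: equiv classes of paths}, and identify the positive-degree piece at a sharp vertex $a$ with the projective at its standard neighbour $\overline a$. Your treatment of degrees $i\le 0$ and of standard vertices is fine.

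The gap is exactly the step you flagged and deferred, and it cannot be closed. \Cref{Lem: equiv classes of paths} says the degree-$i$ class from $a$ to $b$ is nonzero if and only if the degree-$0$ class $\delta_{a\overline a}\sigma$ is nonzero in $A=k\widehat{Q^G}/\widehat{I^G}$. But $\sigma\neq 0$ does not force $\delta_{a\overline a}\sigma\neq 0$. Suppose $\delta_{a\overline a}\delta_{\overline a c}\in\widehat{I^G}$ for an arrow $\overline a\to c$ with $c$ standard. Then $\delta_{a\overline a}\beta_{\overline a}^i\delta_{\overline a c}=\delta_{a\overline a}\delta_{\overline a c}\beta_c^i=0$, so the nonzero element $\delta_{\overline a c}$ lies in the kernel of your map. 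Nothing in \Cref{Lem: arcs between binding} rules this out.

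Here is a concrete instance. Let $n=3$ and label the boundary anticlockwise $r_1,g_1,r_2,g_2,r_3,g_3$. Take red arcs $X=\{r_1,r_2\}$ and $Y=\{r_1,r_3\}$, and binding arcs $b_1=\{g_1,r_2\}$, $b_2=\{g_2,r_2\}$, $b_3=\{g_3,r_3\}$. Up to reflecting the disc (which reverses all arrows), the keyboard quiver is $b_2\to X\to b_1$, $X\to Y\leftarrow b_3$. It has the single relation $\delta_{b_2X}\delta_{XY}$, because these two arrows sit at different ends of $X$.

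Then $\iota_XA$ has basis $\iota_X,\delta_{Xb_1},\delta_{XY}$. However, $\iota_{b_2}(\Lambda^G)^i$ for $i>0$ is spanned only by $\delta_{b_2X}\beta^i$ and $\delta_{b_2X}\beta^i\delta_{Xb_1}$. This relation is genuine on the categorical side: the limit arc $G_{b_2}$ at $\mathfrak{a}_2$ and the double limit arc $G_Y=\{\mathfrak{a}_1,\mathfrak{a}_3\}$ neither cross nor share an endpoint, so there are no morphisms between them in any degree.

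Counting gives $\dim_k(\Lambda^G)^i=7$ for $i>0$: two double limit arcs, plus five pairs of summands sharing an accumulation point, each contributing one dimension. Your $\bigoplus_a p_{\overline a}$ has dimension $8$. The reflected dissection has the relation $\delta_{YX}\delta_{Xb_2}$ ending at a sharp vertex. It gives the same $7\neq 8$ for the statement in its own convention (arrow $a'\to a$, paths ending at $a$).

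So the proposition is false as stated. The paper's proof does not help: it asserts $e_a\Lambda^G\cong e_{a'}\Lambda^G\cong p_{a'}$ by exactly the identification you were worried about.

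What your argument does establish is a corrected version. For $i>0$, the summand at $a\in B$ is the radical of the projective at $a$. Equivalently, it is $p_{\overline a}$ modulo the submodule generated by the (at most one) arrow at $\overline a$ that forms a relation with the arrow between $a$ and $\overline a$. This equals $p_{\overline a}$ precisely when no such relation exists. That holds for the fan generator, where $\widehat{I^E}=0$, so \Cref{Cor: Lambda as A modules} survives.

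One smaller correction: a sharp vertex can have two neighbours. A binding arc lying between two red arcs at its red endpoint has one incoming and one outgoing arrow. What is true, and all you need, is that a sharp vertex has at most one arrow in each direction, both to standard vertices.
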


\begin{proof}
	Let $e_a$ be the trivial path at vertex $a \in Q_0$.
	Then by \Cref{Lem: equiv classes of paths}, $(e_a\Lambda^G)^i$ has a basis given by the equivalence classes of paths in degree $i$ ending at vertex $a$.
	If $i \leq 0$, this implies a one-to-one correspondence between basis elements of $(e_a\Lambda^G)^i$ and paths in $k\widehat{Q^G}/\widehat{I^G}$ ending at $a$, hence $(e_a\Lambda^G)^i \cong p_a$.
	Therefore, as $\chi^G\cong \bigoplus_{a \in Q_0} e_a \Lambda^G$, we see that when $i \leq 0$
	\[
	 (\Lambda^G)^i \cong (\bigoplus_{a \in Q_0} e_a \Lambda^G)^i \cong \bigoplus_{a \in Q_0} (e_a \Lambda^G)^i \cong \bigoplus_{a \in Q_0} p_a \cong k\widehat{Q^G}/\widehat{I^G},
	\]
	where we recall that the differential is trivial in the second isomorphism.
	
	Now let $i >0$, and suppose that $a \not\in B$, and so there is a loop in degree 1 at vertex $a$.
	Then the above argument holds and we get $ (e_a \Lambda^G)^i \cong p_a$.
	
	However, suppose that $a \in B$, then there is no loop in degree 1 at $a$, and all paths of degree $>0$ must pass through a vertex not in $B$ so that it may pass through a loop of degree 1.
	By \Cref{Lem: arcs between binding}, there must exist an arrow from a vertex not in $B$ to a vertex in $B$, or $a$ is a source. 
	Furthermore this arrow is unique if it exists, as vertices in $B$ correspond to binding arcs, and as such only one endpoint is shared by other arcs in the extended admissible dissection.
	Label this vertex by $a'$.
	
	Again by \Cref{Lem: equiv classes of paths}, we see that now the basis of $(e_a \Lambda^G)^i$ is given by the paths ending at vertex $a'$, composed with the arrow $a' \rightarrow a$.
	If such an $a'$ does not exists, then $p_{\overline{a}}=0$.
	As such, we in fact see that $e_a \Lambda^G\cong e_{a'} \Lambda^G\cong p_{a'}$, where the second isomorphism comes from the above argument, and so our claims follows.
\end{proof}

We specialise \Cref{Prop: Piano as paths} to the case of $\Lambda_n$, which will become useful for us in \Cref{Sec: The Final Equivalences}.

\subsection{Fan Generators of \texorpdfstring{$\ocC$}{Cn}}

Let $E \in \ocC$ be the object in \Cref{fig:XandYs}, then $E$ is a generator by \Cref{Thm:Gens}, and we call $E$ a \textit{fan generator} of $\ocC$.
Fan generators are a particularly nice class of limit generators, as they exhibit some desirable properties that allow us to apply previously known results.

For a fan generator $E \in \ocC$, for the associated piano algebra we shall in general drop the superscript and write $\Lambda_n \cong \Lambda^E$.

\begin{figure}[h!]
	\centering
	\begin{tikzpicture}
		\draw (0,0) circle (3cm);
		\draw[thin] (0,3) .. controls (0.5,2.6) .. ({3*sin(36)},{3*cos(36)}) node[pos=0.85, below] {\footnotesize $E_{2n-1}$};
		\draw[thin] (0,3) .. controls (1,0.3) .. ({3*sin(108)},{3*cos(108)}) node[pos=0.75, below] {\footnotesize $E_{2n-3}$};
		\draw[thin] (0,3) .. controls (-0.5,2.6) .. ({3*sin(324)},{3*cos(324)}) node[pos=0.85, below] {\footnotesize $E_1$};
		\draw[thin] (0,3) .. controls (-1,0.3) .. ({3*sin(252)},{3*cos(252)}) node[pos=0.75, right] {\footnotesize $\, E_3$};
		\draw[thin] (0,3) .. controls (1.5,1.5) .. ({3*sin(72)},{3*cos(72)}) node[pos=0.75, below] {\footnotesize $E_{2n-2}$};
		\draw[thin] (0,3) .. controls (0.5,0) .. ({3*sin(144)},{3*cos(144)}) node[pos=0.85, left] {\footnotesize $E_{2n-4}$};
		\draw[thin] (0,3) .. controls (-1.5,1.5) .. ({3*sin(288)},{3*cos(288)}) node[pos=0.75, below] {\footnotesize $E_2$};
		\draw[thin] (0,3) .. controls (-0.5,0) .. ({3*sin(216)},{3*cos(216)}) node[pos=0.85, right] {\footnotesize $E_4$};
		\draw[thick,dotted] ({2.5*sin(190)},{2.5*cos(190)}) -- ({2.5*sin(170)},{2.5*cos(170)});
		\path[fill=white] ({3*sin(190)},{3*cos(190)}) -- ({3.1*sin(190)},{3.1*cos(190)}) -- ({3.1*sin(170)},{3.1*cos(170)}) -- ({3*sin(170)},{3*cos(170)}) -- cycle;
		\node at ({3.4*sin(0)},{3.4*cos(0)}) {$\mathfrak{a}_1$};
		\node at ({3.4*sin(288)},{3.4*cos(288)}) {$\mathfrak{a}_2$};
		\node at ({3.4*sin(216)},{3.4*cos(216)}) {$\mathfrak{a}_3$};
		\node at ({3.4*sin(144)},{3.4*cos(144)}) {$\mathfrak{a}_{n-1}$};
		\node at ({3.4*sin(72)},{3.4*cos(72)}) {$\mathfrak{a}_n$};
		\node at ({3.4*sin(324)},{3.4*cos(324)}) {$z_1$};
		\node at ({3.4*sin(252)},{3.4*cos(252)}) {$z_2$};
		\node at ({3.4*sin(36)},{3.4*cos(36)}) {$z_n$};
		\node at ({3.5*sin(108)},{3.5*cos(108)}) {$z_{n-1}$};
		\draw[fill=white] (0,3) circle (0.1cm);
		\draw[fill=white] ({3*sin(144)},{3*cos(144)}) circle (0.1cm);
		\draw[fill=white] ({3*sin(72)},{3*cos(72)}) circle (0.1cm);
		\draw[fill=white] ({3*sin(288)},{3*cos(288)}) circle (0.1cm);
		\draw[fill=white] ({3*sin(216)},{3*cos(216)}) circle (0.1cm);
	\end{tikzpicture}
	\caption{A \textit{fan generator} of $\ocC$.}
	\label[fig]{fig:XandYs}
\end{figure}

The following corollaries are direct applications of \Cref{Prop: endo ring} and \Cref{Prop: Piano as paths} respectively.

\begin{corollary}\label[cor]{Cor: Fan Generator}
	Let $E$ be a fan generator of $\ocC$.
	Then 
	\[
	(\chi^E)_{i,j} \cong \begin{cases*}
		k[x] & if $i=j$ is odd,\\
		0 &  if $i > j$,\\
		k[x^{\pm 1}] & else.
	\end{cases*}
	\]
\end{corollary}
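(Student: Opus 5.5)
The plan is to apply \Cref{Prop: endo ring} directly to the fan generator $E = \bigoplus_{i=1}^{2n-1} E_i$ of \Cref{fig:XandYs}, so the work is mostly combinatorial: identify the arcs and read off which pairs fall under which case. First I check that $E$ satisfies the hypotheses of \Cref{Prop: endo ring}. It is a limit generator: the odd-indexed summands $E_{2j-1} = \{\mathfrak{a}_1, z_j\}$ are limit arcs, the even-indexed summands $E_{2j} = \{\mathfrak{a}_1, \mathfrak{a}_{j+1}\}$ are double limit arcs, and no two of them cross up to suspension, since all share the endpoint $\mathfrak{a}_1$. Two distinct summands meet only at $\mathfrak{a}_1$, so the orbits satisfy $\mathscr{M}_{E_i} \cap \mathscr{M}_{E_j} \subseteq \{\mathfrak{a}_1\}$; here I use that a limit arc $\{\mathfrak{a}_1,z_j\}$ has its marked-point orbit inside the single segment $(\mathfrak{a}_j,\mathfrak{a}_{j+1})$, and these segments are distinct for distinct $j$.

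The diagonal entries are then immediate. If $i$ is odd, $E_i$ is a limit arc and $(\chi^E)_{i,i} \cong k[x]$ by \Cref{Lem:EndX}. If $i$ is even, $E_i$ is a double limit arc and $(\chi^E)_{i,i} \cong k[x^{\pm1}]$ by \Cref{Lem:EndY}. Both agree with the stated formula, which assigns $k[x]$ to odd diagonal entries and, under ``else'', $k[x^{\pm1}]$ to even ones.

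For $i \neq j$, no two summands cross and every pair shares the accumulation point $\mathfrak{a}_1$. By \Cref{Prop: endo ring} (equivalently \Cref{Lem:accumulation point endo}), $(\chi^E)_{i,j}$ is therefore $k[x^{\pm1}]$ or $0$, depending on which arc is the anticlockwise rotation of the other about $\mathfrak{a}_1$. The remaining step is to order the arcs around $\mathfrak{a}_1$. Going anticlockwise from $\mathfrak{a}_1$, the other endpoints appear in the order $z_1, \mathfrak{a}_2, z_2, \mathfrak{a}_3, \ldots, \mathfrak{a}_n, z_n$, which is the order of the indices $E_1, E_2, \ldots, E_{2n-1}$. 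Hence for $i<j$ the arc $E_j$ is obtained from $E_i$ by rotating about $\mathfrak{a}_1$ in one fixed direction, so exactly one of $(\chi^E)_{i,j}$, $(\chi^E)_{j,i}$ is nonzero.

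The only real obstacle is this orientation bookkeeping: I need to confirm, using \Cref{Prop:PY3.14} together with the orientation conventions of \Cref{fig:XandYs}, that the nonzero $\mathrm{Hom}$ direction is from $E_i$ to $E_j$ when $i<j$. That gives $(\chi^E)_{i,j} \cong k[x^{\pm1}]$ for $i<j$ and $(\chi^E)_{i,j} = 0$ for $i>j$, matching the stated formula. If the convention turned out to be reversed, relabelling the fan (reading indices clockwise) would restore the statement. Since the corollary records only the entries, the multiplication rules need no separate verification.
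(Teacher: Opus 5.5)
Your proposal is correct and follows the paper's route exactly: a direct application of \Cref{Prop: endo ring}, with the diagonal entries from \Cref{Lem:EndX,Lem:EndY} and the off-diagonal entries from \Cref{Lem:accumulation point endo}. The orientation check you left open follows at once from that lemma's convention (if $Y$ is an anticlockwise rotation of $X$ about the shared accumulation point, then $\mathrm{Ext}^{\ast}(X,Y)\cong k[x^{\pm 1}]$ and $\mathrm{Ext}^{\ast}(Y,X)=0$), combined with your correct anticlockwise ordering $z_1,\mathfrak{a}_2,z_2,\ldots,\mathfrak{a}_n,z_n$; this gives $(\chi^E)_{i,j}\cong k[x^{\pm 1}]$ for $i<j$ and $0$ for $i>j$, so no relabelling fallback is needed, and none would be legitimate, since the labelling is fixed by \Cref{fig:XandYs}.
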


\begin{corollary}\label[cor]{Cor: Lambda as A modules}
	Let $\Lambda_n$ be the graded algebra associated to the fan generators of $\ocC$, and let $E \in \ocC$ be a fan generator.
	Then $(\Lambda_n)^0 \cong kA_{2n-1}$ as $k$-algebras, where $A_{2n-1}$ is the linearly orientated quiver of Dynkin type $A$ with $2n-1$ vertices.
	Moreover;
	\[
	(\Lambda_n)^i \cong \begin{cases*}
		kA_{2n-1} & if $i \leq 0$,\\
		\bigoplus_{a=1}^{n-1} p_{2a}^{\oplus 2} & if $i > 0$,
	\end{cases*}
	\]
	as $kA_{2n-1}$-modules, where $p_b$ is the projective $kA_{2n-1}$-module at vertex $b$.
\end{corollary}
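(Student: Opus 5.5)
We identify the keyboard quiver $(\widehat{Q^E},\widehat{I^E},B^E)$ of the fan generator $E$ explicitly and then apply \Cref{Prop: Piano as paths}; the degree zero statement also follows from \Cref{Lem: Lambda 0 is gentle}.

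First, describe the extended admissible $\rcirc$-dissection $\varepsilon(E)$ coming from \Cref{Prop: Bijection of ext admissible dissections}. From \Cref{fig:XandYs}, every summand $E_1,\dots,E_{2n-1}$ has $\mathfrak{a}_1$ as an endpoint. The even summands $E_{2k}=\{\mathfrak{a}_1,\mathfrak{a}_{k+1}\}$ are double limit arcs, so they give $\rcirc$-arcs and standard vertices. The odd summands $E_{2k-1}=\{\mathfrak{a}_1,z_k\}$ are limit arcs, so they give binding arcs and sharp vertices. As $i$ increases, the arcs $E_i$ appear in order of successive rotation about $\mathfrak{a}_1$, and every other endpoint is incident to exactly one arc. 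By \Cref{Def: Quiver from dissection}, the arrows of $\widehat{Q^E}$ are therefore exactly $E_i \to E_{i+1}$ for $1 \le i \le 2n-2$.

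Next, I check that $\widehat{I^E}=0$. A relation needs two consecutive arrows whose shared arc $j$ meets $i$ at one endpoint and $l$ at its other endpoint. Here all arrows arise at the single point $\varepsilon(\mathfrak{a}_1)$, so no such relation occurs. Hence $k\widehat{Q^E}/\widehat{I^E}\cong kA_{2n-1}$, linearly oriented, and \Cref{Lem: Lambda 0 is gentle} gives $(\Lambda_n)^0\cong kA_{2n-1}$. I would fix the direction of the arrows using \Cref{Cor: Fan Generator}: $(\chi^E)_{i,j}=0$ for $i>j$, so nonzero paths go from smaller to larger index. In particular vertex $1$ is a source, and each vertex $i>1$ has the unique incoming arrow $i-1\to i$.

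Finally, apply \Cref{Prop: Piano as paths}. For $i\le 0$ it gives $(\Lambda_n)^i\cong kA_{2n-1}$ directly. For $i>0$ it gives $\bigoplus_a p_{\overline a}$, and I sum over the three kinds of vertex.
\begin{itemize}
\item For each standard vertex $a=2k$ with $1\le k\le n-1$, we have $\overline a=a$, contributing $p_{2k}$.
\item The sharp vertex $a=1$ is a source, so it contributes $0$.
\item For each sharp vertex $a=2k+1$ with $1\le k\le n-1$, the unique arrow into $a$ comes from $2k$, so $\overline a=2k$, contributing $p_{2k}$.
\end{itemize}
Summing gives $\bigoplus_{a=1}^{n-1}p_{2a}^{\oplus 2}$.

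The only delicate step is matching the orientation conventions. Anticlockwise succession around $\mathfrak{a}_1$ must agree with increasing index, and the paper's convention that $p_a$ is spanned by paths ending at $a$ must be used consistently. I would check both against the triangularity in \Cref{Cor: Fan Generator} rather than against the picture.
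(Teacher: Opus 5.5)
Your proposal is correct and takes the same route as the paper: identify the keyboard quiver of the fan generator as the linearly oriented $A_{2n-1}$ with no relations, then apply \Cref{Prop: Piano as paths}. You also spell out bookkeeping the paper leaves implicit: all arrows sit at $\mathfrak{a}_1$, odd vertices are sharp, vertex $1$ is a source, and each other odd vertex contributes the projective at its even neighbour, giving $\bigoplus_{a=1}^{n-1}p_{2a}^{\oplus 2}$.
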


\begin{proof}
	This follows from \Cref{Prop: Piano as paths}, and the piano quiver $(Q,I,B)$ associated to $\Lambda_n$ having the Dynkin quiver $A_{2n-1}$ as its underlying keyboard quiver.
\end{proof}

\begin{example}
	Let $n=3$, then by \Cref{Cor: Fan Generator} the graded matrix algebra $\Lambda_3$ would be 
	\[
	\Lambda_3 =
	\begin{pmatrix}
		k [x] & k [x^{\pm 1}] & k [x^{\pm 1}] & k [x^{\pm 1}] & k [x^{\pm 1}]\\
		0 &  k [x^{\pm 1}] & k [x^{\pm 1}] & k [x^{\pm 1}] & k [x^{\pm 1}]\\
		0 & 0 & k [x] & k [x^{\pm 1}] & k [x^{\pm 1}]\\
		0 & 0 & 0 & k [x^{\pm 1}] & k [x^{\pm 1}]\\
		0& 0 & 0 & 0 & k [x]
	\end{pmatrix}
	\]
	and the additive group $\Lambda_3^i$, for $i \leq 0$, would be
	\[
	(\Lambda_3)^i =
	\begin{pmatrix}
		k \cdot x^{-i} & k \cdot x^{-i} & k \cdot x^{-i} & k \cdot x^{-i} & k \cdot x^{-i}\\
		0 & k \cdot x^{-i} & k \cdot x^{-i} & k \cdot x^{-i} & k \cdot x^{-i}\\
		0 & 0 & k \cdot x^{-i} & k \cdot x^{-i} & k \cdot x^{-i}\\
		0 & 0 & 0 & k \cdot x^{-i} & k \cdot x^{-i}\\
		0& 0 & 0 & 0 & k \cdot x^{-i}
	\end{pmatrix}
	\]
	and for $i >0$ would be
	\[
	(\Lambda_3)^i =
	\begin{pmatrix}
		0 & k \cdot x^{-i} & k \cdot x^{-i} & k \cdot x^{-i} & k \cdot x^{-i}\\
		0 & k \cdot x^{-i} & k \cdot x^{-i} & k \cdot x^{-i} & k \cdot x^{-i}\\
		0 & 0 & 0 & k \cdot x^{-i} & k \cdot x^{-i}\\
		0 & 0 & 0 & k\cdot x^{-i} & k \cdot x^{-i}\\
		0& 0 & 0 & 0 & 0
	\end{pmatrix}
	\]
\end{example}

\begin{example}\label[ex]{Ex: Lambda quiver}
	Let $E$ be a fan generator of $\ocC$, that is, a generator with graded endomorphism ring $\Lambda_n$.
	Then the piano quiver $Q^E$ is,
	\[
	\begin{tikzcd}
		Q^E : & \bullet \arrow[r,"\delta"] \arrow[loop right, in=290,out=250,looseness=15,"\alpha",swap] & \circ \arrow[r,"\delta"] \arrow[loop right, in=290,out=250,looseness=15,"\alpha",swap] \arrow[loop,in=70,out=110,looseness=12,"\beta"] & \bullet \arrow[r,"\delta"] \arrow[loop right, in=290,out=250,looseness=15,"\alpha",swap] & \circ \arrow[loop right, in=290,out=250,looseness=15,"\alpha",swap] \arrow[loop,in=70,out=110,looseness=12,"\beta"] \arrow[r] & \cdots \arrow[r] & \circ \arrow[loop right, in=290,out=250,looseness=15,"\alpha",swap] \arrow[loop,in=70,out=110,looseness=12,"\beta"] \arrow[r,"\delta"] & \bullet \arrow[loop right, in=290,out=250,looseness=15,"\alpha",swap]
	\end{tikzcd}
	\]
	and the ideal $I^E$ is given by 
	\[
	I^E := \langle \alpha_a \beta_a  - \iota_a,\, \beta_a \alpha_a  - \iota_a,\, \alpha_a \sigma - \sigma \alpha_b,\, \beta_a  \sigma - \sigma \beta_b \rangle,
	\]
	where $a,b \in Q_0$ and $\sigma$ is any path such that concatenation makes sense.
	Then by \Cref{Thm:path algebras}, we have $\chi^E \cong kQ^E/I^E$.
\end{example}

We recall from \cite{LinearGens}, the definition of a \textit{linear generator} in a triangulated category.

\begin{definition}[\cite{LinearGens}, Def. 2.2] \label[def]{def: linear gen}
	Let $\cT$ be a $k$-linear, Krull-Schmidt, Hom-finite, triangulated category and suppose that $G \in \cT$ is a classical generator. Then, we say that $G$ is a \textit{linear generator} if it satisfies the properties:
	\begin{enumerate}[label={(G\arabic*)}]
		\item The set of classes of indecomposable objects in $\lang[1]{G}$ admits a total ordering defined as:  \label{G1}
		
		For indecomposable objects $Q, P \in \lang[1]{G}$, then $Q \leq P$ if and only if $\hcT{P}{Q} \cong k$. Moreover, if $Q \not\cong P$, then $\hcT{Q}{P}=0$.
		\item Given an indecomposable object $P \in \lang[1]{G}$, then $P \leq P[1]$.  \label{G2}
		\item For indecomposable objects $P, Q \in \lang[1]{G}$, then $P \leq Q \leq P[1]$ implies that $Q$ is isomorphic to at least one of $P$ and $P[1]$.  \label{G4}
		
		\item For indecomposable objects $R \leq Q \leq P$ in $\lang[1]{G}$, then any morphism from $P$ to $R$ factors non-trivially through $Q$.  \label{G3}
	\end{enumerate}
\end{definition}

\begin{lemma}[\cite{LinearGens}, Lemma 4.7]\label[lem]{Lem: Cn has linear gen}
	Let $E \in \ocC$ be a fan generator.
	Then $E$ is a linear generator.
\end{lemma}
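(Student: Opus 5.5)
We verify the four axioms of \Cref{def: linear gen} directly, using the arc model of $\ocC$. The first step is to identify the indecomposable objects of $\lang[1]{E}$. From \Cref{fig:XandYs}, every summand of $E$ has the accumulation point $\mathfrak{a}_1$ as an endpoint. The odd summands $E_{2m-1}=\{\mathfrak{a}_1,z_m\}$ are limit arcs with $z_m\in(\mathfrak{a}_m,\mathfrak{a}_{m+1})$. The even summands $E_{2m}=\{\mathfrak{a}_1,\mathfrak{a}_{m+1}\}$ are double limit arcs. Since $[1]$ fixes accumulation points and sends a point of $\mathscr{M}$ to its predecessor, the indecomposables of $\lang[1]{E}$ are the $E_i[j]$. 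These are exactly the arcs $\{\mathfrak{a}_1,y\}$ with $y\in\overline{\mathscr{M}}\setminus\{\mathfrak{a}_1\}$: each segment $(\mathfrak{a}_m,\mathfrak{a}_{m+1})$ is a single $[1]$-orbit, and the double limit arcs are shift-invariant. Cutting the cyclic order at $\mathfrak{a}_1$ gives a linear order on $\overline{\mathscr{M}}\setminus\{\mathfrak{a}_1\}$. The candidate total order on $\lang[1]{E}$ is the order of the free endpoint $y$, with its direction fixed by (G2) below.

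For (G1), I use $\ocC(P,Q)\cong\ocC(P,Q[-1][1])$ together with \Cref{Prop:PY3.14}. Two distinct arcs through $\mathfrak{a}_1$ never cross. Hence $\ocC(P,Q)\cong k$ for $P\neq Q$ exactly when $Q[-1]$ is obtained from $P$ by a positive rotation about their common endpoint $\mathfrak{a}_1$; otherwise it is $0$. If $P=Q$, then $\ocC(P,P)\cong k$ by \Cref{Lem:EndX} and \Cref{Lem:EndY}. A case check translates the rotation condition on $Q[-1]$ into an inequality of free endpoints. Here $Q[-1]$ has free endpoint $y_Q^+$, or $y_Q$ itself if $y_Q$ is an accumulation point. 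One must check that the shift by one step does not spoil the comparison, since the successor of $y_Q$ lies on the same side as $y_Q$ relative to $y_P$ unless $y_Q=y_P$. The outcome should be that exactly one of $\ocC(P,Q)$, $\ocC(Q,P)$ is nonzero for $P\not\cong Q$, and that this direction is the linear order. This also settles $Q\leq P\iff \ocC(P,Q)\cong k$.

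(G2) and (G3) are then immediate. For a limit arc, $P[1]$ has free endpoint $y^-$, so $P$ and $P[1]$ are adjacent in the order. The direction of (G1) must make $P\leq P[1]$, and this is the orientation check I would do first. Adjacency gives (G3), because nothing lies strictly between $y^-$ and $y$ in $\overline{\mathscr{M}}$ on a segment. For a double limit arc $P[1]=P$, so both axioms are trivial. For (G4), take $R\leq Q\leq P$ and a nonzero map $P\to R$. All three arcs share $\mathfrak{a}_1$, and the free endpoint of $Q$ lies between those of $R$ and $P$. So the betweenness criterion of \Cref{Lem: Factoring in Cn} applies, and the map factors through $Q$. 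The factorization is non-trivial because, by \Cref{Cor: Forwards Backwards Composition}, composites of forward morphisms are nonzero; alternatively this is seen directly.

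The main obstacle I expect is the orientation bookkeeping in (G1) and (G2). I must match the ``anticlockwise rotation'' convention of \Cref{Prop:PY3.14} with the predecessor convention for $[1]$, and treat the comparisons where one or both free endpoints are accumulation points. I would settle this by checking the case $n=1$ and one double limit arc by hand, and then argue uniformly.
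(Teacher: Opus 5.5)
The paper gives no argument for this statement; it imports it from \cite[Lemma 4.7]{LinearGens}. You instead verify (G1)--(G4) directly in the arc model, using only \Cref{Prop:PY3.14}, \Cref{Lem:EndX}, \Cref{Lem:EndY} and \Cref{Lem: Factoring in Cn}. This self-contained route works, and unlike the citation it makes the total order explicit. Carrying out your case check gives $\ocC(P,Q)\cong k$ if and only if $y_P\le y_Q$ in the linear order on $\overline{\mathscr{M}}\setminus\{\mathfrak{a}_1\}$ read anticlockwise from $\mathfrak{a}_1$, and $\ocC(P,Q)=0$ otherwise. So $Q\le P$ exactly when $y_P\le y_Q$: the order on $\lang[1]{E}$ is the reverse of the anticlockwise order of free endpoints. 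With this, (G1) and (G3) follow as you describe.

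There are three points to tighten.

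First, the direction is not yours to fix via (G2). (G1) defines the order through Hom-spaces, so (G2) must be checked. The check is immediate and removes your orientation worry: $\ocC(P[1],P)\cong\ocC(P,P[-1])$ is the degree $-1$ part of the graded endomorphism ring of $P$, which is $k$ by \Cref{Lem:EndX} and \Cref{Lem:EndY}. For a limit arc, $P[1]$ has free endpoint $y_P^-$; for a double limit arc, $P[1]=P$. Similarly, for limit arcs in the same segment, $Q=P[j]$ and $\ocC(P,P[j])\neq 0$ if and only if $j\le 0$. So \Cref{Prop:PY3.14} is only needed for arcs in different segments.

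Second, in the one-step shift the delicate case is $y_Q^+=y_P$, not $y_Q=y_P$. In that case $Q[-1]=P$ is a limit arc, and \Cref{Prop:PY3.14} gives zero, since its second condition needs distinct arcs and its third needs double limit arcs. This is exactly what $y_Q<y_P$ requires.

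Third, in (G4) do not rely on \Cref{Cor: Forwards Backwards Composition}. Its forward/backward inequalities are not adapted to arcs sharing the endpoint $\mathfrak{a}_1$. Read with reference point $\mathfrak{a}_1$, the nonzero maps between indecomposables of $\lang[1]{E}$ satisfy the ``backwards'' inequalities, and part (3) would then declare their composites zero. That contradicts \Cref{Lem:EndX}, since $k[x]$ has no zero divisors. You do not need the corollary: once \Cref{Lem: Factoring in Cn} gives $f=hg$ with $f\neq 0$, both $g$ and $h$ are automatically nonzero. The case $P\cong R$, which that lemma excludes, is trivial.
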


\Cref{Lem: Cn has linear gen} means that we may apply the results of \cite{LinearGens} to the category $\ocC$, as it is a Hom-finite, Krull-Schmidt, triangulated category with a linear generator.
Importantly, if we show that a Hom-finite, Krull-Schmidt, triangulated category $\cT$ has a linear generator $G$ such that there exists a fully faithful, essentially surjective functor $F \colon \lang[1]{G} \rightarrow \lang[1]{E}$, then $\ocC$ and $\cT$ are additively equivalent by \cite[Corollary 2.30]{LinearGens}.

Using \Cref{Lem: Cn has linear gen}, we may alternatively show \Cref{Cor: Lambda as A modules} as a corollary of \cite[Proposition 3.2]{LinearGens}.

\section{Derived Equivalence of Piano Algebras from Discs}\label{Sec: The Final Equivalences}

We begin this section by recalling a result from \cite{LinearGens} concerning the additive equivalence between $\ocC$ and $\mathrm{perf}\Lambda_n$.

\begin{theorem}[\cite{LinearGens}, Theorem 5.6]
	There is an additive equivalence of categories $\mathscr{F} : \mathrm{perf}\Lambda_n \xrightarrow{\sim} \ocC$, such that $\mathscr{F}$ commutes with the respective suspension functors, and preserves triangles with two indecomposable terms.
\end{theorem}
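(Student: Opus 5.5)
This theorem is quoted from \cite{LinearGens}. The plan is to rebuild the equivalence from the linear generator machinery recalled above.

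The first step is to compare two additive subcategories. Let $E\in\ocC$ be a fan generator, and set $\Lambda_n=\Lambda^E$. By \Cref{Thm:path algebras} and \Cref{Ex: Lambda quiver}, the graded algebra $\Endo{\ast}{E}$ is isomorphic to $kQ^E/I^E$. This graded algebra is exactly $\Lambda_n$ with its differential forgotten. Since $\Lambda_n$ has zero differential, this gives a fully faithful functor
\[
\lang[1]{\Lambda_n}\subseteq \mathrm{perf}\Lambda_n \longrightarrow \lang[1]{E}\subseteq \ocC.
\]
It sends the summand $e_a\Lambda_n[i]$ to $E_a[i]$. Morphism spaces match degreewise because
\[
\mathrm{Hom}_{\mathrm{perf}\Lambda_n}(e_a\Lambda_n,e_b\Lambda_n[i]) = (e_b\Lambda_n e_a)^i .
\]
This identification holds because the modules are free and the differential vanishes. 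Composition is respected because $h$ in \Cref{Thm:path algebras} is an algebra isomorphism. The functor is essentially surjective onto $\lang[1]{E}=\mathrm{add}(E[i]\mid i\in\mathbb{Z})$ by construction.

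The second step uses \Cref{Lem: Cn has linear gen}, which says $E$ is a linear generator. I would then check that $\Lambda_n$ is a linear generator of $\mathrm{perf}\Lambda_n$. Conditions (G1)--(G4) of \Cref{def: linear gen} only involve the indecomposables in $\lang[1]{-}$ and the Hom spaces and compositions among them. The fully faithful functor of the first step transports these conditions from $E$ to $\Lambda_n$, once we know it commutes with $[1]$. That compatibility holds by definition on $\lang[1]{\Lambda_n}$.

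The third step checks that $\mathrm{perf}\Lambda_n$ is Hom-finite and Krull--Schmidt. For Hom-finiteness, each graded piece $(\Lambda_n)^i$ is finite dimensional by \Cref{Cor: Lambda as A modules}. For the Krull--Schmidt property, $k$ is algebraically closed, the category is Hom-finite, and idempotents split in $\mathrm{perf}$. With these in place, \cite[Corollary 2.30]{LinearGens} yields an additive equivalence $\mathscr{F}\colon\mathrm{perf}\Lambda_n\to\ocC$ extending the functor of the first step.

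The main obstacle is the final clause: $\mathscr{F}$ commutes with suspension and preserves triangles with two indecomposable terms. The construction in \cite{LinearGens} builds every indecomposable object inductively from $\lang[1]{G}$. At each stage one takes cones of single morphisms between indecomposables, and \Cref{Prop: Ind triangles everywhere} gives such triangles in the $\ocC$ setting. The equivalence is defined by matching these cones on both sides. To control this, I would argue by induction on the level $\lang[m]{-}$. Assume $\mathscr{F}$ commutes with $[1]$ on $\lang[m]{-}$. A triangle with two indecomposable terms is determined, up to isomorphism, by a morphism between indecomposables, and the morphism spaces involved are at most one-dimensional. Together with fullness, this forces $\mathscr{F}$ of the cone to be the cone of $\mathscr{F}$ of the morphism. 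The delicate point is choosing these isomorphisms compatibly, so that naturality of $\mathscr{F}[1]\cong[1]\mathscr{F}$ holds. I would handle this by fixing the identifications inductively and using the factorization property (G4).
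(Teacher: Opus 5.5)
The paper does not prove this statement; it imports it from \cite{LinearGens}. The surrounding text (the remark after \Cref{Lem: Cn has linear gen}, and the appeal to \cite[Theorem 2.16]{LinearGens} in \Cref{Lem: moddg structure}) indicates the route: identify $\lang[1]{\Lambda_n}$ with $\lang[1]{E}$, transport the linear generator axioms, and apply \cite[Corollary 2.30]{LinearGens}. Your first three steps do exactly this and are fine, so you do get an additive equivalence.

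The gap is your last paragraph. That is precisely where the statement goes beyond Corollary 2.30, and the paper later relies on it in \Cref{Cor: Preserving Limit Generators} and \Cref{Thm: Derived Equiv}. Having used Corollary 2.30 as a black box, you only know that $\mathscr{F}$ is \emph{some} additive equivalence extending $F$. Your inference ``a triangle with two indecomposable terms is determined by a morphism between indecomposables, Hom spaces are at most one-dimensional, hence with fullness $\mathscr{F}(\mathrm{cone}\,f)\cong\mathrm{cone}(\mathscr{F}f)$'' does not go through. One-dimensionality only shows that $\mathrm{cone}(\mathscr{F}f)$ is determined by the pair $\mathscr{F}X,\mathscr{F}Y$. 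Fullness and faithfulness say nothing about which object $\mathscr{F}$ assigns to $\mathrm{cone}(f)$, because an additive functor does not see triangles.

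Likewise, compatibility with $[1]$ is posited as an induction hypothesis with no inductive step; ``fix the identifications inductively and use (G4)'' is not an argument. The induction on $\lang[m]{-}$ also targets the wrong difficulty. As the paper uses \cite[Theorem 2.16]{LinearGens}, every object of $\mathrm{perf}\Lambda_n$ is already $\mathrm{cone}(g)$ for a single $g\colon Q\to P$ in $\lang[1]{\Lambda_n}$, so depth is not the issue. The issue is triangles that are not among these defining cones, e.g.\ $X\to A\oplus B\to Y\to X[1]$ for crossing arcs $X,Y$ outside $\lang[1]{E}$.

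To close the gap you need one of two things.
\begin{itemize}
\item The explicit functor behind \cite[Corollary 2.30]{LinearGens}, which (as the paper's use of Theorem 2.16 suggests) sends $\mathrm{cone}(g)$ to $\mathrm{cone}(Fg)$. This gives $\mathscr{F}(\mathrm{cone}(g)[1])\cong\mathrm{cone}(F(g[1]))\cong\mathscr{F}(\mathrm{cone}\,g)[1]$ on objects. You would still need a genuine argument for arbitrary triangles with two indecomposable terms.
\item An invariant of the third term that an additive functor does detect. For example, for $P\in\lang[1]{\Lambda_n}$ the long exact Hom sequence gives
\[
\dim\mathrm{Hom}(P,\mathrm{cone}\,f)=\dim\mathrm{coker}\,\mathrm{Hom}(P,f)+\dim\ker\mathrm{Hom}(P[-1],f).
\]
An additive equivalence preserves this provided $\mathscr{F}(P[-1])\cong\mathscr{F}(P)[-1]$ for $P\in\lang[1]{\Lambda_n}$, which holds by construction. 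So no naturality of $\mathscr{F}[1]\cong[1]\mathscr{F}$ on all of $\mathrm{perf}\Lambda_n$ is needed. You would then still have to prove that objects of $\ocC$, decomposable ones included, are determined up to isomorphism by the numbers $\dim\ocC(E_a[i],-)$. You would also have to check the maps of the image triangle.
\end{itemize}
As written, the proposal establishes the additive equivalence but neither of the two properties that make up the rest of the statement.
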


Moreover, we may show the following useful corollary.

\begin{corollary}\label[cor]{Cor: Preserving Limit Generators}
Let $X \in \mathrm{perf}\Lambda_n$ be object such that $\mathscr{F}(X)$ is a limit generator of $\ocC$.
Then $X$ is a classical generator of $\mathrm{perf}\Lambda_n$.
\end{corollary}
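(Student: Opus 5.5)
The plan is to transport the generating property of $\mathscr{F}(X)$ back along $\mathscr{F}$, using only the structure $\mathscr{F}$ is known to preserve: additivity, direct summands, commutation with suspension, and triangles with two indecomposable terms. Let $\mathcal{S} = \langle X \rangle \subseteq \mathrm{perf}\Lambda_n$. Since $\mathscr{F}$ is an additive equivalence, it suffices to show that $\mathcal{S}$ contains every indecomposable object of $\mathrm{perf}\Lambda_n$. Indeed, $\mathrm{perf}\Lambda_n$ is Krull--Schmidt, being additively equivalent to $\ocC$, and $\mathcal{S}$ is closed under finite sums. Equivalently, we must show that the essential image $\mathscr{F}(\mathcal{S})$ contains every indecomposable $Y \in \ocC$.

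First, $\mathcal{S}$ is closed under $[\pm 1]$ and summands, and $\mathscr{F}$ commutes with suspension and preserves summands. Hence $\mathscr{F}(\mathcal{S})$ contains $\mathrm{add}(\mathscr{F}(X)[i] \mid i \in \mathbb{Z})$, which includes every indecomposable object of $\lang[1]{\mathscr{F}(X)}$.

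Next, take an indecomposable $Y \in \ocC$. Since $\mathscr{F}(X)$ is a limit generator, \Cref{Prop: Ind triangles everywhere} gives a triangle $U \to Y \to V \to U[1]$ in $\ocC$ with $U, V$ indecomposable and lying in $\lang[a]{\mathscr{F}(X)}$ and $\lang[b]{\mathscr{F}(X)}$ respectively. We argue by induction on the level $a+b$, or more precisely on the number of steps needed to build $Y$ following the proof of \Cref{Prop: Ind triangles everywhere}: first double limit arcs and limit arcs via the chain $W_0, \dots, W_m$, then long and short arcs via triangles with two limit-arc terms. By induction, $U \cong \mathscr{F}(U')$ and $V \cong \mathscr{F}(V')$ with $U', V' \in \mathcal{S}$. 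Now use that $\mathscr{F}$ preserves triangles with two indecomposable terms, applied in the direction of $\mathscr{F}^{-1}$ (a quasi-inverse, which also commutes with suspension). The triangle $U \to Y \to V \to U[1]$ has all three terms indecomposable, so it pulls back to a triangle $U' \to Y' \to V' \to U'[1]$ in $\mathrm{perf}\Lambda_n$ with $\mathscr{F}(Y') \cong Y$. Because $\mathcal{S}$ is triangulated, $Y' \in \mathcal{S}$, which completes the induction. Thus every indecomposable $Y' \in \mathrm{perf}\Lambda_n$ lies in $\mathcal{S}$, so $\mathcal{S} = \mathrm{perf}\Lambda_n$ and $X$ is a classical generator.

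The main obstacle is justifying the pullback step. The preservation statement in \cite[Theorem 5.6]{LinearGens} is phrased for $\mathscr{F}$, so one must check that it yields the statement for $\mathscr{F}^{-1}$. Concretely, given a triangle $U \to Y \to V$ in $\ocC$, we complete the morphism $\mathscr{F}^{-1}(U) \to \mathscr{F}^{-1}(Y)$ to a triangle in $\mathrm{perf}\Lambda_n$, apply $\mathscr{F}$, and compare cones. Since $\mathscr{F}$ preserves that triangle, once its cone is shown to be indecomposable (or the triangle to have two indecomposable terms), the cones agree up to isomorphism. I would first try this comparison directly, using that in $\ocC$ the cone of a nonzero morphism between the relevant indecomposables is determined up to isomorphism by the morphism, since Hom-spaces are at most one-dimensional.
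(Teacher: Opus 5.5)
Your proposal is correct and is essentially the paper's argument: the paper's proof just cites that $\mathscr{F}$ preserves triangles with two indecomposable terms, together with \Cref{Prop: Ind triangles everywhere}, and you make explicit both the well-founded induction and the pullback along $\mathscr{F}^{-1}$. The step you flag as the main obstacle closes at once, because the triangle completing $\mathscr{F}^{-1}(U)\to\mathscr{F}^{-1}(Y)$ already has two indecomposable terms, so $\mathscr{F}$ preserves it and uniqueness of cones forces its third term to be isomorphic to $\mathscr{F}^{-1}(V)$.
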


\begin{proof}
This follows from the functor $\mathscr{F}$ preserving triangles with at least two indecomposable terms, and \Cref{Prop: Ind triangles everywhere}.
\end{proof}

\subsection{Derived Equivalences of Piano Algebras}

We use the results of the previous subsection to show that there exists a derived equivalence between piano algebras coming from the marked surface $(D^2,M_n,\emptyset)$.
In particular, we note the importance of the functor $\mathscr{F}$ preserving triangles with at least two indecomposable terms.
This is because the classification of classical generators in $\ocC$ \cite{Generators} relies solely on triangles with at least two indecomposable terms, via the homologically connected property.
Therefore, the functor $\mathscr{F}^{-1}$ preserves the property of classical generation, as so we have a description of (not necessarily all) classical generators in $\mathrm{perf}\Lambda_n$.

The following result is an equivalent restatement of \cite[Prop. 2.26]{LinearGens}.

\begin{lemma}\label[lem]{Lem: Forwards and Backwards}
	Let $f,f'$ be morphisms between indecomposable objects in $\mathrm{perf}\Lambda_n$ such that $ff' \neq 0$.
	If both $f$ and $f'$ are forward morphisms, then $ff'$ is a forward morphism.
	Alternatively, if one of $f$ and $f'$ is a forward morphism and the other is a backwards morphism, then $ff'$ is a backwards morphism.
\end{lemma}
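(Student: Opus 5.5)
Since the lemma is stated as a restatement of \cite[Prop. 2.26]{LinearGens}, the plan is to transport \Cref{Cor: Forwards Backwards Composition} from $\ocC$ to $\mathrm{perf}\Lambda_n$ along the equivalence $\mathscr{F}$ of \cite[Theorem 5.6]{LinearGens}. The first step is to pin down what forward and backwards mean in $\mathrm{perf}\Lambda_n$. I would define them by transport: a morphism $f$ between indecomposables in $\mathrm{perf}\Lambda_n$ is forward (resp.\ backwards) if $\mathscr{F}(f)$ is forward (resp.\ backwards) in $\ocC$, using the arc labelling of indecomposables in $\ocC$. To make this intrinsic, I would also check that it agrees with the definition in \cite{LinearGens}. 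There, the linear order on $\lang[1]{E}$ from \ref{G1}, together with \ref{G2} and \ref{G4}, assigns to each indecomposable a position relative to its shifts, and a nonzero morphism is forward or backwards according to the order of the source and target endpoints. Since $\mathscr{F}$ commutes with suspension and is fully faithful, it preserves this ordering data, so the two descriptions coincide.

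The key steps, in order, are as follows. First, since $\mathscr{F}$ is an additive equivalence, it sends indecomposables to indecomposables and is faithful, so $ff' \neq 0$ gives $\mathscr{F}(f)\mathscr{F}(f') = \mathscr{F}(ff') \neq 0$. In particular, $\mathscr{F}(f)$ and $\mathscr{F}(f')$ are nonzero morphisms between indecomposables in $\ocC$. Second, by the transported definition, each of $\mathscr{F}(f)$ and $\mathscr{F}(f')$ is forward or backwards as $f$ and $f'$ are. Third, apply \Cref{Cor: Forwards Backwards Composition}, which rests on \Cref{Lem: Factoring in Cn}. If both are forward, the composite is a nonzero forward morphism. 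If one is forward and the other backwards, the composite is backwards. The case where both are backwards is excluded, since that composite would be zero. Fourth, pull the conclusion back through $\mathscr{F}$.

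The main obstacle is well-definedness: every nonzero morphism between indecomposables must be exactly one of forward or backwards, and this classification must be invariant under the choice of $\mathscr{F}$, in particular under how $\mathscr{F}$ acts on morphisms rather than just on objects. The definition in \Cref{Sec: Those cats} is stated via inequalities between endpoints. I would first check that every nonzero $\ocC(X,Y)$ with $X,Y$ indecomposable falls into one of the two patterns. This follows from \Cref{Prop:PY3.14} and the crossing description: both patterns describe $\ell_Y[-1]$ crossing $\ell_X$ or rotating about a shared accumulation point, with the two cases distinguished by which endpoint moves past which. Because $\mathrm{Hom}$-spaces between indecomposables are at most one dimensional, the type depends only on the pair of objects, not on the particular morphism. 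Invariance under $\mathscr{F}$ therefore reduces to $\mathscr{F}$ respecting objects up to isomorphism, which holds because it is an equivalence.
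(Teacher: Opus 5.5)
Your deduction is sound within this paper, but it takes a different route from the paper's. The paper gives no argument of its own. It imports \cite[Prop.~2.26]{LinearGens}, a statement from the general theory of linear generators. In that setting each indecomposable is written as $\mathrm{cone}(Q\to P)$ with $Q,P$ indecomposable in $\langle\Lambda_n\rangle_1$, and forward/backwards is phrased through these cone presentations. This is exactly how the lemma is used later, in \Cref{Lem: beta and delta} and in the block matrices of the proof of \Cref{Thm: Derived Equiv}: forward morphisms sit in the diagonal blocks $Q_j[1]\to Q_l[1]$, $P_j\to P_l$, and backwards ones in the block $Q_j[1]\to P_l$.

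You instead pull back the geometric rule \Cref{Cor: Forwards Backwards Composition} along $\mathscr{F}$. This buys transparency. A nonzero map moves the two endpoints anticlockwise through disjoint intervals. Up to boundary conventions, forward means neither interval contains $\mathfrak{a}_1$ and backwards means exactly one does. So two backwards maps cannot compose nonzero, and the other cases are forced. It costs two things.

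\begin{enumerate}
\item Your argument is not independent of the cited result. The equivalence $\mathscr{F}$ comes from the same linear-generator machinery of \cite{LinearGens} (cf.\ \cite[Corollary 2.30]{LinearGens}) in which Prop.~2.26 sits. This is harmless here only because \cite[Theorem 5.6]{LinearGens} is used as a black box.
\item Matching your transported notion with the one the paper uses needs more than compatibility with suspension and full faithfulness. You need $\mathscr{F}(\Lambda_n)\cong E$ with apex $\mathfrak{a}_1$, so that $\mathscr{F}$ restricts to an order-preserving equivalence $\langle\Lambda_n\rangle_1\to\langle E\rangle_1$. You also need the cone triangles $Q\to P\to X\to Q[1]$ to be sent to triangles, so that endpoints go to endpoints; this is where preservation of triangles with two indecomposable terms enters.
\end{enumerate}

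Your last paragraph also misplaces the obstacle. Exhaustiveness of the dichotomy is not needed, since the lemma only concerns morphisms whose types are already given. More importantly, invariance under the choice of $\mathscr{F}$ does not follow from $\mathscr{F}$ being an equivalence: an equivalence does not determine where objects go. The classification is relative to $\mathfrak{a}_1$. For $n\geq 2$, composing $\mathscr{F}$ with a rotation of the disc that sends $\mathfrak{a}_1$ to another accumulation point gives an equivalence with every property you used, yet it transports a different classification. The lemma still holds for each such transported notion. To recover the notion the paper actually needs, you must fix the specific $\mathscr{F}$ described in item 2.
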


Before we can prove the derived equivalence of piano algebras coming from $(D^2,M_n, \emptyset)$, we must first collect some important results.

\begin{lemma}\cite{KellerDeriving}\label[lem]{Lem: AB tri equivalence}
	Let $R$ be a ring.
	Let $(A,d)$ and $(B,d)$ be dg $R$-algebras, and let $N$ be a dg $(A,B)$-bimodule.
	Assume that
	\begin{enumerate}
		\item $N$ is a compact object in $D(B,d)$,
		\item if $N' \in D(B,d)$, and $D(B,d)(N,N'[i])=0$ for all $i \in \mathbb{Z}$, then $N'=0$,
		\item the map $H^i(A,d) \rightarrow D(B,d)(N,N[i])$ is an isomorphism for all $i \in \mathbb{Z}$.
	\end{enumerate}
	Then 
	\[
	- \otimes_A^{\mathbb{L}} N : D(A,d) \rightarrow D(B,d)
	\]
	gives an $R$-linear equivalence of triangulated categories.
\end{lemma}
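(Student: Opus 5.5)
The plan is to show that $F := -\otimes_A^{\mathbb{L}} N \colon D(A,d) \to D(B,d)$ is fully faithful and essentially surjective, by the standard dévissage from the free module $A$ to all of $D(A,d)$. First, $F$ is a triangle functor that commutes with arbitrary coproducts, since derived tensor products commute with direct sums. Moreover $F(A) \cong N$, and therefore $F(A[i]) \cong N[i]$. For each $i$, the map $D(A,d)(A,A[i]) \cong H^i(A,d) \to D(B,d)(N,N[i])$ induced by $F$ is exactly the map of hypothesis (3), so $F$ is fully faithful on the set $\{A[i] \mid i \in \mathbb{Z}\}$.

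Second, I would extend full faithfulness in two steps.

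\emph{Step one: vary the target.} Fix $X = A[j]$ and let $\mathcal{U}$ be the full subcategory of those $Y$ for which $F$ induces isomorphisms $D(A,d)(A[j],Y[i]) \to D(B,d)(N[j],F(Y)[i])$ for all $i$. The five lemma shows that $\mathcal{U}$ is triangulated. It is closed under coproducts because $A$ is compact in $D(A,d)$, $N$ is compact in $D(B,d)$ by hypothesis (1), and $F$ preserves coproducts. So $\mathcal{U}$ is a localizing subcategory containing $A$. Since $A$ compactly generates $D(A,d)$, we get $\mathcal{U} = D(A,d)$.

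\emph{Step two: vary the source.} Fix an arbitrary $Y$ and let $\mathcal{V}$ be the class of those $X$ for which $F$ induces isomorphisms on $\mathrm{Hom}(X,Y[i])$ for all $i$. Again $\mathcal{V}$ is triangulated by the five lemma. It is closed under coproducts with no compactness needed, since $\mathrm{Hom}(\coprod X_\lambda,-) = \prod \mathrm{Hom}(X_\lambda,-)$ on both sides and $F$ preserves coproducts. By step one, $\mathcal{V}$ contains every $A[j]$, hence $\mathcal{V} = D(A,d)$. This proves that $F$ is fully faithful.

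Third, I would prove essential surjectivity. Because $F$ is fully faithful, triangulated and coproduct-preserving, its essential image is a triangulated subcategory of $D(B,d)$ closed under coproducts, and it contains $N$; that is, it is a localizing subcategory containing $N$. Hypotheses (1) and (2) together say that $N$ is a compact generator of $D(B,d)$. By Neeman's theorem (equivalently, Brown representability), the localizing subcategory generated by a compact generator is the whole category. Hence the essential image is $D(B,d)$. $R$-linearity is automatic, since all the constructions are $R$-linear.

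The main obstacle is this last step, passing from ``$\mathrm{Hom}(N,-[i])$ detects zero'' to ``$\mathrm{Loc}(N) = D(B,d)$''. The route I would use is the following. Let $\mathcal{L} = \mathrm{Loc}(N)$. Because $N$ is compact, the inclusion $\mathcal{L} \hookrightarrow D(B,d)$ admits a right adjoint $\rho$ (Brown representability for the compactly generated category $\mathcal{L}$). For $Y \in D(B,d)$, complete the counit to a triangle $\rho Y \to Y \to Y' \to \rho Y[1]$. Then $D(B,d)(N,Y'[i]) = 0$ for all $i$, so $Y' = 0$ by hypothesis (2), and therefore $Y \cong \rho Y \in \mathcal{L}$. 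A secondary technical point is that the compactness of $N$ is genuinely needed in step one and not in step two, so the order of the two dévissages should be kept as stated.
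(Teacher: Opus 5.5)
The paper gives no proof of this lemma; it is quoted from Keller. Your argument is correct and is essentially the standard proof there.
\begin{itemize}
\item $F$ preserves coproducts and, by (3), is fully faithful on the shifts of $A$.
\item A two-step d\'evissage extends full faithfulness to $\mathrm{Loc}(A)=D(A,d)$. As you note, compactness of $A$ and $N$ is needed only when varying the target.
\item The essential image is then a localizing subcategory containing the compact generator $N$.
\end{itemize}

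The one place you use more machinery than necessary is the last step. Rather than Brown representability for $\mathrm{Loc}(N)$, you can use the explicit right adjoint $G=\mathbb{R}\mathrm{Hom}_B(N,-)$ of $F$. Since $F$ is fully faithful, the unit is invertible. By the triangle identity, $G\varepsilon_Y$ is then invertible for the counit $\varepsilon_Y\colon FGY\to Y$. Hence $G$ kills $C=\mathrm{cone}(\varepsilon_Y)$, so $D(B,d)(N,C[i])\cong D(A,d)(A,GC[i])=0$ for all $i$, and (2) forces $C=0$. This route uses no compactness at all for essential surjectivity.

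In the same spirit, full faithfulness can be obtained in a single d\'evissage. The class of $X$ for which the unit $X\to GFX$ is invertible is localizing; here $G$ commutes with coproducts because $N$ is compact. It contains $A$ by (3).
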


Here $- \otimes_A^{\mathbb{L}} N$ is the derived tensor functor.

\begin{definition}
	Let $R$ be a ring, and $(A,d)$ a dg $R$-algebra.
	Let $L,M$ be dg $R$-modules, and $\overline{L},\overline{M}$ their respective underlying graded $A$-modules, then we let
	\[
	\mathrm{Mod}^{dg}(A,d)(L,M)= \bigoplus_{n \in \mathbb{Z}} (\mathrm{Mod}A)^n(\overline{L},\overline{M}),
	\]
	where $(\mathrm{Mod}A)^n(\overline{L},\overline{M})$ is the set of $A$-module morphisms of homogeneous degree $n$.
	We equip $\mathrm{Mod}^{dg}(A,d)(L,M)$ with a differential $d$, given by 
	\[
	d(f) = d_M \circ f - (-1)^nf \circ d_L,
	\]
	for some element $f \in (\mathrm{Mod}A)^n(\overline{L},\overline{M})$.
\end{definition}

We may use the following characterisation of dg bimodule structures on a module \cite[\href{https://stacks.math.columbia.edu/tag/0FQG}{Tag 0FQG}]{stacks-project}.

\begin{lemma}\label[lem]{Lem: AB bimodule}
	Let $R$ be a ring.
	Let $(A,d)$ and $(B,d)$ be dg $R$-algebras, and let $N$ be a right dg $B$-module.
	There is a 1-to-1 correspondence between $(A,B)$-bimodule structures on $N$ compatible with the given dg $B$-module structure and homomorphisms
	\[
	A \rightarrow \mathrm{Mod}^{dg}(B,d)(N,N)
	\]
	of dg $R$-algebras.
\end{lemma}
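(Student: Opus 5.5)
The plan is to write both directions of the correspondence down explicitly and check that they are mutually inverse. The only real work is keeping track of the Koszul signs. Throughout I use the convention that a homogeneous map $f \in (\mathrm{Mod}B)^n(\overline{N},\overline{N})$ of a right graded $B$-module is written on the left and satisfies $f(mb) = f(m)b$ with no sign. The graded ring structure on $\mathrm{Mod}^{dg}(B,d)(N,N)$ is composition. The differential is $d(f) = d_N \circ f - (-1)^n f \circ d_N$, as in the definition above.

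\emph{Step 0} is to check that $\mathrm{Mod}^{dg}(B,d)(N,N)$ is a dg $R$-algebra. This needs three facts:
\begin{itemize}
\item $d(f)$ is again $B$-linear of degree $n+1$. This uses the Leibniz rule $d_N(mb) = d_N(m)b + (-1)^{|m|} m\, d_B(b)$, and the two $d_B$-terms cancel.
\item $d^2 = 0$, which is immediate.
\item $d(f\circ g) = d(f)\circ g + (-1)^{|f|} f \circ d(g)$, which is a direct expansion.
\end{itemize}

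\emph{Step 1} goes from a bimodule to a homomorphism. Suppose $N$ carries a dg $(A,B)$-bimodule structure extending the given right structure. This means:
\begin{itemize}
\item $|an| = |a|+|n|$;
\item $(an)b = a(nb)$, and $a(a'n) = (aa')n$, and $1\cdot n = n$;
\item the action is $R$-bilinear;
\item $d_N(an) = d_A(a)n + (-1)^{|a|} a\, d_N(n)$.
\end{itemize}
Define $\rho(a)(n) := an$. Then $\rho(a)$ is homogeneous of degree $|a|$ and is right $B$-linear by the compatibility $(an)b = a(nb)$. Associativity and unitality give $\rho(aa') = \rho(a)\circ\rho(a')$ and $\rho(1) = \mathrm{id}$, and $\rho$ is $R$-linear. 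For compatibility with differentials, the Leibniz rule rearranges to
\[
d_N\circ\rho(a) - (-1)^{|a|}\rho(a)\circ d_N = \rho(d_A a),
\]
that is, $d(\rho(a)) = \rho(d_A a)$. So $\rho$ is a morphism of dg $R$-algebras.

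\emph{Step 2} is the converse. Given a dg $R$-algebra map $\rho\colon A \to \mathrm{Mod}^{dg}(B,d)(N,N)$, set $an := \rho(a)(n)$ for homogeneous $a$ and extend bilinearly. Each axiom comes from one property of $\rho$:
\begin{itemize}
\item the degree condition, because $\rho(a)$ has degree $|a|$;
\item $(an)b = a(nb)$, from $B$-linearity of $\rho(a)$;
\item the associativity and unit axioms, from multiplicativity and unitality of $\rho$;
\item the Leibniz rule, by reading the displayed identity above backwards.
\end{itemize}
Hence $N$ is a dg $(A,B)$-bimodule whose underlying right dg $B$-module is the given one.

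\emph{Step 3} is to note that the two constructions are inverse to each other, since both are literally defined by $an = \rho(a)(n)$. This gives the claimed bijection.

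I expect the only genuine obstacle to be sign consistency. The sign in the Leibniz rule for the left action must match the sign in the Hom differential, so I would fix the conventions once at the outset, as above. If one instead wrote $B$-linear maps on the right, or used the Koszul rule $f(mb) = (-1)^{|f||b|}\dots$, the same argument would go through with the corresponding sign adjustments.
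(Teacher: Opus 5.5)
Your proof is correct: both directions, the sign check $d(\rho(a))=\rho(d_A a)$, and the verification that $\mathrm{Mod}^{dg}(B,d)(N,N)$ is a dg algebra under sign-free right $B$-linearity all go through as written. The paper gives no argument of its own and only cites the Stacks Project (Tag 0FQG), whose proof is the same direct verification via $a \mapsto (n \mapsto an)$, so your approach matches the source the paper relies on.
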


\begin{lemma}\label[lem]{Lem: moddg structure}
	Let $X \in \mathrm{perf}\Lambda_n$, and suppose that $X \cong \mathrm{cone}g_X$ for some morphism $g_X : Q \rightarrow P$ with $Q,P \in \lang[1]{\Lambda_n}$.
	Then
	\[
	\mathrm{Mod}^{dg}\Lambda_n(X,X) \cong \bigoplus_{i \in \mathbb{Z}} \begin{pmatrix}
		(\mathrm{Mod}\Lambda_n)^i(Q[1],Q[1]) & (\mathrm{Mod}\Lambda_n)^i(Q[1],P)\\
		(\mathrm{Mod}\Lambda_n)^i(P,Q[1]) & (\mathrm{Mod}\Lambda_n)^i(P,P)
	\end{pmatrix},
	\]
	with differential, for some $f \in (\mathrm{Mod}\Lambda_n)^i(X,X)$, given by
	\[
	d(f) = \begin{pmatrix}
		g^X \circ f\, \vline_P^Q & g^X \circ f\, \vline_P^P - (-1)^i f\, \vline_Q^Q \circ g^X\\
		0 & f \, \vline_P^Q \circ g^X
	\end{pmatrix},
	\]
	where $f\, \vline_A^B$ is the restriction of $f$ to $(\mathrm{Mod}\Lambda_n)^i(A,B)$, with $A,B \in \{Q[1],P\}$.
\end{lemma}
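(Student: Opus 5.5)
The plan is to treat $X$ as the underlying graded module of a mapping cone and read off both the graded Hom and its differential from the standard description of the internal Hom complex between cones.

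\emph{Underlying graded module and graded Hom.} Write $X=\mathrm{cone}(g_X)$. Since $Q,P\in\lang[1]{\Lambda_n}$ are (shifts of) direct summands of $\Lambda_n$, they are cofibrant, so the dg model of the cone is honest. Its underlying graded $\Lambda_n$-module is $\overline{X}=\overline{Q[1]}\oplus\overline{P}$, with differential
\[
d_X=\begin{pmatrix} d_{Q[1]} & 0\\ g_X & d_P\end{pmatrix}.
\]
Because $\Lambda_n$ has trivial differential and $Q,P$ are projective summands, $d_{Q[1]}=0=d_P$. Hence $d_X$ is the strictly lower-triangular matrix with single entry $g_X$, the component $Q[1]\to P$ in degree $+1$. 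The graded Hom out of a finite direct sum splits as a direct sum over the components, degree by degree. This gives the asserted $2\times 2$ matrix decomposition of $\bigoplus_i(\mathrm{Mod}\Lambda_n)^i(\overline X,\overline X)$, with entries $(\mathrm{Mod}\Lambda_n)^i(A,B)$ for $A,B\in\{Q[1],P\}$.

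\emph{The differential.} Substitute into $d(f)=d_X\circ f-(-1)^i f\circ d_X$ for $f$ of degree $i$, writing $f$ as a matrix of its restrictions. Since $d_X$ has only the one entry $g_X$, the product $d_X\circ f$ moves the $Q[1]$-row of $f$ into the $P$-row, post-composed with $g_X$. Similarly, $f\circ d_X$ moves the $P$-column of $f$ into the $Q[1]$-column, pre-composed with $g_X$. Collecting the four entries gives: one entry that is the difference $g_X\circ f|-(-1)^i f|\circ g_X$; two entries that each contain a single term $g_X\circ f|$ or $f|\circ g_X$; and one entry that vanishes.

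The main step to watch is the bookkeeping. The entry placement depends on the matrix convention, namely which index is the source and which the target, and whether one uses $Q[1]$ or $P$ first. The claimed formula places $d(f)$ upper-triangular, with the zero in the lower-left. So I would fix the convention (columns indexed by source, rows by target, ordered $(Q[1],P)$ as in the statement) and check that it matches the displayed matrix.

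I would also track the signs. The sign convention for the shift $Q[1]$ and for the cone differential contributes $(-1)$ factors, and these may be absorbed into the restriction notation $f|_A^B$. I expect this sign and index reconciliation, rather than any conceptual step, to be the only real obstacle; the remainder is the formal computation just described.
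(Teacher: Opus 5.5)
Your route is the paper's route. The paper's proof just views $X$ as $\bigl(Q[1]\oplus P,\ \left(\begin{smallmatrix}0&g^X\\0&0\end{smallmatrix}\right)\bigr)$ and says the differential ``follows by the definition'', and your expansion of $d(f)=d_X\circ f-(-1)^i f\circ d_X$ is mechanically right. The problem is the step you postpone, which does not go through as you expect. There are two separate issues.

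\textbf{Convention.} The statement puts $(\mathrm{Mod}\Lambda_n)^i(Q[1],P)$ in the top-right, so rows are indexed by the source and columns by the target. The paper's $d_X$ likewise has $g^X$ in the upper-right. With your convention (rows indexed by target), the vanishing entry of $d(f)$, which is the $P\to Q[1]$ component, lands in the upper-right. So under your convention you cannot match the display.

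\textbf{Signs.} Nothing can be ``absorbed into $f|_A^B$''. Since $d_Q=d_P=0$, the cone and shift conventions contribute no signs; the only sign is the Koszul $(-1)^i$. Also, $f|_A^B$ is by definition the literal component of $f$. Doing the computation gives, in the statement's arrangement,
\[
d(f)=\begin{pmatrix} -(-1)^i\, f|_P^Q\circ g_X & g_X\circ f|_Q^Q-(-1)^i\, f|_P^P\circ g_X\\ 0 & g_X\circ f|_P^Q\end{pmatrix}.
\]
This is not the displayed matrix.
\begin{itemize}
\item If $\circ$ is read right-to-left, as the paper's conventions dictate for morphisms, the displayed $g^X\circ f|_P^Q$ is a map $P\to P$ sitting in the $Q[1]\to Q[1]$ slot. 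Also, $g^X\circ f|_P^P$ does not compose.
\item If $\circ$ is read left-to-right, the displayed entries lie in the correct Hom-spaces. However, both entries with source $Q[1]$ carry an extra factor $(-1)^{i+1}$.
\item No degree-dependent re-signing of the four components removes this. Matching the diagonal entries forces $\epsilon^{QQ}_j\epsilon^{PP}_j=(-1)^j$, while matching the two terms of the mixed entry forces $\epsilon^{QQ}_j\epsilon^{PP}_j=1$. These are inconsistent for odd $j$.
\end{itemize}

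So your argument proves the matrix decomposition together with the corrected differential above, and you should conclude with that rather than aim to reproduce the display. The discrepancy does not affect how the lemma is used in the proof of the derived equivalence. There only $f$ with $f|_P^Q=0$ occur, and both the corrected formula and the left-to-right reading of the display reduce to the same condition, $g_X\circ f|_Q^Q=(-1)^i f|_P^P\circ g_X$.
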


\begin{proof}
	By \cite[Theorem 2.16]{LinearGens}, it is reasonable to assume that such a morphism $g^X$ exists for any $X \in \mathrm{perf}\Lambda_n$.
	Therefore we may view $X$ as the dg $\Lambda_n$-module $\biggl (Q[1] \oplus P, \begin{pmatrix}
		0 & g^X\\
		0 & 0
	\end{pmatrix} \biggr )$.
	
	The differential then follows by the definition of the differential of $\mathrm{Mod}^{dg}\Lambda_n(X,X)$ and the differential of $X$.
\end{proof}

\begin{definition}\label[def]{Def: Signed matrix}
	Let $G \cong \bigoplus_{j=1}^{2n-1} G_j \in \mathrm{perf}\Lambda_n$ be a limit generator, and let $(\widehat{Q^G},\widehat{I^G},B^G)$ be its associated keyboard quiver.
	Let $1 \leq m \leq 2n-1$ such that $G_j \in \lang[1]{\Lambda_n}$ if $j > m$, and $G_j \not\in \lang[1]{\Lambda_n}$ if $j \leq m$.
	
	We construct $M_G$, a \textit{signed diagonal matrix associated to} $G$, as follows;
	\begin{enumerate}
		\item Take the quiver $\widehat{Q^G}$, and at vertex $j$ place the pair $(\beta_j,\delta_j)$, or $(0,\delta_j)$ if $j > m$, and choose one element in the set $\{\beta_1, \ldots, \beta_m, \delta_i, \ldots, \delta_{2n-1}\}$.
		\item If our initial choice is $\beta_j$, and there is an arrow between $j$ and $l$ in $\widehat{Q^G}$, then we choose $\beta_l$ (or $0$ if $l>m$) if the arrow corresponds to a forward morphism in $\mathrm{End}G$, and choose $\delta_l$ if it corresponds to a backwards morphism. If our initial choice is $\delta_j$, we perform the same operation, choosing $\delta_l$ if the arrow corresponds to a forward morphism, and $\beta_l$ if it corresponds to a backwards morphism.
		\item We repeat this process until we have made a choice for all $1 \leq j \leq  2n-1$, and label our choice $\alpha_j$ for each $j$.
		\item Let $M_G$ be a $(2n+m-1) \times (2n+m-1)$-diagonal matrix, such $(M_G)_{jj} = \beta_j$ if $j \leq m$, and $(M_G)_{jj} = \delta_{j-m}$ if $j > m$.
		\item Finally, we let each choice $\alpha_j =-e$, and let every other entry on the diagonal be $e$.
	\end{enumerate}
\end{definition}

\begin{example}
	Suppose we have the following limit generator $G$ of $\mathrm{perf}\Lambda_4$;
	\[
	\begin{tikzpicture}
	\draw ({3*sin(42)}, {3*cos(42)}) -- ({3*sin(132)}, {3*cos(132)}) node [pos=0.5, right] {$6$};;
	\draw ({3*sin(312)}, {3*cos(312)}) -- ({3*sin(132)}, {3*cos(132)}) node [pos=0.5, above] {$2$};;
	\draw ({3*sin(222)}, {3*cos(222)}) -- ({3*sin(132)}, {3*cos(132)}) node [pos=0.5, above] {$5$};;
	\draw ({3*sin(42)}, {3*cos(42)}) .. controls({2.5*sin(65)}, {2.5*cos(65)}) ..  ({3*sin(87)}, {3*cos(87)}) node [pos=0.5, right] {$7$};;
	\draw ({3*sin(132)}, {3*cos(132)}) .. controls({2.5*sin(155)}, {2.5*cos(155)}) ..  ({3*sin(177)}, {3*cos(177)}) node [pos=0.3, below] {$4$};;
	\draw ({3*sin(312)}, {3*cos(312)}) .. controls({2.5*sin(295)}, {2.5*cos(295)}) ..  ({3*sin(267)}, {3*cos(267)}) node [pos=0.5, right] {$3$};;
	\draw ({3*sin(312)}, {3*cos(312)}) .. controls({2.5*sin(335)}, {2.5*cos(335)}) ..  ({3*sin(357)}, {3*cos(357)}) node [pos=0.5, below] {$1$};
	\disco{42}{4}{1,2,3,4}{3}
	\node at ({3.3*sin(42)},{3.3*cos(42)}) {$\mathfrak{a}_1$};
	\end{tikzpicture}
	\]
	Let $G_j$ be the indecomposable direct summand of $G$ corresponding to the arc labelled $j$, and we see that $G_6,G_7 \in \lang[1]{\Lambda_4}$, hence $m=5$.
	Following the algorithm of \Cref{Def: Signed matrix}, we have the following quiver;
	\[
	\begin{tikzcd}
		&&&& (\beta_1,\delta_1) &&&&\\
		(\beta_4,\delta_4) && (\beta_5,\delta_5) \arrow[rr] \arrow[ll] && (\beta_2,\delta_2) \arrow[u,"B"] \arrow[rr] && (0,\delta_6) \arrow[rr] && (0,\delta_7)\\
		&&&& (\beta_3,\delta_3) \arrow[u] &&&&
	\end{tikzcd}
	\]
	where the arrows corresponding to backwards morphism are denoted with a $B$.
	Suppose we make $\beta_5$ our initial choice, then we have 
	\[
	(\alpha_1,\alpha_2,\alpha_3,\alpha_4,\alpha_5,\alpha_6,\alpha_7) = (\delta_1, \beta_2, \beta_3, \beta_4, \beta_5, 0, 0).
	\]
	Therefore we get a signed diagonal matrix $M_G$;
	\[
	M_G = \left( \begin{array}{ccccc:ccccccc}
		1 & 0 & 0 & 0 & 0 & 0 & 0 & 0 & 0 & 0 & 0 & 0\\
		0 & -1 & 0 & 0 & 0 & 0 & 0 & 0 & 0 & 0 & 0 & 0\\
		0 & 0 & -1 & 0 & 0 & 0 & 0 & 0 & 0 & 0 & 0 & 0\\
		0 & 0 & 0 & -1 & 0 & 0 & 0 & 0 & 0 & 0 & 0 & 0\\
		0 & 0 & 0 & 0 & -1 & 0 & 0 & 0 & 0 & 0 & 0 & 0\\ \hdashline[2pt/2pt]
		0 & 0 & 0 & 0 & 0 & -1 & 0 & 0 & 0 & 0 & 0 & 0\\
		0 & 0 & 0 & 0 & 0 & 0 & 1 & 0 & 0 & 0 & 0 & 0\\
		0 & 0 & 0 & 0 & 0 & 0 & 0 & 1 & 0 & 0 & 0 & 0\\
		0 & 0 & 0 & 0 & 0 & 0 & 0 & 0 & 1 & 0 & 0 & 0\\
		0 & 0 & 0 & 0 & 0 & 0 & 0 & 0 & 0 & 1 & 0 & 0\\
		0 & 0 & 0 & 0 & 0 & 0 & 0 & 0 & 0 & 0 & 1 & 0\\
		0 & 0 & 0 & 0 & 0 & 0 & 0 & 0 & 0 & 0 & 0 & 1\\
	\end{array}\right) 
	\]
	where the upper left block contains $\beta_1,\ldots,\beta_5$, and the lower right block contains $\delta_1,\ldots, \delta_7$.
\end{example}

\begin{lemma}\label[lem]{Lem: beta and delta}
	Let $G \cong \bigoplus_{j=1}^{2n-1} G_j \in \mathrm{perf}\Lambda_n$ be a limit generator, and let $M^G$ be a signed diagonal matrix of $G$.
	If $G_j \rightarrow G_l$ is a forward morphism, then $\beta_j = \beta_l$, and $\delta_j = \delta_l$.
	Else, if $G_j \rightarrow G_l$ is a backwards morphism, then $\beta_j = \delta_l$ and $\delta_j = \beta_l$.
\end{lemma}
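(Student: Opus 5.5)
This is a direct unwinding of the propagation algorithm in \Cref{Def: Signed matrix}. I read the claim as a statement about the labels $\alpha_j \in \{\beta_j,\delta_j\}$ chosen at each vertex, equivalently about the signs these receive on the diagonal of $M^G$. For $j \leq m$, $\beta_j$ has sign $-e$ exactly when it was chosen, and $\delta_j$ has sign $-e$ exactly when it was chosen. So ``$\beta_j = \beta_l$ and $\delta_j=\delta_l$'' means: $j$ and $l$ have the same type of chosen label (both chose $\beta$, or both chose $\delta$). ``$\beta_j=\delta_l$ and $\delta_j=\beta_l$'' means the types are swapped.

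The first step is to check that the algorithm is well defined, i.e.\ the choice at each vertex does not depend on the route by which it is reached. By \Cref{Lem: Bijection of admissible dissections} and \Cref{Prop: Bijection of ext admissible dissections}, $\widehat{Q^G}$ is the quiver of an extended admissible dissection of an unpunctured disc. Its underlying graph is therefore a tree; this is the same fact used at the end of \Cref{Lem: equiv classes of paths}. Starting from the initial vertex, every other vertex $l$ is reached along a unique unoriented path, so $\alpha_l$ is determined uniquely. No parity or consistency condition around cycles needs to be checked.

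The main step is the case analysis for an arrow $j \to l$ of $\widehat{Q^G}$. Such an arrow corresponds to a non-zero degree-$0$ morphism $G_j \to G_l$, which is forward or backwards by \Cref{Lem:accumulation point endo} and the definitions preceding \Cref{Cor: Forwards Backwards Composition}. Because the graph is a tree, one of $j,l$ lies on the unique path from the initial vertex to the other. Step (2) of \Cref{Def: Signed matrix} was then applied along this edge, in one direction or the other.
\begin{itemize}
\item If the arrow is forward, step (2) copies the type: $\alpha_l$ is $\beta_l$ exactly when $\alpha_j$ is $\beta_j$. The signs of the $\beta$'s agree, and so do the signs of the $\delta$'s.
\item If the arrow is backwards, step (2) swaps the type. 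Then the sign of $\beta_j$ equals the sign of $\delta_l$, and vice versa.
\end{itemize}
The rule in step (2) is symmetric in the two endpoints of the edge, so it does not matter which endpoint was processed first.

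A general morphism $G_j \to G_l$ need not be a single arrow; it is a composite along a path in $\widehat{Q^G}$. \Cref{Cor: Forwards Backwards Composition} (or \Cref{Lem: Forwards and Backwards} in $\mathrm{perf}\Lambda_n$) says such a composite is backwards exactly when an odd number of its factors are backwards, since a nonzero composite has at most one backwards factor. Applying the arrow case inductively along the path, the relation between the labels at $j$ and $l$ is the identity or the swap according to that parity. This matches the forward/backwards type of the composite.

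The main obstacle I expect is the degenerate vertices $j>m$, where the $\beta$-slot is $0$, i.e.\ $G_j \in \lang[1]{\Lambda_n}$. There the statement should be read with $\beta_j=0$, and I would check that a backwards arrow never forces a nonzero $\beta_l$ to equal $0$. The first thing I would try is to show that backwards arrows only occur between summands not in $\lang[1]{\Lambda_n}$. I would argue this via \ref{G1}, since morphisms inside $\lang[1]{\Lambda_n}$ respect its total order and so are forward. If that fails, I would fall back to interpreting the equalities only for the entries actually present in $M^G$.
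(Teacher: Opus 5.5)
Your argument is essentially the paper's: it likewise writes a forward morphism as a composite of forward arrows and a backwards one as forward arrows plus exactly one backwards arrow (via \Cref{Lem: Forwards and Backwards}), then reads off ``copy'' or ``swap'' from step (2) of \Cref{Def: Signed matrix}, and your observation that the underlying graph of $\widehat{Q^G}$ is a tree (so the propagation is well defined) supplies a point the paper leaves implicit.

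For $j>m$, however, drop your first idea and go straight to your fallback reading, which works because step (2) treats the $0$ slot exactly like a $\beta$ slot. The first idea fails because backwards arrows can be incident to summands in $\lang[1]{\Lambda_n}$: for $n=2$ and $G=\{\mathfrak{a}_1,\mathfrak{a}_2\}\oplus\{\mathfrak{a}_2,x\}\oplus\{\mathfrak{a}_2,y\}$ with $x\in(\mathfrak{a}_1,\mathfrak{a}_2)$ and $y\in(\mathfrak{a}_2,\mathfrak{a}_1)$, the nonzero composite $\{\mathfrak{a}_2,y\}\to\{\mathfrak{a}_1,\mathfrak{a}_2\}\to\{\mathfrak{a}_2,x\}$ is backwards (its moving endpoint sweeps past $\mathfrak{a}_1$), so one of its two arrows is backwards even though both touch $\{\mathfrak{a}_1,\mathfrak{a}_2\}\in\lang[1]{\Lambda_2}$.
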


\begin{proof}
	A forward morphism $G_j \rightarrow G_l$ is a composition of forward morphisms by \Cref{Lem: Forwards and Backwards} and \cite[Prop. 2.26]{LinearGens}
	Hence the choice at $j$ is the same as the choice at $l$, as forward morphisms do not change the choice, and so $\beta_j = \beta_l$, and $\delta_j = \delta_l$.
	
	A backwards morphism $G_j \rightarrow G_l$ is a composition of forward morphisms and a single backwards morphism, again by \Cref{Lem: Forwards and Backwards} and \cite[Prop. 2.26]{LinearGens}.
	Hence the choice at $j$ and at $l$ differ, as backwards morphisms do change the choice, and so $\beta_j = \delta_l$, and $\delta_j = \beta_l$.
\end{proof}

Finally, we may prove the derived equivalence between piano algebras associated to the surface $(D^2,M_n,\emptyset)$.

\begin{theorem}\label{Thm: Derived Equiv}
	Let $G \in \mathrm{perf}\Lambda_n$ be a limit generator.
	Then $G$ is a dg $(\Lambda^G,\Lambda_n)$-bimodule.
	Moreover, there is a equivalence of triangulated categories
	\[
	D(\Lambda^G) \xlongrightarrow{\sim} D(\Lambda_n).
	\]
\end{theorem}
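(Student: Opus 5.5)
The plan is to apply Keller's criterion, \Cref{Lem: AB tri equivalence}, with $A=\Lambda^G$, $B=\Lambda_n$ and $N=G$, where $G$ is viewed as a dg $\Lambda_n$-module. There are two parts. First we build the bimodule structure, which is the first claim of the theorem. Then we check conditions (1)--(3).

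\textbf{Bimodule structure.} By \Cref{Lem: AB bimodule}, it suffices to give a homomorphism of dg algebras
\[
\Phi \colon \Lambda^G \rightarrow \mathrm{Mod}^{dg}\Lambda_n(G,G).
\]
We write each summand as $G_j \cong \mathrm{cone}\, g_{G_j}$, following \Cref{Lem: moddg structure}. For $j>m$ the summand lies in $\lang[1]{\Lambda_n}$ and is a shift of a direct summand of $\Lambda_n$, so no cone is needed. Then $\mathrm{Mod}^{dg}\Lambda_n(G,G)$ is a block matrix algebra, indexed by the pieces $G_j[1]$-part and $P_j$-part, with the differential described in \Cref{Lem: moddg structure}.

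Since $\Lambda^G=kQ^G/I^G$ is a path algebra with relations and $d=0$, we define $\Phi$ on generators:
\begin{itemize}
\item idempotents $\iota_j$ go to the projections onto $G_j$;
\item arrows $\delta$ of $\widehat{Q^G}$ go to chosen cocycle lifts of the corresponding degree-$0$ morphisms $G_j\to G_l$;
\item loops $\alpha_j$ and $\beta_j$ go to lifts of the generators $x^{\pm1}$ of $\Endo{\ast}{G_j}$ from \Cref{Lem:EndX,Lem:EndY}.
\end{itemize}
The signed diagonal matrix $M_G$ of \Cref{Def: Signed matrix} is exactly the device for fixing these lifts. A backwards morphism interchanges the $\beta$- and $\delta$-parts, while a forward morphism preserves them (\Cref{Lem: beta and delta}). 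So conjugating the naive componentwise lifts by $M_G$ makes the signs $(-1)^i$ in $d(f)$ match, and the images become cocycles. The same sign bookkeeping should make the commutation relations in $I^G$ hold strictly, namely $\alpha\delta-\delta\alpha$, $\beta\delta\cdots\delta-\delta\cdots\delta\beta$ and $\alpha\beta-\iota$. The gentle relations $\widehat{I^G}$ should hold because compositions of two backwards morphisms vanish (\Cref{Lem: Forwards and Backwards}), and we choose lifts so that this vanishing holds on the nose rather than only up to homotopy.

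\textbf{Keller's conditions.}
\begin{enumerate}
\item Condition (1) holds because $G\in\mathrm{perf}\Lambda_n$, and perfect objects are compact in $D(\Lambda_n)$.
\item Condition (2) holds because $G$ is a classical generator of $\mathrm{perf}\Lambda_n$ by \Cref{Cor: Preserving Limit Generators}. Hence $\Lambda_n\in\langle G\rangle$, so $\mathrm{Hom}(G,N'[i])=0$ for all $i$ forces $H^\ast(N')=\mathrm{Hom}(\Lambda_n,N'[\ast])=0$, that is, $N'=0$.
\item For condition (3), the map $H^i(\Lambda^G)=(\Lambda^G)^i\to D(\Lambda_n)(G,G[i])$ is induced by $\Phi$. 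Under the additive equivalence $\mathscr{F}$, which commutes with suspension, the target is identified with $\Endo[\ocC]{i}{\mathscr{F}G}=(\chi^G)^i$. By \Cref{Thm:path algebras} the source is $\chi^G$ as well. So it suffices to check that $\Phi$ sends each nonzero path class to a nonzero morphism. Both sides are at most one-dimensional on each $\iota_a(-)\iota_b$ in each degree (\Cref{Lem: equiv classes of paths}), so the check reduces to a dimension comparison together with non-vanishing of the images of the generators and of their products. Non-vanishing of products follows from the factorisation results, \Cref{Lem: Factoring in Cn} and \Cref{Cor: Forwards Backwards Composition}.
\end{enumerate}

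\textbf{Expected obstacle.} The main difficulty is the construction of $\Phi$ as a \emph{strict} dg algebra map, because the relations must hold in $\mathrm{Mod}^{dg}$ and not only in homology. Everything hinges on showing that the $M_G$-twisted lifts are cocycles and satisfy all the relations simultaneously.

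If strictness fails, the fallback is to replace $\mathrm{Mod}^{dg}\Lambda_n(G,G)$ by its homology. This needs an argument that $\mathrm{Mod}^{dg}\Lambda_n(G,G)$ is formal, for instance via the block-triangular shape from \Cref{Lem: moddg structure} and the one-dimensionality of morphism spaces. We would then map $\Lambda^G\cong\chi^G$ into it through a quasi-isomorphism zigzag, which still suffices for Keller's theorem.
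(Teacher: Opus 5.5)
Your overall architecture is the paper's. You use Keller's criterion (\Cref{Lem: AB tri equivalence}) with $N=G$, get the bimodule structure from a dg algebra map $\Lambda^G\to\mathrm{Mod}^{dg}\Lambda_n(G,G)$ via \Cref{Lem: AB bimodule}, and use the cone description of \Cref{Lem: moddg structure}, the sign matrix $M_G$, and the same checks of conditions (1)--(3). Your treatment of (1) and (2) is fine. The gap is exactly the step you flag as the obstacle: you never construct a strict dg algebra map, and the one concrete recipe you give does not work. Conjugating by a diagonal sign matrix multiplies each block by $m_am_b$, which does not depend on the degree $i$. In particular, for the loop $\alpha_j$ (degree $-1$) at a vertex $j\le m$, the lift $\mathrm{diag}(y,z)$ on $Q_j[1]\oplus P_j$ is left unchanged, since both diagonal blocks get the factor $\beta_j^2=\delta_j^2=1$. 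The componentwise lift violates the cocycle condition $g\circ f|_P^P-(-1)^i f|_Q^Q\circ g=0$ of \Cref{Lem: moddg structure} in odd degrees, which is why the paper needs a twist at all, and conjugation cannot repair it. The paper instead uses a degree-dependent twist, $\varphi(x)=(M_G)^i\,\pi(x)$ for $x$ of degree $i$. This scales the $Q_j[1]$-block by $\beta_j^i$ and the $P_j$-block by $\delta_j^i$, and since $\delta_j=-\beta_j$ this is precisely the relative sign $(-1)^i$.

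The second missing ingredient is what makes the relations hold strictly. You lift generators and hope that $\alpha\beta=\iota$, the commutation relations and $\widehat{I^G}$ hold ``on the nose''. The paper instead defines $\varphi$ on the basis of $\Lambda^G$: each $\iota_a\Lambda^G\iota_b$ is at most one-dimensional per degree by \Cref{Lem: equiv classes of paths}, so only multiplicativity has to be checked. It also fixes the shape of the lifts: $\pi$ sends forward morphisms to block-diagonal matrices and backwards morphisms into the off-diagonal $Q[1]\to P$ block. This placement makes a composite of two backwards morphisms vanish strictly, because that block squares to zero, and puts forward--backward composites in the correct block. \Cref{Lem: beta and delta} then gives $\beta_j^i\beta_{j'}^{i'}=\beta_j^{i+i'}$ along forward morphisms and $\beta_j^i\delta_{j'}^{i'}=\beta_j^{i+i'}$ along backwards ones. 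That is why left multiplication by $(M_G)^i$, unlike conjugation, is compatible with products. Without such a map, your check of condition (3) has nothing to apply to. The formality fallback does not close the gap either: you give no argument for formality. Even granting it, a zigzag of quasi-isomorphisms would only yield $D(\Lambda^G)\simeq D(\Lambda_n)$, not the first assertion that $G$ itself is a dg $(\Lambda^G,\Lambda_n)$-bimodule.
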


\begin{proof}
	Let $G \cong \bigoplus_{j=1}^{2n-1} G_i$, and suppose that $G_j \cong \mathrm{cone}g_j$ with $g_j : Q_j \rightarrow P_j$ such that $Q_j,P_j$ are indecomposable in $\lang[1]{\Lambda_n}$.
	If $G_j \in \lang[1]{\Lambda_n}$, then we let $P_j = G_j$ and $Q_j =0$.
	Let $Q = \bigoplus_{j =1}^m Q_j$ and $P = \bigoplus_{j=1}^{2n-1} P_j$, where $m \leq 2n-1$.
	We have that $G \cong \mathrm{cone}(g)$, where $g = \bigoplus_{j=1}^{2n-1} g_j$
	Then by \Cref{Lem: moddg structure} we have 
	\[
	\mathrm{Mod}^{dg}\Lambda_n(G,G) \cong \bigoplus_{i \in \mathbb{Z}} \begin{pmatrix}
		(\mathrm{Mod}\Lambda_n)^i(Q[1],Q[1]) & (\mathrm{Mod}\Lambda_n)^i(Q[1],P)\\
		(\mathrm{Mod}\Lambda_n)^i(P,Q[1]) & (\mathrm{Mod}\Lambda_n)^i(P,P)
	\end{pmatrix},
	\]
	and differential
	\[
	d(x) = \begin{pmatrix}
		g \circ x\, \vline_P^Q & g \circ x\, \vline_P^P - (-1)^i x\, \vline_Q^Q \circ g\\
		0 & x \, \vline_P^Q \circ g
	\end{pmatrix}.
	\]

	We show that there exists a dg $k$-algebra homomorphism $\Lambda^G \rightarrow \mathrm{Mod}^{dg}\Lambda_n(G,G)$.
	By \Cref{Lem: AB bimodule}, this is equivalent to showing that there exists a dg $(\Lambda^G,\Lambda_n)$-bimodule structure on $G$.
	
	We define $\pi \colon \Lambda^G \rightarrow \mathrm{Mod}^{dg}\Lambda_n(G,G)$ on homogeneous elements of $\Lambda^G$;
	\[
		x^i_{jl} \mapsto \begin{cases*}
			\begin{pmatrix}
				y^i_{jl} & 0\\
				0 & z^i_{jl}
			\end{pmatrix} \in (\mathrm{Mod}\Lambda_n)^i(Q_j[1] \oplus P_j, Q_l[1] \oplus P_l), & if $x^i_{jl}$ is a forward morphism,\\
			\begin{pmatrix}
				0 & w^i_{jl}\\
				0 & 0 
			\end{pmatrix} \in (\mathrm{Mod}\Lambda_n)^i(Q_j[1] \oplus P_j, Q_l[1] \oplus P_l), & if $x^i_{jl}$ is a backwards morphism.
		\end{cases*}
	\]
	Here, $x^i_{jl}$ is a non-zero homogeneous morphism of degree $i$ from $G_j$ to $G_l$.
	Note that by \Cref{Thm:path algebras} we have $x^i_{jl} \in k$, and we let $x^i_{jl} = y^i_{jl} = z^i_{jl}$, and $x^i_{jl} = w^i_{jl}$ whenever each is defined.
	That is, we only use $y^i_{jl}$, $w^i_{jl}$, and $z^i_{jl}$ to differentiate by position in the matrix.
	It is clear that $\pi$ is not a homomorphism of dg $k$-algebras, as it cannot commute with the differentials, given there is no $(-1)^i$ term, and we must have $d(\pi(x))=0$.
	However, we may construct a dg $k$-algebra homomorphism  from $\pi$.
	
	Let $M_G$ be a signed diagonal matrix of $G$.
	Then we define $\varphi \colon \Lambda^G \rightarrow \mathrm{Mod}^{dg}(\Lambda_n)(G,G)$ to be the map such that $\varphi (x) = (M_G)^i \cdot \pi(x)$ for $x \in (\mathrm{Mod}\Lambda_n)^i (G,G)$, where $(M_G)^i$ is the $i$-fold product of $M^G$.
	That is $\varphi$ acts on homogeneous elements of $\Lambda^G$ in the following way;
	\[
	x^i_{jl} \mapsto \begin{cases*}
		\begin{pmatrix}
			\beta_j y^i_{jl} & 0\\
			0 & \delta_j z^i_{jl}
		\end{pmatrix} \in (\mathrm{Mod}\Lambda_n)^i(Q_j[1] \oplus P_j, Q_l[1] \oplus P_l), & if $x^i_{jl}$ is a forward morphism,\\
		\begin{pmatrix}
			0 & \beta_j w^i_{jl}\\
			0 & 0 
		\end{pmatrix} \in (\mathrm{Mod}\Lambda_n)^i(Q_j[1] \oplus P_j, Q_l[1] \oplus P_l), & if $x^i_{jl}$ is a backwards morphism.
	\end{cases*}
	\]
	We claim that $\varphi$ is a dg $k$-module homomorphism.
	
	It is clear that $\varphi$ preserves scalar multiplication and addition, so it is left to show that it preserves multiplication and commutes with the differential.
	
	We must have that $d(\varphi(x))=0$, as the differential of $\Lambda^G$ is trivial.
	By \Cref{Lem: moddg structure}, we have
	\[
	d(\varphi(x)) = \begin{pmatrix}
		g \circ \varphi(x)\, \vline_P^Q & g \circ \varphi(x)\, \vline_P^P - (-1)^i \varphi(x) \, \vline_Q^Q \circ g\\
		0 & \varphi(x) \, \vline_P^Q \circ g
	\end{pmatrix}.
	\]
	However, by construction of $\varphi$, we automatically have $\varphi(x)\, \vline_P^Q=0$, as $\pi(x) \cap \mathrm{Mod}\Lambda_n(P,Q) = 0$, and $M^G$ only affects the sign of a morphism in $\pi(x)$.
	It is left to show that $g \circ \varphi(x)\, \vline_P^P - (-1)^i \varphi(x) \, \vline_Q^Q \circ g=0$.
	
	This time by the construction of $\pi$, we have that $g \circ \pi(x) \vline_Q^Q = \pi(x) \vline_P^P \circ g$.
	However, as we have $\beta_j = (-1)^i \delta_j$, then $g \circ \varphi(x) \vline_Q^Q = (-1)^i \varphi(x) \vline_P^P \circ g$,
	and so $g \circ \varphi(x)\, \vline_P^P - (-1)^i \varphi(x) \, \vline_Q^Q \circ g=0$.
	Therefore $\varphi$ commutes with the differential.
	
	Finally, we check that $\varphi$ respects multiplication.
	Let $x \in (\mathrm{Mod}\Lambda_n)^i(G,G)$ and $x' \in (\mathrm{Mod}\Lambda_n)^{i'}(G,G)$.
	Let $1 \leq j,l \leq 2n-1$, then $(xx')_{jl} = \sum_{1 \leq  j' \leq 2n-1} x_{jj'}x'_{j'l}$, such that the morphism $G_j \rightarrow G_l$ factors through $G_{j'}$
	\[
	\varphi(xx')_{jl} = \begin{cases*}
		\begin{pmatrix}
			\beta_{j}^{i+i'} \sum_{1 \leq  j' \leq 2n-1} x_{jj'}x'_{j'l} & 0\\
			0 & \delta_j^{i+i'} \sum_{1 \leq  j' \leq 2n-1} x_{jj'}x'_{j'l}
		\end{pmatrix}, & if $G_j \rightarrow G_l$ is a forward morphism,\\
		\begin{pmatrix}
			0 & \beta_j^{i+i'} \sum_{1 \leq  j' \leq 2n-1} x_{jj'}x'_{j'l}\\
			0 & 0 
		\end{pmatrix}, & if $G_j \rightarrow G_l$ is a backwards morphism.
	\end{cases*}
	\]
	We may also see that 
	\[
	(\varphi(x)\varphi(x'))_{jl} = \begin{pmatrix}
		\sum_{1 \leq  j' \leq 2n-1} (\beta_j^i y_{jj'}) (\beta_{j'}^{i'} y_{j'l}) & \sum_{1 \leq  j' \leq 2n-1} (\beta_j^i y_{jj'}) (\beta_{j'}^{i'} w_{j'l}) + \sum_{1 \leq  j' \leq 2n-1} (\beta_j^i w_{jj'}) (\delta_{j'}^{i'} z_{j'l})\\
		0 & \sum_{1 \leq  j' \leq 2n-1} (\delta_j^i z_{jj'}) (\delta_{j'}^{i'} z_{j'l}).
	\end{pmatrix}
	\]
	
	Suppose that $G_j \rightarrow G_l$ is a forward morphism, then as $y_{jj'}w{j'l} = w_{jj'}z_{j'l}$ by \Cref{Lem: Cn has linear gen}, and the definition of a linear generator.
	Hence $(\beta_j^i y_{jj'}) (\beta_{j'}^{i'} w_{j'l}) = - (\beta_j^i w_{jj'}) (\delta_{j'}^{i'} z_{j'l})$, and so $\sum_{1 \leq  j' \leq 2n-1} (\beta_j^i y_{jj'}) (\beta_{j'}^{i'} w_{j'l}) + \sum_{1 \leq  j' \leq 2n-1} (\beta_j^i w_{jj'}) (\delta_{j'}^{i'} z_{j'l})=0$.
	Therefore we wish to show that;
	\begin{align*}
		\sum_{1 \leq  j' \leq 2n-1} (\beta_j^i y_{jj'}) (\beta_{j'}^{i'} y_{j'l}) &= \beta_{j}^{i+i'} \sum_{1 \leq  j' \leq 2n-1} x_{jj'}x'_{j'l}\\
		\sum_{1 \leq  j' \leq 2n-1} (\delta_j^i z_{jj'}) (\delta_{j'}^{i'} z_{j'l}) &= \delta_j^{i+i'} \sum_{1 \leq  j' \leq 2n-1} x_{jj'}x'_{j'l}.
	\end{align*}
	Clearly $y_{jj'}y_{j'l} = x_{jj'}x_{j'l} = z_{jj'}z_{j'l}$, so it remains to prove that $\beta_j^{i+i'} = \beta_j^i \beta_{j'}^{i'}$ when $y_{jj'}y_{j'l} \neq 0$.
	By \Cref{Lem: Forwards and Backwards} and \cite[Prop. 2.26]{LinearGens}, we see that $y_{jj'}$ and $y_{j'l}$ must both be forward morphisms, and so $\beta_j = \beta_l$ by \Cref{Lem: beta and delta}.
	Hence $\beta_j^{i+i'} = \beta_j^i \beta_{j'}^{i'}$, and analogously $\delta_j^{i+i'} = \delta_j^i \delta_{j'}^{i'}$.
	Therefore we have shown the equalities.
	
	Now suppose that $G_j \rightarrow G_l$ is a backwards morphism, then $\sum_{1 \leq  j' \leq 2n-1} (\beta_j^i y_{jj'}) (\beta_{j'}^{i'} y_{j'l}) = \sum_{1 \leq  j' \leq 2n-1} (\delta_j^i z_{jj'}) (\delta_{j'}^{i'} z_{j'l})=0$, as both $y$ and $z$ are always forward morphisms, and so do not compose to a backwards morphism by \Cref{Lem: Forwards and Backwards}.
	
	Therefore we wish to show that;
	\[
	(\beta_j^i y_{jj'}) (\beta_{j'}^{i'} w_{j'l}) + (\beta_j^i w_{jj'}) (\delta_{j'}^{i'} z_{j'l})= \beta_j^{i+i'} x_{jj'}x'_{j'l}.
	\]
	It is evident that, if $x_{jj'}$ is a backwards morphism, then by \Cref{Lem: Forwards and Backwards} $x_{j'l}$ is a forward morphism, and so $x_{jj'}x_{j'l} = w_{jj'}z_{j'l}$.
	In this case $\beta_j^i \delta_{j'}^{i'} = \beta_j^i \beta_j^{i'} = \beta_j^{i+i'}$, where the first equality holds by \Cref{Lem: beta and delta}.
	So we get
	\[
	(\beta_j^i w_{jj'}) (\delta_{j'}^{i'} z_{j'l})= \beta_j^{i+i'} x_{jj'}x'_{j'l}.
	\]
	Now, if $x_{jj'}$ is a forward morphism, then $x_{j'l}$ is a backwards morphism, again by \Cref{Lem: Forwards and Backwards}, and so $x_{jj'}x_{j'l} = y_{jj'}w_{j'l}$.
	Then we have $\beta_j^i \beta_{j'}^{i'} = \beta_j^i \beta_j^{i'} = \beta_j^{i+i'}$, where the first equality holds again by \Cref{Lem: beta and delta}.
	So we get
	\[
	(\beta_j^i y_{jj'}) (\beta_{j'}^{i'} w_{j'l})= \beta_j^{i+i'} x_{jj'}x'_{j'l},
	\]
	therefore we have shown the desired equality, and subsequently that $\varphi(xx') = \varphi(x) \varphi(x')$.
	Hence we have shown that $\varphi : \Lambda^G \rightarrow \mathrm{Mod}^{dg}(\Lambda_n)(G,G)$ is a homomorphism of dg $k$-algebras, and so there  exists a dg $(\Lambda^G,\Lambda_n)$-bimodule structure on $G$ by \Cref{Lem: AB bimodule}.
	
	Also, we know that $G$ is a compact object in $D(\Lambda_n)$, and a classical generator of $\mathrm{perf}\Lambda_n$ as the preimage of $G$ in $\ocC$ under $\mathscr{F}$ is a classical generator \cite{Generators}, and there is an isomorphism 
	\[
	H^i(\Lambda^G) \rightarrow D(\Lambda_n)(G,G[i])
	\]
	for all $i \in \mathbb{Z}$ by the construction of $\Lambda^G$ and the additive equivalence $\mathscr{F}$.
	Therefore, we may apply \Cref{Lem: AB tri equivalence} to see that
	\[
	- \otimes^{\mathbb{L}}_{\Lambda^G} G : D(\Lambda^G) \rightarrow D(\Lambda_n)
	\]
	is a $k$-linear equivalence of triangulated categories.
\end{proof}

Our final result is an immediate consequence of  \Cref{Thm: Derived Equiv}.

\begin{corollary}
	Let $G,G' \in \ocC$ be two limit generators.
	Then the piano algebras $\Lambda^G$ and $\Lambda^{G'}$ are derived equivalent.
\end{corollary}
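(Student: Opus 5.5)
The plan is to compose the two derived equivalences supplied by \Cref{Thm: Derived Equiv}.

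First, I move the limit generators into $\mathrm{perf}\Lambda_n$. Given limit generators $G, G' \in \ocC$, I take objects $X = \mathscr{F}^{-1}(G)$ and $X' = \mathscr{F}^{-1}(G')$ in $\mathrm{perf}\Lambda_n$, using the additive equivalence $\mathscr{F}$ of \cite[Theorem 5.6]{LinearGens}.

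By \Cref{Cor: Preserving Limit Generators}, both $X$ and $X'$ are classical generators of $\mathrm{perf}\Lambda_n$. They are limit generators in the sense used in \Cref{Def: Signed matrix} and \Cref{Thm: Derived Equiv}.

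I also need the piano algebra attached to $X$ to be $\Lambda^G$. This holds because the piano algebra only depends on the extended admissible dissection, hence on the keyboard quiver with its set of sharp vertices. Concretely:
\begin{itemize}
\item $\mathscr{F}$ commutes with suspension, so it identifies $\Endo{\ast}{X}$ with $\Endo{\ast}{G}$ as graded vector spaces;
\item $\mathscr{F}$ preserves which arcs share accumulation points, and it preserves forward versus backward morphisms.
\end{itemize}
Together with \Cref{Thm:path algebras}, this gives $\Lambda^X = \Lambda^G$, and similarly $\Lambda^{X'} = \Lambda^{G'}$.

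Second, I apply \Cref{Thm: Derived Equiv} to each of $X$ and $X'$. This yields triangle equivalences
\[
D(\Lambda^G) \xrightarrow{\sim} D(\Lambda_n) \quad \text{and} \quad D(\Lambda^{G'}) \xrightarrow{\sim} D(\Lambda_n),
\]
given by $-\otimes^{\mathbb{L}}_{\Lambda^G} X$ and $-\otimes^{\mathbb{L}}_{\Lambda^{G'}} X'$. Composing the first with a quasi-inverse of the second gives a triangle equivalence $D(\Lambda^G) \simeq D(\Lambda^{G'})$.

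If "derived equivalent" is meant at the level of perfect derived categories, I restrict to compact objects. Triangle equivalences preserve compactness, so the equivalence restricts to $\mathrm{perf}\Lambda^G \simeq \mathrm{perf}\Lambda^{G'}$.

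The main obstacle is the identification of $\Lambda^X$ with $\Lambda^G$ in the first step. Here I would check two things against the proof of \Cref{Prop: Bijection of ext admissible dissections}. The equivalence class of a limit generator must be determined by $\mathrm{add}(G[i])$. And $\mathscr{F}$ must restrict to an additive equivalence $\mathrm{add}(X[i]) \simeq \mathrm{add}(G[i])$ compatible with composition. Given that, the keyboard quivers agree and the rest is formal.
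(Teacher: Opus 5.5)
Your proposal is correct and takes essentially the same approach as the paper, which deduces the corollary immediately from \Cref{Thm: Derived Equiv} by composing $D(\Lambda^G) \simeq D(\Lambda_n)$ with a quasi-inverse of $D(\Lambda^{G'}) \simeq D(\Lambda_n)$. The step you flag as the main obstacle, identifying the piano algebra of $X=\mathscr{F}^{-1}(G)$ with $\Lambda^G$, is essentially definitional: the paper implicitly treats a limit generator of $\mathrm{perf}\Lambda_n$ as an $\mathscr{F}$-preimage of a limit generator of $\ocC$, with the same piano algebra, so no separate argument is needed.
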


We end this section by asking the following question.

\begin{question}
	Let $(\Sigma,M,P)$ be a marked surface, and let $\Gamma_1$ and $\Gamma_2$ be piano algebras coming from extended admissible dissections of $(\Sigma,M,P)$.
	When can we say that $\Gamma_1$ and $\Gamma_2$ are derived equivalent?
\end{question}

\printbibliography

\end{document}